\title	 {\normalfont\headfam\itshape\LARGE
          Knot homology groups from instantons}
\author	 {P. B. Kronheimer and T. S. Mrowka%
            
            \footnote{%
             The work of the first author
            was supported by the National Science Foundation through
            NSF grant number DMS-0405271. The work of
            the second author was supported by NSF grants DMS-0206485,
            DMS-0244663 and DMS-0805841.} }
\address {Harvard University, Cambridge MA 02138 \\
          Massachusetts Institute of Technology, Cambridge MA 02139}
\begin{document}

\pagestyle{numbered}	
\maketitle

\section{Introduction}

\subsection{An observation of some coincidences}

For a knot or link  $K$ in $S^{3}$, the Khovanov homology $\kh(K)$  
is a bigraded abelian group whose construction can be described in entirely
combinatorial terms \cite{Khovanov-1}. If we forget the bigrading, then as
abelian groups we have, for example,
\[
        \kh(\mathrm{unknot}) = \Z^{2}
\]
and
\[
        \kh(\mathrm{trefoil}) = \Z^{4} \oplus \Z/2.
\]
The second equality holds for both the right- and left-handed
trefoils, though the bigrading would distinguish these two cases.

The present paper was motivated in large part by the
observation that the group $\Z^{4}\oplus \Z/2$ arises in a different
context.  Pick a basepoint $y_{0}$ in the complement of the knot or
link,  and consider the space of all homomorphisms
\[
            \rho : \pi_{1}\bigl(S^{3}\sminus K, y_{0} \bigr) \to \SU(2)
\]
satisfying the additional constraint that
\begin{equation}\label{eq:conjugacy}
            \text{$\rho(m)$ is conjugate to $\begin{pmatrix} -i & 0 \\
            0 & i \end{pmatrix}$}
\end{equation}
for every $m$ in the conjugacy class of a meridian of the link. (There
is one such conjugacy class for each component of $K$, once the
components are oriented. The orientation does not matter here, because
the above element of $\SU(2)$ is conjugate to its inverse.) Let us write
\[
            R(K) \subset\Hom\bigr(
            \pi_{1}\bigl(S^{3}\sminus K, y_{0} \bigr) , \SU(2)\bigr)
\]
for the set of these homomorphisms. Note that we are not defining
$R(K)$ as a  set of
\emph{equivalence classes} of such homomorphisms under the action of
conjugation by  $\SU(2)$. The observation, then, is the following:

\begin{observation}\label{obs:startingpoint}
    In the case that $K$ is either the unknot or the trefoil, the
    Khovanov homology of $K$ is isomorphic to the ordinary homology of
    $R(K)$, as an abelian group. That is,
    \[
                \kh(K) \cong H_{*}(R(K)).
    \]
    This observation extends to all the torus knots of type $(2,p)$.
\end{observation}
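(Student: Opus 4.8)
The plan is to compute the space $R(K)$ explicitly when $K = T(2,p)$ and to compare its homology with the Khovanov homology of these knots, which is classically known: forgetting the bigrading, $\kh(T(2,p)) \cong \Z^{p+1}\oplus(\Z/2)^{(p-1)/2}$ for $p \ge 1$ odd (see \cite{Khovanov-1}). Since $R(K)$ depends only on the group $\pi_{1}(S^{3}\sminus K)$ and the conjugacy class of a meridian, it is unchanged under mirroring, so we may take $p > 0$. The content of the observation for these knots is therefore the statement
\[
    H_{*}\bigl(R(T(2,p))\bigr) \;\cong\; \Z^{p+1} \oplus (\Z/2)^{(p-1)/2}.
\]
I would start from the standard presentation $\pi_{1}(S^{3}\sminus T(2,p)) = \langle u, v \mid u^{2} = v^{p} \rangle$, in which a meridian is $m = u\,v^{(1-p)/2}$ (the word representing a generator of $H_{1} = \Z$). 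A homomorphism $\rho\colon \pi_{1}\to\SU(2)$ is reducible exactly when its image lies in a maximal torus; such a $\rho$ factors through $H_{1}$ and is determined by the traceless element $\rho(m)$, so the reducible part of $R(K)$ is precisely the $2$-sphere of all traceless elements of $\SU(2)$.

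For an irreducible $\rho$, the central element $u^{2}=v^{p}$ maps to $\pm I$; it cannot map to $I$, since then $\rho(u) = \pm I$ and $\rho$ would be reducible, so $\rho(u)^{2} = -I$ and $\rho(v)^{p} = -I$ with $\rho(v)\neq\pm I$. The conjugacy classes of such $\rho(v)$ are those with eigenvalues $e^{\pm i\pi(2j+1)/p}$ for $0\le j\le(p-3)/2$, of which there are exactly $(p-1)/2$. Fixing one such class and conjugating so that $\rho(v) = V_{j} := \mathrm{diag}\bigl(e^{i\pi(2j+1)/p},\, e^{-i\pi(2j+1)/p}\bigr)$, a direct computation with $\rho(u)$ a general traceless element shows that $\mathrm{tr}\,\rho(m) = \mathrm{tr}\bigl(\rho(u)V_{j}^{(1-p)/2}\bigr)$ is a nonzero multiple of the diagonal part of $\rho(u)$, the proportionality constant being $\sin\!\bigl(\pi(2j+1)(1-p)/2p\bigr)$, which does not vanish because $p$ is odd, $0 < 2j+1 < p$, and $\gcd(p,\,p-1)=1$. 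Hence $\rho(m)$ is traceless precisely when $\rho(u)$ is off-diagonal in the eigenbasis of $V_{j}$, so the admissible $\rho(u)$ form a circle; moreover each such $\rho(u)$ fails to commute with $V_{j}$, so $\rho$ is automatically irreducible. It follows that $\SU(2)$ acts by conjugation on the set $X_{j}$ of these representations transitively --- it is transitive on the conjugacy class $\SU(2)/T$ of $V_{j}$, and the stabilizer $T$ (a maximal torus) acts transitively on the circle of admissible $\rho(u)$ through its squaring endomorphism --- with point-stabilizer the center $\{\pm I\}$. Therefore each $X_{j} \cong \SU(2)/\{\pm I\} = SO(3)$.

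It remains to see that $R(K)$ is the disjoint union of these pieces. The function $\rho\mapsto\mathrm{tr}\,\rho(v)$ is continuous and equals $-2$ on the reducible part and $2\cos\bigl(\pi(2j+1)/p\bigr)$ on $X_{j}$; these $1+(p-1)/2$ values are distinct, and every representation in $R(K)$ lies in one of the pieces, so they are exactly the connected components:
\[
    R(T(2,p)) \;\cong\; S^{2} \;\sqcup\; \bigsqcup_{j=0}^{(p-3)/2} SO(3).
\]
Taking singular homology with integer coefficients and using $H_{*}(S^{2}) = (\Z,\,0,\,\Z,\,0)$ and $H_{*}(SO(3)) = H_{*}(\mathbf{RP}^{3}) = (\Z,\,\Z/2,\,0,\,\Z)$ in degrees $0,1,2,3$, the total is $\Z^{p+1}\oplus(\Z/2)^{(p-1)/2}$, which matches $\kh(T(2,p))$ as an abelian group. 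When $p=1$ the knot is the unknot, there are no $SO(3)$ summands, and one recovers $H_{*}(S^{2}) = \Z^{2} = \kh(\mathrm{unknot})$.

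The step I expect to be the main obstacle is the analysis of the irreducible locus: one must verify that the meridian condition cuts it down to \emph{exactly} a circle over each admissible conjugacy class of $\rho(v)$ --- in particular that the sine above never vanishes --- and then identify the resulting $\SU(2)$-space correctly. The delicate point is that the stabilizing torus acts on the circle of admissible $\rho(u)$ through the squaring map; this degree-two feature is exactly what makes $X_{j}$ a single free $SO(3)$-orbit, i.e.\ $\mathbf{RP}^{3}$, rather than an $S^{2}\times S^{1}$ or some other lens space, and getting it wrong would alter both the free rank and the $2$-torsion. By contrast, the disjointness of the components, though it must be checked, is immediate from the local constancy of $\mathrm{tr}\,\rho(v)$.
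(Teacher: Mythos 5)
Your computation is correct and follows essentially the same route as the paper's own discussion: the reducibles give the $2$-sphere, and the $(p-1)/2$ conjugacy classes of irreducibles, each having stabilizer $\{\pm 1\}$, give copies of $\RP^{3}$, so $H_{*}(R(T(2,p)))\cong \Z^{p+1}\oplus(\Z/2)^{(p-1)/2}\cong\kh(T(2,p))$ as ungraded abelian groups; you are simply supplying the explicit representation-variety details that the paper states without proof. The only point to tighten is the identification of the meridian: mapping to a generator of $H_{1}$ does not by itself pin down the conjugacy class, but the word $uv^{(1-p)/2}$ is indeed a meridian (for instance, from the two-bridge presentation with meridians $x,y$ one can take $v=xy$ and $u=x(yx)^{(p-1)/2}$, whence $x=v^{(1-p)/2}u$), and with that in place your trace computation, the count of $(p-1)/2$ components, and the orbit identification via the squaring action of the torus are all correct.
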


To understand this observation, we can begin with the case of the
unknot, where the fundamental group of the complement is $\Z$. After
choosing a generator, we have a correspondence between
$R(\mathrm{unknot})$ and the conjugacy class of the distinguished
element of $\SU(2)$ in \eqref{eq:conjugacy} above.  This conjugacy
class is a $2$-sphere in $\SU(2)$, so we can write
\[
        R(\mathrm{unknot}) = S^{2}.
\]
For a non-trivial knot $K$, we always have homomorphisms $\rho$ which
factor through the abelianization $H_{1}(S^{3}\sminus K) = \Z$, and
these are again parametrized by $S^{2}$. Every other homomorphism has
stabilizer $\{\pm 1\} \subset \SU(2)$ under the action by conjugation,
so its equivalence class contributes a copy of $\SU(2)/\{\pm 1\} =
\RP^{3}$ to $R(K)$. In the case of the trefoil, for example, there is
exactly one such conjugacy class, and so
\[
                R(\mathrm{trefoil}) = S^{2} \amalg \RP^{3};
\]
and the homology of this space is indeed $\Z^{4}\oplus \Z/2$, just
like the Khovanov homology. This explains why the observation holds
for the trefoil, and the case of the $(2,p)$ torus knots is much the
same: for larger odd $p$, there are $(p-1)/2$ copies of $\RP^{3}$ in
the $R(K)$. In unpublished work, the above observation has been shown
to extend to all $2$-bridge knots by Sam Lewallen \cite{Lewallen}.

The homology of the space $R(K)$, while it is certainly an invariant
of the knot or link, should not be expected to behave well or share
any of the more interesting properties of Khovanov homology; no should
the coincidence noted above be expected to hold. A better way to
proceed is instead to imitate the construction of Floer's instanton
homology for 3-manifolds, by constructing a framework in which $R(K)$
appears as the set of critical points of a Chern-Simons functional on
a space of $\SU(2)$ connections on the complement of the link.  One
should then construct the Morse homology of this Chern-Simons
invariant. In this way, one should associate a finitely-generated
abelian group to $K$ that would coincide with the ordinary homology of
$R(K)$ in the very simplest cases. The main purpose of the present
paper is to carry through this construction. The invariant that comes
out of this construction is certainly not isomorphic to Khovanov
homology for all knots; but it does share some of its formal
properties.  The definition that we propose is a variant of the
orbifold Floer homology considered by Collin and Steer in
\cite{Collin-Steer}.

In some generality, given a knot or link $K$ in a $3$-manifold $Y$,
we will produce an ``instanton Floer homology group'' that is an
invariant of $(Y,K)$. These groups will be functorial for oriented
cobordisms of pairs. Rather than work only with $\SU(2)$, we will
work of much of this paper with a more general compact Lie group $G$,
though in the end it is only for the case of $\SU(N)$ that we are able
to construct these invariants. 

\subsection{Summary of results}

\paragraph{The basic construction.}

Let $Y$ be a closed oriented $3$-manifold, let $K\subset Y$ be an
oriented link, and let $P\to Y$ be a principal $U(2)$-bundle.
Let $K_{1},\dots,K_{r}$ be the components of $K$. We will say that
$(Y,K)$ and $P$ satisfies the \emph{non-integrality condition} if
none of the $2^{r}$ rational cohomology classes
\begin{equation}\label{eq:non-integrality}
                    \tfrac{1}{2} c_{1}(P) \pm\tfrac{1}{4} \PD[K_{1}]
                    \pm \dots \pm \tfrac{1}{4} \PD[K_{r}]
\end{equation}
is an integer class.
When the non-integrality condition holds, we will define a
finitely-generated abelian group $\I_{*}(Y,K,P)$. This group has a
canonical $\Z/2$ grading, and a relative grading by $\Z/4$.

In the case that $K$ is empty, the group $\I_{*}(Y,P)$ coincides with the
familiar variant of Floer's instanton homology arising from a $U(2)$
bundle $P\to Y$ with odd first Chern class \cite{Donaldson-book}. We 
recall, in outline, how this group is constructed. One considers the
space $\cA(Y,P)$ of all connections in the $\SO(3)$ bundle $\ad(P)$. This
affine space is acted on by the ``determinant-1 gauge group'': the
group $\G(Y,P)$ of automorphisms of $P$ that have determinant $1$
everywhere. Inside $\cA(Y,P)$ one has the flat connections: these can be
characterized as critical points of the Chern-Simons functional,
\[
\CS:\cA(Y,P)\to\R.
\]
The Chern-Simons functional descends to a
circle-valued function on the quotient space
\[
   \bonf(Y,P) = \cA(Y,P)/\G(Y,P).
\]
The image of
the set of critical points in $\bonf(Y,P)$ is compact,
and after perturbing $\CS$ carefully by a term that is invariant under
$\G(Y,P)$, one obtains a function whose set of critical points has finite
image in this quotient. If $(1/2)c_{1}(P)$ is not an integral class
and the perturbation is small,
then the critical points in $\cA(Y,P)$ are all irreducible connections.
One can arrange also a Morse-type non-degeneracy condition: the
Hessian if $\CS$ can be assumed to be non-degenerate in the directions
normal to the gauge orbits. The group $\I_{*}(Y,P)$ is then constructed
as the Morse homology of the circle-valued Morse function on
$\bonf(Y,P)$.

In the case that $K$ is non-empty, the construction of $\I_{*}(Y,K,P)$
mimics the standard construction very closely. The difference is that
we start not with the space $\cA$ of all smooth connections in $\ad(P)$,
but with a space $\cA(Y,K,P)$ of connections in the restriction of
$\ad(P)$
to $Y\sminus K$ which have a singularity along $K$. This space is
acted on by a group $\G(Y,K,P)$ of determinant-one gauge
transformations, and we have a quotient space
\[
            \bonf(Y,K,P) = \cA(Y,K,P)/\G(Y,K,P).
\]
In the case that
$c_{1}(P)=0$, the singularity
is such that the flat connections in the quotient space
$\bonf(Y,K,P)$ correspond to conjugacy classes of
homomorphisms from the fundamental group of $Y\sminus K$ to $\SU(2)$
which have the behavior \eqref{eq:conjugacy} for meridians of the
link. Thus, if we write $\Crit(Y,K,P) \subset \bonf(Y,K,P)$ for this set of
critical points of the Chern-Simons functional, then we have
\begin{equation}\label{eq:crit-is-R}
            \Crit(Y,K,P) = R(Y,K) / \SU(2)
\end{equation}
where $R(Y,K)$ is the set of homomorphisms $\rho: \pi_{1}(Y\sminus
K) \to \SU(2)$ satisfying \eqref{eq:conjugacy} and $\SU(2)$ is acting
by conjugation.
The non-integrality of the classes \eqref{eq:non-integrality} is
required in order to ensure that there will be no reducible flat
connections.

\paragraph{Application to classical knots.}
Because of the non-integrality requirement, the construction of
$\I_{*}$ cannot be applied directly when the $3$-manifold $Y$ has
first Betti number zero. In particular, we cannot apply this
construction to ``classical knots'' (knots in $S^{3}$).
However, there is a simple
device we can apply. Pick a point $y_{0}$ in $Y\sminus K$, and form
the connected sum at $y_{0}$ of $Y$ and $T^{3}$, to obtain a new pair
$(Y\# T^{3}, K)$. Let $P_{0}$ be the trivial $U(2)$ bundle on $Y$,
and let $Q$ be the $U(2)$ bundle on $T^{3}= S^{1}\times T^{2}$ whose
first Chern class is Poincar\'e dual to $S^{1}\times
\{\mathrm{point}\}$. We can form a bundle $P_{0} \# Q$ over $Y\#
T^{3}$. This bundle satisfies the non-integral condition, so we define
\[
                    \tI_{*}(Y,K) = \I_{*}(Y\# T^{3}, K, P_{0} \# Q).
\]
and call this the \emph{framed instanton homology} of the pair $(Y,K)$.
In the special case that $Y=S^{3}$, we write
\[
                \tI_{*}(K) = \tI_{*}(S^{3}, K).
\]

To get a feel for $\tI_{*}(Y,K)$ for knots in $S^{3}$, and to
understand the reason for the word ``framed'' here, it is first necessary to understand
that the adjoint bundle $\ad(Q)\to T^{3}$ admits only irreducible flat
connections, and that these form two orbits under the determinant-one
gauge group. (See section~\ref{subsec:config-spaces}. Under the
\emph{full} gauge group of all automorphisms of
$\ad(Q)$, they form a single orbit.) When we form a connected sum,
the fundamental group becomes a free product, and we have a general
relationship of the form
\begin{equation}\label{eq:connected-sum-reps}
                        R(Y_{0}\# Y_{1})/\SU(2)
                        = R(Y_{0}) \times_{\SU(2)} R(Y_{1}).
\end{equation}
Applying this to the connected sum $(Y\# T^{3}, K)$ and recalling
\eqref{eq:crit-is-R}, we find that
 the
flat connections in the quotient space $\bonf(Y\#
T^{3},K,P_{0}\#Q)$ form two disjoint copies of the space we called
$R(Y,K)$ above: that is,
\[
\begin{aligned}
        \Crit (Y\#T^{3}, K, P_{0}\# Q) 
                 &= R(Y,K) \amalg R(Y,K).
\end{aligned}
\]
Note that, on the right-hand side, we no longer have the quotient of
$R(Y,K)$ by $\SU(2)$ as we did before at \eqref{eq:crit-is-R}. The
space $R(Y,K)$ can be thought of as parametrizing isomorphism classes
of flat connections on $Y\sminus K$ with a \emph{framing} at the
basepoint $y_{0}$: that is, an isomorphism of the fiber at $y_{0}$
with $U(2)$.

For a general knot, as long as the set of critical points is
non-degenerate in the Morse-Bott sense, there will be a spectral
sequence starting at the homology of the critical set, $\Crit$, and
abutting to the framed instanton homology.  In the case of the unknot
in $S^{3}$, the spectral sequence is trivial and the group
$\tI_{*}(K)$ is the homology of the two copies of $R(K)$ which
comprise $\Crit$. Thus,
\[
\begin{aligned}
\tI_{*}(\mathrm{unknot})
&= H_{*}(\Crit) \\
&= H_{*}\bigl(R(\mathrm{unknot}) \amalg R(\mathrm{unknot})\bigr)\\
&= H_{*}(S^{2} \amalg S^{2})
                \\
                &= \Z^{2} \oplus \Z^{2}.
                \end{aligned}
\]
Noting again Observation~\ref{obs:startingpoint}, we can say that
$\tI_{*}(\mathrm{unknot})$ is isomorphic to two copies of the Khovanov
homology of the unknot. It is natural to ask whether this isomorphism
holds for a larger class of knots:

\begin{question}\label{conj:tI-Kh}
    Is there an isomorphism of abelian groups
    \[
                    \tI_{*}(K) \cong \kh(K) \oplus \kh(K)
    \]
    for all alternating knots?
\end{question}

There is evidence for an affirmative answer to this question for
the torus knots of type $(2,p)$, as well as for the (non-alternating)
torus knots of type $(3,4)$ and $(3,5)$. It also seems likely that the
answer is in the affirmative for all alternating knots if we use
$\Z/2$ coefficients instead of integer coefficients.  Already for the
$(4,5)$ torus knot, however, it is clear from an examination of $R(K)$
that the framed instanton homology $\tI_{*}(K)$ has smaller rank than
two copies of the Khovanov homology, so the isomorphism does not extend
to all knots.

For a general knot, we expect that $\kh(K) \oplus \kh(K)$
is related to $\tI_{*}(K)$ through a spectral sequence. There is a
similar spectral sequence (though only with $\Z/2$ coefficients)
relating (reduced) Khovanov homology to the Heegaard Floer homology of
the branched double cover \cite{OS-spectral}. The argument of
\cite{OS-spectral} provides a potential model for a similar argument
in the case of our framed instanton homology. An important ingredient
is to show that there is a long exact sequence relating the framed
instanton homologies for $K$, $K_{0}$ and $K_{1}$, when $K_{0}$ and
$K_{1}$ are obtained from a knot or link $K$ by making the two
different smoothings of a single crossing. It seems that a proof of
such a long exact sequence can be given using the same ideas that were
used in \cite{KMOS} to prove a surgery exact sequence for
Seiberg-Witten Floer homology, and the authors hope to return to this
and other related issues in a future paper.

\paragraph{Other variations.} Forming a connected sum with $T^{3}$, as
we just did in the definition of $\tI_{*}(Y,K)$, is one way to take an
arbitrary pair $(Y,K)$ and modify it so as to satisfy the
non-integrality condition; but of course there are many other ways.
Rather than using $T^{3}$, one can use any pair satisfying the
non-integrality condition; and an attractive example that -- like the
$3$-torus -- carries isolated flat connections, is the pair
$(S^{1}\times S^{2}, L)$, where $L$ is the $3$-component link formed
from three copies of the $S^{1}$ factor.

We shall examine this and some other
variations of the basic construction
in section~\ref{sec:classical}. Amongst these is a
``reduced'' version of $\tI_{*}(K)$ that appears to bear the same
relation to reduced Khovanov homology as $\tI_{*}(K)$ does to the
Khovanov homology of $K$. (For this reduced variant, the homology of
the unknot is $\Z$.)  Another variant arises if, instead of forming a
connected sum, we perform $0$-surgery on a knot (in $S^{3}$, for
example) and apply the basic construction to the core of the surgery
torus in the resulting $3$-manifold. The group obtained this way is
reminiscent of the ``longitude Floer homology'' of Eftekhary
\cite{Eftekhary}. It is trivial for the ``unknot'' in any $Y$ (i.e.~a
knot that bounds a disk), and is $\Z^{4}$ for the trefoil in $S^{3}$.

In another direction, we can alter the construction of $\tI_{*}(K)$ by
dividing the relevant configuration space by a slightly larger gauge
group, and in this way we obtain a variant of $\tI_{*}(K)$ which we
refer to as $\bar\tI_{*}(K)$ and which is (roughly speaking) half the
size of $\tI_{*}(K)$. For this variant, the appropriate modification
of Question~\ref{conj:tI-Kh} has only one copy of $\kh(K)$ on the
right-hand side. (For example, if $K$ is the unknot, then
$\bar\tI_{*}(K)$ is the ordinary homology of $S^{2}$, which coincides
with the Khovanov homology.)

\paragraph{Slice-genus bounds.}
A very interesting aspect of Khovanov homology was discovered by
Rasmussen \cite{Rasmussen-slice}, who showed how the Khovanov homology
of a knot can be used to provide a lower bound for the knot's
slice-genus. An argument with a very similar structure can be
constructed using the framed instanton homology $\tI_{*}(K)$. The
construction begins by replacing $\Z$ as the coefficient group with a
certain system of local coefficients $\Gamma$ on $\bonf(S^{3},K)$. In this
way we obtain a new group $\tI_{*}(K;\Gamma)$ that is
finitely-presented module over the ring $\Z[t^{-1},t]$ of
finite Laurent series in a variable $t$. We shall show that
$\tI_{*}(K;\Gamma)$ modulo torsion is essentially independent of the knot $K$:
it is always a free module of rank $2$. On the other hand, $\tI_{*}(K;\Gamma)$ comes with a
descending filtration, and we can define a knot invariant by
considering the level in this filtration at which the two generators lie.
From this knot invariant, we obtain a lower bound for the slice genus.

Although the formal aspects of this argument are modeled on
\cite{Rasmussen-slice}, the actual mechanisms behind the proof are the same
ones that were first used in \cite{KM-gtes-II} and
\cite{K-obstruction}.

\paragraph{Monotonicity and other structure groups.}
The particular conjugacy class chosen in \eqref{eq:conjugacy} is a
distinguished one. It might seem, at first, that the definition of
$\I_{*}(Y,K,P)$ could be carried out without much change if instead we
used any of the non-central conjugacy classes in $\SU(2)$ represented
by the elements
\[
            \exp 2\pi i 
            \begin{pmatrix}
                -\lambda & 0 \\ 0 & \lambda
            \end{pmatrix}
\]
with $\lambda$ in $(0,1/2)$. This is not the case, however, because
unless $\lambda=1/4$ we cannot establish the necessary finiteness
results for the spaces of trajectories in the Morse theory that
defines $\I_{*}(Y,K,P)$. The issue is what is usually called
``monotonicity'' in a similar context in symplectic topology. In
general, a Morse theory of Floer type involves a circle-valued Morse
function $f$ on an infinite-dimensional space $\bonf$ whose periods define a map
\[
             \Delta_{f} : \pi_{1}(\bonf) \to \R.      
\]
The Hessian of $f$ may also have spectral flow, defining another map,
\[
                \sflow : \pi_{1}(\bonf) \to \Z.
\]
The theory is called monotone if these two are proportional. Varying
the eigenvalue $\lambda$ varies the periods of the Chern-Simons functional in
our theory, and it is only for $\lambda=1/4$ that we have
monotonicity. In the non-monotone case, one can still define a Morse
homology group, but it is necessary to use a local system that has a
suitable completeness \cite{Novikov}.

A related issue is the question of replacing $\SU(2)$ by a general
compact Lie group, say a simple, simply-connected Lie group $G$. The
choice of $\lambda$ above now becomes the choice of an element $\Phi$
of the Lie algebra of $G$, which will determine the leading term in
the singularity of the connections that we use.  If we wish to
construct a Floer homology theory, then the choice of $\Phi$ is
constrained again by the monotonicity requirement. It turns out that
the monotonicity condition is equivalent to requiring that the adjoint
orbit of $\Phi$ is K\"ahler-Einstein manifold with Einstein constant
$1$, when equipped with the K\"ahler metric corresponding to the
Kirillov-Kostant-Souriau $2$-form.  We shall develop quite a lot of
the machinery in the context of a general $G$, but in the end we find
that it is only for $\SU(N)$ that we can satisfy two competing
requirements: the first requirement is monotonicity; the second
requirement is that we avoid connections with non-trivial stabilizers
among the critical points of the perturbed Chern-Simons functional.
Using $\SU(N)$, we shall construct a variant of $\tI_{*}(K)$ that
seems to bear the same relation to the $\sl(N)$ Khovanov-Rozansky
homology \cite{Khovanov-Rozansky} as $\tI_{*}(K)$ does to the original
Khovanov homology.

\subsection{Discussion}

Unlike $\tI_{*}(K)$, the Khovanov homology of a knot is bigraded. If
we write $\KPol_{K}(q,t)$ for the $2$-variable Poincar\'e polynomial
of $\kh(K)$, then $\KPol_{K}(q,-1)$ is (to within a standard factor)
the Jones polynomial of $K$ \cite{Khovanov-1}. Relationships between
the Jones polynomial and gauge theory can be traced back to Witten's
reinterpretation of the Jones polynomial as arising from a
$(2+1)$-dimensional topological quantum field theory
\cite{Witten-jones-tqft}. What we are exploring here is in one higher
dimension: a relationship between the Khovanov homology and gauge
theory in $3+1$ dimensions.

Our definition of $\tI_{*}(Y,K)$ seems somewhat unsatisfactory, in
that it involves a rather unnatural-looking connected sum with
$T^{3}$. As pointed out above, one can achieve apparently the same
effect by replacing $T^{3}$ here with the pair $(S^{1}\times S^{2},
L)$ where $L$ is a standard $3$-stranded link. The unsatisfactory
state of affairs is reflected in the fact that we are unable to prove
that these two choices would lead to isomorphic homology groups. The
authors feel that there should be a more natural construction,
involving the Morse theory of the Chern-Simons functional on the
``framed'' configuration space $\tilde\bonf(Y,K) =
\cA(Y,K)/\G^{o}(Y,K)$, where $\G^{o}\subset \G$ is the subgroup
consisting of gauge transformations that are $1$ at a basepoint
$y_{0}\in (Y\sminus K)$.  The reduced homology theory would be
constructed in a similar manner, but using a basepoint $k_{0}$ on $K$.
A related construction, for homology $3$-spheres, appears in an
algebraic guise in \cite[section 7.3.3]{Donaldson-book}.

This idea of using the framed configuration space $\tilde\bonf(Y,K)$
and dispensing with the connect-sum with $T^{3}$ is attractive: it would enable one to work with a general $G$ without concern for avoiding reducible solutions. However, it cannot be carried
through without overcoming obstacles involving bubbles in
the instanton theory: the particular issue is bubbling at the chosen
base-point. 

\subparagraph{Acknowledgments.} The development of these ideas was
strongly influenced by the paper of Seidel and Smith
\cite{Seidel-Smith}. Although gauge theory as such does not appear
there, it does not seem to be far below the surface.  The first author
presented an early version of some of the ideas of the present paper
at the Institute for Advanced Study in June 2005, and learned there
from Katrin Wehrheim and Chris Woodward that they were pursuing a very
similar program (developing from \cite{Wehrheim-Woodward} in the
context of Lagrangian intersection Floer homology). Ciprian Manolescu
and Chris Woodward have described a similar program, also involving
Lagrangian intersections, motivated by the idea of using the framed
configuration space. The idea of using a 3-stranded link in
$S^{1}\times S^{2}$ as an alternative to $T^{3}$ was suggested to us
by Paul Seidel and also appears in the work of Wehrheim and Woodward.
The authors have been informed that Magnus Jacobsson and Ryszard
Rubinsztein independently noticed the coincidence described in
Observation~\ref{obs:startingpoint}, for various knots
\cite{Jacobsson-Rubinsztein}.

\section{Instantons with singularities}

For the case that the structure group is $\SU(2)$ or $\PSU(2)$,
instantons with codimension-2 singularities were studied in
\cite{KM-gtes-I,KM-gtes-II} and related papers. Our purpose here is to
review that material and at the same time to generalize some of the
constructions to the case of more general compact groups $G$.
In the next section, we will be considering cylindrical $4$-manifolds
and Floer homology; but in this section we begin with the closed case.
We find it convenient to work first with the case that $G$ is simple and
simply-connected. Thus our discussion here applies directly to
$\SU(N)$ but not to $U(N)$. Later in this section we will
indicate the adjustments necessary to work with other Lie groups,
including $U(N)$.

For instantons with codimension-2 singularities and arbitrary
structure group, the formulae  for the energy of solutions and the
dimension of moduli spaces which we examine here are closely related
to similar formulae for non-abelian Bogomoln'yi monopoles. See
\cite{Murray-1, Murray-Singer, Weinberg}, for example.

\subsection{Notation and root systems}
\label{subsec:root-systems}

For use in the rest of the paper, we set down some of the
notation we shall use for root systems and related matters.
Fix $G$, a compact connected Lie group that is both simple
and simply connected.  We will fix a
maximal torus $T\subset G$ and denote by $\ft \subset \g$ its Lie
algebra. Inside $\ft$ is the integer lattice consisting of points
$x$ such that $\exp(2\pi x)$ is the identity. The dual lattice is the
lattice of weights: the elements in $\ft^{*}$ taking integer values on
the integer lattice. We denote by $\Roots\subset\ft^{*}$ the set of
roots. We choose a set of positive roots $\Roots^{+}\subset \Roots$, so that
$\Roots = \Roots^{+}\cup \Roots^{-}$, with $\Roots^{-}=-\Roots^{+}$.
The set of simple roots
corresponding to this choice of positive roots will be
denoted by $\Sroots^{+}\subset \Roots^{+}$. We denote by $\weyl$ half
the sum of the positive roots, sometimes called the \emph{Weyl vector},
\[
        \weyl = \frac{1}{2}\sum_{\beta\in \Roots^{+}}\beta,
\]
and we write $\theta$ for the highest root.

We define the Killing form on $\g$
with a minus sign, as
 \[
 \langle a, b\rangle = -\tr(\ad(a)\ad(b)),
\]
 so
that it is positive definite. The corresponding map $\g^{*}\to \g$
will be denoted $\alpha\mapsto\alpha^{\dag}$, and the inverse map is
$\psi \mapsto \psi_{\dag}$. If $\alpha$ is a root, we denote by
$\alpha^{\vv}$ the coroot
\[
                \alpha^{\vv} = \frac{2 \alpha^{\dag}}{\langle
                \alpha,\alpha\rangle}.
\]
The simple coroots, $\alpha^{\vv}$ for $\alpha\in \Sroots^{+}$, form an
integral basis for the integer lattice in $\ft$. The \emph{fundamental
weights} are the elements of the dual basis $\w_{\alpha}$, $\alpha\in \Sroots^{+}$ for
the lattice of weights. The \emph{fundamental Weyl chamber} is the cone in
$\ft$ on which all the simple roots are non-negative.\footnote{Note
that our convention is that the fundamental Weyl chamber is closed: it is not the
locus where the simple roots are \emph{strictly} positive.}
This is the cone spanned by the duals of the fundamental
weights, $\w_{\alpha}^{\dag}\in \ft$.

The highest root $\theta$ and the corresponding coroot $\theta^{\vv}$
can be written as positive integer
combinations of the simple roots and coroots respectively: that is,
\[
                \begin{aligned}
                    \theta &= \sum_{\alpha\in \Sroots^{+}} n_{\alpha} \alpha
                    \\
                    \theta^{\vv} &= \sum_{\alpha\in \Sroots^{+}}
                    n^{\vv}_{\alpha} \alpha^{\vv}
                    \\
                \end{aligned}
\]
for non-negative integers $n_{\alpha}$ and $n^{\vv}_{\alpha}$. The
numbers
\begin{equation}\label{eq:sum-is-coxeter}
                \begin{aligned}
                    h &=1+ \sum_{\alpha\in \Sroots^{+}} n_{\alpha}
                    \\
                    h^{\vv} &= 1 + \sum_{\alpha\in \Sroots^{+}}
                    n^{\vv}_{\alpha} 
                    \\
                \end{aligned}
\end{equation}
are the Coxeter number and dual Coxeter number respectively. The
squared length of the highest root is
equal to $1/h^{\vv}$:
\begin{equation}\label{eq:theta-length}
                \langle \theta,\theta\rangle = 1/h^{\vv}.
\end{equation}
(The inner product on $\g^{*}$ here is understood to be the dual
inner product to the Killing form.) We also record here the relation
\begin{equation}\label{eq:magic-delta-formula}
                \weyl  = \sum_{\alpha\in \Sroots^{+}} \w_{\alpha};
\end{equation}
this and the previous relation have the corollary
\begin{equation}\label{eq:delta-theta}
                2 \langle \weyl, \theta \rangle  = 1 - 1/h^{\vv}.
\end{equation}

For each root $\alpha$, there is a preferred homomorphism
\[
            j_{\alpha} : \SU(2) \to G
\]
whose derivative maps
\[
   dj_{\alpha}:
        \begin{pmatrix}
            i & 0 \\
            0 & -i
        \end{pmatrix} \mapsto \alpha^{\vv}.
\]
In the case that $G$ is simply laced, or if $\alpha$ is a long root in
the non-simply laced case, then the map $j_{\alpha}$ is injective and
represents a generator of $\pi_{3}(G)$. In
particular, this applies when $\alpha$ is the highest root $\theta$.
 Under the adjoint action of
$j_{\theta}(\SU(2))$, the Lie algebra $\g$ decomposes as one copy of
the adjoint representation of $\SU(2)$, a number of copies of the
defining $2$-dimensional representation of $\SU(2)$, and a number of
copies of the trivial representation. The pair
$(G,j_{\theta}(\SU(2)))$ is $4$-connected. 

\subsection{Connections and moduli spaces}
\label{subsec:connections-and-moduli}

Let $X$ be a closed, connected, oriented, Riemannian
$4$-manifold, and let $\Sigma\subset X$ be a smoothly embedded,
compact oriented $2$-manifold. We do not assume that $\Sigma$ is
connected. We take a principal $G$-bundle $P\to X$, where $G$ is
as above. Such a $P$ is classified by an element of $\pi_{3}(G)$, and
hence by a single
characteristic number: following \cite{AHS}, we choose to normalize
this characteristic number by defining
\[
            k = -\frac{1}{(4 h^{\vv})} p_{1}(\g)[X],
\]
where $h^{\vv}$ is the dual Coxeter number of $G$. This normalization
is chosen as in \cite{AHS} so that $k$ takes all values in $\Z$ as $P$
ranges through all bundles on $X$. If the structure group of $P$ is
reduced to the subgroup $j_{\theta}(\SU(2))$, the $k$ coincides with
the second Chern number of the corresponding $\SU(2)$ bundle.

Fix
$\Phi \in \ft$ belonging to the fundamental Weyl chamber, and suppose that
\begin{equation}\label{eq:highest-root-bound}
        \theta(\Phi) < 1
\end{equation}
where $\theta$ is the highest root. This condition is
equivalent to saying that
\[
            -1 < \alpha(\Phi) < 1
\]
for all roots $\alpha$. This in turn means that an element $U \in \g$
is fixed by the adjoint action of $\exp(2\pi\Phi)$ if and only if
$[U,\Phi]=0$. In a simply-connected group, the commutant of any
element is always connected, and it therefore follows that the
subgroup of $G$ which commutes with $\exp(2\pi\Phi)$ coincides with the
stabilizer of $\Phi$ under the adjoint action. We call this group
$G_{\Phi}$. We write $\g_{\Phi}\subset \g$ for its Lie algebra, and we
let $\fo$ stand for the unique $G$-invariant complement, so that
\begin{equation}\label{eq:fo-def}
        \g = \g_{\Phi} \oplus \fo.
\end{equation}
 The set of roots can be decomposed
according to the sign of $\alpha(\Phi)$, as
\[
           \Roots = \Roots^{+}(\Phi) \cup \Roots^{0}(\Phi) \cup
           \Roots^{-}(\Phi).
\]
 Similarly, the set
of simple roots decomposes as
\[
                \Sroots^{+} = \Sroots^{+}(\Phi) \cup
                \Sroots^{0}(\Phi),
\]
where $\Sroots^{0}(\Phi)$ are the simple roots that vanish on $\Phi$.
Knowing $\Sroots^{0}(\Phi)$, we can recover $\Roots^{0}(\Phi)$ as the set of
those positive roots lying in the span of $\Sroots^{0}(\Phi)$.
We can
decompose the complexification $\fo\otimes \C$ as
\begin{equation}\label{eq:fo-decomp}
        \fo\otimes\C = \fo^{+} \oplus \fo^{-}
\end{equation}
where $\fo^{+}\subset \g\otimes \C$ is the sum of the weight spaces
for all roots $\alpha$ in $\Roots^{+}(\Phi)$, and $\fo^{-}$ is the sum of
the weight spaces for roots in $\Roots^{-}(\Phi)$.

Choose a reduction of the structure group of $P|_{\Sigma}$ to the
subgroup $G_{\Phi} \subset G$. Extend this 
arbitrarily to a tubular neighborhood  $\nu\supset\Sigma$, so that we
have a reduction of $P|_{\nu}$. If $O \cong G/G_{\Phi}$ is the
adjoint orbit of $\Phi$ in the Lie algebra $\g$ and if $O_{P}\subset
\g_{P}$ is the corresponding subbundle of the adjoint bundle, then
such a
reduction of structure group to $G_{\Phi}\subset G$ is determined by
giving a section $\varphi$ of $O_{P}$ over the neighborhood $\nu$. We
denote the  principal $G_{\Phi}$-bundle resulting from this reduction
by $P_{\varphi}\subset P|_{\nu}$. There are corresponding reductions of
the associated bundle $\g_{P}$ with fiber $\g$ and its
complexification, which we write as
\begin{equation}\label{eq:gphi-decomp}
\begin{aligned}
\g_{P}|_{\nu} &= \g_{\varphi} \oplus \fo_{\varphi}\\
\end{aligned}
\end{equation}
and
\[
\begin{aligned}
(\g_{P}\otimes\C)|_{\nu} &= (\g_{\varphi}\otimes\C) \oplus
(\fo_{\varphi}\otimes\C)\\
              &= (\g_{\varphi}\otimes\C) \oplus \fo^{+}_{\varphi} \oplus
              \fo_{\varphi}^{-}.
\end{aligned} 
\]

We identify $\nu$ diffeomorphically with the disk bundle of the
oriented normal bundle to $\Sigma$, we let $i\eta$ be a connection
$1$-form on the circle bundle $\partial \nu$, and we extend $\eta$ by
pull-back to the deleted tubular neighborhood $\nu\sminus \Sigma$.
Thus $\eta$ is a $1$-form that coincides with $d\theta$ in polar
coordinates $(r,\theta)$ on each normal disk under the chosen
diffeomorphism.

The data $\varphi$ and $\eta$ together allow us to define the model for
our singular connections. We choose a smooth $G$-connection
$A^{0}$ on $P$. We take  $\beta(r)$ to be a
cutoff-function equal to $1$ in a neighborhood of $\Sigma$ and
supported in the neighborhood  $\nu$. Then we define
\begin{equation}\label{eq:singular-model-A}
                 A^{\varphi} =  A^{0} + \beta(r) \varphi\otimes \eta
\end{equation}
as a connection in $P$ over $X\sminus \Sigma$. (The form
$\beta(r)\varphi\eta$ has been extended by zero to all of
$X\sminus\Sigma$.)  The holonomy of $A^{\varphi}$ around a
loop $r=\epsilon$ in a normal disk to $\Sigma$ (oriented with the
standard $\theta$ coordinate increasing) defines an automorphism of
$P$ over $\nu\sminus\Sigma$ which is asymptotically equal to
\begin{equation}\label{eq:exp-phi-holonomy}
                \exp( -2\pi \varphi)
\end{equation}
when $\epsilon$ is small. 

Following \cite{KM-gtes-I}, we fix a $p>2$ and consider a  space of
connections on $P|_{X\sminus \Sigma}$ defined as
\begin{equation}\label{eq:cA-def}
            \cA^{p}(X,\Sigma,P,\varphi) = \{\,  A^{\varphi} + a \mid
                                                a, \nabla_{A^{\varphi}}
                                                a \in L^{p}(X\sminus
                                                \Sigma) \,\}.
\end{equation}
As in \cite[Section 3]{KM-gtes-I}, the definition of this space of
connections can be reformulated to make clear that it depends only on
the reduction of structure group defined by $\varphi$, and does not
otherwise depend on $\Phi$. To do this,
extend
the radial distance function $r$ as a positive function on $X\sminus
\Sigma$ and 
define a Banach space $W^{p}_{k}(X)$
for $k\ge 1$ by taking the completion of the
compactly supported smooth functions on
$X\sminus\Sigma$
with respect to norm
\[
            \|f\|_{W^{p}_{k}} = \left \| \frac{1}{r^{k}} f \right
            \|_{p} + \left \| \frac{1}{r^{k-1}} \nabla f \right
            \|_{p} + \dots + \left \|\nabla^{k} f \right
            \|_{p}.
\]
(For $k=0$, we just define $W^{p}_{k}$ to be $L^{p}$.)
The essential point then is that
 the condition on $a|_{\nu}$ that arises from the definition
\eqref{eq:cA-def} can be equivalently written (using the decomposition
\eqref{eq:gphi-decomp}) as
\[
            a|_{\nu} \in L^{p}_{1}(\nu;\g_{\varphi}) \oplus
            W^{p}_{1}(\nu; \fo_{\varphi}).
\]
This shows that the space $\cA^{p}(X,\Sigma,P,\varphi)$ depends only
on the decomposition of $\g_{P}$ as $\g_{\varphi}\oplus
\fo_{\varphi}$.
It is important here that the condition
\eqref{eq:highest-root-bound} is satisfied: this condition
ensures that the eigenvalues of the bundle automorphism
\eqref{eq:exp-phi-holonomy} acting on $\fo_{\varphi}$ are all different
from $1$, for these eigenvalues are $\exp(\pm 2\pi i \alpha(\Phi))$ as
$\alpha$ runs through $\Roots^{+}(\Phi)$.
This space of connections is acted on by the
gauge group
\[
                \G^{p}(X,\Sigma,P,\varphi) = \{\,
                g \in\Aut(P|_{X\sminus\Sigma}) \mid
                \text{ $\nabla_{A^{\varphi}}g,
                \nabla^{2}_{A^{\varphi}}g \in
                L^{p}(X\sminus\Sigma)$}\,\}.
\]
The fact that this is a Banach Lie group acting smoothly on
$\cA^{p}(X,\Sigma,P,\varphi)$ is a consequence of multiplication theorems,
such as the continuity of the multiplications $W^{p}_{2}\times
W^{p}_{1}\to W^{p}_{1}$, just as in \cite{KM-gtes-I}.

The center of the gauge group $\G^{p}=\G^{p}(X,\Sigma,P,\varphi)$ is
canonically isomorphic to the center $Z(G)$ of $G$, a finite group. This
subgroup $Z(\G^{p})$ acts trivially on the $\cA^{p}(X,\Sigma,P,\varphi)$, so
the group that acts effectively is the quotient $\G^{p}/Z(\G^{p})$. Some
connections $A$ will have larger stabilizer; but there is an important
distinction here that is not present in the most familiar case, when
$G=\SU(N)$. In the case of $\SU(N)$ if the stabilizer of $A$ is larger
than $Z(\G^{p})$, then the stabilizer has positive dimension, but for other
$G$ the stabilizer may be a finite group strictly larger than $G$.
To understand this point, 
recall that the stabilizer of a  connection $A$ in the gauge group is
isomorphic to the centralizer $C_{G}(S)$ where $S$ is the set of
holonomies around all loops based at some chosen basepoint. So the
question of which stabilizers occur is equivalent to the question of
which subgroups of $G$ arise as $C_{G}(H)$ for some $H\subset G$,
which we may as well take to be a closed subgroup. In the case of
$\SU(N)$, the only \emph{finite} group that arises this way is the
center. But for other simple Lie groups $G$, there may be a
semi-simple subgroup $H\subset G$ of the same rank as $G$; and in this
case the centralizer $C_{G}(H)$ is isomorphic to the center of $H$,
which may be strictly larger than the center of $G$.
Examples of this phenomenon include the case where $G=\Spin(2n+1)$ and
$H$ is the subgroup $\Spin(2n)$. In this case $C_{G}(H)$ contains
$Z(G)$ as a subgroup of index $2$. Another case is $G=G_{2}$ and
$H=\SU(3)$: in this case $C_{G}(H)$ has order $3$ while the center of
$G$ is trivial. 

We reserve the word \emph{reducible} for connections $A$ whose
stabilizer has positive dimension:

\begin{definition}
We will say that
a connection $A$ is \emph{irreducible} if its stabilizer in the gauge
group is finite.
\end{definition}

The homotopy type of $\G^{p}(X,\Sigma,P,\varphi)$
coincides with that of the group of all smooth automorphisms of $P$
which respect the reduction of structure group along $\Sigma$. The
bundle $P$ is classified its characteristic number $k$, and the section
$\varphi$ is determined up to homotopy by the induced map on
cohomology,
\begin{equation}\label{eq:H2-map}
                \varphi^{*} : H^{2}(O_{P}) \to H^{2}(\Sigma).
\end{equation}
Because the restriction of $P$ to $\Sigma$ is trivial, and because the
choice of trivialization is unique up to homotopy, we can also
think of the reduction of structure group along $\Sigma$ as being
determined simply by specifying the isomorphism class of the principal
$G_{\Phi}$-bundle $P_{\varphi}\to\Sigma$. In this way, when $\Sigma$ is connected,
the classification is by
$\pi_{1}(G_{\Phi})$. The inclusion $T\to G_{\Phi}$ induces a
surjection on $\pi_{1}$, so we can lift to an element of $\pi_{1}(T)$,
which we can reinterpret as an integer lattice point $\xi$ in $\ft$.
Let $Z(G_{\Phi})\subset T$ be the center of $G_{\Phi}$,  let
$\fz(G_{\Phi})$ be its Lie algebra, and let $\Pi$ be the orthogonal
projection
\[
                \Pi : \ft \to \fz(G_{\Phi}).
\]
We can describe $\fz(G_{\Phi})$ as
\[
\begin{aligned}
  \fz(G_{\Phi}) &= \bigcap_{\alpha\in S^{0}(\Phi)} \ker\alpha \\
             &= \mathrm{span}\{ \,\w_{\beta}^{\dag} \mid \beta\in
             S^{+}(\Phi)\,\}.  
\end{aligned}
\]
The projection $\Pi(\xi)$ may not be an integer  lattice point, but the
image under $\Pi$ of the integer lattice in $\ft$ is isomorphic
to $\pi_{1}(G_{\Phi})$, and the reduction of structure group is
determined up to homotopy by the element
\[
                \Pi(\xi) \in \fz(G_{\Phi}).
\]
We give the image of the integer lattice under $\Pi$ a name:
\begin{definition}\label{def:lattice}
    We write $\LL(G_{\Phi}) \subset \fz(G_{\Phi})$ for the image under $\Pi$ of the
    integer lattice in $\ft$. Thus $\LL(G_{\Phi})x$ is isomorphic both to
    $H_{2}(O;\Z)$ and to $\pi_{1}(G_{\Phi})$, and classifies the
    possible reductions of structure group of $P\to\Sigma$ to the
    subgroup $G_{\Phi}$, in the case that $\Sigma$ is connected.
    If the reduction of structure group determined by $\varphi$ is
    classified by the lattice element $\ll \in \LL(G_{\Phi})$, we refer
    to $\ll$ as the \emph{monopole charge}. If $\Sigma$ has more than
    one component, we define the monopole charge by summing over the
    components of $\Sigma$.
\end{definition}

We now wish to define a moduli space of anti-self-dual connections as
\[
            M(X,\Sigma,P,\varphi) =
                    \{ \, A \in \cA^{p} \mid F^{+}_{A} = 0 \, \}
                    \bigm/ \G^{p}.
\]
As shown in \cite{KM-gtes-I}, there
is a Kuranishi model for the neighborhood of a  connection $[A]$
in $M(X,\Sigma,P,\varphi)$ described by a Fredholm complex, as long as $p$ is
chosen sufficiently close to $2$. Specifically, if $x$ denotes the
smallest of all the real numbers $\alpha(\Phi)$ and $1-\alpha(\Phi)$
as $\alpha$ runs through $\Roots^{+}(\Phi)$, then
$p$ needs to be in the
range
\begin{equation}\label{eq:p-close-to-2}
                2 < p < 2 + \epsilon(x)
\end{equation}
where $\epsilon$ is a continuous function which is positive for $x>0$
but has $\epsilon(0)=0$. We suppose henceforth that $p$ is in this
range. The Kuranishi theory then tells us,
in particular, that if $A$ is irreducible and the
operator
\[
            d^{+}_{A} : L^{p}_{1,A}(X\sminus\Sigma;\g_{P}\otimes
            \Lambda^{1}) \to  L^{p}(X\sminus\Sigma;\g_{P}\otimes
            \Lambda^{+})
\]
is surjective, then a neighborhood of $[A]$ in $M(X,\Sigma,P,\varphi)$ is a
smooth manifold (or orbifold if the finite stabilizer is larger than
$Z(G)$),
and its dimension is equal to minus the index of the
Fredholm complex
\[
            L^{p}_{2,A}(X\sminus\Sigma;\g_{P}\otimes\Lambda^{0})
            \stackrel{d_{A}}{\to}
            L^{p}_{1,A}(X\sminus\Sigma;\g_{P}\otimes\Lambda^{1})
            \stackrel{d^{+}_{A}}{\to}
            L^{p}(X\sminus\Sigma;\g_{P}\otimes\Lambda^{+}) .           
\]
No essential change is needed to carry over the proofs from
\cite{KM-gtes-I}. The non-linear aspects come down to the
multiplication theorems for the $W^{p}_{k}$ spaces, while the Fredholm theory for
the linear operators reduces in the end to the case of a line bundle
with the weighted norms.

We refer to minus the index of the above complex as the \emph{formal
dimension} of the moduli space $M(X,\Sigma,P,\varphi)$. We can
write the formula for the formal dimension as
\begin{multline}\label{eq:dimension-fmla}
 ( 4 h^{\vv}) k + 2 \bigl\langle c_{1}(\fo^{-}_{\varphi}), [\Sigma]
    \bigr\rangle 
     + \frac{(\dim O)}{2} \chi(\Sigma)  - (\dim G) (b^{+}-b^{1}+1).
\end{multline}
Here $\dim O$ denotes the dimension of $O$ as a  real manifold: an
even number, because $O$ is also a complex manifold. The proof of this
formula can be given following \cite{KM-gtes-I} by using excision to
reduce to the simple case that the reduction of structure group can
extended to all of $X$. In this way, the calculation can eventually be
reduced to calculating the index of a Fredholm complex of the same
type as above, but with $\g_{P}$ replaced by a complex line-bundle
$\fo_{\mu}$ on $X$,
equipped with a singular connection $d_{\mu}$ of the form
\[
                \nabla + i \beta(r) \mu\eta
\]
with $\mu\in(0,1)$ (\emph{cf.} \eqref{eq:singular-model-A}). The
index calculation in the case of such a line bundle is given in
\cite{KM-gtes-I}.

We can express the characteristic class $c_{1}(\fo_{\varphi}^{-})$ that
appears in the formula \eqref{eq:dimension-fmla} in slightly different
language. As was just mentioned, the manifold $O$ is naturally a
complex manifold. To define the standard complex structure, we
identify the tangent space to $O$ at $\Phi$ with $\fo$, and we give
$\fo$ a complex structure $J$ by identifying it with $\fo^{-}\subset
\fo\otimes\C$ using the linear projection. This gives a
$G_{\Phi}$-invariant complex structure on $T_{\Phi}O$ which can be
extended to an integrable complex structure on all of $O$ using the
action of $G$. The bundle $O_{P}\to X$ is now a bundle of complex
manifolds, and we use $c_{1}(O_{P}/X)$ to denote the first Chern
class of its vertical tangent bundle. Then we can rewrite
$c_{1}(\fo^{-}_{\varphi})$ as
\[
          c_{1}(\fo^{-}_{\varphi}) = \varphi^{*}(c_{1}(O_{P}/X))
\]
in $H^{2}(\Sigma)$. Using again the canonical trivialization of
$P|_{\Sigma}$ up to homotopy, we
can also think of $\varphi$ as simply a map $\Sigma\to O$ up to homotopy,
and we can think of the characteristic class as $\varphi^{*}(c_{1}(O))$.

We can summarize the situation with a lemma:

\begin{lemma}\label{lem:dimension-fmla}
    Let
    the reduction of structure group of $P\to\Sigma$ to the
    subgroup $G_{\Phi}$ have monopole charge $\ll$.
    Then
    the formula \eqref{eq:dimension-fmla} for
    formal dimension of the moduli space can
    be rewritten as
\begin{equation}\label{eq:dimension-fmla-2}
 4 h^{\vv} k + 4 \weyl(\ll)
     + \frac{(\dim O)}{2} \chi(\Sigma)  - (\dim G) (b^{+}-b^{1}+1),
\end{equation}
where $\weyl$ is, as above, the Weyl vector.
\end{lemma}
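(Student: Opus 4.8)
The plan is to show that the only term in \eqref{eq:dimension-fmla} that needs rewriting is the pairing $2\langle c_{1}(\fo^{-}_{\varphi}),[\Sigma]\rangle$, and that this equals $4\weyl(\ll)$. Everything else in the two formulae is identical, so the lemma is purely a statement about identifying $\langle c_{1}(\fo^{-}_{\varphi}),[\Sigma]\rangle$ with $2\weyl(\ll)$, where $\ll = \Pi(\xi)$ is the monopole charge of Definition~\ref{def:lattice} and $\xi$ is an integer lattice point in $\ft$ lifting the classifying element in $\pi_{1}(G_{\Phi})$.

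First I would reduce, by additivity over the components of $\Sigma$ (which is how the monopole charge is defined in the disconnected case), to the case that $\Sigma$ is connected. Next I would use the discussion preceding the lemma: the class $c_{1}(\fo^{-}_{\varphi})$ is $\varphi^{*}(c_{1}(O_{P}/X))$, and using the canonical-up-to-homotopy trivialization of $P|_{\Sigma}$ we may regard $\varphi$ as a map $\Sigma\to O\cong G/G_{\Phi}$, so that $\langle c_{1}(\fo^{-}_{\varphi}),[\Sigma]\rangle = \langle c_{1}(O), \varphi_{*}[\Sigma]\rangle$, a pairing of $c_{1}(O)\in H^{2}(O)$ with the image of the fundamental class under $\varphi_{*}: H_{2}(\Sigma)\to H_{2}(O)$. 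Since $O$ is simply connected (it is $G/G_{\Phi}$ with $G$ simply connected and $G_{\Phi}$ connected), $\pi_{2}(O)\cong H_{2}(O;\Z)\cong \pi_{1}(G_{\Phi})$, and under these identifications $\varphi_{*}[\Sigma]$ is exactly the classifying element of the reduction, i.e. $\ll$ under the further identification $\LL(G_{\Phi})\cong \pi_{1}(G_{\Phi})$ from Definition~\ref{def:lattice}. So it remains to compute the homomorphism $\langle c_{1}(O),-\rangle : H_{2}(O;\Z)\to\Z$ in terms of the lattice $\LL(G_{\Phi})$, and show it is $\ll\mapsto 2\weyl(\ll)$.

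The key computation is therefore the evaluation of $c_{1}(O)$ on a generator corresponding to a lattice vector. I would pull back along a test $2$-sphere: given $\xi$ in the integer lattice of $\ft$, the homomorphism $\C^{\times}\to T$ with derivative $\xi$ composed with $T\to O$, $t\mapsto \mathrm{Ad}(t)\Phi$, gives a map $\C P^{1}\to O$ (at least after the standard compactification argument, or one works with $SU(2)$-orbits through the relevant root-$SL_2$'s) representing $\Pi(\xi)=\ll\in H_{2}(O)$. The first Chern class of the vertical tangent bundle $T O = \fo^{-}$, evaluated on this sphere, is computed from the weights of the $T$-action on $\fo^{-}$ at the fixed point $\Phi$: these weights are the roots $\alpha\in\Roots^{+}(\Phi)$ with multiplicity one, and the degree of the resulting line bundle contribution from each is $\alpha(\xi)$ (since each is the $\pm1$-weight space of the corresponding coroot $SL_2$ and $\xi$ projects correctly). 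Summing, $\langle c_{1}(O),\ll\rangle = \sum_{\alpha\in\Roots^{+}(\Phi)}\alpha(\xi)$. Now I use that roots in $\Roots^{0}(\Phi)$ vanish on $\Phi$ and pair trivially against $\Pi(\xi)$ in the relevant sense, while for $\alpha\in\Roots^{0}(\Phi)$ we also have $\alpha(\xi)$ contributing $0$ after projection; more precisely $\sum_{\alpha\in\Roots^{+}(\Phi)}\alpha = 2\weyl - \sum_{\alpha\in\Roots^{0}(\Phi)}\alpha$ and the second sum is the Weyl vector of $G_{\Phi}$, which lies in the span of $\Sroots^{0}(\Phi)$ and hence annihilates $\fz(G_{\Phi})\ni\ll$. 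Therefore $\langle c_{1}(O),\ll\rangle = 2\weyl(\ll)$, giving $2\langle c_{1}(\fo^{-}_{\varphi}),[\Sigma]\rangle = 4\weyl(\ll)$ as required.

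The main obstacle I anticipate is the bookkeeping in the last step: correctly matching the lattice-theoretic definition of the monopole charge $\ll=\Pi(\xi)$ (an element of $\fz(G_{\Phi})$, on which one evaluates the linear functional $\weyl$) with the topological generator $\varphi_{*}[\Sigma]\in H_{2}(O;\Z)$, and being careful that $\weyl(\ll)=\weyl(\Pi\xi)=\weyl(\xi)$ only because $\weyl$ need not annihilate $\ker\Pi$ — in fact one must check $\weyl$ restricted to $\mathrm{span}\,\Sroots^{0}(\Phi)$ is the Weyl vector of $G_{\Phi}$, so that $\weyl(\xi) = \weyl_{G_{\Phi}}(\xi) + \weyl(\Pi\xi)$ and the $\sum_{\Roots^{0}(\Phi)}\alpha(\xi)$ term is exactly $2\weyl_{G_{\Phi}}(\xi)$, cancelling correctly. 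As in the dimension formula itself, none of this requires new analysis — the Fredholm and index input is entirely quoted from \cite{KM-gtes-I} via the line-bundle reduction — so the proof is a finite root-system computation once the identifications are pinned down.
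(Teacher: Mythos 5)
Your proposal is correct, and its skeleton matches the paper's: everything reduces to the identity $c_{1}(\fo^{-}_{\varphi})[\Sigma]=2\weyl(\ll)$, which both you and the paper prove by passing to torus data $\xi$ and computing $c_{1}(\fo^{-}_{\varphi})[\Sigma]=\sum_{\alpha\in \Roots^{+}(\Phi)}\alpha(\xi)$ — you phrase this via $H_{2}(O)\cong\pi_{2}(O)\cong\pi_{1}(G_{\Phi})$ and a cocharacter test sphere, the paper via reducing $P_{\varphi}\to\Sigma$ to $T$ and splitting $\fo^{-}_{\varphi}$ into root line bundles with $c_{1}(\fo^{-}_{\alpha})[\Sigma]=\alpha(\xi)$; these are the same computation. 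The only genuine divergence is the last root-system step. The paper shows directly that $\sum_{\alpha\in \Roots^{+}(\Phi)}\alpha^{\dag}$ is fixed by each reflection $\sigma_{\beta}$, $\beta\in \Sroots^{0}(\Phi)$ (since $\sigma_{\beta}$ permutes $\{\alpha^{\dag}:\alpha\in\Roots^{+}(\Phi)\}$), hence lies in $\fz(G_{\Phi})$ and equals $2\Pi\weyl^{\dag}$. You instead write $\sum_{\alpha\in\Roots^{+}(\Phi)}\alpha=2\weyl-2\weyl_{G_{\Phi}}$ and use (a) that $\weyl_{G_{\Phi}}$ annihilates $\fz(G_{\Phi})$ and (b) that $\weyl-\weyl_{G_{\Phi}}$ annihilates the coroot span of $\Sroots^{0}(\Phi)$. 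Point (b) is exactly the fact you defer with ``one must check''; it is equivalent to the paper's reflection argument, and is closed in one line by noting $\weyl(\beta^{\vv})=1=\weyl_{G_{\Phi}}(\beta^{\vv})$ for every $\beta\in\Sroots^{0}(\Phi)$, since $\Sroots^{0}(\Phi)$ is a set of simple roots both for $G$ and for the root system $\Roots^{0}(\Phi)$ of $G_{\Phi}$. So your route trades the paper's Weyl-group invariance argument for the standard Levi--Weyl-vector decomposition; neither is stronger, but yours makes explicit which piece of $\weyl(\xi)$ survives the projection $\Pi$. Two small points of hygiene: the sum of the \emph{positive} roots in $\Roots^{0}(\Phi)$ is $2\weyl_{G_{\Phi}}$, not $\weyl_{G_{\Phi}}$ (you correct the factor later, but the earlier sentence is off), and the assertion $\weyl(\Pi\xi)=\weyl(\xi)$ in your final paragraph is false in general and should simply be deleted — your own decomposition $\weyl(\xi)=\weyl_{G_{\Phi}}(\xi)+\weyl(\Pi\xi)$ (valid once (b) is checked) is what does the work.
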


\begin{proof}
    The difference between this expression and the previous formula
    \eqref{eq:dimension-fmla} is the replacement of the term involving
    $2 c_{1}(\fo^{-}_{\varphi})[\Sigma]$ by the term involving
    $4\weyl(\ll)$. To see that these are equal, we may
    reduce the structure group of the $G_{\Phi}$ bundle over $\Sigma$
    to the torus $T$, and again write $\xi$ for the vector in the
    integer lattice of $T$ that classifies this $T$-bundle. The bundle
    $\fo^{-}_{\varphi}$ now decomposes as a direct sum according to
    the positive roots in $R^{+}(\Phi)$:
    \begin{equation*}
            \fo^{-}_{\varphi} = \bigoplus_{\alpha\in R^{+}(\Phi)}
            \fo^{-}_{\alpha}.
    \end{equation*}
    We have
    \[
    \begin{aligned}
    c_{1}(\fo^{-}_{\alpha})[\Sigma] &=
                      \alpha(\xi);
                      \end{aligned}
    \]
    so,
    \[
    \begin{aligned}
    c_{1}(\fo^{-}_{\varphi})[\Sigma] &=
                \sum_{\alpha\in R^{+}(\Phi)} \alpha(\xi) \\
                &=  \sum_{\alpha\in R^{+}(\Phi)} \langle
                \alpha^{\dag}, \xi\rangle.
                \end{aligned}
   \]                
    For each simple root $\beta$ in $S^{0}(\Phi)$, the Weyl group
    reflection $\sigma_{\beta}$ permutes the vectors
    \[
                  \{ \, \alpha^{\dag} \mid \alpha \in R^{+}(\Phi) \,
                  \}.
    \]
    It follows that when we write
    \[ 
                \sum_{\alpha\in R^{+}} \alpha^{\dag}
                = \sum_{\alpha\in R^{+}(\Phi)} \alpha^{\dag} +
                \sum_{\alpha\in R^{0}(\Phi)\cap R^{+}} \alpha^{\dag},
    \]
    the first term on the right is in the kernel of $\beta$ for all
    $\beta$ in $S^{0}(\Phi)$; i.e.~the first term belongs to $\fz(G_{\Phi})$.
    The second term on the right belongs to the orthogonal complement of $\fz(G_{\Phi})$,
    because it is in the span of the elements $\alpha^{\vv}$ as
    $\alpha$ runs through $S^{0}(\Phi)$. Recalling the definition of
    the Weyl vector $\weyl$, we therefore deduce
    \[
             \sum_{\alpha\in R^{+}(\Phi)}
                \alpha^{\dag}        = 2 \Pi \weyl^{\dag}.
    \]
    Thus
        \[
    \begin{aligned}
    c_{1}(\fo^{-}_{\varphi})[\Sigma] &=
                \sum_{\alpha\in R^{+}(\Phi)} \langle
                \alpha^{\dag}, \xi\rangle \\
                &= 2 \langle \Pi \weyl^{\dag} , \xi \rangle \\
                &= 2 \langle \weyl, \Pi\xi \rangle \\
                &= 2 \weyl(\ll)
                \end{aligned}
   \]
   as desired, because $\ll = \Pi\xi$.
\end{proof}

\subsection{Energy and monotonicity}

Along with the formula \eqref{eq:dimension-fmla-2} for the dimension of
the moduli space, the other important quantity is the
\emph{energy} of a solution $A$ in $\cA^{p}(X,\Sigma,P,\varphi)$ to the equations
$F^{+}_{A}=0$, which we define as
\begin{equation}\label{eq:cE-def}
\begin{aligned}
\cE &= 2 \int_{X\sminus\Sigma} | F_{A}
                |^{2}\,d\mathrm{vol} \\
                    &= 2 \int_{X\sminus\Sigma}-\tr(
                    \ad(*F_{A})\wedge\ad(F_{A})).
                    \end{aligned}
\end{equation}
(Note that  the norm on $\g_{P}$ in the first line is again defined using the Killing form,
$-\tr(\ad(a)\ad(b))$.) The reason for the factor of two is to fit with
our use of the path energy in the context of Floer homology later.

This quantity depends only on $P$ and $\varphi$, and can be calculated in
terms of the instanton number $k$ and the monopole charges $\ll$.
 To do this, we again suppose that the structure group of
 $P_{\varphi}\to \Sigma$ is reduced to the torus $T$, and we 
decompose the bundle $\fo^{+}_{\varphi}$ again as
\eqref{eq:fo-decomp}. The formula for $\cE$ as a function of $P$ and
$\varphi$ can then be written, following the argument of
\cite{KM-gtes-I}, as
\begin{equation}
\begin{aligned}
\cE(X,\Sigma,P,\varphi) & =  32\pi^{2} \Bigl( h^{\vv}k + \sum_{\beta\in
          R^{+} }\beta(\Phi) c_{1}(\fo^{-}_{\beta})[\Sigma]
          - \frac{1}{2}\sum_{\beta\in
          R^{+}}\beta(\Phi)^{2}(\Sigma\cdot\Sigma)\Bigr) \\
          &=  32\pi^{2} \Bigl( h^{\vv}k + \sum_{\beta\in
          R^{+} }\beta(\Phi)\beta(\xi)
          - \frac{1}{2}\sum_{\beta\in
          R^{+}}\beta(\Phi)^{2}(\Sigma\cdot\Sigma)\Bigr)
          \end{aligned}
          \label{eq:proto-E}
\end{equation}
where $\xi$ is again the integer vector in $\ft$ classifying the
$T$-bundle to which $P_{\varphi}$ has been reduced.
The two sums involving the positive roots in this
formula each be interpreted as half the Killing form, which leads to
the more compact formula
\begin{equation*}
\cE = 8\pi^{2} \Bigl(4 h^{\vv}k + 2 \langle
                    \Phi,\xi\rangle
          - \langle
                    \Phi,\Phi\rangle(\Sigma\cdot\Sigma)\Bigr) . 
                    \end{equation*}
Finally, using the fact that $\Phi$ belongs to $\fz(G_{\Phi})$, we can replace
$\xi$ by its projection $\ll$ and write
\begin{equation}\label{eq:energyl-fmla-2}
\cE = 8\pi^{2} \Bigl(4 h^{\vv}k + 2 \langle
                    \Phi,\ll\rangle
          - \langle
                    \Phi,\Phi\rangle(\Sigma\cdot\Sigma)\Bigr) . 
                    \end{equation}

An important comparison to be made is between the linear terms
in $k$ and $l$ in the formula for $\cE$ and the linear terms in the same variables in
the formula \eqref{eq:dimension-fmla-2} for the formal dimension of the moduli
space.
 In the dimension formula, these linear terms are
\begin{equation}\label{eq:linear-d}
            4  h^{\vv} k + 4 \weyl(\ll)
\end{equation}
while in the formula for $\cE$ they are
\begin{equation}\label{eq:linear-E}
 8\pi^{2} \Bigl(4 h^{\vv}k + 2 \langle
                    \Phi,\ll\rangle\Bigr).
\end{equation}

\begin{definition}\label{def:monotone}
    We shall say that the choice of $\Phi$ is \emph{monotone} if the
    linear forms \eqref{eq:linear-E} and \eqref{eq:linear-d}
    in the variables $k$ and $\ll$ are
    proportional.
\end{definition}

\begin{proposition}\label{prop:Einstein}
    Let $\Phi_{0}$ be any element in the fundamental Weyl chamber. Then
    there exists a unique $\Phi$ in the same Weyl chamber such that the
    stabilizers of $\Phi_{0}$ and $\Phi$ coincide, and such that
    the monotonicity condition holds. Furthermore, this $\Phi$
    satisfies the constraint \eqref{eq:highest-root-bound}.
\end{proposition}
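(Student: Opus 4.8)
The plan is to use the monotonicity condition to pin $\Phi$ down completely — it will be forced to equal a specific combination of Weyl vectors — and then to verify directly that this forced value lies in the fundamental Weyl chamber, has the same stabilizer as $\Phi_{0}$, and obeys \eqref{eq:highest-root-bound}. The first step is to make Definition~\ref{def:monotone} explicit. The coefficient of $k$ is $4h^{\vv}$ in \eqref{eq:linear-d} and $8\pi^{2}\cdot 4h^{\vv}$ in \eqref{eq:linear-E}, so if the two forms are proportional the constant of proportionality is forced to be $8\pi^{2}$; comparing the coefficients of $\ll$ then shows that $\Phi$ is monotone if and only if $\langle\Phi,\ll\rangle = 2\weyl(\ll)$ for all $\ll\in\LL(G_{\Phi})$. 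Since $\weyl(\ll) = \langle\weyl^{\dag},\ll\rangle$, since $\LL(G_{\Phi})$ spans $\fz(G_{\Phi})$ over $\R$, and since $\Phi\in\fz(G_{\Phi})$, this is equivalent to the single equation $\Phi = 2\Pi\weyl^{\dag}$, with $\Pi:\ft\to\fz(G_{\Phi})$ the orthogonal projection. In particular, among elements $\Phi$ with a prescribed stabilizer — equivalently with a prescribed subset $\Sroots^{0}(\Phi)$, hence a prescribed subspace $\fz(G_{\Phi})$ and projection $\Pi$ — there is at most one monotone element; this will give uniqueness.

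For existence, put $\fz = \fz(G_{\Phi_{0}})$, let $\Pi_{0}$ be the orthogonal projection onto $\fz$, and define $\Phi := 2\Pi_{0}\weyl^{\dag}$. By the computation already carried out inside the proof of Lemma~\ref{lem:dimension-fmla}, $2\Pi_{0}\weyl^{\dag} = \sum_{\alpha\in\Roots^{+}(\Phi_{0})}\alpha^{\dag}$; and since $\Roots^{+}$ is the disjoint union of $\Roots^{+}(\Phi_{0})$ with the set of positive roots lying in $\Roots^{0}(\Phi_{0})$ (the subsystem with simple roots $\Sroots^{0}(\Phi_{0})$), this equals $2\weyl^{\dag} - 2(\weyl^{0})^{\dag}$, where $\weyl^{0}$ denotes the Weyl vector of $\Roots^{0}(\Phi_{0})$. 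Now one evaluates the simple roots on $\Phi$, using that $\weyl(\beta^{\vv}) = 1$ for every simple root $\beta$ (a consequence of \eqref{eq:magic-delta-formula}) together with the corresponding identity $\weyl^{0}(\beta^{\vv}) = 1$ inside the subsystem: for $\beta\in\Sroots^{0}(\Phi_{0})$ one gets $\beta(\Phi) = 2 - 2 = 0$, while for $\alpha\in\Sroots^{+}(\Phi_{0})$ one gets $\alpha(\Phi) = 2 - 2\weyl^{0}(\alpha^{\vv}) \geq 2$, because $\weyl^{0}$ is a non-negative combination of roots lying in the span of $\Sroots^{0}(\Phi_{0})$, each of which pairs non-positively with the coroot of the simple root $\alpha\notin\Sroots^{0}(\Phi_{0})$. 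Hence all simple roots are non-negative on $\Phi$, so $\Phi$ lies in the fundamental Weyl chamber, and $\Sroots^{0}(\Phi) = \Sroots^{0}(\Phi_{0})$, so $G_{\Phi} = G_{\Phi_{0}}$ and $\Pi = \Pi_{0}$. Feeding this back into the previous paragraph, the equation $\Phi = 2\Pi\weyl^{\dag}$ is precisely the monotonicity condition for $\Phi$, so $\Phi$ is monotone; and uniqueness follows as noted above.

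It remains to check \eqref{eq:highest-root-bound}. From $\Phi = (2\weyl - 2\weyl^{0})^{\dag}$ we get $\theta(\Phi) = \langle\theta, 2\weyl - 2\weyl^{0}\rangle = 2\langle\weyl,\theta\rangle - 2\langle\weyl^{0},\theta\rangle$. The first term equals $1 - 1/h^{\vv}$ by \eqref{eq:delta-theta}, and the second is non-negative because the highest root $\theta$ is dominant while $\weyl^{0}$ is a non-negative combination of positive roots, so $\langle\weyl^{0},\theta\rangle\geq 0$. Therefore $\theta(\Phi)\leq 1 - 1/h^{\vv} < 1$, as required.

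There is no analytic content here; all the work is in the root-system bookkeeping of the middle paragraph. The point that needs care is the apparent circularity in Definition~\ref{def:monotone}: the monotonicity equation $\Phi = 2\Pi\weyl^{\dag}$ refers to $\fz(G_{\Phi})$, which depends on $\Phi$. The way around this is to build the candidate out of $\fz(G_{\Phi_{0}})$ and then verify a posteriori that it has the same stabilizer as $\Phi_{0}$. That verification rests on the strict inequality $\alpha(\Phi)\geq 2 > 0$ for simple roots $\alpha\notin\Sroots^{0}(\Phi_{0})$ — it is this strictness that prevents the stabilizer of $\Phi$ from being strictly larger than that of $\Phi_{0}$ — and this is the one spot where the signs must be handled carefully.
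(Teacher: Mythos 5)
Your proposal is correct and follows essentially the same route as the paper: the monotonicity condition forces $\Phi = 2\Pi(\weyl^{\dag}) = \sum_{\alpha\in\Roots^{+}(\Phi_{0})}\alpha^{\dag}$, and the bound $\theta(\Phi)<1$ is obtained from $2\langle\weyl,\theta\rangle = 1-1/h^{\vv}$. The only differences are that you spell out, via the sub-Weyl-vector $\weyl^{0}$, the verification that this $\Phi$ lies in the fundamental chamber with the same stabilizer as $\Phi_{0}$ (a step the paper leaves implicit), and that your intermediate values ``$2-2$'' and ``$\geq 2$'' conflate $\alpha(\Phi)$ with the pairing against the coroot $\alpha^{\vv}$, which differs only by the positive factor $\langle\alpha,\alpha\rangle/2$ and so does not affect the sign conclusions.
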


\begin{proof}
We are seeking a $\Phi$ with $G_{\Phi}=G_{\Phi_{0}}$. The Lie algebras
of these groups therefore have the same center, in which both $\Phi$
and $\Phi_{0}$ lie, and we can write $\Pi$ for the projection of $\ft$
onto the center. 
From the formulae, we see that the monotonicity condition requires
that $\langle \Phi, \ll \rangle = 2 \weyl(\ll)$ for all $\ll$ in the
image of $\Pi$. This condition is satisfied only by the element
     \begin{equation}\label{eq:monotone-Phi}
                    \Phi= 2 \Pi(\weyl^{\dag}).
     \end{equation}
     We can rewrite this as
     \begin{equation}\label{eq:Phi-roots}
                \Phi = \sum_{\alpha\in \Roots^{+}(\Phi_{0})
                }\alpha^{\dag}.
    \end{equation}
    It remains only to verify the bound
    $\theta(\Phi)<1$. But we have
    \[
    \begin{aligned}
    \theta(\Phi)
                &< \sum_{\alpha\in\Roots^{+}}
                \theta(\alpha^{\dag}) \\
                            &= 2 \langle \theta,
                            \weyl\rangle\\
                            &= 1 - 1/h^{\vv}\\
                            &< 1
                            \end{aligned}
    \]
     by \eqref{eq:delta-theta}, as desired.
\end{proof}

\subsection{Geometric interpretation of the monotonicity condition}
                    
We can reinterpret these formulae in terms of cohomology classes on
$O$. As a general reference for the following material, we cite
\cite[Chapter 8]{Besse}.
In the first line of \eqref{eq:proto-E}, we see the
characteristic class
\begin{equation}\label{eq:char-class-1}
    \sum_{\beta\in
          R^{+} }\beta(\Phi) c_{1}(\fo^{-}_{\beta}).
\end{equation}
The decomposition of $\fo^{-}_{\varphi}$ as the sum of
$\fo^{-}_{\beta}$ reflects our reduction of the structure group to
$T$. A more invariant way to decompose this bundle is as follows. We
write $E^{+}\subset \fz(G_{\Phi})^{*}$ for the set of non-zero linear forms on $\fz(G_{\Phi})$
arising as $\beta|_{\fz(G_{\Phi})}$ for $\beta$ in $R^{+}$. The elements of
$E^{+}$ are weights for the action of $Z(G_{\Phi})$ on $\fo$, and we have
a corresponding decomposition of the vector bundle $\fo^{-}_{\varphi}$ into
weight spaces,
\[
                \fo^{-}_{\varphi} = \bigoplus_{\gamma\in E^{+}}
                \fo^{-}_{\varphi}(\gamma).
\]
The characteristic class \eqref{eq:char-class-1} can be written more
invariantly as
\begin{equation}\label{eq:char-class-2}
    \sum_{\gamma\in
          E^{+} }\gamma(\Phi) c_{1}(\fo^{-}_{\varphi}(\gamma)).
\end{equation}
The tangent bundle $TO$ has a decomposition of the same form, as a
complex vector bundle,
\[
                TO = \bigoplus_{\gamma\in E^{+}} TO(\gamma).
\]
Using the canonical trivialization of $P\to\Sigma$ up
to homotopy, interpret $\varphi$ again as a map
\[
            \varphi : \Sigma \to O,
\]
and then rewrite the characteristic class \eqref{eq:char-class-2} as
$\varphi^{*}(\Omega_{\Phi})$, where
\begin{equation}\label{eq:Omega-fmla}
                \Omega_{\Phi} =  \sum_{\gamma\in
          E^{+} }\gamma(\Phi) c_{1}(TO(\gamma)) \in H^{2}(O;\R).
\end{equation}
In this way, we can rewrite the linear form  \eqref{eq:linear-E} in
$k$ and $l$ as
\begin{equation}\label{eq:linear-E-Omega}
        32\pi^{2} \Bigl( h^{\vv}k + \bigl\langle \varphi^{*}\Omega_{\Phi},
        [\Sigma]\bigr\rangle \Bigr)
\end{equation}

Using this last expression, we see that the monotone condition
simply requires that
\begin{equation}\label{eq:G-monotone-formula}
               c_{1}(\fo^{-}) = 2 \Omega_{\Phi}
\end{equation}
in $H^{2}(O;\R)$. This equality has a geometric interpretation in
terms of the geometry of the orbit $O$ of $\Phi$ in $\g$.
Recall that
we have equipped $O$ with a complex structure $J$, so that its complex
tangent bundle is isomorphic to $\fo^{-}$. There is also the
    Kirillov-Kostant-Souriau $2$-form on  $O$, which is the
    $G$-invariant form $\omega_{\Phi}$ characterized by the condition that at
    $\Phi\in O$ it is given by
    \[
                    \omega_{\Phi} ( [U,\Phi], [V,\Phi] ) = \bigl\langle \Phi,
                    [U,V] \bigr\rangle
    \]
     where the angle brackets denote the Killing form. Together, $J$
     and $\omega_{\Phi}$ make $O$ into a homogeneous K\"ahler manifold. The
     cohomology class $[\omega_{\Phi}]$ of the Kirillov-Kostant-Souriau form
     is $4 \pi \Omega_{\Phi}$, so the monotonicity condition can be
     expressed as
     \[
                [\omega_{\Phi}] = 2\pi c_{1}(O).
     \]
     The fact that the K\"ahler class and the first Chern class are
     proportional means, in particular, that $(O,\omega_{\Phi})$ is a
     monotone symplectic manifold, in the usual sense of symplectic
     topology.
     In the homogeneous case,
     this
     proportionality (with a specified constant) between the classes
     $[\omega_{\Phi}]$ and  $c_{1}(O)$ on $O$ implies a
     corresponding relation between their natural geometric
     representatives, namely $\omega_{\Phi}$ itself and the Ricci form. That is to
     say, our monotonicity condition is equivalent to the equality
     \[
                 g_{\Phi} =\mathrm{Ricci}(g_{\Phi}) 
     \]
     for the K\"ahler metric $g_{\Phi}$ corresponding to $\omega_{\Phi}$. Thus in
     the monotone case, $O$ is a K\"ahler-Einstein manifold with
     Einstein constant $1$.

\subsection{The case of the special unitary group}

We now look at  the case of the special unitary group $\SU(N)$.
We continue to suppose
that $X\supset \Sigma$ is a pair consisting of a $4$-manifold and an
embedded surface, as in the previous subsections.
Let $P\to X$ be a given
principal $\SU(N)$ bundle. An element $\Phi$ in the standard
fundamental Weyl chamber in the Lie algebra
$\su(N)$ has the form
the form
\[
            \Phi = i \diag(\lambda_{1},\dots,
            \lambda_{1},\lambda_{2},\dots,\lambda_{2}, \dots
            ,\lambda_{m},\dots,
            \lambda_{m})
\]
where $\lambda_{1} > \lambda_{2} > \dots > \lambda_{m}$. Let the
multiplicities of the eigenspaces be $N_{1},\dots, N_{m}$, so that
$N=\sum N_{s}$. We will suppose that
\begin{equation}\label{eq:lambda-bound}
                \lambda_{1} - \lambda_{N} < 1,
\end{equation}
this being the analog of the condition \eqref{eq:highest-root-bound}.
Let $O\subset \su(N)$ be the orbit of $\Phi$ under the
adjoint action. Choose a
section $\varphi$ of the associated bundle $O_{P}|_{\Sigma}$, so
defining a reduction of the structure group of $P|_{\Sigma}$ to the
subgroup
\[
           S( U(N_{1}) \times \dots \times U(N_{m})).
\]
Let $P_{s}\to \Sigma$ be the principal $U(N+s)$-bundle arising from the
$s$'th factor in this reduction and define
\[
                l_{s} = - c_{1}(P_{s})[\Sigma].
\]
Because we have a special unitary bundle, we have
\[
\begin{aligned}
\sum N_{s}\lambda_{s} &= 0 \\
\sum l_{s} &= 0.
\end{aligned}
\]
The integers $l_{s}$ are equivalent data to the monopole charge $\ll$
in $\LL(G_{\Phi})$ as defined in Definition~\ref{def:lattice} for the
general case: more precisely, the relationship is
\[
                \ll = i \diag( l_{1}/N_{1}, \dots, l_{1}/N_{1},
                     \dots , l_{m}/N_{m}, \dots, l_{m}/N_{m} ).
\]
The Killing form is $2N$ times the standard trace norm on $\su(N)$, so
we have for example
\[
            \langle \Phi, \ll \rangle = 2N
            \sum_{s=1}^{m} \lambda_{s}l_{s}
\]
The dual Coxeter number is $N$ and the energy formula becomes:
\[
\cE(X,\Sigma,P,\varphi) = 32\pi^{2}N\Bigr(
                k +  \sum_{s=1}^{m} \lambda_{s}l_{s} -
                \tfrac{1}{2}\bigl( \sum_{s=1}^{m}
               \lambda_{s}^{2}
                N_{s}\bigr)\Sigma\cdot\Sigma \Bigr).
\]

If $E_{s}$ is the vector bundle associated to $P_{s}$ by the standard
representation, then the bundle denoted $\fo_{\varphi}^{-}$ in the
previous sections (the pull-back by $\varphi$ of the vertical tangent
bundle of $O_{P}$, equipped with its preferred complex structure) can
be written as
\[
            \fo_{\varphi}^{-} = \bigoplus_{s<t} E_{s}^{*}\otimes
            E^{t},
\]
and the first Chern class of this bundle evaluates on $\Sigma$ as
\[
            c_{1}(\fo_{\varphi}^{-})[\Sigma]
            = \sum_{s=1}^{m} \sum_{t=1}^{m} \mathrm{sign} (t-s)
            N_{t}l_{s} .
\]
The dimension formula becomes
    \begin{equation}\label{eq:SUN-dim}
                4Nk + 2 \sum_{s,t}\mathrm{sign} (t-s)
            N_{t}l_{s} + \Bigl(\sum_{s<t}N_{s}N_{t}\Bigr)\chi(\Sigma)
            - (N^{2}-1)(b^{+}-b^{1}+1).
    \end{equation}
    where  $k$ is the second Chern number of $P$.
Thus the monotone condition simply requires that
\begin{equation}\label{eq:lamdba-monotone-formula}
                \lambda_{s} =
                \frac{1}{2N} \sum_{t=1}^{m}
             \mathrm{sign} (t-s) N_{t}
\end{equation}
for all $s$. Notice that the $\Phi$ whose eigenvalues are
given by the formula \eqref{eq:lamdba-monotone-formula} with
multiplicities $N_{s}$ is already traceless, and satisfies the
requirement \eqref{eq:lambda-bound}, which we can take as confirming
Proposition~\ref{prop:Einstein} in the case of $\SU(N)$.

\begin{examples}
    (i) The simplest example occurs when $P$ is an $\SU(2)$ bundle and
    the reduction of structure group is to $U(1)$. In
    this case, we can write
    \[
                    \Phi = i
                    \begin{pmatrix}
                        \lambda & 0 \\
                        0     & -\lambda
                    \end{pmatrix}
    \]
    with $\lambda\in (0,1/2)$,
    and the holonomy of $A^{\varphi}$ along small loops linking $\Sigma$
    is asymptotically
    \[
                \exp 2\pi i \begin{pmatrix}
                        -\lambda & 0 \\
                        0     & \lambda
                    \end{pmatrix}.
    \]
    We can write $(l_{1}, l_{2})$ as $(l,-l)$ and
    formula for the index becomes
    \[
                8k + 4 l + \chi(\Sigma) - 3(b^{+}-b^{1}+1).
    \]
    The action is given by
    \[
                64 \pi^{2} ( k + 2\lambda l - \lambda^{2}
                \Sigma\cdot\Sigma).
    \]
    These are the formulae from \cite{KM-gtes-I}. The monotone
    condition requires that $\lambda=1/4$, and in this case the
    asymptotic holonomy on small loops is
    \[
            \begin{pmatrix}
                        -i & 0 \\
                        0     & i
                    \end{pmatrix}.
    \]

    (ii) The next simplest case is that of $\SU(N)$ with two
    eigenspaces, of dimensions $N_{1}=1$ and $N_{2}=N-1$. In this
    case, $O$ becomes $\CP^{N-1}$. We can again write $(l_{1}, l_{2})$
    as $(l,-l)$ and we can write
    \[
    \begin{aligned}
        \lambda_{1} &= \lambda\\
        \lambda_{2} &= - \lambda/(N-1)
    \end{aligned}
    \]
    with $\lambda$ in the interval $(0,1)$. The formula for the
    dimension is
    \[
                    4N k + 2N l + (N-1) \chi(\Sigma) -
                    (N^{2}-1)(b^{+}-b^{1}+1)
    \]
    and the energy is given by
    \[
                32 \pi^{2} N \Bigl( k + \frac{N}{N-1}\bigl(\lambda l -
                \tfrac{1}{2} \lambda^{2} \Sigma\cdot\Sigma\bigr)\Bigr).
    \]
    The monotone condition requires $\lambda=(N-1)/(2N)$, so that
\[
    \begin{aligned}
        \lambda_{1} &= (N-1)/(2N)\\
        \lambda_{2} &= -1/(2N).
    \end{aligned}
\]    
The asymptotic holonomy on small loops is
\[
                \zeta \mathop{\mathrm{diag}}(-1,1,\dots,1)
\]
where $\zeta = e^{\pi i /N}$ is a  $(2N)$'th root of unity.

\end{examples}

\subsection{Avoiding reducible solutions}
\label{subsec:reducibles}

We return to the case of a  general  $G$ (still simple and simply
connected). Recall that a connection is reducible
if its stabilizer in $\G^{p}(X,\Sigma,P,\varphi)$ has positive
dimension. This is equivalent to saying that there is a non-zero section $\psi$ of
the bundle $\g_{P}$ on $X\sminus\Sigma$ that is parallel for the
connection $A$.

Let us ask under what conditions this can occur for an $[A]$ belonging
to the moduli space $M(X,\Sigma,P,\varphi)$. For simplicity, we suppose for
the moment that $\Sigma$ is connected. If $\psi$ is parallel, then
it determines a single orbit in $\g$; we take
$\Psi\in \g$ to be a representative. The structure group of the bundle
then reduces to the subgroup $G_{\Psi}$, the stabilizer of $\Psi$, which is
a connected proper subgroup of $G$. Because $\Phi$ and $\Psi$ must
commute, we may suppose they both belong to the Lie algebra $\ft$ of
the maximal torus. As before, we shall suppose $\Phi$ belongs to the
fundamental Weyl chamber.

The center of $G_{\Psi}$ contains a torus of dimension at least one,
because $\Psi$ itself lies in the Lie algebra of the center. So there
is a non-trivial character,
\[
                s : G_{\Psi} \to U(1).
\]
Because $G_{\Psi}$ contains $T$, this character corresponds to a weight
for this maximal torus: there is an element $\w\in \ft^{*}$ in the
lattice of weights such that
\begin{equation}\label{eq:character}
                s( \exp (2\pi x)) = \exp(2\pi\w( x))
\end{equation}
for $x$ in $\ft$.
The fundamental group of $T$ maps
onto that of $G_{\Psi}$, so we may assume that $\w$ is a primitive
weight (i.e. is not a non-trivial multiple of another integer weight).

In addition to taking $\w$ to be primitive, we can further narrow
down the possibilities as follows. Suppose first that
$\Psi$ lies (like $\Phi$) in the fundamental Weyl chamber. In the
complex
group $G^{c}$, there are $r=\rank(G)$
different maximal parabolic subgroups which contain the standard Borel
subgroup corresponding to our choice of positive roots. These maximal
parabolics are indexed by the set of simple roots $\Sroots^{+}$; we let
$G(\alpha)$, for $\alpha\in \Sroots^{+}$, denote the intersection of these
groups with the compact group $G$. Each group $G(\alpha)\subset G$ has
$1$-dimensional center, and the weight $\w$ corresponding to a
primitive character of $G(\alpha)$ is the fundamental weight
$\w_{\alpha}$.
 The group
$G_{\Psi}$ lies inside one of the $G(\alpha)$, so the same fundamental
weight
$\w_{\alpha}$ defines a character of $G_{\Psi}$.
If $\Psi$ does not lie in the fundamental Weyl chamber, then we
need to apply an element of the Weyl group $\mathcal{W}$; so in general, we can
always take $\w$ to have the form
\[
            \w = \w_{\alpha} \comp \sigma
\]
where $\sigma\in \mathcal{W}$ and $\w_{\alpha}$ is one of the
fundamental weights.

Applying the character $\w$ to the singular connection  $A$, we obtain
a singular $U(1)$ connection on $X\sminus\Sigma$, carried by the
bundle obtained by applying $s$ to the principal $G_{\Psi}$-bundle
$P_{\psi}\to X$.  That is, we have a $U(1)$ connection differing by
terms of regularity $L^{p}_{1,A}$ from the singular model connection
\[
                \nabla + i \beta(r) \w(\Phi) \eta.
\]
Here it is important that $\Sigma$ is connected: what we have really
done, is picked a base-point near $\Sigma$, and used the values of
$\phi$ and $\psi$ at that base-point to determine $T$ and $s$.
When we apply the Chern-Weil to obtain an expression for $c_{1}$ of
the line bundle $s(P_{\psi})$, we obtain an additional contribution
from the singularity, equal to the Poincar\'e dual of
\[
            2\pi \w(\Phi) [\Sigma].
\]
More precisely, if $F$ denotes the curvature of this $U(1)$ connection
on $X\sminus\Sigma$ (as an $L^{p}$ form, extended by zero to all of
$X$), then
\[
            \frac{i}{2\pi}[F] = c_{1}(s(P_{\psi})) -  \w(\Phi)
            \mathrm{P.D.}[\Sigma].
\]
If $\Sigma$ has more than one component, say
$\Sigma=\Sigma_{1}\cup\dots\Sigma_{r}$, then the only change is that
we will see a different element of the Weyl group for each component
of $\Sigma$: so if we have a reducible solution then there will be a
fundamental weight $w_{\alpha}$ and elements
$\sigma_{1},\dots,\sigma_{r}$ in $\mathcal{W}$ such that the 
curvature $F$ of the corresponding $U(1)$ connection satisfies
\begin{equation}\label{eq:de-Rham-class}
            \frac{i}{2\pi}[F] = c_{1}(s(P_{\psi})) -
            \sum_{j=1}^{r}(\w_{\alpha}\comp\sigma_{j})(\Phi)
            \mathrm{P.D.}[\Sigma_{j}].
\end{equation}

Because the connection $A$ is anti-self-dual, the $2$-form $F$ is also
anti-self-dual, and is therefore $L^{2}$-orthogonal to every closed,
self-dual form $h$ on $X$. By the usual argument
\cite{Donaldson-omega-argument}, we deduce:

\begin{proposition}\label{prop:no-reducibles}
    Suppose that $b^{+}(X)\ge 1$, and let the components of $\Sigma$
    be $\Sigma_{1},\dots,\Sigma_{r}$. Suppose that for every fundamental
    weight $\w_{\alpha}$ and every choice of elements
    $\sigma_{1},\dots,\sigma_{r}$ in the Weyl group, the real
    cohomology class
\begin{equation}\label{eq:is-non-integral}
            \sum_{j=1}^{r}(\w_{\alpha}\comp\sigma_{j})(\Phi)
            \mathrm{P.D.}[\Sigma_{j}]
\end{equation}
    is not integral. Then
   for generic choice of Riemannian metric on $X$, there are
       no reducible solutions in the moduli space
       $M(X,\Sigma,P,\varphi)$.
\end{proposition}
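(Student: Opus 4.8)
The plan is to translate the existence of a reducible solution into a statement about de Rham cohomology and then run the standard genericity (``$\omega$-'') argument on the space of metrics. First I would record the output of the discussion preceding the proposition: if $[A]\in M(X,\Sigma,P,\varphi)$ is reducible, then there is a fundamental weight $\w_\alpha$, Weyl-group elements $\sigma_1,\dots,\sigma_r$, and a reduction of $P$ to some $G_\Psi$ whose associated $U(1)$-connection has curvature $F$ an $L^p$ (hence $L^2$, since $p>2$ and $X$ is compact), closed, anti-self-dual $2$-form on $X\sminus\Sigma$, extended by zero, with
\[
\frac{i}{2\pi}[F] = c_{1}(s(P_{\psi})) - \sum_{j=1}^{r}(\w_{\alpha}\comp\sigma_{j})(\Phi)\,\mathrm{P.D.}[\Sigma_{j}]
\]
in $H^{2}(X;\R)$, as in \eqref{eq:de-Rham-class}. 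Since $F$ is closed and anti-self-dual and lies in $L^2$, for every self-dual harmonic form $h$ on $X$ one has $\int_X F\wedge h=\langle[F]\cup[h],[X]\rangle$ and also $F\wedge h=\langle F,h\rangle\,d\mathrm{vol}=0$ pointwise; hence $[F]$ is cup-product orthogonal to every self-dual harmonic class, i.e.\ $[F]$ lies in the subspace $H^{-}_{g}\subset H^{2}(X;\R)$ spanned by anti-self-dual harmonic forms, a subspace of codimension $b^{+}(X)$.

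Next I would isolate the relevant cohomology classes. For each choice of the finite data $\alpha$ and $(\sigma_1,\dots,\sigma_r)\in\mathcal{W}^r$ and each integral class $c\in H^{2}(X;\Z)$ that can occur as $c_{1}(s(P_{\psi}))$, set
\[
e(\alpha,\sigma,c) = c - \sum_{j=1}^{r}(\w_{\alpha}\comp\sigma_{j})(\Phi)\,\mathrm{P.D.}[\Sigma_{j}] \in H^{2}(X;\R).
\]
The hypothesis of the proposition says precisely that $\sum_{j}(\w_{\alpha}\comp\sigma_{j})(\Phi)\,\mathrm{P.D.}[\Sigma_{j}]$ is not an integral class; as $c$ is integral, $e(\alpha,\sigma,c)$ is then non-integral, and in particular $e(\alpha,\sigma,c)\neq 0$. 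By the first step, the presence of a reducible connection in $M(X,\Sigma,P,\varphi)$ forces some $e(\alpha,\sigma,c)$ to lie in $H^{-}_{g}$.

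Now I would invoke the $\omega$-argument of \cite{Donaldson-omega-argument}: the period map $g\mapsto H^{+}_{g}$ (equivalently $g\mapsto H^{-}_{g}$) on the space of Riemannian metrics on $X$ is generic enough that, for any fixed nonzero $e\in H^{2}(X;\R)$, the metrics with $e\in H^{-}_{g}$ — i.e.\ with $e$ orthogonal to $H^{+}_{g}$ — form a nowhere-dense set, cut out by $b^{+}(X)\ge 1$ independent conditions. Since $\alpha$ ranges over the finite set of simple roots, $(\sigma_1,\dots,\sigma_r)$ over the finite set $\mathcal{W}^r$, and the admissible $c$ lie in the countable lattice $H^{2}(X;\Z)$, there are only countably many classes $e(\alpha,\sigma,c)$, each nonzero, hence only countably many such nowhere-dense ``walls'' of metrics. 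By the Baire category theorem a generic (residual) metric $g$ lies in none of them, and then no $e(\alpha,\sigma,c)$ lies in $H^{-}_{g}$; by the first step, $M(X,\Sigma,P,\varphi)$ has no reducible connection for such $g$.

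The main obstacle — and the only substantive analytic input — is the genericity of the period map used in the third step: that varying the metric genuinely moves $H^{+}_{g}$, so that each wall has codimension $b^{+}(X)\ge 1$ rather than being all of metric space. This is exactly the content of the familiar $\omega$-argument and is where the assumption $b^{+}(X)\ge 1$ enters; the remaining points (closedness of $F$ as a current on $X$ with the stated singular contribution, and countability of the set of admissible $c_{1}(s(P_{\psi}))$, which is automatic since these sit in $H^{2}(X;\Z)$) are routine.
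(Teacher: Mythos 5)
Your proposal is correct and follows essentially the same route as the paper: the paper derives the de Rham identity \eqref{eq:de-Rham-class}, notes that the anti-self-dual form $F$ is $L^{2}$-orthogonal to every closed self-dual form, and then appeals to the usual metric-genericity argument of \cite{Donaldson-omega-argument}, which is exactly the wall-and-Baire-category bookkeeping you spell out.
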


\begin{examples}
As an illustration,  in the case $G=\SU(N)$,
if $\Sigma$ is connected and $[\Sigma]$ is primitive, then
for \eqref{eq:is-non-integral} to be an integral class
means that the sum of some proper subset of the eigenvalues
of $\Phi$ (listed with repetitions) is equal to an integer multiple of
$i$. 
If there are only two distinct eigenvalues $i\lambda_{1}$ and
$i\lambda_{2}$ of multiplicities $N_{1}$ and $N_{2}$, and if we are in
the monotone case, so that
\[
\begin{aligned}
    \lambda_{1} = N_{2}/(2N)\\
    \lambda_{2} = -N_{1}/(2N),
\end{aligned}
\]
then this integrality means that
\[
                (2N) \bigm| (a N_{2} - b  N_{1})
\]
for some non-negative with $a\le N_{1}$, $b\le N_{2}$ and $0 < a+b <
N$. This cannot happen if $N_{1}$ and $N_{2}$ are coprime.

Another family of examples satisfying the monotone condition occurs
when $\Phi=2\weyl^{\dag}$ (the case where the reduction of structure
group is to the maximal torus, $T\subset G$) and the group $G$ is
small. For example, if $\Phi=2\weyl^{\dag}$ and $G$ is either the
group $G_{2}$ or the simply-connected group of type $B_{2}$, $B_{3}$
or $B_{4}$, then \eqref{eq:is-non-integral} is never an integer.
Thus we have:

\begin{corollary}\label{cor:irreducibles-examples}
    Suppose that $b^{+}(X)$ is positive and that $\Sigma$ is connected and lies
    in a primitive homology class. Suppose $\Phi$ is chosen to satisfy
    the monotone condition and that we are in one of the following cases:
    \begin{enumerate}
        \item $G$ is the group $\SU(N)$, and $\Phi$ has two distinct
        eigenvalues whose multiplicities are coprime.

        \item $G$ is the group $G_{2}$ and $\Phi$ is the regular
        element $2\weyl^{\dag}$.

        \item \label{item:G2-irreducible}
        $G$ is the group $\Spin(5)$, $\Spin(7)$ or $\Spin(9)$
        and $\Phi$ is $2\weyl^{\dag}$.
    \end{enumerate}
    Then for generic choice of
    Riemannian metric on $X$, there are no reducible solutions in the
    moduli space
    $M(X,\Sigma,P,\varphi)$.
\end{corollary}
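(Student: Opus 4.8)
The plan is to apply Proposition~\ref{prop:no-reducibles} directly: since $b^{+}(X)\ge 1$ is assumed and $\Sigma$ is connected and primitive, it suffices to verify that for each of the three families of pairs $(G,\Phi)$ the real cohomology class $(\w_{\alpha}\comp\sigma)(\Phi)\,\mathrm{P.D.}[\Sigma]$ fails to be integral for every fundamental weight $\w_{\alpha}$ and every Weyl-group element $\sigma$. Because $[\Sigma]$ is primitive, this class is integral if and only if the rational number $(\w_{\alpha}\comp\sigma)(\Phi)$ is an integer. So the entire statement reduces to the purely Lie-theoretic assertion: for the listed $(G,\Phi)$, none of the numbers $\w_{\alpha}(\sigma\Phi)$, ranging over $\alpha\in\Sroots^{+}$ and $\sigma\in\mathcal{W}$, is an integer.

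For case (1), $G=\SU(N)$ with $\Phi$ having two distinct eigenvalues of coprime multiplicities $N_{1}, N_{2}$ in the monotone normalization $\lambda_{1}=N_{2}/(2N)$, $\lambda_{2}=-N_{1}/(2N)$: I would invoke the computation already carried out in the Examples following Proposition~\ref{prop:no-reducibles}. There it is shown that integrality of \eqref{eq:is-non-integral} forces $(2N)\mid(aN_{2}-bN_{1})$ for integers $0\le a\le N_{1}$, $0\le b\le N_{2}$ with $0<a+b<N$ (the value $\w_{\alpha}(\sigma\Phi)$ being exactly the sum of $a$ copies of $\lambda_{1}$ and $b$ copies of $\lambda_{2}$, i.e. $(aN_{2}-bN_{1})/(2N)$). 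Using $N_{1}+N_{2}=N$, one rewrites $aN_{2}-bN_{1}=aN-(a+b)N_{1}$, so $(2N)\mid(aN_{2}-bN_{1})$ implies $N\mid(a+b)N_{1}$, hence $N\mid(a+b)$ since $\gcd(N_{1},N)=\gcd(N_{1},N_{2})=1$; but $0<a+b<N$, a contradiction. So (1) follows.

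For cases (2) and (3), $\Phi=2\weyl^{\dag}$ is the regular element and one must check finitely many rational numbers. Here I would argue that $\w_{\alpha}(\sigma\cdot 2\weyl^{\dag})=2\langle\w_{\alpha},\sigma\weyl\rangle=2\langle\sigma^{-1}\w_{\alpha},\weyl\rangle$; since $\weyl=\sum_{\beta\in\Sroots^{+}}\w_{\beta}$ by \eqref{eq:magic-delta-formula}, and $\langle\w_{\alpha},\w_{\beta}\rangle$ is computable from the inverse Cartan matrix (scaled by the normalization $\langle\theta,\theta\rangle=1/h^{\vv}$ fixed in \eqref{eq:theta-length}), each of these numbers is an explicit rational with denominator dividing some fixed integer determined by $G$. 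The work is then to tabulate, for $G=G_{2}$ and for $G=\Spin(5),\Spin(7),\Spin(9)$ (i.e. types $B_{2},B_{3},B_{4}$), all the values $2\langle w,\weyl\rangle$ as $w$ ranges over the Weyl orbit of each fundamental weight — equivalently, over all weights in the $\mathcal{W}$-orbits of the $\w_{\alpha}$ — and observe that none is an integer. One can shorten this: it is enough to show that for each fundamental weight $\w_{\alpha}$, the minimum and maximum of $\langle w,\weyl\rangle$ over the orbit bracket finitely many half-integers or thirds, none of which is an integer; since $\weyl$ is dominant regular, $\langle w,\weyl\rangle$ for $w$ in the orbit of $\w_{\alpha}$ takes its extreme values at $\w_{\alpha}$ and at the lowest weight $w_{0}\w_{\alpha}=-\w_{\alpha^{*}}$, and all intermediate values differ from $\langle\w_{\alpha},\weyl\rangle$ by integers (since orbit elements differ by integer combinations of roots and $\langle\text{root},\weyl\rangle\in\Z$). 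Therefore it suffices to check the single number $2\langle\w_{\alpha},\weyl\rangle$ for each simple $\alpha$ and confirm it is a non-integer; this is a short finite check using the known data for $G_{2}$ (where $h^{\vv}=4$) and $B_{n}$, $n=2,3,4$ (where $h^{\vv}=2n-1$).

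The main obstacle is the bookkeeping in cases (2) and (3): one must be careful with the normalization of the inner product inherited from the minus-Killing-form convention (in which $\langle\theta,\theta\rangle=1/h^{\vv}$ rather than the more familiar $2$), so that $2\langle\w_{\alpha},\weyl\rangle$ comes out with the correct denominator. Once the reduction to ``$2\langle\w_{\alpha},\weyl\rangle\notin\Z$ for all simple $\alpha$'' is in place, verifying it for the four groups $G_{2},B_{2},B_{3},B_{4}$ is a routine computation with Cartan matrices, and the proof is complete.
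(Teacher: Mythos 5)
Your overall framing is the same as the paper's: apply Proposition~\ref{prop:no-reducibles}, use primitivity of $[\Sigma]$ to reduce everything to showing that the rational numbers $(\w_{\alpha}\comp\sigma)(\Phi)$ are never integers, and your case (1) is correct — it is exactly the paper's $\SU(N)$ example, with the coprimality arithmetic (which the paper leaves as an assertion) filled in correctly.

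However, cases (2) and (3) contain a genuine gap. Your reduction to checking only the dominant representative $2\langle\w_{\alpha},\weyl\rangle$ rests on the claim that all values $2\langle w,\weyl\rangle$, for $w$ in the Weyl orbit of $\w_{\alpha}$, agree modulo $\Z$, because orbit elements differ by root-lattice elements and ``$\langle\mathrm{root},\weyl\rangle\in\Z$''. That last claim is false in the normalization used here (and no rescaling fixes it uniformly for the non--simply-laced groups at issue): by \eqref{eq:delta-theta} one has $2\langle\theta,\weyl\rangle=1-1/h^{\vv}\notin\Z$, and for $\Spin(9)$, where the Killing form is $14$ times the Euclidean form, one computes $2\langle e_{1}-e_{2},\weyl\rangle=1/7$. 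So integrality of $\w_{\alpha}(\sigma\Phi)$ is \emph{not} constant along a Weyl orbit, and checking only $\sigma=1$ proves nothing about the other $\sigma$. The paper's own closing remark about $\Spin(11)$ is a concrete counterexample to your shortcut: there $2\weyl^{\dag}=(1/18)(9,7,5,3,1)$ pairs to $4/3\notin\Z$ with the dominant weight $(1,1,1,1,0)$, yet pairs to $0\in\Z$ with the orbit element $(1,0,-1,-1,-1)$; your argument, applied verbatim, would ``prove'' the $B_{5}$ case as well, where the non-integrality criterion in fact fails. This is precisely why the paper's proof for $\Spin(9)$ lists the \emph{entire} Weyl orbits of all four fundamental weights (the vectors \eqref{eq:orbits-weights}) and checks the pairing of $2\weyl^{\dag}$ with every one of them; the analogous full-orbit checks for $G_{2}$, $B_{2}$ and $B_{3}$ are what your cases (2) and (3) still need.
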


\begin{proof}
    We illustrate the calculation in the case of $\Spin(9)$. The
    $B_{4}$ root system is the following collection of integer vectors
    in $\R^{4}$: the vectors $\pm e_{i} \pm e_{j}$ for $i\ne j$, and
    the vectors $\pm e_{i}$. The simple roots are
    \[
    \begin{aligned}
    \alpha_{i} &= e_{i}-e_{i+1},\qquad (i=1,2,3) ,\\
                \alpha_{4} &= e_{4}
                \end{aligned}
                \]
    and the fundamental weights are
    \[
    \begin{aligned}
    w_{1} & = (1,0,0,0) \\
                    w_{2} & = (1,1,0,0) \\
                    w_{3} & = (1,1,1,0) \\
                     w_{4} & = (1/2, 1/2,1/2,1/2) .
                     \end{aligned}
    \]
    The orbit of these under the Weyl group consists of all vectors of
    the form
    \begin{equation}\label{eq:orbits-weights}
             \begin{gathered}
             \pm e_{i}\\
                \pm e_{i} \pm e_{j} \\
                \pm e_{i} \pm e_{j} \pm e_{k} \\
                (1/2)(\pm e_{1} \pm e_{2} \pm e_{3} \pm e_{4})                
             \end{gathered}
    \end{equation}
    with $i,j,k$ distinct.
    We identify $\ft$ with its dual using the Euclidean inner product
    on $\R^{4}$, so that the coroots are $\alpha^{\vv}_{i}=\alpha_{i}$
    for $i=1,2,3$ and $\alpha^{\vv}_{4}=2\alpha_{4}$;
    and we note that the Killing form on $\ft$ is $14$
    times the Euclidean inner product, because the dual Coxeter number
    is $7$. Then we calculate the sum of the positive roots to obtain
    \[
                2\weyl = (7,5,3,1)
    \]
    and we deduce
    \[
                2\weyl^{\dag} = (1/14) (7,5,3,1).
    \]
    It is straightforward to see that $2\weyl^{\dag}$ does not have
    integer pairing with any of the vectors in
    \eqref{eq:orbits-weights}, with respect to the Euclidean inner
    product on $\R^{4}$.
\end{proof}

The examples in this corollary are not meant to be exhaustive: the
authors have not attempted a  complete classification of the monotone
cases. Note that we cannot extend the above proof for $\Spin(9)$ to
the case of $\Spin(11)$ (the $B_{5}$ Dynkin diagram) because the
vector $2\weyl^{\dag}$ is then \[2\weyl^{\dag}(1/18)(9,7,5,3,1)\] which has inner
product $0$ in $\R^{5}$ with the vector $(1,0,-1,-1,-1)$, which
belongs to the Weyl orbit of the fundamental weight $(1,1,1,1,0)$.

\end{examples}

\begin{remark}
    We recall again from section~\ref{subsec:connections-and-moduli} that
    in cases other than $\SU(N)$, it is possible for a connection to
    be irreducible and yet have finite stabilizer strictly larger than
    the center of the group $G$. The examples of this phenomenon that
    were illustrated previously include the case that $G$ is
    $\Spin(2n+1)$ as well as the case of $G_{2}$.  Note that these
    examples include the examples mentioned in part
    \ref{item:G2-irreducible} of
    Corollary~\ref{cor:irreducibles-examples}; so although we can
    avoid stabilizers of positive dimension in those cases, we will
    still be left with finite stabilizers larger than the center.
\end{remark}

The examples in Corollary~\ref{cor:irreducibles-examples} are cases
where $\Sigma$ is connected. The next corollary
exhibits an interesting case where
Proposition~\ref{prop:no-reducibles} can be applied to a disconnected
$\Sigma$:

\begin{corollary}\label{cor:irreducibles-comp}
    Suppose $b^{+}(X)$ is positive.
    Let $G=\SU(N)$ and 
    suppose $\Sigma$ has $N+1$ components, all belonging to the same
    primitive homology class. Let $\Phi$ be the element
    \[
                    \Phi = \left(\frac{i}{2N}\right)\diag\bigl((N-1),
                    -1,\dots,-1\bigr)
    \]
    so that the monotone condition holds. Then for a generic choice
    of Riemannian metric on $X$, the moduli space
    $M(X,\Sigma,P,\varphi)$ contains no reducible solutions.
\end{corollary}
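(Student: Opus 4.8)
The plan is to verify the hypotheses of Proposition~\ref{prop:no-reducibles}. Since $b^{+}(X)$ is assumed positive, all that has to be checked is the non-integrality condition: for every fundamental weight $\w_{\alpha}$ of $\SU(N)$ and every choice of Weyl group elements $\sigma_{1},\dots,\sigma_{N+1}$, the class \eqref{eq:is-non-integral} must fail to be integral. First I would record that, because all $N+1$ components $\Sigma_{j}$ carry the same primitive class $c\in H_{2}(X;\Z)$, this class is simply $\Lambda\cdot\mathrm{P.D.}[c]$, where $\Lambda=\sum_{j=1}^{N+1}(\w_{\alpha}\comp\sigma_{j})(\Phi)$ is purely imaginary; and since $\mathrm{P.D.}[c]$ is a primitive integral cohomology class, $\Lambda\cdot\mathrm{P.D.}[c]$ is integral if and only if $\Lambda$ is an integral multiple of $i$. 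So the corollary reduces to an arithmetic assertion about the numbers $(\w_{\alpha}\comp\sigma_{j})(\Phi)$.

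Next I would carry out the arithmetic. The fundamental weights of $\SU(N)$ are, on traceless diagonal matrices, the functionals $\w_{k}: \diag(x_{1},\dots,x_{N})\mapsto x_{1}+\dots+x_{k}$ for $1\le k\le N-1$, so each $(\w_{k}\comp\sigma_{j})(\Phi)$ is a sum of some $k$ of the eigenvalues of $\Phi$. Since our $\Phi$ has the eigenvalue $i(N-1)/(2N)$ with multiplicity $1$ and $-i/(2N)$ with multiplicity $N-1$, such a $k$-fold sum equals $i(N-k)/(2N)$ when the large eigenvalue is among the $k$ chosen and $-ik/(2N)$ otherwise. If $p$ of the indices $j\in\{1,\dots,N+1\}$ produce the first value, then
\[
    \Lambda = p\cdot\frac{i(N-k)}{2N} + (N+1-p)\cdot\frac{-ik}{2N}
            = \frac{i}{2N}\bigl(pN-(N+1)k\bigr),
\]
and for this to be an integral multiple of $i$ one would need $2N\mid pN-(N+1)k$; reducing modulo $N$ this forces $N\mid k$, which is impossible for $1\le k\le N-1$. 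Hence \eqref{eq:is-non-integral} is never integral and Proposition~\ref{prop:no-reducibles} yields the conclusion.

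I do not anticipate a genuine obstacle here: once the statement of Proposition~\ref{prop:no-reducibles} is unwound, the computation is elementary. The point that must be respected is that a \emph{single} fundamental weight $\w_{\alpha}$ is applied to all $N+1$ components at once, so that each $(\w_{\alpha}\comp\sigma_{j})(\Phi)$ is a subset sum of the \emph{same} cardinality $k$; this is precisely what makes $N+1$ components suffice, since with only $N$ components one could choose $p=k$ and obtain $\Lambda=0$. The other mild subtlety is the passage from ``$\Lambda\,\mathrm{P.D.}[c]$ is integral'' to ``$\Lambda$ is an integral multiple of $i$,'' which uses that Poincar\'e duality carries the primitive class $c$ to a primitive element of $H^{2}(X;\Z)$ modulo torsion.
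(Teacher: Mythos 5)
Your proposal is correct and follows essentially the same route as the paper: both reduce the statement to Proposition~\ref{prop:no-reducibles}, note that each $(\w_{\alpha}\comp\sigma_{j})(\Phi)$ is either $-k/(2N)$ or $(N-k)/(2N)$ (the paper's \eqref{eq:weyl-cases}), and then check by elementary arithmetic that the resulting sum over the $N+1$ components can never be an integer. Your mod-$N$ reduction is the same computation as the paper's observation that the sum differs from a half-integer by $k/(2N)$, and your side remarks (primitivity of the dual class, and why $N$ components would not suffice) are consistent with the paper's setup.
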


\begin{proof}
    From Proposition~\ref{prop:no-reducibles}, we see that we must
    check that the rational number
    \[
                            \sum_{j=1}^{N-1}(\w_{\alpha}\comp\sigma_{j})(\Phi)
    \]
    is never an integer. As a function on the maximal torus,
    the fundamental weight $\alpha$  can be taken to be the sum of the
    first $k$ eigenvalues for some $k$ with $1\le k \le N-1$, and so
    \begin{equation}\label{eq:weyl-cases}
            (\w_{\alpha}\comp\sigma_{j})(\Phi)
            =
            \begin{cases}
                -k/(2N),&\text{or}\\
                (N-k)/(2N),&
            \end{cases}
    \end{equation}
    according to which Weyl group element $\sigma_{j}$ is involved.
    The above sum is therefore
    \[
                   (s/2) - k(N-1)/(2N)
    \]
    where $k$ depends on the choice of $\alpha$, and $s$ is the number
    of components for which the second case of \eqref{eq:weyl-cases}
    occurs. This quantity differs from an element of $(1/2)\Z$ by
    $k/(2N)$, so it cannot be an integer.
\end{proof}

\subsection{Bubbles}

Uhlenbeck's compactness theorem for instanton moduli spaces on a
closed $4$-manifold $X$ carries over to the case of instantons with
codimension-2 singularities along a surface $\Sigma\subset X$.
In the case of $\SU(2)$, the proof can
again be found in \cite{KM-gtes-I}. The proof carries over without
substantial change to the case of a general group. We shall state
here the version appropriate for a simply-connected simple Lie group
$G$ and a moduli space $M(X,\Sigma,P,\varphi)$ of anti-self-dual
connections with reduction along $\Sigma$.

\begin{proposition}\label{prop:Uhlenbeck}
    Let $[A_{n}]$ be a sequence of gauge-equivalence classes of
    connections in the moduli space $M(X,\Sigma,P,\varphi)$. Then,
    after replacing this sequence by a subsequence, we can find a
    bundle $P'\to X$, a section $\varphi'$ of the bundle
    $O_{P'}\to\Sigma$ defining a reduction of structure group to the
    same subgroup $G_{\Phi}\subset G$,  an element $[A]$ in
    $M(X,\Sigma,P',\varphi')$ and a finite set of point
    $\mathbf{x}\subset X$ with the following properties.
    \begin{enumerate}
        \item There is a sequence of isomorphisms of bundles $g_{n}:
        P'|_{X\sminus\mathbf{x} } \to P$ such that
        $g_{n}^{*}(\varphi)=\varphi'|_{\Sigma\sminus\mathbf{x}}$ and such
        that
        \[
                            g_{n}^{*}(A_{n}) \to
                            A|_{X\sminus\mathbf{x}}
        \]
        on compact subsets of $X\sminus\mathbf{x}$.

        \item In the sense of measures on $X$, the energy densities $2
        |F_{A_{n}}|^{2}$ converge to
        \[
                           2 |F_{A}|^{2} + \sum_{x\in
                           \mathbf{x}}\mu_{x}
                           \delta_{x}
        \]
        where $\delta_{x}$ is the delta-mass at $x$ and $\mu_{x}$ are
        positive real numbers.

        \item\label{item:charge-change}
        For each $x\in \mathbf{x}$, we can find an integer $k_{x}$
        and an  $\ll_{x}$ in the lattice
        $\LL(G_{\Phi})\subset\fz(G_{\Phi})$ such that
        \[
                        \mu_{x} = 8\pi^{2}(4 h^{\vv} k_{x} +
                        2 \langle \Phi, \ll_{x} \rangle )
        \]
        If $x\not\in\Sigma$, we can take $\ll_{x}=0$ here. Furthermore, if
        $(k,\ll)$ and $(k',\ll')$ are the instanton numbers and
        monopole charges for $(P,\varphi)$ and
        $(P',\varphi')$ respectively, then we can arrange that
        \[
                    \begin{aligned}
                        k &= k' + \sum_{x\in\mathbf{x}} k_{x} \\
                        \ll &= \ll' + \sum_{x\in\mathbf{x}} \ll_{x} \\
                    \end{aligned}
        \]

        \item \label{item:on-S4}
        For each such pair $(k_{x}, \ll_{x})$ with $x\in\Sigma$
        we can find an
        expression for these as finite sums,
        \[
\begin{aligned}
           k_{x} &= k_{x,1} + \dots + k_{x,m}\\
            \ll_{x} &= \ll_{x,1} + \dots + \ll_{x,m}
\end{aligned}
        \]
         and solutions $[A_{x,i}]$ in moduli spaces $M(S^{4},S^{2},
         P_{x,i}, \varphi_{x,i})$ for the round metric on $(S^{4},
         S^{2})$, where $P_{x,i}$ is the $G$-bundle on $S^{4}$ with
         $k(P_{x,i})=k_{x,i}$ and $\varphi_{x,i}$ is the reduction of
         structure group along $S^{2}$ classifies by the element
         $\ll_{x,i})$ in $\LL(G_{\Phi})$.        
    \end{enumerate}
\end{proposition}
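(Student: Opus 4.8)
The plan is to adapt the standard proof of Uhlenbeck compactness, as carried out for $\SU(2)$-connections with codimension-two singularities in \cite{KM-gtes-I}, to the present setting of a general simple, simply-connected $G$. The essential point is that all of the analytic input---the $\epsilon$-regularity theorem, the removal-of-singularities theorem, and the local compactness away from the bubbling set---are statements about anti-self-dual connections in the adjoint bundle $\g_{P}$ over punctured balls, and these carry over verbatim once one observes (as in the discussion following \eqref{eq:cA-def}) that near $\Sigma$ the linear analysis reduces, after decomposing $\g_{P}=\g_{\varphi}\oplus\fo_{\varphi}$ and further decomposing $\fo_{\varphi}$ into weight line bundles, to the scalar model operator $\nabla+i\beta(r)\mu\eta$ with $\mu=\alpha(\Phi)\in(0,1)$ studied there. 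Thus no new estimates are needed: one runs the usual argument to extract a subsequence, a limiting connection $[A]$ on a (possibly different) bundle $P'$, and a finite bubbling set $\mathbf{x}$.

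First I would establish parts (1) and (2): by $\epsilon$-regularity, away from the finite set $\mathbf{x}$ where the energy density concentrates more than a fixed threshold, the $A_{n}$ (after gauge transformations $g_{n}$) converge in $C^{\infty}$ on compact subsets to a connection on a bundle $P'|_{X\sminus\mathbf{x}}$; the isomorphism class extends over the punctures by removal of singularities (in the interior case this is Uhlenbeck's theorem; for punctures on $\Sigma$ one uses the singular removal-of-singularities result of \cite{KM-gtes-I}), and the sequence $g_{n}^{*}(\varphi)$ converges to a section $\varphi'$ defining a reduction to the same $G_{\Phi}$ because $G_{\Phi}$ is the stabilizer and the reduction type is a discrete invariant. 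The weak convergence of energy densities to $2|F_{A}|^{2}+\sum_{x}\mu_{x}\delta_{x}$ with $\mu_{x}>0$ is then formal. For part (3), the number $\mu_{x}$ is the concentrated energy at $x$; comparing the energy formula \eqref{eq:energyl-fmla-2} for $(P,\varphi)$ and $(P',\varphi')$ and using additivity of energy under weak limits gives $\sum_{x}\mu_{x}=8\pi^{2}(4h^{\vv}(k-k')+2\langle\Phi,\ll-\ll'\rangle)$; that each individual $\mu_{x}$ has the asserted form with integral $k_{x}$ and $\ll_{x}\in\LL(G_{\Phi})$, and that away from $\Sigma$ one has $\ll_{x}=0$, follows by a rescaling (blow-up) analysis at each point: rescaling the metric near $x$ produces, in the limit, a finite-energy anti-self-dual connection on $\R^{4}$ (or on $\R^{4}$ with a singularity along $\R^{2}$ if $x\in\Sigma$), which compactifies to a solution on $(S^{4},S^{2})$ whose instanton number $k_{x}$ is an integer and whose monopole charge $\ll_{x}$ lies in $\LL(G_{\Phi})$; quantization of $k_{x}$ comes from the integrality of the characteristic number on $S^{4}$, and $\ll_{x}=0$ when $x\notin\Sigma$ because there is then no surface to carry a charge. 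Part (4) is the statement that the bubble at $x\in\Sigma$ may itself consist of several bubbles colliding, so $(k_{x},\ll_{x})$ is a sum of the charges of finitely many solutions on $(S^{4},S^{2})$; this is obtained by iterating the rescaling argument at the points where, after a single rescaling, energy still concentrates, exactly as in the closed case.

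The main obstacle is the removal-of-singularities and energy-quantization analysis at punctures lying on $\Sigma$, where the model connection is itself singular. One must verify that a finite-energy anti-self-dual connection on the punctured model $(B^{4}\sminus\{0\})\sminus\Sigma$, asymptotic along $\Sigma$ to the standard singularity of type $\Phi$, in fact extends (up to gauge and up to a possible change of bundle) across the puncture $0$, with a well-defined limiting reduction of structure group and a well-defined pair $(k_{x},\ll_{x})$. In the $\SU(2)$ case this is precisely the content of the relevant sections of \cite{KM-gtes-I}, and the argument there---based on the conformal equivalence of the punctured ball with a half-infinite cylinder, monotonicity of energy on cylinders, and the index/energy bookkeeping for the weighted Sobolev setup---goes through for general $G$ once one replaces the $\SU(2)$ representation theory by the weight-space decomposition of $\fo$ recorded in section~\ref{subsec:root-systems}; the only care required is to track the lattice $\LL(G_{\Phi})$ correctly, which is handled by the same projection $\Pi$ used in Definition~\ref{def:lattice}. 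With that analytic input in hand, the proof is a routine transcription of the standard compactness argument, and I would present it as such, citing \cite{KM-gtes-I} for the details that are unchanged.
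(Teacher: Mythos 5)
Your proposal is correct and takes essentially the same approach as the paper: the paper offers no separate argument for this proposition, stating only that the $\SU(2)$ proof of \cite{KM-gtes-I} carries over without substantial change, with the reduction of the linear analysis near $\Sigma$ to the weighted line-bundle model being exactly the point you invoke. Your sketch of the rescaling analysis at bubble points and the bookkeeping of $(k_{x},\ll_{x})$ in $\Z\times\LL(G_{\Phi})$ is the intended adaptation.
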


The content of the last three parts of the proposition is that the
energy $\mu_{x}$ that is ``lost'' at each of the point $x$ in
$\mathbf{x}$ is accounted for by the energy of a collection of
solutions on $(S^{4},S^{2})$ that have bubbled off. (The expression
\begin{equation}\label{eq:energy-S4}
            8\pi^{2}( 4h^{\vv} k + 2\langle \Phi,\ll \rangle)
\end{equation}
is the formula for the energy in the case of $(S^{4},S^{2})$.) In
general, if no multiple of $\Phi$ is an integer point, then the set of
values realized by this function of $k$ and $\ll$ is dense in the real
line; and while the proof of Uhlenbeck's theorem does provide us with
a constant $\eta$ and a guarantee that $\mu_{x}\ge \eta$ in all cases,
we need better information than this to make use of the compactness
theorem in applications. For example, the statement of the result as
given does not guarantee that the formal dimension of
$M(X,\Sigma,P,\varphi)$ is not larger than that of
$M(X,\Sigma,P',\varphi')$.

The essential matter is to know which pairs $(k,\ll)$ in $\Z
\times \LL(G_{\Phi})$ are realized by solutions on $(S^{4},S^{2})$.

\begin{proposition}\label{prop:energy-inequalities}
    Let $\Phi$ as usual lie in the fundamental Weyl chamber and
    satisfy the necessary constraint $\theta(\Phi)<1$.
    Then for any solution $[A]$ on $(S^{4},S^{2})$
    with the round metric, the corresponding topological invariants
    $(k,\ll)$ in $\Z\times \LL(G_{\Phi})$ must satisfy the inequalities
    \begin{equation}\label{eq:inequality-1}
                    k \ge 0
    \end{equation}
    and
    \begin{equation}\label{eq:inequality-2}
                    n^{\vv}_{\alpha}k + \w_{\alpha}(\ll) \ge 0
    \end{equation}
    for all simple roots $\alpha$.
\end{proposition}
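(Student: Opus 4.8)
The plan is to extract, from any solution $[A]$ on $(S^4,S^2)$, a family of non-negative integers — energies and instanton numbers of abelian sub-objects — whose non-negativity translates directly into the claimed inequalities. The key point is that both $k\ge 0$ and the refined bounds \eqref{eq:inequality-2} should come from the positivity of $L^2$-energies, applied not to the full connection but to pieces of it obtained by pushing forward along characters of $G_\Phi$.

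First I would establish \eqref{eq:inequality-1}. On $(S^4,S^2)$ we have $b^+=1$ but the relevant self-dual harmonic form is supported away from $\Sigma$; more to the point, the energy formula \eqref{eq:energy-S4} gives $\cE = 8\pi^2(4h^\vee k + 2\langle\Phi,\ll\rangle)$, and since $\cE \ge 0$ always, together with a separate argument that the ``abelian part'' of the energy near $\Sigma$ is controlled, one gets that the instanton number of the ambient $G$-connection — which on $S^4$ measures $-\tfrac{1}{4h^\vee}p_1(\g)[S^4]$ — is non-negative by the standard Chern--Weil/ASD argument: for an anti-self-dual connection on a closed manifold with the round metric one has $p_1(\g)[X] \le 0$ with the sign conventions of \cite{AHS}, hence $k\ge 0$. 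I would reduce the singular case to this by the excision principle already invoked for the dimension formula, or by directly writing $8\pi^2 \cdot 4h^\vee k$ as $\cE$ minus the (sign-definite) boundary contribution $16\pi^2\langle\Phi,\ll\rangle$ coming from the singularity, and checking that the combination that survives is a multiple of $\int |F_A|^2$ over the complement, hence non-negative.

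The main work is \eqref{eq:inequality-2}. For a fixed simple root $\alpha$, I would consider the maximal parabolic $G(\alpha)\subset G$ from section~\ref{subsec:reducibles} and its primitive character $\w_\alpha: G(\alpha)\to U(1)$; note $G_\Phi \subseteq G(\alpha)$ precisely when $\alpha\in S^0(\Phi)$, and otherwise one passes through a larger group, so some care is needed, but in all cases the fundamental weight $\w_\alpha$ pairs with $\Phi$ and with $\ll$. Applying $\w_\alpha$ to a reduction of $A$ produces a singular $U(1)$-connection on $S^4\sminus S^2$ whose curvature $F$ satisfies, by the Chern--Weil computation of section~\ref{subsec:reducibles},
\[
\frac{i}{2\pi}[F] = c_1(\w_\alpha(P_\psi)) - \w_\alpha(\Phi)\,\mathrm{P.D.}[S^2].
\]
Since $A$ is anti-self-dual, so is $F$, and $-\int_{S^4\sminus S^2} F\wedge F = \int |F|^2 \ge 0$; evaluating the left side via the de Rham class above, and using $[S^2]\cdot[S^2]=0$ in $S^4$, expresses $\int|F|^2$ as a non-negative linear combination of $k$ and $\w_\alpha(\ll)$ — the coefficients being read off from the normalization $\langle\theta,\theta\rangle = 1/h^\vee$ and the relation between $c_1$ of the associated line bundle and the instanton number, which is where the integer $n^\vee_\alpha$ enters. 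The constraint $\theta(\Phi)<1$ is used exactly as in section~\ref{subsec:connections-and-moduli} to guarantee the associated singular line bundle has a well-defined topological degree (the holonomy eigenvalue is $\ne 1$), so that the Chern--Weil integral converges and the decomposition into integer and singular contributions is valid.

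The hard part will be pinning down the precise positive constant multiplying $(n^\vee_\alpha k + \w_\alpha(\ll))$ — i.e.\ verifying that when one pushes the ASD $G$-connection forward along $\w_\alpha$ and computes $\int|F|^2$, the resulting linear form in $(k,\ll)$ is a \emph{positive} multiple of $n^\vee_\alpha k + \w_\alpha(\ll)$ rather than some other combination. This is a root-theoretic bookkeeping exercise: one must match the coefficient of $k$ coming from $p_1(\g)$ restricted to the $j_\theta(\SU(2))$ or $j_\alpha(\SU(2))$ subgroup against the coefficient $n^\vee_\alpha$ appearing in $\theta^\vee = \sum n^\vee_\alpha \alpha^\vee$, and the coefficient of the monopole charge against $\w_\alpha(\ll)$, using \eqref{eq:magic-delta-formula} and \eqref{eq:theta-length}. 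Once the sign and the identification of the linear form are secured, non-negativity of $\int|F|^2$ gives the inequality immediately. I would also remark that \eqref{eq:inequality-1} is in fact the special case of this argument applied to the ``trivial'' character, or can be recovered from \eqref{eq:inequality-2} summed against the positive integers $n^\vee_\alpha$ together with the identity $h^\vee = 1 + \sum n^\vee_\alpha$, which is a useful consistency check on the constants.
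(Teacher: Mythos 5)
There is a genuine gap, and it is at the heart of your argument for \eqref{eq:inequality-2}. The Chern--Weil computation with characters that you are importing from section~\ref{subsec:reducibles} is only available for \emph{reducible} connections: one needs a parallel section $\psi$ of $\g_{P}$, hence a global reduction of the structure group of $A$ to $G_{\Psi}\subset G(\alpha)$ over $S^{4}\sminus S^{2}$, before one can ``apply $\w_{\alpha}$'' and obtain a $U(1)$-connection $s(A)$. A general solution on $(S^{4},S^{2})$ is irreducible; the reduction $\varphi$ to $G_{\Phi}$ exists only along $S^{2}$, not over the $4$-manifold, so there is no line bundle and no closed $2$-form $F$ to integrate. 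Even granting some decomposition of the curvature, anti-self-duality of $F_{A}$ does not descend to a component of it unless the decomposition is parallel, so the step ``since $A$ is ASD, so is $F$'' fails exactly in the cases the proposition is about. Your argument for \eqref{eq:inequality-1} has a related problem: for the singular connection, the only positivity you get from the one given solution is $\cE\ge 0$, i.e.\ $4h^{\vv}k+2\langle\Phi,\ll\rangle\ge 0$, which mixes $k$ and $\ll$; the promised ``separate argument'' isolating $k$ is precisely what is missing, and the observation that \eqref{eq:inequality-1} follows from \eqref{eq:inequality-2} does not help once \eqref{eq:inequality-2} is unproved.

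The paper's proof supplies the missing idea by \emph{varying $\Phi$} rather than dissecting the fixed solution. Munari's theorem identifies the moduli space on $(S^{4},S^{2})$ with a space of holomorphic data on $\CP^{2}$ that depends on $\Phi$ only through its stabilizer; hence if the moduli space is non-empty for your $\Phi$, it is non-empty for every $\Phi_{1}$ in the interior of $\Alcove\cap\fz(G_{\Phi})$. Applying the single positivity statement $\cE(S^{4},S^{2},P,\varphi_{1})\ge 0$ to each such $\Phi_{1}$, passing to the closed simplex by continuity, and using $\Pi(\Alcove)\subset\Alcove\cap\fz(G_{\Phi})$ together with $\ll\in\fz(G_{\Phi})$, one gets $2h^{\vv}k+\langle\psi,\ll\rangle\ge 0$ for all $\psi\in\Alcove$; evaluating at the vertices $\psi=0$ and $\psi=\w_{\alpha}^{\dag}/\theta(\w_{\alpha}^{\dag})$, and using $2h^{\vv}\theta(\w_{\alpha}^{\dag})=n^{\vv}_{\alpha}$, yields exactly \eqref{eq:inequality-1} and \eqref{eq:inequality-2}. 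In other words, the whole family of inequalities comes from the one scalar inequality $\cE\ge 0$ applied to a family of moduli problems, not from a family of abelian sub-objects inside one solution; if you want to salvage your outline, you would need some substitute for this deformation-of-$\Phi$ step (or the rational-map/K\"ahler-cone positivity in the $k=0$ case, which the paper mentions as the model).
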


\begin{remark}
    We will see in the course of the proof that the above inequalities
    are equivalent to a smaller set, namely the set consisting of the
    inequality
    \eqref{eq:inequality-1} together with the inequalities
    \eqref{eq:inequality-2} taken \emph{only} for those $\alpha$
    belonging to the set of simple roots $\alpha$ in $S^{+}(\Phi)$
    (the simple roots which are positive
    on $\Phi$).
\end{remark}

Before proving the proposition, we note an important corollary for the
formal dimensions of the non-empty moduli spaces on $(S^{4},S^{2})$.
The dimension formula in this case can be written
\[
                4 h^{\vv} l + 4 \langle \weyl^{\dag}, \ll \rangle -
                \dim G_{\Phi}.
\]
We can interpret the first two terms
\begin{equation}\label{eq:framed-dim}
  4 h^{\vv} l + 4 \langle \weyl^{\dag}, l \rangle
\end{equation}
as the dimension of a  framed moduli space, as follows. The gauge
group $\G^{p}(X,\Sigma, P,\varphi)$ consists of continuous
automorphisms of the bundle $P\to S^{4}$ which preserve the section
$\varphi$ of the adjoint bundle; so if we pick a point $s\in
S^{2}\subset S^{4}$ then there is  a closed subgroup $\G^{p}_{1}$
consisting of elements with $g(s)=1$. The formula
\eqref{eq:framed-dim} can be interpreted as the formal dimension of a
moduli space $\tilde{M}(S^{4}, S^{2})$ where we divide the space of
anti-self-dual singular connections by the smaller group $\G^{p}_{1}$
instead of the full gauge group. We then have:

\begin{corollary}\label{cor:at-least-4}
    For any non-empty moduli space on $(S^{4},S^{2})$ with the round
    metric, the corresponding instanton number $k$ and monopole charge are
    either both zero (in which case the moduli space contains only the flat
    connection) or satisfy
    \[
                    4 h^{\vv} k + 4 \langle \weyl^{\dag}, \ll \rangle
                    \ge 4.
    \]
\end{corollary}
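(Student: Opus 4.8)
\emph{Proof proposal.} The plan is to read off Corollary~\ref{cor:at-least-4} from Proposition~\ref{prop:energy-inequalities} after a short algebraic rearrangement. First I would combine the identity $\weyl=\sum_{\alpha\in\Sroots^{+}}\w_{\alpha}$ of \eqref{eq:magic-delta-formula} with $h^{\vv}=1+\sum_{\alpha\in\Sroots^{+}}n^{\vv}_{\alpha}$ from \eqref{eq:sum-is-coxeter}. Using $\langle\weyl^{\dag},\ll\rangle=\weyl(\ll)=\sum_{\alpha\in\Sroots^{+}}\w_{\alpha}(\ll)$ and substituting the formula for $h^{\vv}$, the quantity in question becomes
\[
4h^{\vv}k+4\langle\weyl^{\dag},\ll\rangle
 = 4k + 4\sum_{\alpha\in\Sroots^{+}}\bigl(n^{\vv}_{\alpha}k+\w_{\alpha}(\ll)\bigr).
\]
By Proposition~\ref{prop:energy-inequalities}, the invariants of a non-empty moduli space satisfy $k\ge 0$ and $n^{\vv}_{\alpha}k+\w_{\alpha}(\ll)\ge 0$ for every simple root $\alpha$, so the right-hand side is at least $4k$. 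Hence if $k\ge 1$ we already get $4h^{\vv}k+4\langle\weyl^{\dag},\ll\rangle\ge 4$, and the only remaining case is $k=0$.

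Suppose then $k=0$. The inequalities of Proposition~\ref{prop:energy-inequalities} reduce to $\w_{\alpha}(\ll)\ge 0$ for all simple $\alpha$, and the quantity of interest is $4\weyl(\ll)=4\sum_{\alpha\in\Sroots^{+}}\w_{\alpha}(\ll)$. If in addition $\ll=0$, then the energy formula \eqref{eq:energy-S4} gives $\cE=0$, so every connection in the moduli space is flat; identifying flat singular connections with the representations of $\pi_{1}(S^{4}\sminus S^{2})\cong\Z$ carrying the prescribed meridian holonomy, the moduli space consists of the single flat connection, as asserted in the statement. So assume $\ll\ne 0$; the content of the corollary is then that $\weyl(\ll)\ge 1$. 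To obtain this I would use that for $\alpha\in\Sroots^{+}(\Phi)$ the vector $\w_{\alpha}^{\dag}$ lies in $\fz(G_{\Phi})$ --- this is precisely the description of $\fz(G_{\Phi})$ recorded in Section~\ref{subsec:connections-and-moduli}. Writing $\ll=\Pi(\xi)$ with $\xi$ in the integer lattice of $\ft$ and using that $\Pi$ is the orthogonal projection onto $\fz(G_{\Phi})$, one finds
\[
\w_{\alpha}(\ll)=\langle\w_{\alpha}^{\dag},\Pi\xi\rangle=\langle\Pi\w_{\alpha}^{\dag},\xi\rangle=\langle\w_{\alpha}^{\dag},\xi\rangle=\w_{\alpha}(\xi)\in\Z
\]
for every $\alpha\in\Sroots^{+}(\Phi)$. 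These numbers are thus non-negative integers, and they cannot all vanish: since the $\w_{\alpha}^{\dag}$ with $\alpha\in\Sroots^{+}(\Phi)$ span $\fz(G_{\Phi})$, having $\w_{\alpha}(\ll)=\langle\w_{\alpha}^{\dag},\ll\rangle=0$ for all of them would force the element $\ll\in\fz(G_{\Phi})$ to be orthogonal to $\fz(G_{\Phi})$, hence zero. So some $\w_{\alpha}(\ll)$ with $\alpha\in\Sroots^{+}(\Phi)$ is a positive integer, hence at least $1$; adding the remaining non-negative terms $\w_{\beta}(\ll)$ gives $\weyl(\ll)\ge 1$, and therefore $4h^{\vv}k+4\langle\weyl^{\dag},\ll\rangle=4\weyl(\ll)\ge 4$.

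The arithmetic is routine once Proposition~\ref{prop:energy-inequalities} is available; the one step needing care is the passage from $k=0$, $\ll\ne 0$ to $\weyl(\ll)\ge 1$, where positivity of $\weyl(\ll)$ alone is not enough. The subtlety is that $\w_{\alpha}(\ll)$ need not be an integer when $\alpha\in\Sroots^{0}(\Phi)$ --- it can be a proper fraction --- so the integrality argument must be confined to the simple roots positive on $\Phi$, and on the remaining simple roots one exploits only the non-negativity supplied by Proposition~\ref{prop:energy-inequalities}. I expect this to be the only real point of the proof; it is consistent with, and in effect explains, the remark following Proposition~\ref{prop:energy-inequalities} that the relevant inequalities are governed by the simple roots in $\Sroots^{+}(\Phi)$.
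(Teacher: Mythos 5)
Your proposal is correct and follows essentially the same route as the paper: expand $4h^{\vv}k+4\langle\weyl^{\dag},\ll\rangle$ via $h^{\vv}=1+\sum n^{\vv}_{\alpha}$ and $\weyl=\sum\w_{\alpha}$, invoke the inequalities of Proposition~\ref{prop:energy-inequalities}, and use integrality of $\w_{\alpha}(\ll)$ for $\alpha\in\Sroots^{+}(\Phi)$ (your projection computation is the same fact the paper phrases as $\xi-\ll\in\ker\w_{\alpha}$), concluding that the quantity is at least $4$ unless $k=0$ and all these integers vanish, which forces $\ll=0$. The only cosmetic differences are your explicit case split on $k$ and your spelled-out verification of the flat-connection clause, which the paper leaves implicit.
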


\begin{proof}[Proof of Corollary~\ref{cor:at-least-4}]
    We recall the relation \eqref{eq:sum-is-coxeter} and the fact that
    $\weyl$ is the sum of the $\w_{\alpha}$ (taken over all
    simple roots $\alpha)$. Using these, we see that the sum of all the
    inequalities in Proposition~\ref{prop:energy-inequalities} gives
    us
    \[
                    h^{\vv} k +  \langle \weyl^{\dag}, \ll \rangle
                    \ge 0.
    \]
    To refine this a little, let us break up the sum into two parts
    according to whether $\alpha$ lies in $S^{+}(\Phi)$ or
    $S^{-}(\Phi)$: we obtain
    \[
    \begin{aligned}
        4 h^{\vv} k + 4 \langle \weyl^{\dag}, \ll \rangle
                    &=
                    4k + 4 \sum_{\alpha\in S^{+}(\Phi)}\bigl( h^{\vv}
                    k +  \langle \w_{\alpha}^{\dag}, \ll \rangle \bigr)
                   + 4 \sum_{\beta\in S^{0}(\Phi)}\bigl( h^{\vv} k +
                    \langle \w_{\beta}^{\dag}, \ll \rangle \bigr) \\
                    & \ge
                    4k + 4 \sum_{\alpha\in S^{+}(\Phi)}\bigl( h^{\vv}
                    k +  \langle \w_{\alpha}^{\dag}, \ll \rangle
                    \bigr)  .                  
    \end{aligned}
    \]
    As well as being non-negative by the proposition, the terms under
    the final summation sign are all integers: this is because $\ll$
    is the projection in $\fz(G_{\Phi})$ of an integer vector $\xi\in
    \ft$ and $\xi-\ll$ lies in the kernel of $\w_{\alpha}$ for
    $\alpha$ in $S^{+}(\Phi)$. (The similar terms involving the
    $\beta$ in $S^{0}(\Phi)$ need not be integers.)

    This shows that $4 h^{\vv} k + 4 \langle \weyl^{\dag}, \ll
    \rangle$ is at least $4$ unless $k$ is zero and
    $\w_{\alpha}(\ll)$ is zero for all $\alpha$ in $S^{+}(\Phi)$.
    These are independent linear conditions which imply that $k$ and
    $\ll$ are both zero.
\end{proof}

\begin{corollary}\label{cor:lose-4}
    In the situation of Proposition~\ref{prop:Uhlenbeck}, if the set
    of bubble-points $\mathbf{x}$ is non-empty, then the formal
    dimension of the moduli space $M(X,\Sigma,P',\varphi')$
    is smaller than the dimension of
    $M(X,\Sigma,P,\varphi)$, and the difference is at least $4$.
\end{corollary}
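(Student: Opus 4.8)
The plan is to combine the additivity of the instanton number and monopole charge under bubbling (part~\ref{item:charge-change} of Proposition~\ref{prop:Uhlenbeck}) with the dimension formula~\eqref{eq:dimension-fmla-2} and the energy estimate for bubbles on $(S^4,S^2)$ recorded in Corollary~\ref{cor:at-least-4}. The key observation is that the linear-in-$(k,\ll)$ part of the dimension formula, namely $4h^{\vv}k + 4\weyl(\ll)$, is exactly the quantity that Corollary~\ref{cor:at-least-4} controls, while the remaining terms $(\dim O/2)\chi(\Sigma) - (\dim G)(b^{+}-b^{1}+1)$ depend only on the topology of $(X,\Sigma)$ and $G$, and hence are the \emph{same} for the moduli spaces $M(X,\Sigma,P,\varphi)$ and $M(X,\Sigma,P',\varphi')$.

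First I would write down the difference of the two formal dimensions. Using~\eqref{eq:dimension-fmla-2} for each of $(P,\varphi)$ and $(P',\varphi')$, all the topological terms cancel, and the difference equals
\[
    4h^{\vv}(k - k') + 4\weyl(\ll - \ll').
\]
By part~\ref{item:charge-change} of Proposition~\ref{prop:Uhlenbeck} we have $k - k' = \sum_{x\in\mathbf{x}} k_x$ and $\ll - \ll' = \sum_{x\in\mathbf{x}} \ll_x$, so this difference is
\[
    \sum_{x\in\mathbf{x}}\bigl(4h^{\vv}k_x + 4\weyl((\ll_x)_{\dag})\bigr)
    \;=\;\sum_{x\in\mathbf{x}}\bigl(4h^{\vv}k_x + 4\langle\weyl^{\dag},\ll_x\rangle\bigr).
\]
(For $x\notin\Sigma$ we have $\ll_x=0$, so the corresponding term is $4h^{\vv}k_x$, which by~\eqref{eq:inequality-1} is $\geq 0$, and is in fact $\geq 4h^{\vv}\geq 4$ if $k_x>0$; but I should also rule out $k_x=0$ for such a point, which follows since a bubble carrying zero energy is no bubble at all.)

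Next I would invoke Corollary~\ref{cor:at-least-4} (applied via part~\ref{item:on-S4} of Proposition~\ref{prop:Uhlenbeck}): for each bubble point $x$, the pair $(k_x,\ll_x)$ is realized by a nonempty moduli space on $(S^4,S^2)$ — indeed it decomposes further into pieces each realized on $(S^4,S^2)$, and Corollary~\ref{cor:at-least-4} applies to each summand. Since $\mathbf{x}$ is nonempty there is at least one bubble point, and at that point the energy $\mu_x$ is strictly positive, so the pair $(k_x,\ll_x)$ cannot be the trivial one; hence at that point $4h^{\vv}k_x + 4\langle\weyl^{\dag},\ll_x\rangle \geq 4$. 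Every other term in the sum is $\geq 0$ by the same corollary (or by~\eqref{eq:inequality-1} in the off-$\Sigma$ case). Therefore the total difference is $\geq 4$, which is precisely the claim: $\dim M(X,\Sigma,P',\varphi') \leq \dim M(X,\Sigma,P,\varphi) - 4$.

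The main obstacle — really the only subtle point — is bookkeeping the case $x\notin\Sigma$ and making sure a bubble point genuinely contributes. For $x\notin\Sigma$ the lost energy $\mu_x = 8\pi^2\cdot 4h^{\vv}k_x$ is positive, forcing $k_x\geq 1$ and hence a contribution of $4h^{\vv}k_x \geq 4$ to the dimension drop; for $x\in\Sigma$ one uses the further decomposition of part~\ref{item:on-S4} together with Corollary~\ref{cor:at-least-4} applied summand-by-summand, noting that positivity of $\mu_x$ guarantees at least one nontrivial summand. Everything else is the cancellation of topological terms, which is immediate once one notes that $P$ and $P'$ have the same underlying $G$, and $\varphi,\varphi'$ reduce to the same subgroup $G_{\Phi}$ over a surface of the same topology.
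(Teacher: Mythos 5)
Your argument is correct and is essentially the paper's own proof: cancel the topological terms in \eqref{eq:dimension-fmla-2}, use the additivity of $(k,\ll)$ from Proposition~\ref{prop:Uhlenbeck}, and bound each bubble's contribution $4h^{\vv}k_{x,i}+4\langle\weyl^{\dag},\ll_{x,i}\rangle$ via Corollary~\ref{cor:at-least-4}. Your extra care in using positivity of $\mu_{x}$ to rule out all summands at a bubble point being trivial is a slightly more explicit version of the same step the paper makes implicitly (and note $\weyl(\ll_{x})$ already makes sense for $\ll_{x}\in\ft$, so the dagger in $\weyl((\ll_{x})_{\dag})$ is spurious).
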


\begin{proof}[Proof of Corollary~\ref{cor:lose-4}]
     The difference in the dimensions is equal to a sum
     \[
          \sum_{x}\sum_{i=1}\left(
          4 h^{\vv} k_{x,i} + 4 \langle \weyl^{\dag}, l_{x,i}
          \rangle\right)
     \]
     and each term is at least four by the previous corollary and the
     condition in part \ref{item:on-S4} of
     Proposition~\ref{prop:Uhlenbeck}.
\end{proof}

\begin{proof}[Proof of Proposition~\ref{prop:energy-inequalities}]
    The proof rests on a theorem of Munari \cite{Munari} which provides
    a correspondence between moduli spaces of singular instantons in
    $(S^{4},S^{2})$ and certain complex-analytic moduli spaces for
    holomorphic data on $\CP^{2}$. To state Munari's theorem, fix
    $\Phi$ as usual, let $P\to S^{4}$ be a $G$-bundle and $\varphi$ a
    section of $O_{P} \to S^{2}$ defining a  reduction of structure
    group. Pick a point $s\in S^{2}$ and let $\tilde{M}(S^{4}, S^{2},
    P,\varphi)$ be the corresponding framed moduli space. Let $\pi :
    \CP^{2} \to S^{4}$ be a map which collapses the line at infinity
    $\ell_{\infty}\subset \CP^{2}$ to the point $s$ and which maps
    another complex line $\Sigma$ to $S^{2}\subset S^{4}$. Write
    $s_{\infty}\in \CP^{2}$ for the point where $\Sigma$ and
    $\ell_{\infty}$ meet. Then we
    have:
    \begin{theorem}[\cite{Munari}; see also \cite{Biquard}]\label{thm:Munari}
        There is a bijection between the moduli space of singular
        anti-self-dual connections $\tilde{M}(S^{4}, S^{2},
        P,\varphi)$ on the one hand, and on the other, the set of
        isomorphism classes of collections $(\mathcal{P},
        \psi, \tau)$ where
        \begin{itemize}
            \item $\mathcal{P}\to \CP^{2}$ is a holomorphic principal
            $G^{c}$-bundle topologically isomorphic to $\pi^{*}(P)$,
            \item $\psi : \Sigma \to O_{\mathcal{P}}$ is a holomorphic
            section of the associated bundle on $\Sigma=\CP^{1}$ with
            fiber $O$, homotopic to the section $\pi^{*}(\varphi)$.
            \item $\tau$ is a  holomorphic trivialization of the
            restriction of $\mathcal{P}$ to $\ell_{\infty}$,
            satisfying the constraint that the induced trivialization
            of the adjoint bundle carries $\psi(s_{\infty})$ to
            $\Phi$.
        \end{itemize}
    \end{theorem}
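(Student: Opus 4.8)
\emph{Plan.} This is a Kobayashi--Hitchin type correspondence for singular (parabolic) instantons, and the plan is to construct the two maps of the asserted bijection and verify that they are mutually inverse, following the model of Donaldson's description of framed instantons on $S^{4}$ by holomorphic bundles on $\CP^{2}$, adapted to the singular setting of \cite{KM-gtes-I}. The underlying mechanism is conformal invariance of anti-self-duality in dimension four: writing $\CP^{2}\sminus\ell_{\infty}\cong\C^{2}\cong\R^{4}$ and $\Sigma\sminus s_{\infty}\cong\C\subset\C^{2}$, the collapsing map $\pi$ identifies $\C^{2}\sminus\C$ conformally with $S^{4}\sminus S^{2}$ (the basepoint $s$ lying on $S^{2}$), so that a singular ASD connection on $(S^{4},S^{2})$ restricts to a singular ASD connection on $(\C^{2},\C)$ with the Euclidean structure, and conversely such a connection extends smoothly across the point $s$ provided it has the right decay at infinity in $\C^{2}$.

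\emph{The forward map.} Given a class $[A]\in\tilde M(S^{4},S^{2},P,\varphi)$, the part $F_{A}^{0,2}=0$ of the ASD equation makes $\bar\partial_{A}$ an integrable holomorphic structure on $P|_{\C^{2}\sminus\C}$. The singular model \eqref{eq:singular-model-A} along $\C$, combined with the regularity theory in the weighted spaces $W^{p}_{k}$ of Section~\ref{subsec:connections-and-moduli} --- this is exactly where the constraint $\theta(\Phi)<1$ enters, keeping the holonomy eigenvalues $\exp(\pm2\pi i\alpha(\Phi))$ away from $1$ --- shows that this holomorphic $G^{c}$-bundle extends holomorphically across $\C$ and there acquires a holomorphic reduction of structure group to $G^{c}_{\Phi}$, that is, a holomorphic section $\psi$ of $O_{\mathcal P}$ over the affine part of $\Sigma$. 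The $L^{p}$ decay of $A-A^{\varphi}$ at infinity, together with the framing at the basepoint $s$ that defines $\tilde M$, lets one apply a removable-singularity and extension theorem in the holomorphic category to extend $(\mathcal P,\psi)$ and the framing $\tau$ across $\ell_{\infty}$, yielding a triple on all of $\CP^{2}$ of the required topological type.

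\emph{The reverse map, and the hard step.} Conversely, given a holomorphic triple $(\mathcal P,\psi,\tau)$ one must produce a Hermitian metric on $\mathcal P$ --- adapted to the parabolic structure $\psi$ with weights prescribed by $\Phi$, and asymptotically standard along $\ell_{\infty}$ relative to $\tau$ --- whose Chern connection satisfies the remaining, Hermitian--Einstein, part of the ASD equation; pushing this connection down by $\pi$ then gives the singular ASD connection on $(S^{4},S^{2})$. This is the analytic core: one solves the Hermitian--Einstein equation on $\CP^{2}$ in weighted Sobolev spaces modelled on the $W^{p}_{k}$ norms, by the continuity method as in Donaldson's original argument, with the parabolic weights dictating the singularity model along $\Sigma$ and the framing pinning down the behaviour along $\ell_{\infty}$. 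As in the non-singular case, no stability hypothesis is needed: fixing the framing along $\ell_{\infty}$ rigidifies the problem and makes the adapted Hermitian--Einstein metric both exist and be unique. Uniqueness then gives that the reverse map is a two-sided inverse of the forward map, once one also notes that the holomorphic extension across $\ell_{\infty}$ in the forward construction is itself unique given the framing. I expect the reverse direction to be the main obstacle: carrying out the Hermitian--Einstein analysis in the simultaneous presence of a codimension-two parabolic singularity along $\Sigma$ and a framing along $\ell_{\infty}$, and proving sharp regularity of the solution at both loci so that the resulting connection genuinely descends to $(S^{4},S^{2})$ with the prescribed singularity along $S^{2}$. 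This is the parabolic analogue of the weighted Fredholm theory already set up in Section~\ref{subsec:connections-and-moduli}, and it is again the condition $\theta(\Phi)<1$ --- equivalently \eqref{eq:highest-root-bound} --- that makes it go through.
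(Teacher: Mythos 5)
This statement is not proved in the paper at all: Theorem~\ref{thm:Munari} is quoted from \cite{Munari} (see also \cite{Biquard}) and used as a black box in the proof of Proposition~\ref{prop:energy-inequalities}. So there is no in-paper argument to measure yours against, and your outline has to be judged on its own terms. As a roadmap it is the expected one — a framed, parabolic Kobayashi--Hitchin correspondence — but the two steps where the actual content of the theorem lives are asserted rather than proved.

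In the forward direction, ``$L^{p}$ decay plus a removable-singularity and extension theorem in the holomorphic category'' does not by itself produce the extension of $(\mathcal{P},\psi)$ and the trivialization $\tau$ across $\ell_{\infty}$: extending a holomorphic $G^{c}$-bundle across a divisor is in general neither automatic nor unique, the framing has to emerge from the construction rather than be imposed, and the point $s_{\infty}$ is especially delicate because there the collapsed line meets the locus $\Sigma$ over which the connection is singular. Even in the non-singular model case this extension is the delicate part of the classical $S^{4}/\CP^{2}$ correspondence, which Donaldson established by passing through monads/ADHM data rather than by a direct removable-singularity argument. More seriously, the reverse direction — existence, uniqueness and sharp regularity of a Hermitian--Einstein metric adapted simultaneously to the parabolic structure along $\Sigma$ (with weights prescribed by $\Phi$) and to the framing along $\ell_{\infty}$ — is precisely the analytic theorem of Munari and Biquard. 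Invoking ``the continuity method as in Donaldson's original argument'' and asserting that the framing removes any stability hypothesis is not a routine adaptation: the framing is an asymptotic boundary condition on a problem that is effectively non-compact, and the cone singularity along $\Sigma$ requires the weighted/edge estimates of the type developed by Biquard (or the $W^{p}_{k}$ theory of \cite{KM-gtes-I}), together with a verification that the two constructions are mutually inverse and preserve the topological data $(k,\ll)$. As written, then, your text is a sensible plan with a citation-shaped hole at its centre; to count as a proof it would need either the framed parabolic Hermitian--Einstein analysis carried out in detail, or an explicit reduction to \cite{Munari} and \cite{Biquard} — which is what the paper itself does.
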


    A special case of this theorem, which may help to understand the
    statement, is the case that $k=0$ and the bundle $P$ on $S^{4}$ is
    trivial. In this case $\mathcal{P}$ is topologically trivial; and
    the trivialization on the line at infinity forces $\mathcal{P}$ to
    be analytically trivial also, so that $\tau$ extends uniquely to
    a holomorphic trivialization of $\mathcal{P}\to\CP^{2}$. The data
    $\psi$ then becomes a based rational map: a holomorphic map from
    $\Sigma = \CP^{1}$ to $O$ sending $s_{\infty}$ to $\Phi$.

    Staying with this special case, the inequalities of
    Proposition~\ref{prop:energy-inequalities} have a straightforward
    interpretation. For a holomorphic map $\psi$ from
    $\CP^{1}$ to $O$, the pairing of $\psi(\CP^{1})$ with any class in
    the closure of  the K\"ahler
    cone of $O$ must be non-negative. The inequalities of the
    proposition when $k=0$ can be seen as consequences of this
    statement. This is essentially the same argument that was used by
    Murray \cite{Murray-1} to constrain the possible charges
    of monopoles on $\R^{3}$.

    For the general case, the strategy is similar, but we use the
    energy $\cE$ of the anti-self-connection, rather than the energy
    of a holomorphic map. The essential point, which the following immediate
    corollary of Theorem~\ref{thm:Munari} above:

    \begin{corollary}
        Suppose $\Phi$ and $\Phi_{1}$ are two elements of the
        fundamental Weyl chamber with the same stabilizer, so that
        $\fz(G_{\Phi})=\fz(G_{\Phi_{1}})$. Suppose both satisfy the
        constraint \eqref{eq:highest-root-bound}.
        Then $M(S^{4},S^{2},P,\varphi)$ is homeomorphic to
        $M(S^{4},S^{2},P,\varphi_{1})$ when
        $\varphi$ and $\varphi_{1}$ are sections of the bundles
        associated to the adjoint action of $G$ on the orbits of
        $\Phi_{1}$ and $\Phi_{2}$ respectively, with the same homotopy
        class. In particular, one of these moduli spaces is non-empty
        if and only if the other is.
    \end{corollary}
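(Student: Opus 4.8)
The plan is to read this off directly from Munari's Theorem~\ref{thm:Munari}: moving $\Phi$ to $\Phi_{1}$ within a fixed stabilizer class changes none of the holomorphic data appearing in that theorem except the normalization constraint on $\tau$ at $s_{\infty}$, and that constraint transforms in the obvious way. First I would record the elementary Lie-theoretic fact underlying everything: the hypothesis $G_{\Phi}=G_{\Phi_{1}}$ forces $\Roots^{0}(\Phi)=\Roots^{0}(\Phi_{1})$ (these are the roots vanishing on the common center $\fz(G_{\Phi})$), and since $\Phi$ and $\Phi_{1}$ both lie in the fundamental Weyl chamber this gives $\Sroots^{+}(\Phi)=\Sroots^{+}(\Phi_{1})$ and hence $\Roots^{\pm}(\Phi)=\Roots^{\pm}(\Phi_{1})$. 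Therefore the adjoint orbits $O=G\cdot\Phi$ and $O_{1}=G\cdot\Phi_{1}$ are identified by the $G$-equivariant map $g\cdot\Phi\mapsto g\cdot\Phi_{1}$, which is well defined precisely because the stabilizers coincide; and this identification is a biholomorphism for the preferred complex structures, since at the basepoint both structures are given by identifying the tangent space $\fo$ with the same subspace $\fo^{-}\subset\fo\otimes\C$. Being $G$-equivariant between two complex homogeneous spaces of $G^{c}$, it is in fact $G^{c}$-equivariant, and it sends $\Phi$ to $\Phi_{1}$.

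Next I would transport the holomorphic data of Munari's theorem across this biholomorphism. For any holomorphic principal $G^{c}$-bundle $\mathcal{P}\to\CP^{2}$ the identification $O\cong O_{1}$ induces a fibrewise isomorphism of associated holomorphic bundles $O_{\mathcal{P}}\cong(O_{1})_{\mathcal{P}}$. Fix the topological bundle $P\to S^{4}$ and a homotopy class of section, with representatives $\varphi$ of $O_{P}|_{S^{2}}$ and $\varphi_{1}$ of $(O_{1})_{P}|_{S^{2}}$ corresponding under this isomorphism (so that ``same homotopy class'' is interpreted via the canonical identification of both classifying sets with $\LL(G_{\Phi})$). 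Then the map $(\mathcal{P},\psi,\tau)\mapsto(\mathcal{P},\bar\psi,\tau)$, where $\bar\psi$ is $\psi$ followed by $O_{\mathcal{P}}\cong(O_{1})_{\mathcal{P}}$, is a bijection between the set of triples in Theorem~\ref{thm:Munari} for $(P,\varphi,\Phi)$ and the corresponding set for $(P,\varphi_{1},\Phi_{1})$: the topological type of $\mathcal{P}$ is unchanged, the section $\bar\psi$ is still homotopic to $\pi^{*}(\varphi_{1})$ because the biholomorphism is a homotopy equivalence carrying the class of $\varphi$ to that of $\varphi_{1}$, and the constraint that $\tau$ carry $\psi(s_{\infty})$ to $\Phi$ becomes exactly the constraint that it carry $\bar\psi(s_{\infty})$ to $\Phi_{1}$, since the biholomorphism sends $\Phi$ to $\Phi_{1}$. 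As Munari's correspondence is a homeomorphism for the natural topologies, this produces a homeomorphism $\tilde{M}(S^{4},S^{2},P,\varphi)\cong\tilde{M}(S^{4},S^{2},P,\varphi_{1})$ of the framed moduli spaces.

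Finally I would descend from framed to unframed moduli spaces. Recall from the discussion preceding the corollary that $M(S^{4},S^{2},P,\varphi)$ is the quotient of the framed moduli space $\tilde{M}(S^{4},S^{2},P,\varphi)$ by the residual gauge group of automorphisms of the fibre $P_{s}$ preserving $\varphi(s)\in O$, which is a copy of the common stabilizer $G_{\Phi}=G_{\Phi_{1}}$ (this is consistent with the two dimension formulas, which differ by $\dim G_{\Phi}$). The homeomorphism of framed moduli spaces constructed above is the identity on the underlying bundle and framing data, hence equivariant for these two $G_{\Phi}$-actions, so it descends to a homeomorphism $M(S^{4},S^{2},P,\varphi)\cong M(S^{4},S^{2},P,\varphi_{1})$, and the ``in particular'' clause about non-emptiness is immediate. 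The one step that deserves care is the compatibility claimed in the first paragraph: one must genuinely check that the tautological identification $O\cong O_{1}$ is $G^{c}$-equivariant and biholomorphic, not merely a $G$-equivariant diffeomorphism, since it is only in that form that it transports holomorphic sections $\psi$ of $O_{\mathcal{P}}$ and respects the normalization of $\tau$ at $s_{\infty}$; granted that, everything else is bookkeeping.
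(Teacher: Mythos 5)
Your argument is correct and is essentially the paper's: the corollary is stated there as an immediate consequence of Theorem~\ref{thm:Munari}, precisely because the holomorphic data $(\mathcal{P},\psi,\tau)$ depends on $\Phi$ only through the orbit as a complex $G^{c}$-homogeneous space together with its marked point, and these are unchanged under the canonical $G^{c}$-equivariant biholomorphism $g\cdot\Phi\mapsto g\cdot\Phi_{1}$ that you construct from the equality of stabilizers. Your additional care in descending from the framed moduli space to $M(S^{4},S^{2},P,\varphi)$ by quotienting by the common residual group $G_{\Phi}=G_{\Phi_{1}}$ just makes explicit a detail the paper leaves implicit.
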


    To apply this corollary, suppose that a moduli space $M(S^{4},
    S^{2}, P,\varphi)$ is non-empty. Let $k\in\Z$ and $\ll \in
    \LL(G_{\Phi})\subset \fz(G_{\Phi})$ be the topological invariants of $P$
    and $\varphi$. Let $\Alcove$ be the alcove in $\ft$ defined as the
    intersection of the fundamental Weyl chamber with the half-space
    $\theta \le 1$, where $\theta$ is the highest root. Thus $\Alcove$
    is a closed simplex. The intersection $\Alcove\cap \fz(G_{\Phi})$ is a
    simplex with possibly smaller dimension. In applying the
    corollary, the admissible values for $\Phi_{1}$ are precisely the
    interior points of the simplex $\Alcove\cap \fz(G_{\Phi})$.
    A necessary condition for  a moduli space to be non-empty is that
    the associated topological energy
    $\cE(S^{4},S^{2},P,\varphi_{1})$ is non-negative, so the corollary
    tells us that
    \begin{equation}\label{eq:simplex-inequality}
                        2 h^{\vv} k + \langle \psi ,
                        \ll
                        \rangle \ge 0
    \end{equation}
    for all interior points of $\Alcove\cap \fz(G_{\Phi})$, and hence for
    all points in the closed simplex $\Alcove\cap \fz(G_{\Phi})$, by
    continuity. If $\Pi : \ft \to \fz(G_{\Phi})$ again denotes the
    orthogonal projection, then it is a fact about the geometry of
    $\Alcove$ that
    \[
                    \Pi (\Alcove) \subset \Alcove \cap \fz(G_{\Phi}).
    \]
    As $\ll$ itself lies in $\fz(G_{\Phi})$, we deduce that the
    inequality \eqref{eq:simplex-inequality} holds not just for $\psi$
    in $\Alcove\cap \fz(G_{\Phi})$, but for all $\psi$ in $\Alcove$.

    The vertices of the simplex $\Alcove$ are the point $0$ and the
    points $\psi =
    \w_{\alpha}^{\dag}/\theta(\w_{\alpha}^{\dag})$, as
    $\alpha$ runs through the simple roots. Applying
    \eqref{eq:simplex-inequality} with $\psi$ at these vertices, we
    obtain $k\ge0$ and
    \[
                            2 h^{\vv}\theta(\w_{\alpha}^{\dag}) k
                            + \w_{\alpha}(\ll) \ge 0
    \]
    for all simple roots $\alpha$.
    To complete the proof of the inequality \eqref{eq:inequality-2},
    we calculate, using \eqref{eq:theta-length} and the definition of
    the coroots,
    \[
\begin{aligned}
                    2 h ^{\vv}\theta(\w_{\alpha}^{\dag}) &=
                      2 \langle \theta, \theta\rangle^{-1}
                      \w_{\alpha}(\theta^{\dag}) \\
                      &=\w_{\alpha}(\theta^{\vv}) \\
                      &= n^{\vv}_{\alpha}.
\end{aligned}
    \]
    This completes the proof of the proposition.
\end{proof}

\begin{example}
We illustrate the $\SU(N)$ case. Arrange the eigenvalues
of $\Phi$ as usual, as $i\lambda_{1}$, \dots $i\lambda_{m}$ with
$\lambda_{1}> \dots >\lambda_{m}$ and $\lambda_{1}-\lambda_{m}<1$.
Let $N_{s}$ be the multiplicity of
the eigenspace for $\lambda_{s}$, so that $\varphi$ defines a reduction of
$P|_{S^{2}}$ to the subgroup
\[
            S(U(N_{1}) \times \dots \times U(N_{m})).
\]
Let $k$ be $c_{2}(P)[S^{4}]$, and let
$l_{1},\dots, l_{m}$ be the first Chern numbers,
\[
            l_{s} = -c_{1}(E_{s})[S^{2}]
\]
where $E_{s}$ is the associated $U(N_{s})$ bundle. 
We have $\sum l_{s} =0$. Then the inequalities of
Proposition~\ref{prop:energy-inequalities}, taken just for the extreme
cases when $\alpha$ is in $S^{+}(\Phi)$, become
\begin{equation}\label{eq:monopole-number-inequalities}
\begin{gathered}
     k \ge 0 \\
     k + l_{1} \ge 0 \\
     \dots  \\
     k + l_{1} + l_{2} + \dots + l_{m-1} \ge 0.
\end{gathered}
\end{equation}
Note that the first inequality can also be written as the
non-negativity of $k + l_{1} + \dots + l_{m}$, because the $l_{s}$ add
up to
zero. Let us write
\[
                K_{s} = k + \sum_{t < s} l_{t},
\]
so that the above inequalities assert $K_{s}\ge0$.
Then we observe that the formal dimension of the framed moduli space,
given by the formula \eqref{eq:SUN-dim}, can be written
as
\[
            2(N_{m} + N_{1}) K_{1} + 2 (N_{1}+N_{2}) K_{2} + \dots +
            2(N_{m-1}+N_{m}) K_{m}.
\]
This is bounded below by
\[
                    4 ( K_{1} + \dots + K_{m}).
\]
In particular the dimension of the moduli space is at least $4$,
unless $k$ and the $l_{s}$ are all zero. Slightly more precisely, we
can state:

\begin{corollary}\label{cor:at-least-2N}
For $\Phi$ as above, and $G=\SU(N)$,
    the minimum possible formal dimension of any non-empty framed moduli space of
    positive formal dimension on
    $(S^{4}, S^{2})$ is
    \[
                \min \{ \, 2(N_{s-1} + N_{s}) \mid s=1,\dots,  m \,\}
    \]
   where we interpret $N_{0}$ as a synonym for $N_{m}$. In particular,
   no moduli space has dimension less than $4$, except for the trivial
   zero-dimensional moduli space; and in the special case that there
   are only two distinct eigenvalues, the smallest
   positive-dimensional moduli space has dimension $2N$.
\end{corollary}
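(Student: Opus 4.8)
The plan is to read the statement directly off the dimension formula established in the example immediately above, and then to verify that the lower bound it yields is sharp by exhibiting the minimal solutions explicitly via Munari's correspondence. So I would first recall that, writing $K_s = k + \sum_{t<s} l_t$ and using the cyclic convention $N_0 = N_m$, the inequalities of Proposition~\ref{prop:energy-inequalities} amount precisely to $K_s \ge 0$ for $s = 1,\dots,m$, while the formal dimension of the framed moduli space $\tilde{M}(S^4,S^2,P,\varphi)$ equals $\sum_{s=1}^{m} 2(N_{s-1}+N_s)\,K_s$. The three facts to keep in view are that each $K_s$ is an integer (because $k=c_2(P)[S^4]$ and $l_s=-c_1(E_s)[S^2]$ are integers), that each $K_s$ is non-negative once the moduli space is non-empty, and that each coefficient $2(N_{s-1}+N_s)$ is an even integer which is at least $4$ since every $N_s\ge1$.

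The lower bound is then immediate. If a non-empty framed moduli space has positive formal dimension, then the $K_s$ are non-negative integers that are not all zero, so some $K_{s_0}\ge1$, and the formal dimension is at least $2(N_{s_0-1}+N_{s_0})\ge\min_s 2(N_{s-1}+N_s)\ge4$. This establishes that no positive-dimensional moduli space has dimension less than $4$; and when $m=2$ the only coefficient that occurs is $2(N_1+N_2)=2N$, so the smallest positive-dimensional moduli space then has dimension $\ge 2N$.

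To show that the value $\min_s 2(N_{s-1}+N_s)$ is actually attained, I would fix an index $s_0$ realizing this minimum and construct a non-empty moduli space with $K_{s_0}=1$ and $K_s=0$ for $s\ne s_0$; its formal dimension is then exactly $2(N_{s_0-1}+N_{s_0})$, as required. Unwinding the definition of $K_s$, these invariants correspond to $k=0$ with $l_{s_0-1}=1$, $l_{s_0}=-1$ and all other $l_t=0$ when $s_0\ge2$, and to $k=1$ with $l_1=-1$, $l_m=1$ and the rest zero when $s_0=1$. For $s_0\ge2$ the bundle is trivial and $k=0$, so Theorem~\ref{thm:Munari} identifies the moduli space with the space of based holomorphic maps $\CP^1\to O$ lying in the corresponding class in $H_2(O;\Z)$ --- the class of a line in $O$ through the point $\Phi$ --- and such maps plainly exist. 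For $s_0=1$, the same theorem identifies the moduli space with a charge-one holomorphic $G^{c}$-bundle $\mathcal{P}$ on $\CP^2$ with a trivialization over $\ell_\infty$, together with a holomorphic section $\psi$ of $O_{\mathcal{P}}$ over $\Sigma$ of the prescribed degree such that the trivialization carries $\psi(s_\infty)$ to $\Phi$; one produces such data from a charge-one bundle whose underlying instanton is centered at a point of $\Sigma$ distinct from $s_\infty$, so that $\Sigma$ becomes a jumping line of splitting type $\mathcal{O}(1)\oplus\mathcal{O}(-1)\oplus\mathcal{O}^{N-2}$ and the required filtration over $\Sigma$ can be written down explicitly.

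The first two steps are routine arithmetic once the example's dimension formula is granted, so the substance of the argument is the third step, and within it I expect the ``affine'' case $s_0=1$ to be the main obstacle: Proposition~\ref{prop:energy-inequalities} supplies only \emph{necessary} conditions for non-emptiness, so attainment cannot be deduced formally, and one must genuinely produce solutions --- for which Munari's correspondence, Theorem~\ref{thm:Munari}, is the essential tool, with the $s_0=1$ case depending on the existence and restriction behavior of a concrete charge-one bundle on $\CP^2$.
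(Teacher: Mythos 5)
Your lower-bound argument is exactly the paper's: the corollary is stated right after the displayed identity $\dim = \sum_{s} 2(N_{s-1}+N_{s})K_{s}$ with $K_{s}\ge 0$, and the paper offers no further proof, so "each $K_{s}$ is a non-negative integer and each coefficient is an even integer $\ge 4$" is all that is behind it. Where you genuinely depart from the paper is in the third step: the paper never addresses sharpness (it reads "minimum possible" as the minimum allowed by the constraints of Proposition~\ref{prop:energy-inequalities}, and only the lower bound is ever used later, e.g.\ in Proposition~\ref{prop:broken-compactness}), whereas you set out to realize the minimal value by exhibiting solutions with $K_{s_{0}}=1$ and $K_{s}=0$ otherwise. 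That extra step is in the spirit of the paper's own use of Theorem~\ref{thm:Munari} (for $k=0$ the moduli space is the space of based rational maps, and the class with $l_{s_{0}-1}=1$, $l_{s_{0}}=-1$ is a Schubert-line class in the flag manifold $O$, so non-emptiness is clear), and your $s_{0}=1$ construction via a charge-one bundle $\mathcal{E}_{0}\oplus\mathcal{O}^{N-2}$ with $\Sigma$ a jumping line, a direct-sum flag putting $\mathcal{O}(1)$ in $E_{1}$ and $\mathcal{O}(-1)$ in $E_{m}$, is plausible; but be aware that this part carries unverified details the paper never needed: the sign convention tying $l_{s}$ to $-\deg E_{s}$ in the holomorphic picture (which determines whether $\mathcal{O}(1)$ goes into the sub or the quotient), and the base-point normalization $\tau(\psi(s_{\infty}))=\Phi$, which you should justify, e.g.\ by composing the trivialization $\tau$ with a constant element of $G^{c}$ acting transitively on flags of the given type. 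In short: your proof contains the paper's argument verbatim and then proves a stronger statement (attainment) that the paper asserts at most implicitly; if you only need what the paper uses, the first two steps suffice.
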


\end{example}

\subsection{Orbifold metrics and connections}
\label{subsec:orbifold-metrics}

Up until this point, we have considered a moduli space
$M(X,\Sigma,P,\varphi)$ of singular instantons defined using a space of
connections $\cA^{p}(X,\Sigma,P,\varphi)$ modeled on an $L^{p}_{1}$ Sobolev
space, with $p$ a little bigger than $2$. There are disadvantages
associated with having to use such a weak Sobolev norm: for example,
these connections $A$ are not continuous, which creates difficulties
if we want to use holonomy perturbations later. There is also a
difficulty with proving the sort of vanishing theorems that are
usually used to show that the moduli spaces of solutions on $S^{4}$,
for example, are smooth.

Something that was exploited in \cite{KM-gtes-I} is that we \emph{can} use
stronger Sobolev norms if we first make a slight change to the
geometry of our picture. We will explain this here.

We shall equip $X$ with a singular metric $g^{\nu}$ which has an
orbifold-type singularity along the surface $\Sigma$, with cone-angle
$2\pi/\nu$ for some integer $\nu>0$. This means that at each point of
$\Sigma$ there is a neighborhood $U$ such that $(U\sminus\Sigma,
g^{\nu})$ is isometric to the quotient of a smooth Riemannian
manifold by a cyclic group of order $\nu$: the model for such a metric
in the flat case is the metric
\[
            du^{2} + dv^{2} + dr^{2} + \left( \frac{r^{2}}{\nu^{2}}
            \right) d\theta^{2}.
\]
As motivation, if $\Phi$ is an element of $\ft\subset\g$ with the
property that $\nu\Phi$ is integral, then our model singular
connection $A^{\varphi}$ from \eqref{eq:singular-model-A} can be
constructed so that it becomes a
smooth connection on passing to  $\nu$-fold branched cover; so if we
use the metric $g^{\nu}$, then we can regard $A^{\varphi}$ as an
orbifold connection, and we can reinterpret it as being a smooth
connection in the orbifold sense.

We shall not use the orbifold language here, except in referring to
the metric $g^{\nu}$ as having an ``orbifold singularity''. Also,
when using the metric $g^{\nu}$, we shall not require that $\nu\Phi$
be integral. What we will exploit is that, by making $\nu$ sufficiently
large, the ``Fredholm package'' that is used in constructing the
moduli spaces can be made to work in Sobolev spaces with any desired
degree of regularity. More precisely, let $A^{\varphi}$ be the model
singular connection on $(X,\Sigma)$ equipped with the metric
$g^{\nu}$, and let $d_{A^{\varphi}}^{+}$ be the linearized
anti-self-duality operator acting on $\g_{P}$-valued $1$-forms,
defined using the metric $g^{\nu}$. On differential forms on
$X\sminus\Sigma$, define the norms $\check{L}^{p}_{k,A^{\varphi}}$
using the Levi-Civita derivative of $g^{\nu}$ and the covariant
derivative of $A^{\varphi}$ on $\g_{P}$. Then let
$\mathcal{D}_{\varphi}$ be the operator
\begin{equation}\label{eq:D-4d}
                \mathcal{D}_{\varphi} = -d^{*}_{A^{\varphi}} \oplus
                d^{+}_{A^{\varphi}}
\end{equation}
acting on the spaces
\begin{equation}\label{eq:check-Sobolev}
                \check{L}^{p}_{k,A^{\varphi}}(X\sminus\Sigma, \g_{P}\otimes
                \Lambda^{1})
                \to
                \check{L}^{p}_{k-1,A^{\varphi}}(X\sminus\Sigma, \g_{P}\otimes
                (\Lambda^{0}\oplus\Lambda^{+}))               
\end{equation}
Then $\mathcal{D}_{\varphi}$ is Fredholm, as shown in
\cite[Proposition 4.17]{KM-gtes-I}:

\begin{proposition}[\cite{KM-gtes-I}]\label{prop:Fredholm-cone}
    Given any compact subinterval $I\subset (0,1)$ and any $p$ and
    $m$, there exists a $\nu_{0}=\nu_{0}(I,p,m)$ such that for all
    $\nu\ge\nu_{0}$, all $k\le m$ and all $\Phi$ in the
    fundamental Weyl chamber satisfying
    \[
                        \alpha(\Phi) \in I, \forall \alpha\in
                        R^{+}(\Phi),
    \]
    the operator $\mathcal{D}_{\varphi}$ acting on the spaces
    \eqref{eq:check-Sobolev} is Fredholm, as is its formal adjoint, and
    the Fredholm alternative holds.
\end{proposition}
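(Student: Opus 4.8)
The plan is to treat the Fredholm property as a local matter and reduce everything to a model analysis in a tubular neighbourhood of $\Sigma$. Away from $\Sigma$ the operator $\mathcal{D}_{\varphi}=-d^{*}_{A^{\varphi}}\oplus d^{+}_{A^{\varphi}}$ is an ordinary first-order elliptic operator on a compact piece of a smooth Riemannian manifold with a smooth connection, so interior elliptic estimates apply there and contribute no analytic difficulty. All the work is concentrated in the tubular neighbourhood $\nu\supset\Sigma$, which with the metric $g^{\nu}$ is modelled on the product of $\Sigma$ with a flat $2$-dimensional cone of cone-angle $2\pi/\nu$. Using the product structure together with the decomposition $\g_{P}=\g_{\varphi}\oplus\fo_{\varphi}$, refined into weight spaces for the maximal torus, one reduces $\mathcal{D}_{\varphi}$ in this neighbourhood to a direct sum of model operators: on $\g_{\varphi}$ the model connection extends smoothly in the orbifold sense, and on each weight summand of $\fo_{\varphi}$ one is looking at a Dirac-type operator on a complex line bundle over the cone carrying the singular connection $\nabla+i\beta(r)\mu\eta$, with $\mu=\alpha(\Phi)$ for $\alpha\in\Roots^{+}(\Phi)$, so that $\mu\in I$.

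Second, I would analyse each line-bundle model by separation of variables on the cone. Writing the normal disc in polar coordinates $(r,\theta)$ and expanding in Fourier modes $e^{in\theta}$, the operator $\mathcal{D}_{\varphi}$ becomes a family (indexed by $n\in\Z$ and by the tangential variables) of ordinary differential operators in $r$ on the half-line. The decisive quantitative point is the location of the indicial roots, i.e.\ the discrete set of weights at which these radial operators fail to be invertible: because the angular operator on a cone of cone-angle $2\pi/\nu$ has eigenvalues scaled by $\nu^{2}$, the indicial roots occur at values of the form $\nu n\pm\mu$ plus a shift bounded independently of $n$ and $\nu$, so consecutive critical weights are separated by a gap of order $\nu$. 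The model operator is Fredholm on the Sobolev space $\check{L}^{p}_{k,A^{\varphi}}$ precisely when the weight that this norm imposes — which is determined by $k$ and $p$ and stays bounded for $k\le m$ — lies strictly between two consecutive critical weights; and one obtains the Fredholm alternative for $\mathcal{D}_{\varphi}$ and for its formal adjoint simultaneously, since the adjoint has the reflected set of critical weights.

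Third, I would pick $\nu_{0}=\nu_{0}(I,p,m)$ large enough that, for every $\nu\ge\nu_{0}$, every $k\le m$ and every $\mu\in I$, the operative weight falls in such an admissible gap with a lower bound on the gap that is independent of all these parameters; this uniformity is exactly what the hypothesis $\alpha(\Phi)\in I$ with $I\subset(0,1)$ compact provides, because then the shifts $\pm\mu$ stay a fixed distance from the integers and from one another. Granting that the cone models are Fredholm with uniform estimates, one builds a global parametrix for $\mathcal{D}_{\varphi}$ on the spaces \eqref{eq:check-Sobolev} by gluing the cone parametrix to the interior parametrix with the cutoff $\beta$, the gluing errors being compact operators; Atkinson's theorem then gives that $\mathcal{D}_{\varphi}$ is Fredholm, and the same argument applied to the complementary operator gives the statement for the formal adjoint.

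The main obstacle is the second step: pinning down the indicial roots as explicit functions of $\mu$ and $\nu$, and checking that in this Sobolev setting the weight carried by $\check{L}^{p}_{k,A^{\varphi}}$ is the one that lands in the admissible gap for all $k\le m$ once $\nu$ is large. This is precisely the ``Fredholm package'' developed in \cite{KM-gtes-I}, and for a general compact simple group $G$ the analysis is no different from the $\SU(2)$ case treated there: after the weight-space decomposition every nontrivial summand is one of the line-bundle cone models already analysed, and the summand on $\g_{\varphi}$ requires no weights at all.
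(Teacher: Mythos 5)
The overall architecture you propose (localize near $\Sigma$, split $\g_{P}$ into $\g_{\varphi}\oplus\fo_{\varphi}$ and then into weight lines, reduce to the line-bundle cone model $\nabla+i\beta(r)\mu\eta$, separate variables, and patch parametrices) is the right shape and is indeed how the cited reference proceeds; note that the paper itself gives no proof here, quoting \cite[Proposition 4.17]{KM-gtes-I}. But your decisive quantitative step is wrong, and the error sits exactly where the proposition gets its content. For the metric $dr^{2}+(r^{2}/\nu^{2})\,d\theta^{2}$ and the mode $e^{in\theta}r^{s}$, the angular operator contributes $\nu^{2}(n+\mu)^{2}/r^{2}$, so the indicial roots are $s=\pm\nu\,|n+\mu|$: the holonomy offset $\mu$ is scaled by $\nu$ along with $n$. (You can check this by passing to the local $\nu$-fold branched cover, where the connection becomes smooth after a gauge transformation $e^{-i\nu\mu\psi}$ and the admissible exponents are $\nu|n-\mu|$.) Your formula ``$\nu n\pm\mu$ plus a bounded shift'' would leave roots at distance roughly $\mu<1$ from zero for $n=0$, \emph{independently of $\nu$}.

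With roots pinned near $\pm\mu$ as in your statement, step three collapses: the window of exponents that $\check{L}^{p}_{k,A^{\varphi}}$ forces you to clear grows with $k$, so no choice of $\nu_{0}$ could make the model operator Fredholm on these spaces for all $k\le m$ once $m$ is large, and the claimed dependence $\nu_{0}=\nu_{0}(I,p,m)$ is left unexplained (what you would actually prove is only the low-regularity statement, essentially the $p$-close-to-$2$ theory of \eqref{eq:p-close-to-2}, where increasing $\nu$ buys nothing). The true mechanism is that \emph{every} nonzero indicial root has absolute value at least $\nu\cdot\min_{\mu\in I}\{\mu,1-\mu\}$, which grows linearly in $\nu$ with a rate controlled by the compactness of $I\subset(0,1)$; choosing $\nu_{0}$ comparable to $m$ divided by this distance (with constants depending on $p$) clears the whole window needed for all $k\le m$ simultaneously, and uniform invertibility of the radial model operators then feeds into the parametrix patching and the Fredholm alternative as you describe. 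Correct the root computation and propagate the factor of $\nu$ through step three, and the argument goes through.
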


This proposition gives us the linear part of the theory needed for the
gauge theory; the non-linear aspects are the multiplication theorems
and the Rellich lemma, which also go through in this setting: see
\cite{KM-gtes-I} for details.  When using the orbifold-type metric, we
will fix an integer $m>2$ and define our space of connections as
\[
            \cA(X,\Sigma,P,\varphi) = \{ \, A \mid A - A^{\varphi} \in
            \check{L}^{2}_{m,A^{\varphi}} \,\}.
\]
We write $\G(X,\Sigma,P,\varphi)$ for the corresponding gauge group, whose
Lie algebra is $L^{2}_{m+1,A^{\varphi}}(X\sminus\Sigma, \g_{P})$,
and we let
\[
                M(X,\Sigma,P,\varphi) \subset \cA(X,\Sigma,P,\varphi)/ \G
\]
be the moduli space of singular anti-self-dual connections for the
metric $g^{\nu}$. The formula for the dimension of the moduli space
at an irreducible regular point (i.e. the index of the operator
$\mathcal{D}_{\varphi}$ above) is given by the same formula
\eqref{eq:dimension-fmla-2} as before, as is the energy $\cE$ of a
solution.

\subsection{Orienting moduli spaces}
\label{subsec:orienting-4d}

We next show that the moduli spaces of singular instantons are
orientable, and discuss how to orient them. Again, for the case
$G=\SU(2)$, the necessary material is in \cite{KM-gtes-I}. In the case
that the $K$ is absent, the orientability of the moduli spaces for a
general simple Lie group $G$ and simply-connected $X$ is
explained in
\cite{Donaldson-Kronheimer}. For the case of $\SU(N)$ and arbitrary
$X$, a proof is given in \cite{Donaldson-orientations}. In the
following proposition, we treat a simple, simply-connected group $G$.
Recall that $\Sigma$ is an \emph{oriented} surface.

\begin{proposition}\label{prop:orientable-sing}
    In the moduli space $M(X,\Sigma,P,\varphi)$, the set of regular
    points $M^{\reg}$ is an orientable manifold. If the dimension of $G$ is even,
    then $M^{\reg}$ has a  canonical orientation; while if $G$ is
    odd-dimensional, the manifold $M^{\reg}$ can be canonically
    oriented once a homology orientation for $X$ is given.
\end{proposition}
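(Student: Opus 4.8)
The plan is to use the determinant-line-bundle method of \cite{Donaldson-Kronheimer, Donaldson-orientations}, carried over to the singular setting exactly as for $\SU(2)$ in \cite{KM-gtes-I}. I would work in the orbifold set-up of Section~\ref{subsec:orbifold-metrics}, so that the linearized operator $\mathcal{D}_{A}=-d^{*}_{A}\oplus d^{+}_{A}$ of \eqref{eq:D-4d} is genuinely Fredholm by Proposition~\ref{prop:Fredholm-cone}. Since $\mathcal{D}_{A}$ depends on $A$ only through the continuous zeroth-order term $a\mapsto[a,\,\cdot\,]$, the assignment $A\mapsto\mathcal{D}_{A}$ is a continuous family of Fredholm operators over the affine space $\cA=\cA(X,\Sigma,P,\varphi)$, and hence carries a real determinant line bundle $\lambda\to\cA$ with fibre $\lambda_{A}=\det(\ker\mathcal{D}_{A})\otimes\det(\operatorname{coker}\mathcal{D}_{A})^{*}$. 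Over the open set of irreducible connections that are regular points of the anti-self-duality equation we have $\operatorname{coker}\mathcal{D}_{A}=0$ and $\ker\mathcal{D}_{A}$ is the tangent space $T_{[A]}M$, so $\lambda$ restricts there to the pullback of the orientation bundle of $M^{\reg}$. Because $\cA$ is contractible, $\lambda$ is trivial; this already shows $M^{\reg}$ is orientable, and it remains to (a) choose the trivialization canonically, with the advertised dependence on a homology orientation of $X$ when $\dim G$ is odd, and (b) check that the choice is $\G$-invariant, so that it descends to $\cA/\G$ and hence to $M^{\reg}$.

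For (a) I would run an excision argument of the kind used in \cite{Donaldson-orientations} and \cite{KM-gtes-I}. Joining $A$ by a path in $\cA$ to a model connection that is the singular model near $\Sigma$, is the product connection on a trivialization of $\g_{P}$ away from a tubular neighbourhood of $\Sigma$ and finitely many small balls, and carries the instanton charge inside those balls, and then using additivity of determinant lines along paths, under direct sums, and under cutting $X$ along separating $3$-spheres, one obtains a canonical isomorphism
\[
    \lambda_{A}\;\cong\;\Lambda_{\Sigma}(P,\varphi)\otimes\bigl(\det\operatorname{ind}\mathcal{D}^{X}_{\R}\bigr)^{\otimes\dim G},
\]
where $\mathcal{D}^{X}_{\R}=-d^{*}\oplus d^{+}$ is the \emph{untwisted} operator on $X$ and $\Lambda_{\Sigma}(P,\varphi)$ assembles the index contributions localized along $\Sigma$ together with the $S^{4}$--bubbling contributions indexed by $k$. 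The crucial point is that $\Lambda_{\Sigma}(P,\varphi)$ has a preferred \emph{complex} orientation: the extra index concentrated along $\Sigma$ is that of a $\bar\partial$--type operator coupled to the complex vector bundle $\fo_{\varphi}$ --- complex because $\fo$ carries the $G_{\Phi}$--invariant complex structure $J$ and the orbit $O=G/G_{\Phi}$ is a complex manifold (Section~\ref{subsec:connections-and-moduli}) --- and the $S^{4}$ pieces are complex-orientable in the familiar way. On the other hand $\det\operatorname{ind}\mathcal{D}^{X}_{\R}\cong\det H^{1}(X;\R)\otimes\bigl(\det(H^{0}(X;\R)\oplus H^{+}(X;\R))\bigr)^{*}$, and an orientation of this line is exactly a homology orientation of $X$. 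If $\dim G$ is even this factor occurs to an even power and $L^{\otimes 2}$ has the canonical orientation $\ell\otimes\ell>0$ for any real line $L$, so $\lambda$ is canonically oriented with no extra data; if $\dim G$ is odd a single copy of $\det\operatorname{ind}\mathcal{D}^{X}_{\R}$ survives, so a homology orientation of $X$ is both necessary and sufficient.

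For (b), every ingredient of this construction is untouched by a gauge transformation $g\in\G(X,\Sigma,P,\varphi)$: the homotopy class of $(P,\varphi)$ is fixed because $g$ fixes the bundle, the complex structure on $O$ is fixed, and $H^{*}(X;\R)$ is intrinsic to $X$ and carries no $\G$--action. Since $g$ also intertwines $\mathcal{D}_{A}$ with $\mathcal{D}_{g^{*}A}$ compatibly with all the cutting and gluing, the excision isomorphisms are natural, so the canonical orientation of $\lambda_{A}$ matches that of $\lambda_{g^{*}A}$. Hence the orientation descends to the moduli space; in particular it is fixed by the finite stabilizer of any $[A]\in M^{\reg}$, which therefore acts orientation-preservingly on $\ker\mathcal{D}_{A}$, so that $M^{\reg}$ is an oriented orbifold even at points with stabilizer larger than the centre.

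The one step that needs real care, rather than a citation, is the excision argument in the presence of the codimension-$2$ singularity along $\Sigma$: one must check, following \cite{KM-gtes-I}, that the weighted/orbifold Fredholm package of Section~\ref{subsec:orbifold-metrics} is compatible with the standard gluing formulae for determinant lines, and that the index contribution concentrated along $\Sigma$ really is that of a complex-linear operator. Neither poses a difficulty beyond the $\SU(2)$ case already treated in \cite{KM-gtes-I}, but that is where the substance of the proof lies; the even/odd dichotomy in $\dim G$ then falls out cleanly.
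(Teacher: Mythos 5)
There is a genuine gap at the crucial step. Triviality of the determinant line bundle $\lambda$ over the affine space $\cA$ is vacuous (the base is contractible); orientability of $M^{\reg}$ is equivalent to triviality of the induced real line bundle over the \emph{quotient} $\bonf^{*}$ (or over the framed space $\tbonf$), i.e.\ to the statement that every component of the gauge group acts orientation-preservingly on the trivialized determinant line. Your step (b) disposes of this by ``naturality,'' but that argument is circular: the isomorphism you write in (a) is built from choices (a path from $A$ to a model connection, cuttings along $3$-spheres, gluing of index contributions), and the assertion that the trivialization so obtained is independent of these choices -- in particular that transporting around the loop formed by a path from $A$ to $g^{*}A$ together with the identification by $g$ gives $+1$ for $g$ in a non-identity component of $\G$ -- is exactly the orientability statement to be proved. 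Since $\pi_{0}(\G)$ is nontrivial, nothing in ``the homotopy class of $(P,\varphi)$ is fixed and $H^{*}(X;\R)$ carries no $\G$-action'' rules out a sign. Relatedly, your claim that the ``$S^{4}$-bubbling contributions'' carry complex orientations is unjustified: for a general simple $G$ the relevant operator is coupled to the \emph{real} adjoint representation, and the orientability of those local contributions is precisely the content of Donaldson's theorem, not something available ``in the familiar way.''

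The paper supplies exactly the missing input by a stabilization-in-reverse argument that your proposal does not contain: it picks the highest-root embedding $j_{\theta}:\SU(2)\to G$, reduces $P$ to an $\SU(2)$-bundle $Q$, and uses the fact that $(G,j_{\theta}(\SU(2)))$ is $4$-connected to conclude that $j_{*}:\tbonf(X,Q)\to\tbonf(X,P)$ is surjective on $\pi_{1}$; it then decomposes the adjoint representation of $G$ under $j_{\theta}(\SU(2))$ into one copy of the adjoint of $\SU(2)$, copies of the complex $2$-dimensional representation, and trivial summands, so that $j_{*}^{*}\Omega(X,P)$ is a tensor product of $\Omega(X,Q)$ (trivial by \cite{Donaldson-orientations}), complex-oriented factors, and trivial factors. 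That is where the orientability actually gets proved. The parts of your proposal that do match the paper are the later steps: fixing the canonical orientation at the trivial connection (giving $\dim G$ copies of $-d^{*}\oplus d^{+}$ with real coefficients, hence the even/odd dichotomy and the role of the homology orientation), and the comparison of the singular and non-singular determinant lines by excision to a neighborhood of $\Sigma$, where the discrepancy is an operator coupled to $\fo$ and the complex structure on $\fo$ supplies the orientation. If you insert the $j_{\theta}$-reduction step (or some other genuine computation of the monodromy of the determinant line over $\tbonf$), the rest of your outline can be made to work.
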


\begin{proof}
We first deal with the case that $\Sigma$ is absent: we consider the
orientability of the set of regular points in a moduli space $M(X,P)$
of (non-singular) anti-self-dual connections. Following the usual
argument, we consider the space of all connections modulo gauge,
$\bonf(X,P)$, and also the space of framed connections $\tbonf(X,P)$:
the quotient of the space of connections by the based gauge group.
Over $\tbonf(X,P)$ one has a real determinant line bundle
$\Omega(X,P)$, the determinant of the family of operators obtained by
coupling $-d^{*}\oplus d^{+}$ to the family of connections in the
adjoint bundle. To show that $M^{\reg}(X,P)$ is orientable, we will
show that $\Omega(X,P)$ is trivial.

We will reduce the problem to the known case of an $\SU(2)$ bundle by
applying (in the reverse direction) the same stabilization argument
used in \cite{Donaldson-orientations}. 
Pick any long
root for $G$, say the highest root $\theta$, and let $j: \SU(2) \to G$
be the corresponding copy of $\SU(2)$.
The structure group of $P$ can
be reduced to the subgroup $j(\SU(2))$, giving us an $\SU(2)$ bundle
$Q\subset P$, and we have a map
\[
                j_{*} : \tbonf(X,Q) \to \tbonf(X,P)
\]
whose domain is the space of based $\SU(2)$ connections. From
\cite{Donaldson-orientations}, we know that the corresponding line
bundle $\Omega(X,Q)$ on $\tbonf(X,Q)$ is trivial.

The pair
$(G,j(\SU(2))$ is $4$-connected, so the inclusion of based gauge
groups is surjective on $\pi_{0}$; and the map $j_{*}$ above is
therefore surjective on $\pi_{1}$. To show that the determinant line
$\Omega(X,P)$ on $\tilde\bonf(X,P)$ is trivial, it is therefore enough to show that
its pull back by $j_{*}$ is trivial. 
As stated in section~\ref{subsec:root-systems},
the adjoint representation of $G$
decomposes as a representation of $j(\SU(2))$ as one copy of the adjoint
representation of $\SU(2)$, a number of copies of the
$2$-dimensional representation of $\SU(2)$, and a number of copies of
the trivial representation. Accordingly, the pull-back of $\Omega(X,P)$ by
$j_{*}$ is a tensor product of a number of real line bundles: the one
corresponding to the adjoint representation is a copy of
$\Omega(X,Q)$;  the determinant lines corresponding to
the $2$-dimensional representations are orientable using the complex
orientations; and the remaining factors are trivial. Thus the
triviality of $\Omega(X,P)$ is reduced to the known case of
$\Omega(X,Q)$.

Once one knows that the moduli space is orientable, the next issue is
to specify a standard orientation. We stay with the case that $\Sigma$ is
absent. By ``addition of instantons'', the matter is reduced to
specifying a trivialization of $\Omega(X,P)$ in the case that $P$ is
trivial. In this case we can look at the fiber of $\Omega(X,P)$ at the
trivial connection, where operator is the standard operator
$-d^{*}\oplus d^{+}$ coupled to the trivial bundle $\g$. In the case
that $\g$ is even-dimensional, the determinant line can be canonically
oriented; while in the case that $\g$ is odd-dimensional, we need to
specify and orientation
the determinant of the operator $-d^{*}\oplus d^{+}$ with real
coefficients, i.e.~a homology orientation for $X$.
Conventions for these choices can be set up so that
the orientation of the moduli space agrees with its complex
orientation when $X$ is K\"alher:
the arguments from \cite{Donaldson-orientations} are
adapted to the case of $\SU(N)$ in  \cite{K-higherrank}, and the case
of a general simple $G$ is little different.

We now consider how the determinant line changes when we introduce a
codimension-2 singularity along $\Sigma\subset X$. There is again a
determinant line bundle $\Omega(X,\Sigma,P,\varphi)$ over the space
$\tbonf(X,\Sigma,P,\varphi)$ of singular connections. We must show
this line bundle is trivial. If we consider
the restriction of the line bundle to a compact family $S\subset
\tbonf(X,\Sigma,P,\varphi)$  of singular
connections, then the data is a family of $G$ bundles $P_{s}$ with a
family of reductions of structure group, $\varphi_{s}$. Let $A_{s}$
be a family of smooth connections in the bundles $P_{s}$. We may
suppose that $A_{s}|_{\Sigma}$ respects the reduction $\varphi_{s}$ to
the group $G_{\Phi}\subset G$. We may also suppose that $A_{s}$ can be
identified with pull-back of $A_{s}|_{\Sigma}$ in a tubular
neighborhood of $\Sigma$. Let $A^{\varphi}_{s}$ be constructed from
$A_{s}$ by adding the singular term in the usual way, as at
\eqref{eq:singular-model-A}. Over $S$, we can consider two line
bundles: first the line bundle $\Omega(X,P)$, which we have already
seen is trivial; and second the line bundle
$\Omega(X,\Sigma,P,\varphi)$, the determinant line of the deformation
complex for the singular instantons. We must examine the ratio
\begin{equation}\label{eq:Omega-ratio}
            \Omega(X,\Sigma,P,\varphi) \otimes \Omega(X,P)^{-1}
\end{equation}
and show that it is trivial.

By excision, we can replace $X$ now by
the sphere-bundle over $\Sigma$ which is obtained by doubling the
tubular neighborhood, and we can replace the family of connections
$A_{s}$ by the family of $G_{\Phi}$ connections obtained by pulling
back $A_{s}|_{\Sigma}$.  In this setting,
the adjoint bundle $\g_{P}$ over $S\times
\Sigma$ decomposes as a sum of two sub-bundles, associated to the
decomposition of $\g$ as $\g_{\Phi} \oplus \fo$ in \eqref{eq:fo-def}.
There is a corresponding tensor product decomposition of each of the
determinant lines in \eqref{eq:Omega-ratio} above. On the summand
$\g_{\Phi}$, the two operators agree; so the ratio
\eqref{eq:Omega-ratio} is isomorphic to the ratio of determinant lines
for the same operators coupled only to the subbundle coming from
$\fo$ instead of to all of $\g$. Since $\fo$ is complex, the ratio of
determinant lines can be given its complex orientation, which
completes the proof that the moduli space is orientable. This argument
also shows that a choice of orientation for $\Omega(X,P)$ gives rise
to a preferred orientation for $\Omega(X,\Sigma,P,\varphi)$.
So the data needed to orient the moduli space is the same in the
singular and non-singular cases.
\end{proof}

\subsection{The unitary and other non-simple groups}
\label{subsec:unitary-4d}

Up until this point, $G$ has always been a simple and simply-connected
group. One very straightforward generalization is allow $G$ to be
semi-simple and still simply-connected. In this case $G$ is a product
of simple groups $G_{1}\times \dots \times G_{m}$, and our
configuration spaces of connections are simply products.  We can
define the lattice $\LL(G_{\Phi})$ and the the monopole charge $\ll$ just
as before, with the understanding that we are now dealing with a root
system that is reducible. The only new feature here is that the
instanton charge $k$ is now an $m$-tuple, $k=(k_{1},\dots, k_{m})$,
and in the dimension and action formulae we see
\[
            \sum h_{i}^{\vv} k_{i}
\]
with $h^{\vv}_{i}$ the dual Coxeter number of $G_{i}$, where
previously we had just $h^{\vv}k$. The monotone condition and
Proposition~\ref{prop:Einstein} are unchanged.

There is a more interesting variation to consider when $G$ has center
of positive dimension, such as in the case of the unitary group. We
will make the assumption that $G$ is connected and that
the commutator subgroup $[G,G]$ is
simply connected. We shall write $Z(G)$ for the center of $G$, and we
set
\[
\begin{aligned}
\bar{Z}(G) &= G/[G,G] \\
                                &= Z(G)/(Z(G)\cap [G,G]).
                                \end{aligned}                                
\]
The quotient map will be written
\begin{equation}\label{eq:g-det}
            \gdet : G\to\bar{Z}(G),
\end{equation}
and we use the same notation also for the corresponding map on the Lie
algebras. The abelian group $Z(G)$ may not be connected, but
$\bar{Z}(G)$ is a  torus. We will run through some of the points to
show how the theory adapts to this case.

\paragraph{Instanton moduli spaces.}
Let us temporarily omit the codimension-2 singularity along $\Sigma$
from our discussion. An appropriate setting for gauge theory in a
$G$-bundle $P\to X$ when $Z(G)$ has positive dimension is not to consider
the space of all $G$-connections in $P$, but instead to fix a
connection $\Theta$ in the associated $\bar Z(G)$-bundle, $\gdet(P)
\to X$, and to consider the space $\cA(X,P)$ of connections $A$ in $P$ which
induce the given connection $\Theta$ in $\gdet(P)$:
\[
            \cA(X,P) = \{\, A \mid \gdet(A) = \Theta \,\}
\]
(In the case of
the unitary group $U(N)$, this means looking at the unitary connections in a
rank-$N$ vector bundle $E$ inducing a given connection $\Theta$ in
$\Lambda^{N}E$.)
Such a
$G$-connection $A$ in $P$ is entirely determined by the induced
connection $\bar{A}$ in the associated $(G/Z(G))$-bundle $\bar P$.
The
appropriate gauge group in this context is not the group of
all automorphisms of $P$, but instead the group $\G(X,P)$ consisting
of automorphisms which take values in $[G,G]$ everywhere. That is, an
element of $\G(X,P)$ is a section of associated fiber bundle arising
from the adjoint action of $G$ on the subgroup $[G,G]$. In the case of
a  unitary vector bundle, this is the group of unitary automorphisms
of a vector bundle $E\to X$ having determinant $1$ at every point.
The moduli space $M(X,P)$ is the subspace of the quotient $\bonf(X,P)
= \cA(X,P)/\G(X,P)$ consisting of all $[A]$ such that the curvature of
$\bar{A}$ (not the curvature of $A$) is anti-self-dual:
\[
            M(X,P) = \{ \, A \in \cA(X,P) \mid F^{+}_{\bar{A}}=0 \,\}
            / \G(X,P).
\]

Note that the chosen connection $\Theta$ really plays no role here,
and we could equally well regard $\cA(X,P)$ as paramatrizing the
connections $\bar{A}$ in $\bar{P}$. (Later, however, when we introduce
holonomy perturbations in section~\ref{subsec:holomy-perts}, a choice
of $\Theta$ will be important.) Indeed, $\bonf(X,P)$ and $M(X,P)$
really depend only on the adjoint group $\bar{G}$ and the bundle
$\bar{P}$, because both the adjoint bundle with fiber $[\g,\g]$ and
the bundle of groups with fiber $[G,G]$ (whose sections are the gauge
transformations) are bundles associated to $\bar{P}$. The choice of
$G$ therefore only affects which bundles $\bar{P}$ can arise.

Because of this last observation, we can if we wish start with the
simply-connected semi-simple group $G_{1}$ with adjoint group
$\bar{G}_{1}$ and construct a group $G$ with
$[G,G]\cong G_{1}$ as an extension. Such a $G$ can be described by taking a
subgroup $H_{1}$ of the finite group $Z(G_{1})$ together with a torus
$S$ and an
injective  homomorphism $a : H_{1} \to S$; one then defines
\begin{equation}\label{eq:G-from-G1}
                    G = (G_{1} \times S) / H
\end{equation}
where $H \subset H_{1} \times S$ is the graph of $a$. A given bundle
$\bar{P}$ with structure group $\bar{G}=\bar{G_{1}}$ lifts to a
$G$-bundle $P$ if and only if its characteristic class
\[
            \bar{c}(\bar{P}) \in H^{2}(X; \pi_{1}(\bar{G}_{1}) )
\]
lifts to a class
\[
            c \in H^{2}(X; \pi_{1}(G) ).
\]
The image of the map $\pi_{1}(G) \to \pi_{1}(\bar{G}_{1}) \cong
Z(G_{1})$ is our chosen subgroup $H_{1}$ and $\pi_{1}(G)$ is
torsion-free; so the bundle $\bar{P}$ has
a lift to a $G$-bundle if and only if $\bar{c}(\bar{P})$ lies in the
subgroup $H^{2}(X; H_{1})$ and admits an ``integer lift'' to
$\Z^{k}$ for one (and hence any) presentation of $H_{1}$ as
\[
                   0\to \Z^{k} \to \Z^{k} \to H_{1} \to 0.
\]
This discussion shows us that, in order to allow the largest possible
collection of $\bar{G_{1}}$-bundles to lift, and to avoid redundancy,
we may impose the following conditions.

\begin{condition}\label{cond:G-conditions}
    In the construction of the non-semi-simple group $G$ from $G_{1}$
    in \eqref{eq:G-from-G1}, we may require
    \begin{enumerate}
        \item the subgroup $H_{1} \subset Z(G_{1})$ is the whole of
        $Z(G_{1})$;
        \item the rank $k$ of the torus $S$ is chosen to be equal to
        the number of generators in a smallest possible generating set
        of $H_{1}$; or equivalently, the image of $H_{1}$ in $S$ is
        not contained in any proper sub-torus.
    \end{enumerate}
\end{condition}

These conditions mean that if $G_{1}$ is simple, then $G=G_{1}$ in the
case of type $E_{8}$, $F_{4}$ or $G_{2}$ (the simply connected cases);
while $S$ will be a circle group in all other cases except $D_{2r}$,
where $S$ will be a $2$-torus. These conditions do not determine $G$
uniquely, in general. For example, in the case that $G_{1}=\SU(N)$,
the condition allows that $G$ is $U(N)$ but also allows $G$ to be
$U(N)/C_{m}$, where $C_{m}$ is a cyclic central subgroup of order
prime to $N$.

\paragraph{Singular instantons.}
We fix a maximal torus and a set of positive roots
for $G$. Because $G$ is not semi-simple, the fundamental Weyl chamber
in $\ft = \mathrm{Lie}(T)$ is the product of the fundamental Weyl
chamber for $[\g,\g]$ with $\fz(G) = \mathrm{Lie}(Z(G))$.
The bundle $P$ is no longer trivial on the $3$-skeleton of $X$,
because $\pi_{1}(G)$ is non-trivial. We can identify $\pi_{1}(G)$
with the lattice
\[
            \LL(G) \subset \fz(G)
\]
obtained as the projection of the integer lattice in $\ft$. The
bundle $P$ has a $2$-dimensional characteristic class which we write
as
\begin{equation}\label{eq:c-char-class}
                c(P)  \in H^{2}(X; \LL(G)).
\end{equation}

Now we introduce the codimension-$2$ singularity along a surface
$\Sigma\subset X$. 
Fix an element $\Phi$ in the Lie algebra $\g$
belonging to the fundamental Weyl chamber and satisfying
$\theta(\Phi)<1$, where $\theta$ is the highest root.
Let $O\subset \g$ be the orbit of $\Phi$ under the
adjoint action. This lies in a translate  of $[\g,\g]$ inside $\g$
consisting of all elements with the same trace as $\Phi$.
Choose a
section $\varphi$ of the associated bundle $O_{P}|_{\Sigma}$, so
defining a reduction of the structure group of $P|_{\Sigma}$ to the
subgroup $G_{\Phi}$.

   Let $\Theta$ again be a fixed
    connection in the associated bundle $\gdet(P)$ on $X$, and
    let ${A}^{0}$ be a $G$-connection with $\gdet(A^{0})=\Theta$.
    Choose an extension of the section $\varphi$ to the tubular
    neighborhood, and define a singular $G$ connection on
    the restriction of $P$ to $X\sminus\Sigma$ by the same formula
    as in the previous case:
\begin{equation}\label{eq:singular-model-A-tilde}
            {A}^{\varphi} =  {A}^{0} + \beta(r) \varphi\otimes \eta
\end{equation}
   The induced connection on $\gdet(P)$ is
   \begin{equation}\label{eq:Theta-varphi}
                \Theta^{\varphi} = \Theta + \beta(r)
                \gdet(\varphi)\otimes \eta.
   \end{equation}
 We
    define a space of $G$-connections
     modeled on $A^{\varphi}$ and having the same induced connection
     on $\gdet(P)$:
    \[
            \cA^{p}(X,\Sigma,P,\varphi) = \{\,  A^{\varphi} + a \mid
                                                a, \nabla_{A^{\varphi}}
                                                a \in L^{p}(X\sminus
                                                \Sigma;
                                                [\g,\g]\otimes\Lambda^{1}(X)) \,\}.
    \]  
    Our gauge group will consist of $[G,G]$-valued automorphisms of
    $P|_{X\sminus\Sigma}$:
    \[
                    \G^{p}(X,\Sigma,P,\varphi) = \{\,
                g  \mid
                \text{$\nabla_{A^{\varphi}}g,
                \nabla^{2}_{A^{\varphi}}g \in
                L^{p}(X\sminus\Sigma; [G,G]_{P})$}\,\}.
    \]
    For $p$ sufficiently close to $2$, as in \eqref{eq:p-close-to-2},
    the moduli space $M(X,\Sigma,P,\varphi)$ is defined as the quotient by
    $\G^{p}$ of the set of solutions to $F^{+}_{\bar{A}}=0$ in
    $\cA^{p}(X,\Sigma,P,\varphi)$.

    \paragraph{Monopole charges.}
    We continue to write $Z(G_{\Phi})$ for the center of the commutant
    $G_{\Phi}$, and $\LL(G_{\Phi}) \subset \fz(G_{\Phi})$ for the image of the
    integer lattice in $\ft$ under the projection $\Pi : \ft \to
    \fz(G_{\Phi})$.  The structure group of the bundle $P_{\varphi} \to \Sigma$
    can still be reduced to the subgroup $T$, and the resulting
    $T$-bundle is classified by an element $\xi$ in the integer
    lattice. The element $\ll = \Pi(\xi)$ determines $P_{\varphi}$ up
    to isomorphism. The bundle $P$ itself may be non-trivial on
    $\Sigma$, and $\ll$ is constrained by the requirement that
    \[
    \gdet(\ll) = \langle c(P) , [\Sigma] \rangle
    \]
      in  $\LL(G) \cong \pi_{1}(G)$.

      \paragraph{Dimension and energy.}
      The formula for the formal dimension of the moduli space can be
      written in the same was as before (see
      \eqref{eq:dimension-fmla-2}), except for the term involving the
      $4$-dimensional characteristic class:
      \begin{equation}\label{eq:dimension-non-simple}
               -2 p_{1}(\g_{P})[X] + 4 \weyl(\ll)
     + \frac{(\dim O)}{2} \chi(\Sigma)  - (\dim G) (b^{+}-b^{1}+1)
      \end{equation}
      Note that the term $\weyl(\ll)$ depends only on the projection
      of $\ll$ into $[\g,\g]$. The term involving $p_{1}(\g_{P})$
      satisfies a congruence depending on  the associated
      $\bar{Z}(G)$-bundle $\gdet(P)$, via the $2$-dimensional
      characteristic class $c(P)$. In the case that the structure
      group of $P$ reduces to the maximal torus, we obtain a lift of
      $c(P)$ to a class $\hat{c} \in H^{2}(X; L(T))$, and we then have
      \[
                    -2 p_{1}(\g_{P}) = -\langle \hat{c}, \hat{c} \rangle
      \]
      where the quadratic form on the right is defined using the
      semi-definite Killing form on the lattice $L(T)$ and the
      cup-square on $X$. Modulo $4h^{\vv}$,
      the quantity on the right depends only on the image of $c(P)$
       in the finite group $H^{2}(X; \pi(\bar{G}))$. Furthermore,
      in the case that $[G,G]$ is simple,
      if $P$ is altered on a $4$-cell in $X$, then
      $p_{1}(\g_{P})$ changes by a multiple of $2h^{\vv}$ (as in the case
      of the $4$-sphere); and since a general $P$ can be reduced to
      the maximal torus on the complement of a $4$-cell, we have in
      general
      \[
            -2 p_{1}(\g_{P})[X]  = -\langle\hat{c}, \hat{c} \rangle[X] \pmod
            {4h^{\vv}}
      \]
       where $\hat{c}$ is any lift of $c(P)$ to $H^{2}(X;L(T))$. In
       the case of the unitary group $U(N)$, we identify $c(P)$ as the first
       Chern class, and the formula becomes
       \begin{equation}\label{eq:p1-versus-c1-squared}
                  -2 p_{1}(\g_{P})[X] = - 2(N-1) c_{1}^{2}(P)[X]
                  \pmod{4N}.
       \end{equation}
      
      The appropriate definition of the
      energy $\cE$ is still the formula \eqref{eq:cE-def}, with the
      understanding that the norm defined by the Killing form is now
      only a semi-norm, so that the formula for the energy actually
      involves only
      $\bar{A}$. In a similar manner, the formula
      \eqref{eq:energyl-fmla-2} now becomes
      \begin{equation}\label{eq:energy-non-simple}
      \cE = 8\pi^{2} \Bigl(-2 p_{1}(\g_{P})[X] + 2 \langle
                    \Phi,\ll\rangle
          - \langle
                    \Phi,\Phi\rangle(\Sigma\cdot\Sigma)\Bigr),
       \end{equation}
      where the inner products are now only semi-definite.

      In these two formulae, as elsewhere, the component of $\Phi$ in
      the center of $\g$ is immaterial. The monotone condition (i.e.
      the condition that the terms in \eqref{eq:dimension-non-simple}
      and \eqref{eq:energy-non-simple} which are linear in $p_{1}(\g)$
      and $\ll$ are proportional) is a constraint only on the
      component $\bar\Phi$ which lies in $[\g,\g]$. In formulae, the
      monotone condition still amounts to requiring that
      \begin{equation}\label{eq:monotone-non-simple}
                \langle \Phi , \ll \rangle = 2 \weyl(\ll)
      \end{equation}
       for all $\ll$ in $\fz(G_{\Phi})$. Given any $\Phi_{0}$ in the
       fundamental Weyl chamber, there is a unique $\Phi$ satisfying
       this condition with the additional constraints that (i)
       $\Phi_{0}$ and $\Phi$ have the same centralizer and (ii)
       $\Phi_{0}$ and $\Phi$ have the same central component in
       $\fz(G)$.
      
      \paragraph{Isomorphic moduli spaces.}

      For the following discussion, we return temporarily to
      considering the moduli spaces $M(X,P)$ of non-singular
      connections, in the absence of the
      embedded surface $\Sigma$.
      If $P$ and $P'$ are isomorphic $G$-bundles, then the moduli
      spaces $M(X,P)$ and $M(X,P')$ are certainly homeomorphic also;
      but a particular identification $M(X,P)\to M(X,P')$ depends on a
      choice of bundle isomorphism $f : P \to P'$. Because we are
      dividing out by the action of the gauge group $\G(X,P)$
      consisting of all $[G,G]$-valued automorphisms, the map of
      moduli spaces $M(X,P)\to M(X,P')$ depends on $f$ only through
      the corresponding  isomorphism of $\bar{Z}(G)$-bundles,
      $\gdet(f):\gdet(P)\to \gdet(P')$.

      A convenient viewpoint on this is to fix a principal
      $\bar{Z}(G)$-bundle $\delta$ on $X$, together with a connection
      $\Theta$ on $\delta$, and then regard $\bonf$ as parametrizing
      isomorphism classes of triples consisting of:
      \begin{enumerate}
        \item a principal $G$-bundle $P\to X$ with specified instanton
        charges $k=(k_{1},\dots, k_{m})$;
        \item an isomorphism of $G$-bundles $q: \gdet(P)\to \delta$;
        \item a connection $A$ in $P$ with $\gdet(A) =
        q^{*}(\Theta)$, or equivalently just a connection $\bar{A}$ in
        $\bar{P}$.
       \end{enumerate}
        Two such triples $(P,q,\bar{A})$ and $(P',q',\bar{A}')$ are isomorphic if
        there is an isomorphism of $G$-bundles, $f:P\to P'$, with
        $f^{*}(\bar{A}')=\bar{A}$ and $q = q' \comp \gdet(f)$. From
        this point of view, it is natural to write the configuration
        space as $\bonf_{k}(X)_{\delta}$, indicating its dependence on
        $k$ and the $\bar{Z}(G)$-bundle. The corresponding moduli
        space can be written
        \[
                    M_{k}(X)_{\delta} \subset \bonf_{k}(X)_{\delta}
        \]

        It is clear that the automorphisms  $g:\delta\to\delta$ act on
        $\bonf_{k}(X)_{\delta}$, preserving the moduli space, by
        $(P,q,\bar{A})\mapsto (P,g\comp q,\bar{A})$.
        Furthermore the action of $g$ on $\bonf_{k}(X)_{\delta}$
        is trivial if and only if
        $g=\gdet(f)$ for some bundle isomorphism $f : P \to P$ with
        $f^{*}(\bar{A})=\bar{A}$ for all $A$. This last condition on
        $f$ requires that $f$ take values in $Z(G)$. Thus the group
        which acts effectively is the quotient of $\Map(X,\bar{Z}(G))$
        by the image of $\Map(X,Z(G))$ under the map
        $\gdet:Z(G)\to\bar{Z}(G)$. This is a finite group. For
        example, in the case of $U(N)$, this is the quotient of
        $H^{1}(X;\Z)$ by the image of multiplication by $N$; this is
        isomorphic to the subgroup of $H^{1}(X;\Z/N)$ consisting of
        elements with an integer lift.

        There is another way in which isomorphisms arise between
        moduli spaces of this sort.
       The group operation provides a homomorphism of groups,
      \[
                 G \times Z(G) \to G.
      \]
      Given a $G$-bundle $P$ and a $Z(G)$-bundle $\epsilon$ on $X$, we
      can use this homomorphism to obtain a ``product'' $G$-bundle,
      which we will denote by $P \bb\epsilon$.
      If
      we fix a connection $\omega$ in $\epsilon$, then to each connection
      $A$ in $\conf(X,P)$ with
      $\gdet(A)=\Theta$, we can associate a connection
      \[A'=A+\omega\] in $\conf(X,P\bb\epsilon)$ with
      $\gdet(A')=\Theta+\gdet(\omega)$.  This operation
      descends to the quotient space $\bonf$ and preserves the
      locus of connections which satisfy the equations
      $F^{+}_{\bar{A}}=0$. It therefore gives an identification of moduli spaces
      \[
                    \mu_{\epsilon} : M(X,P)
                            \to M(X,P\bb\epsilon).
      \]
      In terms of the data $k$ and $\delta$ which determine the moduli
      space up to isomorphism, this is a map
      \[
                    \mu_{\epsilon} : M_{k}(X)_{\delta} \to
                     M_{k}(X)_{\delta\bb\gdet(\epsilon)},
      \]
      where we have extended the use of $\bb$ to denote also the
      product of two $\bar{Z}(G)$-bundles. (In the case of $U(N)$, the
      operation $\bb$ in both cases becomes the tensor product by a
      line bundle.)

      All of this discussion can be carried over to the case of
      connections with singularities along $\Sigma\subset X$. Once
      $\Phi$ is given, the moduli space $M(X,\Sigma,P,\varphi)$ is
      determined up to isomorphism by $\delta = \gdet(P)$ together
      with the instanton charges $k$ and monopole charges $l$. (In the
      case that $\Sigma$ has more than one component, the monopole
      charges need to be specified for each component.) We can
      therefore write the moduli space as $M_{k,l}(X,\Phi)_{\delta}$.
      The automorphisms of $\delta$ then act on the moduli spaces, and
      it is again the quotient of $\Map(X,\bar{Z}(G))$ by the image of
      $\Map(X,Z(G))$ that acts effectively.  If $\epsilon$ is a
      $Z(G)$-bundle, we also have a corresponding isomorphism
      \[
                    \mu_{\epsilon} : M_{k,l}(X,\Phi)_{\delta}
                    \to
                    M_{k,l}(X,\Phi)_{\delta\otimes\gdet(\epsilon)}.
      \]
      
      \paragraph{Reducibles.}
      The main point at which the the present discussion of
      non-semi-simple groups diverges from the previous case is in the
      discussion of reducible connections, because the characteristic
      class $c(P)$ now plays a role and because the integer lattice is
      no longer generated by the coroots. Let us write $L(T)$ for
      the integer lattice of the maximal torus; the lattice of weights
      is the dual lattice in $\ft^{*}$. The simple
      coroots $\alpha^{\vv}$ are a basis for $L(T) \cap [\g,\g]$,
      because $[G,G]$ is simply connected. Let $\bar{w}_{\alpha}$
      denote the dual basis for $(\ft \cap [\g,\g])^{*}$. For each
      simple root $\alpha$, we can choose a weight $w_{\alpha}$ in
      $\ft^{*}$ such that the restriction of $w_{\alpha}$ to $\ft \cap
      [\g,\g]$ is $\bar{w}_{\alpha}$. The choice of $w_{\alpha}$ is
      uniquely determined by $\bar{w}_{\alpha}$ to within the addition
      of a weight that factors through $\gdet$. Associated to
      $\alpha$, as before, is a subgroup $G(\alpha) \subset G$, the
      centralizer of $w_{\alpha}^{\dag}$ (or equivalently of
      $\bar{w}_{\alpha}^{\dag}$). Note that we will not always be able
      to choose $w_{\alpha}$ in such a way that its restriction to
      $\fz(G)$ is zero, and nor will its restriction to the lattice
      $\LL(G)\subset\fz(G)$ be integral: it will define a map
      \[
                    w_{\alpha} : \LL(G) \to \Q.
      \]
       In the case of $U(2)$ for example, $w_{\alpha}$ will map the
       rank-$1$ lattice
       $\LL(G)$ onto $\frac{1}{2}\Z$.
      
      To say that $[A]$ is reducible
      still means that its stabilizer in the gauge group
      $\G^{p}(X,\Sigma,P,\varphi)$ has positive dimension, and this is
      equivalent to there being a non-zero covariant-constant section
      $\psi$ of the associated bundle $[\g,\g]_{P}\subset \g_{P}$. As
      in subsection~\ref{subsec:reducibles}, we obtain a reduction of
      the structure group of $P$ to a subgroup $G_{\Psi}$, and we
      write $P_{\psi}\subset P$ for this $G_{\Psi}$-bundle. For some
      element $\sigma$ in the Weyl group, $G_{\sigma(\Psi)}$ is
      contained in $G(\alpha)$ for some simple root $\alpha$, and it
      follows that $w_{\alpha}\comp\sigma$ defines a  non-central
      character of $G_{\Psi}$:
      \[
                    s : G_{\Psi} \to U(1).
      \]
       Applying $s$ to the connection $A$ gives a $U(1)$ connection
       $s(A)$ whose curvature $F_{s(A)}$ is an $L^{p}$ form defines
       a de Rham class
       \begin{equation}\label{eq:deRham-2}
                                \frac{i}{2\pi}[F_{s(A)}] = c_{1}(s(P_{\psi})) -
            \sum_{j=1}^{r}(\w_{\alpha}\comp\sigma_{j})(\Phi)
            \mathrm{P.D.}[\Sigma_{j}]
       \end{equation}
       where the $\Sigma_{j}$ are the components of $\Sigma$, just as
       at \eqref{eq:de-Rham-class}. Unlike the previous case, the
       curvature form $F_{s(A)}$ is not anti-self-dual, because
       $F_{A}^{+}$ has a non-zero central component. The Chern-Weil
       formula for the central component is
       \[
                                \frac{i}{2\pi}[F_{\gdet(A)}] =
                                c(P) -
            \sum_{j=1}^{r}\gdet(\Phi)
            \mathrm{P.D.}[\Sigma_{j}]
      \]
      as an equality of $\fz(G)$-valued cohomology classes. The
      self-dual part of $F_{A}$ coincides with the self-dual part of
      $F_{\gdet(A)}$. So, applying $w_{\alpha}\comp\sigma$ to this
      last formula and then subtracting it  from \eqref{eq:deRham-2},
      we learn that the class
      \[
                          c_{1}(s(P_{\psi})) - w_{\alpha}(c(P))
                             -
            \sum_{j=1}^{r}(\w_{\alpha}\comp\sigma_{j})(\Phi-\gdet(\Phi))
            \mathrm{P.D.}[\Sigma_{j}]
      \]
      is represented by an anti-self-dual form on $X$. The first term
      in this last formula is an integral class. The second term may
      not be integral: the class $c(P)$ takes values in
      $\LL(G)$, and $w_{\alpha}$ need not take integer values
      on this lattice. The terms in the last sum depend only on
      $\bar{w}_{\alpha}$, not on $w_{\alpha}$, because
      $\Phi-\gdet(\Phi)$ lies in $[\g,\g]$. This leads to
      the following variant of Proposition~\ref{prop:no-reducibles}.

      \begin{proposition}\label{prop:no-reducibles-2}
    Suppose that $b^{+}(X)\ge 1$, and let the components of $\Sigma$
    be $\Sigma_{1},\dots,\Sigma_{r}$. Write
    \[
                    \bar \Phi = \Phi - \gdet(\Phi)
    \]
    for the component of $\Phi$ in $[\g,\g]$.
    Suppose that for every fundamental
    weight $\w_{\alpha}$ and every choice of elements
    $\sigma_{1},\dots,\sigma_{r}$ in the Weyl group, the real
    cohomology class
\begin{equation*}
            w_{\alpha}(c(P)) +
            \sum_{j=1}^{r}(\bar{w}_{\alpha}\comp\sigma_{j})(\bar\Phi)
            \mathrm{P.D.}[\Sigma_{j}]
\end{equation*}
    is not integral. Then
   for generic choice of Riemannian metric on $X$, there are
       no reducible solutions in the moduli space
       $M(X,\Sigma,P,\varphi)$.
\end{proposition}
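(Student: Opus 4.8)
The plan is to reprise the proof of Proposition~\ref{prop:no-reducibles}, the only genuinely new ingredient being the treatment of the central component of the curvature, most of which has already been carried out in the discussion preceding the statement. So suppose, for contradiction, that $M(X,\Sigma,P,\varphi)$ contains a reducible solution $[A]$. As explained above, this produces a covariant-constant section $\psi$ of $[\g,\g]_{P}$, hence a reduction of the structure group of $P$ to a subgroup $G_{\Psi}$; and there is then a fundamental weight $\w_{\alpha}$ together with Weyl-group elements $\sigma_{1},\dots,\sigma_{r}$, one for each component $\Sigma_{j}$ of $\Sigma$, such that $w_{\alpha}\comp\sigma$ defines a non-central character $s:G_{\Psi}\to U(1)$ and the induced $U(1)$ connection $s(A)$ has curvature representing the de Rham class \eqref{eq:deRham-2}, in which $c_{1}(s(P_{\psi}))$ is an integral class.

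First I would carry out the subtraction indicated in the text. Since the self-dual part of $F_{A}$ coincides with the self-dual part of the central curvature $F_{\gdet(A)}$, subtracting $w_{\alpha}\comp\sigma$ applied to the Chern-Weil formula for $\gdet(A)$ from \eqref{eq:deRham-2} cancels the self-dual parts, and leaves a closed $2$-form on all of $X$ --- an $L^{p}$ form, extended by zero across $\Sigma$, hence an $L^{2}$ form because $X$ is compact --- which is honestly anti-self-dual and which represents the cohomology class
\[
            c_{1}(s(P_{\psi})) - w_{\alpha}(c(P)) -
            \sum_{j=1}^{r}(\bar{w}_{\alpha}\comp\sigma_{j})(\bar\Phi)\,
            \mathrm{P.D.}[\Sigma_{j}],
\]
the last sum depending only on $\bar{w}_{\alpha}$ because $\bar\Phi\in[\g,\g]$.

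The remaining step is the standard $\omega$-argument \cite{Donaldson-omega-argument}: an anti-self-dual $L^{2}$ form on a closed $X$ with $b^{+}(X)\ge 1$ is $L^{2}$-orthogonal to every self-dual harmonic form, so if it represents a nonzero class in $H^{2}(X;\R)$ then the metric lies in a subset of positive codimension; as there are only finitely many fundamental weights $\w_{\alpha}$ and finitely many tuples $(\sigma_{1},\dots,\sigma_{r})$, a generic metric avoids all of the resulting bad loci simultaneously. Under the hypothesis of the proposition, for every such $\w_{\alpha}$ and every such tuple the class $w_{\alpha}(c(P)) + \sum_{j}(\bar{w}_{\alpha}\comp\sigma_{j})(\bar\Phi)\,\mathrm{P.D.}[\Sigma_{j}]$ is non-integral and therefore cannot equal the integral class $c_{1}(s(P_{\psi}))$, so the anti-self-dual form constructed above necessarily represents a nonzero element of $H^{2}(X;\R)$; for a generic metric this is impossible, and hence $M(X,\Sigma,P,\varphi)$ has no reducible solutions. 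The hard part is really the second paragraph, and it is exactly where this argument diverges from that of Proposition~\ref{prop:no-reducibles}: one must check that after the subtraction one is left with a genuinely globally defined anti-self-dual form on $X$ with the stated de Rham class, and this is precisely what the cancellation of the self-dual parts of $F_{A}$ and $F_{\gdet(A)}$ provides, together with the fact (already used at \eqref{eq:de-Rham-class}) that the curvature of the singular $U(1)$ connection extends across $\Sigma$ as an $L^{p}$ form whose de Rham class acquires the indicated multiple of $\mathrm{P.D.}[\Sigma_{j}]$. Once this is in hand the rest is routine.
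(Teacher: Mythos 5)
Your proof is correct and follows essentially the same route as the paper: the paper's argument for this proposition is exactly the discussion preceding its statement (the covariant-constant section of $[\g,\g]_{P}$, the character $s$, the subtraction of $w_{\alpha}\comp\sigma$ applied to the Chern--Weil formula for $\gdet(A)$ to cancel the self-dual central part and produce an anti-self-dual representative of the stated class), followed by the same generic-metric argument of \cite{Donaldson-omega-argument} already invoked for Proposition~\ref{prop:no-reducibles}. Nothing essential differs.
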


  Note that, as a rational cohomology class, $w_{\alpha}(c(P))$ does
  depend on $w_{\alpha}$, not just on $\bar{w}_{\alpha}$. But
  different choices of how to extend $\bar{w}_{\alpha}$ will be
  reflected in a change to $w_{\alpha}(c(P))$ by an integral class.

  In the case that each component $\Sigma_{j}$ is null-homologous, the
  criterion in the above proposition reduces to the requirement:
\begin{equation}\label{eq:simple-coprime-condition}
  \text{\emph{ $w_{\alpha}(c(P))$ be non-integral for each simple root
  $\alpha$}}. 
\end{equation}
  To
  illustrate this, consider the familiar case of the unitary group
  $U(N)$. There are $(N-1)$ simple roots, $\alpha_{k}$, $k=1,\dots,
  N-1$, and if we write a typical Lie algebra element in $\u(N)$ as
  \[
                    x = i \diag(\epsilon_{1},\dots ,\epsilon_{N}),
  \]
  then we can choose $w_{\alpha_{k}}$ so that
  \[
            w_{\alpha_{k}}(x) = \epsilon_{1} + \dots + \epsilon_{k}.
  \]
   Then $c(P)$ is related to the usual first Chern class $c_{1}(P)$ in
   such a way that
   \[
                  w_{\alpha_{k}}(c(P)) = (k/N) c_{1}(P).      
   \]
   So the criterion in the proposition is that the rational class
   $(k/N) c_{1}(P)$ should not be integral, for any $k$ in the range
   $1\le k \le N-1$.  This is equivalent to requiring that the
   evaluation of $c_{1}(P)$ on some integral homology class should be
   coprime to $N$. We record this corollary, which is familiar from
   the case of non-singular instantons:

   \begin{corollary}\label{cor:example-coprime}
    Let $G$ be the unitary group $U(N)$.
    Suppose that $b^{+}(X)\ge 1$, and that all components of $\Sigma$
    are null-homologous. If there is an integral homology class in $X$
    whose pairing with $c_{1}(P)$ is coprime to $N$, then
     for generic choice of Riemannian metric on $X$, there are
       no reducible solutions in the moduli space
       $M(X,\Sigma,P,\varphi)$.
   \end{corollary}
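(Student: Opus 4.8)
The plan is to derive the corollary directly from Proposition~\ref{prop:no-reducibles-2}, specializing its non-integrality criterion to the case $G=U(N)$ with null-homologous surface components. First I would observe that, since each component $\Sigma_{j}$ is null-homologous, the Poincar\'e dual classes $\mathrm{P.D.}[\Sigma_{j}]$ vanish, so every term in the sum $\sum_{j}(\bar{w}_{\alpha}\comp\sigma_{j})(\bar\Phi)\,\mathrm{P.D.}[\Sigma_{j}]$ appearing in Proposition~\ref{prop:no-reducibles-2} is zero. The criterion of that proposition therefore reduces to the requirement \eqref{eq:simple-coprime-condition}: that $w_{\alpha}(c(P))$ be non-integral for every simple root $\alpha$. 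The hypothesis $b^{+}(X)\ge 1$ needed to invoke the proposition is among the assumptions of the corollary.

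Next I would use the explicit description of the relevant weights for $U(N)$ recorded just before the corollary, namely that one may take $w_{\alpha_{k}}$ with $w_{\alpha_{k}}(c(P)) = (k/N)\,c_{1}(P)$ for $k=1,\dots,N-1$. So it remains to check that the rational class $(k/N)\,c_{1}(P)$ is non-integral for each such $k$. By hypothesis there is a class $h\in H_{2}(X;\Z)$ with $d:=\langle c_{1}(P), h\rangle$ coprime to $N$. For $0<k<N$, the coprimality $\gcd(d,N)=1$ forces $N\nmid kd$, hence $\langle (k/N)\,c_{1}(P), h\rangle = kd/N\notin\Z$; since an integral cohomology class pairs integrally with every integral homology class, $(k/N)\,c_{1}(P)$ cannot be integral. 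Thus condition \eqref{eq:simple-coprime-condition} holds, and Proposition~\ref{prop:no-reducibles-2} gives the conclusion for a generic Riemannian metric.

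I do not expect any real obstacle: all of the analytic content sits in Proposition~\ref{prop:no-reducibles-2}, and what remains is the elementary number-theoretic observation above together with the remark, already made following that proposition, that a different extension of $\bar{w}_{\alpha_{k}}$ alters $w_{\alpha_{k}}(c(P))$ only by an integral class, so the non-integrality condition is well posed and independent of that choice. The only point meriting a line of care is to state the genericity correctly: Proposition~\ref{prop:no-reducibles-2} produces, for a Baire-generic metric on $X$, a moduli space $M(X,\Sigma,P,\varphi)$ containing no reducible solutions, which is exactly what the corollary asserts.
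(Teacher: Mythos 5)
Your proposal is correct and follows essentially the same route as the paper: the corollary is recorded there as the specialization of Proposition~\ref{prop:no-reducibles-2} worked out in the preceding paragraph, where the null-homologous hypothesis reduces the criterion to the non-integrality of $w_{\alpha_{k}}(c(P)) = (k/N)c_{1}(P)$, and the coprimality hypothesis supplies exactly the elementary number-theoretic check you spell out.
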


\begin{remark}
One might hope that there would be something analogous to this
corollary in the case of other simply-connected groups with
non-trivial center, but unfortunately, the case of the unitary group
is rather special.
If the commutator subgroup $G_{1}=[G,G]$ is a simply-connected simple
group of any type other than $A_{r}$, there will always be a
fundamental weight $w_{\alpha}$ which takes integer values on
$\LL(G)$; so the criterion \eqref{eq:simple-coprime-condition} cannot
be met. This phenomenon is noted in
\cite[Proposition~7.8]{Ramanathan-1975}.
\end{remark}

\paragraph{Orientations.}

The discussion of orientations from
section~\ref{subsec:orienting-4d} adapts readily to the case of
non-simple groups $G$ with $[G,G]$ simply-connected. 
Recall that Proposition~\ref{prop:orientable-sing}  has two
parts: the first is the assertion that the moduli spaces are
orientable, and the second involves specifying a canonical
orientation. Adapting the first part is routine. For the second task,
we need to observe that the restriction of the homomorphism
\eqref{eq:g-det} to the maximal torus $T\subset G$ admits a right-inverse,
\begin{equation}\label{eq:gdet-inverse}
\begin{aligned}
\fe : \bar{Z}(G) \to T \\
\gdet \comp \fe = 1.
\end{aligned}
\end{equation}
Fix such an $\fe$ once and for all. From our chosen $\bar{Z}(G)$-connection $\Theta$
in $\gdet(P)$ we now obtain a $G$-connection $\fe(\Theta)$ on a
bundle isomorphic to $P$, with $\gdet(\fe(\Theta))=\Theta$. This
comes with a reduction of structure group (on the whole of $X$) to the
maximal torus, and a fortiori to $G_{\Phi}$. Adding the singular term
along $\Sigma$ in the usual way, we obtain a distinguished singular
connection $A^{\varphi}$. Because $A^{\varphi}$ respects a reduction
to the maximal torus, the adjoint bundle with fiber $[\g,\g]$
decomposes as a direct sum of a bundle with fiber $\ft\cap
[\g,\g]$ and a complex vector bundle, and there is a corresponding
decomposition of the operator \eqref{eq:D-4d} whose determinant line we
wish to orient. The induced connection on the first summand is
trivial. As in the previous argument, we can now proceed by
making use of the complex orientation on the second summand and the
homology orientation $o_{W}$ for the first summand. In this way, the
moduli spaces $M(X,\Sigma,P,\varphi)$ become canonically oriented at
all regular points.

Let us write $\delta$ again for the $\bar{Z}(G)$-bundle $\gdet(P)$,
and so denote the moduli space by $M_{k,l}(X,\Phi)_{\delta}$ as above.
Recall that in this setting, the automorphisms of $\delta$ act on the
moduli space.
The naturality of the construction of the orientation means that the
automorphisms of this moduli space arising in this way
are orientation-preserving diffeomorphisms on the regular part. A more
interesting question arises if we ask whether the map
\begin{equation}\label{eq:mu-epsilon}
            \mu_{\epsilon}: M_{k,l}(X,\Phi)_{\delta} \to
            M_{k,l}(X,\Phi)_{\delta\otimes \gdet(\epsilon)}
\end{equation}
preserves orientation. The singularity along $\Sigma$ plays no role in
the answer here, and we could equally well consider $M_{k}(X)_{\delta}$
instead. This is a question which was treated for
$G=U(2)$ in \cite{Donaldson-orientations}, and the argument was
adapted for $U(N)$ in \cite{K-higherrank}. The case of a general
non-semisimple group is little different, and we shall summarize the results.

We shall write $G_{1}=[G,G]$ again, and we shall
suppose that $Z(G_{1})$ is non-trivial (for otherwise $G$ is
simply a product). We will also require that $G_{1}$ is simple, and
that $G$ is obtained from
$G_{1}$ by the construction \eqref{eq:G-from-G1} and that
Condition~\ref{cond:G-conditions} holds. Then we have the following
result:

\begin{proposition}\label{prop:epsilon-signs}
    Under the above assumptions, the map $\mu_{\epsilon}$ in
    \eqref{eq:mu-epsilon} is orientation preserving if the simple
    group $G_{1}$ is any group other than $E_{6}$ or $\SU(N)$. In the
    case of $E_{6}$, the result depends on the choice of $\fe$; but
    $\fe: U(1)\to E_{6}$ may be chosen so that $\mu_{\epsilon}$ is
    orientation-preserving for all $\epsilon$ and $\delta$. In the
    case of $\SU(N)$, if $G$ is chosen to the standard $U(N)$ and
    $\fe :  U(1) \to U(N)$ is the
    inclusion in the first factor of the torus $U(1)^{N}$ in $U(N)$,
    then $\mu_{\epsilon}$ is orientation-preserving if $N$ is $0$, $1$
    or $3$ mod $4$. If $N$ is $2$ mod $4$ then $\mu_{\epsilon}$ is
    orientation-preserving if and only if $c_{1}(\epsilon)^{2}[X]$ is
    even.
\end{proposition}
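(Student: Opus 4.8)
The plan is to run the argument of Donaldson \cite{Donaldson-orientations} for $U(2)$, in the form extended to $U(N)$ in \cite{K-higherrank}, and to reduce the remaining content to a computation in the root system of $[G,G]$. First I would discard the surface $\Sigma$, as the text already observes it plays no role, and work with $M_{k}(X)_{\delta}$. The key preliminary remark is that, because $\epsilon$ is a $Z(G)$-bundle, there is a canonical isomorphism $[\g,\g]_{P\bb\epsilon}\cong[\g,\g]_{P}$ of adjoint bundles, and under this isomorphism a connection and its image under $\mu_{\epsilon}$ induce one and the same operator $-d^{*}\oplus d^{+}$. Thus $\mu_{\epsilon}$ identifies the determinant line bundles of the two deformation families pointwise, and the only question is whether it matches the two \emph{canonical} orientations constructed as in the proof of Proposition~\ref{prop:orientable-sing}. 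By the ``addition of instantons'' principle this comparison is independent of $k$, so it amounts to a single sign depending only on $X$, on the isomorphism type of $G$ (and the chosen right-inverse $\fe$), and on $\epsilon$. As a sanity check, both this sign and the quantity asserted in the proposition are bordism invariants of the pair $(X,\epsilon)$, so it would suffice to evaluate them on a generating set of four-manifolds-with-line-bundle; but I prefer to compute the sign directly.

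To compute it I would connect the given bundle, by added instantons, to one carrying a reducible connection whose structure group is reduced to the maximal torus $T$, and compare the two canonical orientations there. Over the torus reduction $[\g,\g]_{P}$ splits as $(\ft\cap[\g,\g])_{P}\oplus\bigoplus_{\alpha\in\Roots^{+}}\fo_{\alpha}$ with each $\fo_{\alpha}$ a complex line bundle, the operator $-d^{*}\oplus d^{+}$ splits accordingly, and the canonical orientation is the product of the homology orientation on the flat summand with the complex orientations of the line-bundle summands. Twisting by $\epsilon$ replaces this torus reduction by the $\epsilon$-twisted one, which shifts $c_{1}(\fo_{\alpha})$ by $w_{\alpha}(c(\epsilon))$, where $w_{\alpha}$ is the weight attached to $\alpha$ as in subsection~\ref{subsec:unitary-4d}; hence the discrepancy between the two canonical orientations is $(-1)^{n}$, with $n$ the parity of the total change in the complex indices of the line-bundle operators. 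An application of the index theorem writes $n$ as a quadratic-plus-linear expression in $c(\epsilon)$, whose reduction modulo $2$ — using the Wu formula to absorb the linear part — is a fixed integer multiple of $\langle c(\epsilon)^{2},[X]\rangle$, the coefficient being determined by $\sum_{\alpha\in\Roots^{+}}w_{\alpha}(\,\cdot\,)^{2}$ and hence, via the Weyl symmetry of the weight system exactly as in the proof of Lemma~\ref{lem:dimension-fmla}, by the root system of $[G,G]$ together with the choice of $\fe$.

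It then remains to evaluate this coefficient case by case. For $[G,G]=\SU(N)$ with $G=U(N)$ and $\fe$ the inclusion of the first $U(1)$-factor, the explicit weights $w_{\alpha_{k}}$ of subsection~\ref{subsec:unitary-4d} reduce the computation to the one already carried out in \cite{Donaldson-orientations} for $N=2$ and in \cite{K-higherrank} for general $N$: the coefficient of $\langle c_{1}(\epsilon)^{2},[X]\rangle$ is even precisely when $N\equiv 0,1,3\pmod 4$, which gives the last two assertions of the proposition. For the simply connected types $E_{8}$, $F_{4}$ and $G_{2}$ there is nothing to prove, since $Z([G,G])$ is trivial and $G$ is a product. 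For $B_{r}$, $C_{r}$, $D_{r}$ and $E_{7}$, a direct evaluation of $\sum_{\alpha\in\Roots^{+}}w_{\alpha}(\,\cdot\,)^{2}$ in standard coordinates shows the coefficient is even independently of $\fe$, so $\mu_{\epsilon}$ is always orientation-preserving. The one delicate case is $E_{6}$, where the parity of the coefficient genuinely depends on the extension $G\supset E_{6}$, that is on $\fe:U(1)\to E_{6}$; here I would exhibit an admissible $\fe$ for which the coefficient is even, yielding the stated result.

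The main obstacle is not any single calculation but the orientation bookkeeping underlying the first two paragraphs: one must track the conventions for the homology orientation and for the stabilization isomorphism $j_{*}:\tbonf(X,Q)\to\tbonf(X,P)$ all the way to the torus-reduced reducible connection, so as to identify the orientation discrepancy with precisely the index parity $(-1)^{n}$ and to confirm that the contributions linear in $c(\epsilon)$ really do reduce, modulo $2$, to a multiple of $\langle c(\epsilon)^{2},[X]\rangle$. This is routine but delicate; it is carried out for $U(2)$ in \cite{Donaldson-orientations} and adapted to $U(N)$ in \cite{K-higherrank}, and for a general non-semisimple $G$ with $[G,G]$ simply connected the argument differs from these only in bookkeeping.
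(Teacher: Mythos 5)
Your overall strategy -- identify the two moduli spaces via the common adjoint bundle, compare the two canonical orientations at torus-reduced reducible connections, express the discrepancy as a parity of an index-type quantity, and then do a case analysis over the root systems, deferring $\SU(N)$ to \cite{K-higherrank} -- is the same route the paper takes, which adapts the $U(N)$ computation of \cite{K-higherrank} to obtain the parity formula \eqref{eq:sign-change-1}. But there is a genuine gap at the central step. You assert that, after an index computation, the sign reduces mod $2$ (``using the Wu formula to absorb the linear part'') to a fixed integer multiple of $\langle c(\epsilon)^{2},[X]\rangle$, determined by $\sum_{\alpha\in\Roots^{+}}w_{\alpha}(\cdot)^{2}$. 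This is not correct as stated, and it assumes away exactly the content of the proof. The parity one actually obtains is \eqref{eq:sign-change-1}: it contains a cross term $\tfrac12\langle a\cupprod b\rangle[X]$ in which $a$ is the characteristic class of $\fe(\delta)$ -- so it depends on $\delta$, not only on $\epsilon$ -- together with $\tfrac14\langle b\cupprod b\rangle[X]$ and a Weyl-vector term $\weyl(b)\cupprod\weyl(b)[X]$. The Wu formula relates $x\cupprod x$ to $w_{2}\cupprod x$ and cannot remove a term bilinear in $a$ and $b$ with $a$ arbitrary; the evenness of these terms is precisely what the case analysis establishes, and it uses ingredients absent from your sketch: Condition~\ref{cond:G-conditions} forces the projected twisting class $\bar b$ to be divisible by $p$ (the l.c.m.\ of the orders of elements of $Z(G_{1})$); the parity of the dual Coxeter number $h^{\vv}$ kills the first two terms in most cases; for $B_{n}$ and $C_{2m}$ one needs that the normalized pairing $\langle\cdot,\cdot\rangle_{2}$ is even on $L(\bar T_{1})\times L(T_{1})$; and for $E_{6}$ (where $p=3$ is odd) one must exhibit a coset representative $\bar e_{1}$ with $\weyl(\bar e_{1})$ even and choose $\fe$ accordingly. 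Your claim that for $B_{r},C_{r},D_{r},E_{7}$ the coefficient is even ``independently of $\fe$'' by evaluating $\sum w_{\alpha}(\cdot)^{2}$ targets the wrong quadratic form: the relevant form comes from the Killing form (i.e.\ $\sum_{\alpha\in\Roots^{+}}\alpha(\cdot)\alpha(\cdot)$) plus the $\weyl(\cdot)^{2}$ term, not from squares of fundamental weights.

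Two smaller points. First, in your torus-reduction picture the shift in $c_{1}(\fo_{\alpha})$ under the twist is by the \emph{root} $\alpha$ evaluated on the twisting class (the weights of the adjoint representation are the roots), not by the fundamental weight $w_{\alpha}$ of subsection~\ref{subsec:unitary-4d}; as written this feeds the wrong weights into your final computation. Second, note that the reducible used to compare orientations for $\delta\bb\gdet(\epsilon)$ is $\fe(\Theta+\gdet(\omega))$, whereas $\mu_{\epsilon}$ carries $\fe(\Theta)$ to $\fe(\Theta)+\omega$; these are two different torus reductions, and the ``bookkeeping'' comparing them is where both the cross term in $a$ and the $\weyl(b)^{2}$ term arise -- so deferring this entirely to routine adaptation of \cite{Donaldson-orientations} and \cite{K-higherrank} is acceptable only if you then carry out the lattice-theoretic case analysis above, which is the part of the argument your proposal is missing.
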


\begin{proof}
Both $\fe(\delta)$ and $\fe(\gdet(\epsilon))$ are $T$-bundles on $X$.
Let $a$ and $b$ be their respective characteristic classes in $H^{2}(X; L(T))$.
The calculation for $U(N)$ from \cite{K-higherrank} adapts with little
change to show that $\mu_{\epsilon}$ preserves or reverses orientation according to
the parity of the quantity,
\begin{equation}\label{eq:sign-change-1}
     \left(
                  \frac{1}{2}  \langle a \cupprod b \rangle
                  + \frac{1}{4}  \langle  b\cupprod b \rangle
                  + \weyl(b)\cupprod \weyl(b)
    \right)[X]
 \end{equation}
in which $\langle a \cupprod b \rangle$ denotes the pairing in
$H^{4}(X;\Z)$
obtained from the semi-definite Killing form on $L(T)$ and the cup
product on $X$, and $\weyl(b)$ is to be interpreted as an element of
$H^{2}(X;\Z)$. The quantity above plainly depends only on the images of
$a$ and $b$ under the projection to $[\g,\g]$. Let us write $\bar{a}$,
$\bar{b}$ for these projections (with the torsion parts of the
cohomology dropped). We have
\begin{equation}
            \begin{aligned}
                \bar{a} &\in   H^{2}(X;
                L(\bar{T}_{1}))/\mathrm{torsion}
                \subset H^{2}(X; \ft_{1})\\
                \bar{b} &\in   H^{2}(X;  L(T_{1}))/\mathrm{torsion}
                \subset  H^{2}(X; \ft_{1})
            \end{aligned}
\end{equation}
where $L(T_{1})$ is the integer lattice for the maximal torus $T_{1}$
in the simply-connected group $G_{1}=[G,G]$ and and $L(\bar{T}_{1})$
is the integer lattice for $T_{1}/Z(G_{1})$ (the maximal torus of the
adjoint form of $G_{1}$). Let $\langle \dash,\dash \rangle_{2}$ denote
the inner product on $\ft_{1} = \Lie(T_{1})$ normalized so that the
coroots $\alpha^{\vv}$ corresponding to the long roots $\alpha$ for
$G_{1}$ have length $2$.  We then have
\[
                    \langle x,y\rangle = 2 h^{\vv} \langle
                    x,y\rangle_{2}
\]
where $h^{\vv}$ is the dual Coxeter number of $G_{1}$,
so the formula \eqref{eq:sign-change-1} can be rephrased as
\begin{equation}\label{eq:sign-change-2}
     \left(
                  h^{\vv} \langle \bar a \cupprod \bar b \rangle_{2}
                  + \frac{h^{\vv}}{2}  \langle  \bar{b}\cupprod
                  \bar{b} \rangle_{2}
                  + \weyl(\bar{b})\cupprod \weyl(\bar{b})
    \right)[X]
 \end{equation}
The pairing $\langle \dash,\dash \rangle_{2}$ gives a map
\[
                 L(\bar{T}_{1}) \times L(T_{1})    \to \Z
\]
and its restriction to the coarser lattice $L(T_{1})$ is an even form.
The Weyl vector $\weyl$ takes integer values on $L(T_{1})$, so each of
the three terms in \eqref{eq:sign-change-2} is an integer.

Let $p$ be the least
common multiple of the orders of the elements of $Z(G_{1})$: so $p$ is
$2$ in the case of $B_{n}$, $C_{n}$, $D_{2n}$ and $E_{7}$, is $4$ in
the case of $D_{2n+1}$, is $3$ in the case of $E_{6}$ and is $n+1$ in
the case of $A_{n}$. Condition~\ref{cond:G-conditions} ensures that the
map
\[
              \gdet:  Z(G) \to \bar{Z}(G)
\]
has the property
\[
               \gdet_{*}( \pi_{1}(Z(G)) = p \cdot
               \pi_{1}(\bar{Z}(G)).
\]
This condition tells us that $\bar{b}$ lies in
\[
 p \cdot H^{2}(X ;L(\bar{T}_{1}))/\mathrm{torsion}\subset
 H^{2}(X; L(T_{1}))/\mathrm{torsion}.
\]
We write
\[
\begin{aligned}
            \bar{b} &= p \bar{e} \\
                 \bar{e} &\in H^{2}(X;
                 L(\bar{T}_{1}))/\mathrm{torsion}.
\end{aligned}
\]
We also exclude the $A_{n}$ case from our discussion, because the
result of the Proposition for $\SU(N)$ is contained in
\cite{K-higherrank}. We examine the parity of the three terms in
\eqref{eq:sign-change-2}, beginning with the first term, the quantity
\[
                h^{\vv}\langle \bar{a}\cupprod \bar{b} \rangle_{2} [X].
\]
This is even if $h^{\vv}$ is even, and the remaining cases to look at
(with the above exclusions in mind) are $B_{n}$ for any $n$ and
$C_{n}$ for $n$ even. In both these cases, the pairing
$\langle\dash,\dash\rangle_{2}$ takes only even values on $L(\bar{T}_{1})
\times L(T_{1})$, so in all these cases this term is even. The
second term in \eqref{eq:sign-change-2} is also even if $h^{\vv}$ is
even. If $h^{\vv}$ is odd, then $p$ is even and the term can be expressed as
\[
            (h^{\vv}p/2) \langle \bar{e}\cupprod \bar{b}\rangle_{2} [X].
\]
As with the first term, we are dealing with $B_{n}$ or $C_{2m}$, and
the pairing is even by the same mechanism. The third term can be
written
\[
                p^{2} \weyl(\bar{e})\cupprod \weyl(\bar{e})[X].
\]
The case $A_{n}$ has been excluded, and in all other cases $\weyl$
takes integer values on $L(\bar{T}_{1})$. So $\weyl(\bar{e})$ is
an integral class. If $p$ is even, then this term is therefore even.
The only case where $p$ is odd is the case of $E_{6}$. In the $E_{6}$
case, one can check that there exists a coset representative $\bar{e}_{1}$ for a generator of
$L(\bar{T}_{1})/L(T_{1})\cong \Z/3$ such that $\weyl(\bar{e}_{1})$ is an
even integer. We can choose $\fe$ so that its image is spanned by this
representative, and with such a choice this third term is again even.
\end{proof}

\section{Instanton Floer homology for knots} 

\subsection{Configuration spaces and flat connections}
\label{subsec:config-spaces}

Let $Y$ be a closed, connected, oriented $3$-manifold, and let
$K\subset Y$ be an oriented knot or link. Take a simple,
simply-connected Lie group $G$, and let $P\to Y$ be a principal
$G$-bundle (necessarily trivial). Fix a maximal torus and a set of
positive roots as before, and choose $\Phi$ in the fundamental Weyl
chamber, satisfying the constraint \eqref{eq:highest-root-bound}.
Let $O \subset \g$ be its orbit, and let
$\varphi$ be a section of the associated bundle $O_{P}$ along $K$,
defining a  reduction of the structure group of $P|_{K}$ to the
subgroup $G_{\Phi}$. Any two choices of section $\varphi$ are
homotopic: the only topological data is in the pair $(Y,K)$ and the
choice of $G$ and $\Phi$.

We will equip $Y$ with a
Riemannian metric $g^{\nu}$ that is singular along $K$, as we did in dimension 4
in subsection~\ref{subsec:orbifold-metrics} above: the cone-angle will
be $2\pi / \nu$.
To ensure sufficient regularity, we 
let $I$ denote a compact interval in $(0,1)$ containing $\alpha(\Phi)$ for
all roots $\alpha$ in $R^{+}(\Phi)$, and take $\nu$ to  be at least as large as the
integer $\nu_{0}(I,2,m)$ supplied by
Proposition~\ref{prop:Fredholm-cone}, with $m$ a chosen Sobolev
exponent not less than $3$. (We will impose further restrictions on
$\nu$ shortly.)
We construct a model singular connection
$B^{\varphi}$ on the restriction of $P$ to $Y\sminus K$, just as in
the $4$-dimensional case (see \eqref{eq:singular-model-A}), and we
introduce the space of connections
\[
            \conf(Y,K,\Phi) = \{ \, B \mid B-B^{\varphi} \in
            \check{L}^{2}_{m,B^{\varphi}} \, \}.
\]
Here $\check{L}^{2}_{k,B^{\varphi}}$ denote the $3$-dimensional
Sobolev spaces defined just as in
subsection~\ref{subsec:orbifold-metrics}. Because they are trivial,
we omit $P$ and
$\varphi$ from our notation. There is a gauge group
\[
            \G(Y,K,\Phi) = \{ \, g \mid g \in
            \check{L}^{2}_{m+1,B^{\varphi}} \, \}
\]
and a quotient space
\[
                \bonf(Y,K,\Phi) = \conf(Y,K,\Phi) / \G(Y,K,\Phi).
\]
Two connections $B$ and $B'$ belonging to $\conf(Y,K,\Phi)$ are
gauge-equivalent as $G$-connections on $Y\setminus K$ if and only if
they differ by the action of an element of $\G(Y,K,\Phi)$.
As in the $4$-dimensional case, we call a connection $B$ reducible if
its stabilizer has positive dimension.

The space of connections $\conf(Y,K,\Phi)$ is an affine space, and on the
tangent space $T_{B}\conf$ we define an $L^{2}$ inner product
(independent of $B$) by
\begin{equation}\label{eq:inner-prod}
            \langle b, b' \rangle_{L^{2}} = \int_{Y}
             - \tr  (\ad(*b) \wedge \ad( b')),
\end{equation}
Thus we are using the Killing form to contract the
Lie algebra indices, and the Hodge star on $Y$ and the wedge product to
contract the form indices. The Hodge star is the one defined by the
singular metric $g_{^{\nu}}$.
We define the \emph{Chern-Simons functional} on
$\conf(Y,P,\Phi)$ to be the unique function
\[
            \CS : \conf(Y,K,\Phi) \to \R
\]
satisfying $\CS(B^{\varphi})=0$ and having formal gradient (with respect to
the above inner product)
\[
           ( \grad \CS)_{B} = *F_{B}.
\]
From this characterization, one can derive as usual the formula
\begin{equation}\label{eq:CS-formula}
                \CS(B^{\varphi}+b) = \bigl\langle *F_{B^{\varphi}}, b \bigr\rangle_{L^{2}}
                       +\frac{1}{2} \bigl\langle *d_{B^{\varphi}}b, b
                       \bigr\rangle_{L^{2}}
                       +
                       \frac{1}{3} \bigl\langle *[b\wedge b], b
                       \bigr\rangle_{L^{2}}.
\end{equation}
The Chern-Simons functional is independent of the choice of Riemannian
metric on $Y$, as can be seen by rewriting this formula using
\eqref{eq:inner-prod}.

The homotopy type of $\G(Y,K,\Phi)$ is that of the space of maps $g :
Y\to G$ with $g(K) \subset G_{\Phi}$. It follows that the the space of
components of the gauge group is
\[
            \pi_{3}(G) \times [ K , G_{\Phi} ]
\]
which is  isomorphic to
\begin{equation}\label{eq:pi1-of-bonf}
                \Z \oplus \LL(G_{\Phi})^{r}
\end{equation}
where $r$ is the number of components of the link $K$ and $\LL(G_{\Phi}) \subset
\fz(G_{\Phi})$ is the lattice of Definition~\ref{def:lattice}.  In
particular, there is a preferred homomorphism
\begin{equation}\label{eq:d-map}
            d:     \G(Y,K,\Phi)  \to \Z \oplus \LL(G_{\Phi}),
\end{equation}
where the map to the second factor is
obtained by taking the sum over all components of $K$.  An alternative
way to think of $d$ is to use a gauge transformation $g$ in
$\G(Y,K,\Phi)$ to form the bundle $S^{1} \times_{g} P$ over
$S^{1}\times Y$, together with its reduction $S^{1}\times_{g} \varphi$
over $S^{1}\times K$, defined by $\varphi$.  This data over
$(S^{1}\times Y, S^{1}\times \Sigma)$ has an instanton number $k$ and
monopole charge, as in the previous section (see
Definition~\ref{def:lattice} in particular). Then $d(g)$ can be
computed as $(k,l)$.

 The Chern-Simons
functional is invariant only under the identity component of the gauge
group. To express this quantitatively, let $B \in \conf(Y,K,\Phi)$ be
a connection,
let
$g$ be a
gauge transformation, and write $d(g) = (k,l) \in \Z\times \LL(G_{\Phi})$.
Then we have
\[
            \CS(B) - \CS(g(B)) =
            4\pi^{2}( 4h^{\vv} k + 2\langle \Phi,\ll \rangle).
\]

For a path $\gamma : [0,1] \to \conf(Y,K,\Phi)$, we  define the
\emph{topological energy} as twice the drop in the Chern-Simons
functional; so we can reinterpret the last equation as saying that a
path from $B$ to $g(B)$ has topological energy
\[
                \cE = 8\pi^{2}( 4h^{\vv} k + 2\langle \Phi,\ll
                \rangle).
\]
This is a formula which is familiar also for the energy of a solution
on any closed-manifold pair $(X,\Sigma)$ with $\Sigma\cdot\Sigma=0$.
For a path which formally solves the downward gradient-flow equation
for the Chern-Simons functional on $\conf(Y,K,\Phi)$, the topological
energy coincides with the modified path energy,
\[
                  \int_{0}^{1} \bigr ( \| \dot \gamma(t) \|^{2} +
                 \| \grad \CS(\gamma(t))\|^{2} \bigr) dt^{2}.
\]

From the definition of the Chern-Simons functional, it is apparent
that  critical points of $\CS$ are the flat connections in
$\conf(Y,K,\Phi)$.
The image of the critical points in the quotient space
$\bonf(Y,K,\Phi)$ can be identified with the quotient by the action of
conjugation of the space of all
homomorphisms
\begin{equation}\label{eq:homomorphisms}
            \rho : \pi_{1}(Y\sminus K) \to G
\end{equation}
with the property that the holonomy around each positively-oriented
meridian of $K$ is conjugate to $\exp(-2\pi \Phi)$. We shall write
\[
            \Crit \subset \bonf(Y,K,\Phi)
\]
for this set of critical points.

Reducible critical points of the Chern-Simons functional can be ruled
out on topological grounds, by the following criterion, whose proof
follows the same line as the $4$-dimensional version,
Proposition~\ref{prop:no-reducibles}.

\begin{proposition}\label{lem:coprime-sing-G}
    Let the components of $K$
    be $K_{1},\dots,K_{r}$. Suppose that for every fundamental
    weight $\w_{\alpha}$ and every choice of elements
    $\sigma_{1},\dots,\sigma_{r}$ in the Weyl group, the real
    cohomology class
\begin{equation}\label{eq:is-non-integral-3d}
            \sum_{j=1}^{r}(\w_{\alpha}\comp\sigma_{j})(\Phi)
            \mathrm{P.D.}[K_{j}]
\end{equation}
    is not integral. Then there are
       no reducible connections in the set of critical points
       $\Crit\subset\bonf(Y,K,\Phi)$.
\end{proposition}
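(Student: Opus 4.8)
The plan is to run the argument of Proposition~\ref{prop:no-reducibles} essentially verbatim through its representation-theoretic part, and then to replace the role of anti-self-duality by flatness; the latter makes the final step sharper and removes any need for a genericity hypothesis.

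So suppose that $\Crit$ contains a reducible connection, represented by a flat $B\in\conf(Y,K,\Phi)$. Since $B$ is reducible its stabilizer in $\G(Y,K,\Phi)$ has positive dimension, so there is a nonzero covariant-constant section $\psi$ of $\g_{P}$ over $Y\sminus K$. Exactly as in subsection~\ref{subsec:reducibles}, $\psi$ determines a reduction of the structure group of $P|_{Y\sminus K}$ to the stabilizer $G_{\Psi}$ of a representative $\Psi\in\g$; since $[\Psi,\Phi]=0$ we may take both $\Psi$ and $\Phi$ in $\ft$, with $\Phi$ in the fundamental Weyl chamber. The center of $G_{\Psi}$ has positive dimension, so $G_{\Psi}$ carries a nontrivial character; picking a basepoint near each component $K_{j}$ and arguing as in subsection~\ref{subsec:reducibles}, this character has the form $\w_{\alpha}\comp\sigma_{j}$ for a single fundamental weight $\w_{\alpha}$ and Weyl-group elements $\sigma_{1},\dots,\sigma_{r}$, the component-by-component bookkeeping being identical to that preceding \eqref{eq:de-Rham-class}.

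Applying the resulting character $s:G_{\Psi}\to U(1)$ to $B$ yields a singular $U(1)$ connection $s(B)$ on $Y\sminus K$, carried by a line bundle $s(P_{\psi})\to Y$ and differing near each $K_{j}$ from the model connection $\nabla+i\beta(r)(\w_{\alpha}\comp\sigma_{j})(\Phi)\eta$ by a term of the same regularity as $B-B^{\varphi}$. The Chern--Weil computation of subsection~\ref{subsec:reducibles} then gives, exactly as at \eqref{eq:de-Rham-class},
\[
 \frac{i}{2\pi}[F_{s(B)}]=c_{1}(s(P_{\psi}))-\sum_{j=1}^{r}(\w_{\alpha}\comp\sigma_{j})(\Phi)\,\mathrm{P.D.}[K_{j}]
\]
in $H^{2}(Y;\R)$, where $F_{s(B)}$ is extended by zero across $K$. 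But $B$ is flat on $Y\sminus K$, so $F_{s(B)}\equiv0$ there; hence the extended $2$-form is identically zero and its de Rham class vanishes. Therefore $\sum_{j=1}^{r}(\w_{\alpha}\comp\sigma_{j})(\Phi)\,\mathrm{P.D.}[K_{j}]=c_{1}(s(P_{\psi}))$, an integral class, contradicting the hypothesis that the class \eqref{eq:is-non-integral-3d} is never integral. Hence no reducible critical point exists.

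The routine-but-essential point to verify carefully is the Chern--Weil identity for the singular abelian connection: one must confirm that the curvature of $s(B)$, extended by zero across $K$, is a genuine closed $L^{p}$ (indeed $L^{2}$) $2$-form on $Y$, and that the contribution of the conical singularity to $\int_{Y}F_{s(B)}$ is precisely $-2\pi\sum_{j}(\w_{\alpha}\comp\sigma_{j})(\Phi)\,\mathrm{P.D.}[K_{j}]$. This is the three-dimensional shadow of the corresponding four-dimensional computation; it reduces, via the weight-space decomposition and the weighted-Sobolev estimates underlying Proposition~\ref{prop:Fredholm-cone}, to a local model on the punctured normal disk bundle of $K$. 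Note that, unlike its four-dimensional counterpart, the conclusion here is independent of the singular metric $g^{\nu}$, since the set $\Crit$ of flat connections does not depend on it.
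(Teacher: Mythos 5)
Your proof is correct and is essentially the argument the paper intends: it states that the proposition "follows the same line as the 4-dimensional version" (Proposition~\ref{prop:no-reducibles}), and you have carried out exactly that adaptation — the reduction to $G_{\Psi}$, the character $\w_{\alpha}\comp\sigma_{j}$, and the Chern--Weil identity \eqref{eq:de-Rham-class} in its 3-dimensional form. Your observation that flatness forces $F_{s(B)}\equiv 0$, so the class is exactly $c_{1}(s(P_{\psi}))$ and no generic-metric or $b^{+}$ hypothesis is needed, correctly accounts for the difference between the statements of the two propositions.
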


Because the criterion in this proposition is referred to a few times,
we give it a name:

\begin{definition}\label{def:non-integral}
    We will say that $(Y,K,\Phi)$ satisfies the \emph{non-integral
    condition} if the expression \eqref{eq:is-non-integral-3d} is a
    non-integral cohomology class for every choice of fundamental weight $w_{\alpha}$
    and Weyl group elements
    $\sigma_{1},\dots,\sigma_{r}$.
\end{definition}

\subsection{Holonomy perturbations}
\label{subsec:holomy-perts}

We will perturb the Chern-Simons functional $\CS$ by adding a term
$f$: a real-valued function on $\conf(Y)$ invariant under the action
of the gauge group $\G(Y,K,\Phi)$. The type of perturbation that we
use is essentially the same as that used in \cite{Floer-original},
though similar constructions appear in \cite{Taubes-casson,
Donaldson-orientations} and elsewhere.

Let $q: S^{1} \times D^{2} \to Y\sminus K$ be a smooth immersion of a closed
solid torus.
Regard the circle $S^{1}$ as $\R/\Z$ and let $s$ be a corresponding
periodic coordinate.
Let $G_{P} \to Y$ be the bundle with fiber $G$ over $Y$ whose
sections are the gauge transformations of $P$, and for each $z$ in
$D^{2}$
let
\[
            \Hol_{q(\dash, z)}( B) \in (G_{P})_{q(0,z)}
\]
be the holonomy of the connection $ B$ around the corresponding
loop based at $q(0,z)$. As $z$ varies, we obtain in this way a section
$\Hol_{q}(B)$
of the bundle $q^{*}(G_{P})$ on the disk $D^{2}$.

Next suppose we have
an $r$-tuple of maps,
\[
                \bq = (q_{1},\dots, q_{r}),
\]
with $q_{j} : S^{1}\times D^{2} \to Y\sminus K$ an immersion. Suppose
further that there is some interval $[-\eta, \eta]$ such that the
restriction of $q_{j}$ to $[-\eta,\eta]\times D^{2}$
 is independent of $j$:
 \begin{equation}\label{eq:q-coincide}
                        q_{j}(s,z) = q_{j'}(s,z), \text{for all
                        $s$ with $|s|\le \eta$}.
 \end{equation}
The pull-back bundles $q^{*}_{j}(G_{P})$ are all canonically
identified with each other on the subset $[-\eta,\eta] \times D^{2}$, and
we can regard the holonomy maps as defining a section
\[
            \Hol_{\bq}(B) : D^{2} \to q_{1}^{*}(G^{r}_{P})
\]
of the $r$-fold fiber-product of the bundle $G_{P}$ pulled back to
$D^{2}$. Pick any smooth function
\[
                h : G^{r} \to \R
\]
that is invariant under the diagonal action of $G$ by the adjoint
action on the $r$ factors. Such an $h$ defines also a function on
$q_{1}^{*}(G^{r}_{P})$.  Let $\mu$ be a non-negative $2$-form supported
in the interior of $D^{2}$ and having integral $1$, and define
\begin{equation}\label{eq:cylinder-function-fmla}
                f(B) =
                \int_{D^{2}} h ( \Hol_{\bq}(B) ) \mu.
\end{equation}
A function $f$ of this sort is invariant under the gauge group action.

\begin{definition}\label{def:cylinder-function}
    A \emph{cylinder function} on $\conf(Y,K,\Phi)$ is a function \[f:
    \conf(Y,K,\Phi)\to \R\] of the form
    \eqref{eq:cylinder-function-fmla}, determined by an $r$-tuple of
    immersions as above and a $G$-invariant function $h$ on $G^{r}$.
\end{definition}

\begin{remark}
    In the transversality arguments that arise later, the important
    feature
    of the class of functions $f$ obtained in this way is that they
    separate points in the quotient space $\bonf(Y,K,\Phi)$, and also
    that they separate tangent vectors at points where the gauge
    action is free. There is a  slightly different class of
    perturbations that one can use and which serves just as well in
    the case that $G=\SU(N)$: one can drop the requirement that
    $q_{j}=q_{j'}$ on $[-\eta,\eta]\times D^{2}$, but instead put a
    more restrictive condition on $h$, namely that it be invariant
    under the action of $G$ on each of the $r$ factors separately.
    This alternative approach is laid out in detail in
    \cite{Donaldson-book}, where it is explained that such functions
    do separate points of $\bonf$ when $G=\SU(N)$: the key point is
    the following lemma.

    \begin{lemma}[\cite{Donaldson-book}]
    Suppose $h_{1}, \dots, h_{m}$ and $h'_{1},\dots,h'_{m}$ are elements of
    $\SU(N)$ and suppose that for all words $W$, the elements
    $W(h_{1},\dots, h_{m})$ and $W(h'_{1}, \dots, h'_{m})$ are
    conjugate. Then there is a $u\in \SU(N)$ such that $h'_{i} = u
    h_{i}u^{-1}$ for all $i$.
\end{lemma}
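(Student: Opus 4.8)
The plan is to restate the lemma as a fact about representations of a free group and then combine two classical ingredients: that a semisimple complex representation is determined up to isomorphism by its character, and that an intertwiner between unitary representations can be replaced by a unitary one. Let $F_{m}$ be the free group on generators $x_{1},\dots,x_{m}$, and let $\rho,\rho':F_{m}\to\SU(N)$ be the homomorphisms with $\rho(x_{i})=h_{i}$ and $\rho'(x_{i})=h_{i}'$; a word $W$ is exactly an element of $F_{m}$, and $W(h_{1},\dots,h_{m})=\rho(W)$, $W(h_{1}',\dots,h_{m}')=\rho'(W)$. Conjugate matrices have equal trace, so the hypothesis that $\rho(W)$ and $\rho'(W)$ are conjugate in $\SU(N)$ for every $W$ yields in particular
\[
        \tr\rho(W)=\tr\rho'(W)\qquad\text{for all }W\in F_{m},
\]
that is, $\rho$ and $\rho'$ have the same character.

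Next I would invoke the fact that a semisimple finite-dimensional complex representation of any group is determined up to isomorphism by its character. Both $\rho$ and $\rho'$ are semisimple, because a representation with image in the compact group $\SU(N)$ admits an invariant Hermitian metric and hence splits as a direct sum of irreducibles. Writing the isotypic decompositions $\mathbf{C}^{N}=\bigoplus_{j}V_{j}^{\oplus a_{j}}$ for $\rho$ and $\mathbf{C}^{N}=\bigoplus_{k}(V'_{k})^{\oplus a'_{k}}$ for $\rho'$, with the $V_{j}$ (resp.\ $V'_{k}$) pairwise non-isomorphic irreducibles, the characters of pairwise non-isomorphic irreducible representations are linearly independent (a consequence of Burnside's theorem together with the Jacobson density theorem: one can find an element of $\mathbf{C}[F_{m}]$ acting as the identity on a prescribed isotypic summand and as zero on the others). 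Hence equality of characters forces the two lists of (irreducible type, multiplicity) pairs to coincide, so $\rho\cong\rho'$ as complex representations: there is an invertible $g\in\mathrm{GL}(N,\mathbf{C})$ with $g\rho(W)g^{-1}=\rho'(W)$ for all $W$, equivalently $gh_{i}g^{-1}=h_{i}'$ for every $i$. This character-rigidity step is the only substantial ingredient — and since it is entirely classical, there is really no hard obstacle; everything else is a short computation.

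Finally I would promote $g$ to an element of $\SU(N)$. Set $P=g^{*}g$, a positive-definite Hermitian matrix. From $g\rho(W)=\rho'(W)g$ and its adjoint one gets $\rho(W)^{*}P\,\rho(W)=g^{*}\rho'(W)^{*}\rho'(W)g=P$, using that $\rho'(W)$ is unitary; thus $P$ commutes with every $\rho(W)$, and since $P^{1/2}$ is a polynomial in $P$, so does $P^{1/2}$. Then $u:=gP^{-1/2}$ satisfies $u^{*}u=P^{-1/2}PP^{-1/2}=I$ and $u\rho(W)u^{-1}=g\rho(W)g^{-1}=\rho'(W)$, so $u\in U(N)$ conjugates $(h_{i})$ to $(h_{i}')$. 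Choosing a scalar $\zeta$ with $\zeta^{N}=(\det u)^{-1}$ and replacing $u$ by $\zeta u$ — which does not change the conjugation, scalars being central — we may take $u\in\SU(N)$, giving $h_{i}'=uh_{i}u^{-1}$ for all $i$, as required.
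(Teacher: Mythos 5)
Your argument is correct, but note that the paper does not prove this lemma at all: it is quoted from Donaldson's book, so there is no internal proof to compare against. What you give is the standard argument and it holds up: conjugacy of each word gives equality of characters of the two representations of the free group $F_{m}$; complete reducibility (via an invariant Hermitian metric) plus linear independence of characters of pairwise non-isomorphic irreducibles (Burnside/Jacobson density) shows the representations are isomorphic over $\mathrm{GL}(N,\C)$; and the polar-decomposition trick ($P=g^{*}g$ commutes with the image, $u=gP^{-1/2}$ is unitary) followed by a central rescaling lands the intertwiner in $\SU(N)$. The only point worth a sentence in a written version is that it is immaterial whether ``words'' are taken in the free group or the free monoid, since each $h_{i}^{-1}$ lies in the algebra generated by $h_{i}$ (or, as you use, only traces are needed anyway).
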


\noindent
This lemma fails for other groups: this is essentially the observation
of Dynkin \cite{Dynkin}, that two homomorphisms $f_{1},f_{2}:H\to G$
between compact Lie groups
may be \emph{linearly equivalent} without being equivalent, where
linear equivalence means that $\omega\comp f_{1}$ is equivalent to
$\omega\comp f_{2}$ for all linear representations $\omega$ of $G$.
The approach we have taken here is the one used in
\cite{Floer-original}, and works for any simple $G$.
\end{remark}

We  examine the formal gradient of such a cylinder function with
respect to our $L^{2}$ inner product on the tangent spaces of
$\conf(Y,K,\Phi)$. Let $\partial_{j} h$ be the partial derivative of
$h$ along the $j$'th factor: after trivializing the cotangent bundle
of $G$ using left-translation, we may regard this as a map
\[
                \partial_{j} h : G^{r} \to \g^{*}.
\]
Using the Killing form, we can also construct the $\g$-valued function
$(\partial_{j} h)_{\dag}$. The $G$-invariance means that this also
defines a map
\[
            (\partial_{j} h)_{\dag} : G^{r}_{P} \to \g_{P}.
\]
Let $H_{j}$ be the section of $q^{*}_{j}(\g_{P})$ on $D^{2}$ defined by
\[
                H_{j} = (\partial_{j} h)_{\dag}  (\Hol_{\bq}(B)).
\]
We extend $H_{j}$ to a section of $q^{*}_{j}(\g_{P})$ on all of
$S^{1}\times D^{2}$ by using parallel transport along the curves
$s\mapsto q(s,z)$: the resulting section $H_{j}$ has a discontinuity at $s=0$
because the parallel transport around the closed loops may be
non-trivial. The formal gradient of the cylinder function $f$, interpreted as
a $\g_{P}$-valued $1$-form on $Y\sminus K$, is then given by
\begin{equation}\label{eq:formula-for-gradient}
           *\Bigl(\sum_{j=1}^{r}  (q_{j})_{*} (H_{j} \mu) \Bigr). 
\end{equation}
Note that on $[-\eta,\eta]\times D^{2}$ we can regard each $H_{j}$ as a section of the same bundle
$q_{1}^{*}(\g_{P})$, and while each $H_{j}$ has a singularity at
$s=0$, the sum of the $H_{j}$'s does not, because of the
$G$-invariance of $h$. The above $1$-form is therefore continuous at
$q_{1}(\{0\}\times D^{2})$.

Our connections are of class $L^{2}_{m}$ away from the link $K$, and as in
\cite{Taubes-casson,K-higherrank} a short calculation shows that the
section defined by the holonomy is of the same class. So the
$\g_{P}$-valued $1$-form \eqref{eq:formula-for-gradient} is indeed in
$L^{2}_{m}$. It is also supported in a compact subset of $Y\sminus K$,
so it defines a  tangent vector to the space of connections
$\conf(Y,K,\Phi)$. As an abbreviation, let us write $\conf_{m}$ for
our space of connections modelled on $\check{L}^{2}_{m,B^{\varphi}}$,
and $\cT_{m}$ for its tangent bundle.
For
$k\le m$ we have the bundle $\cT_{k} \to \conf_{m}$ obtained by
completing the tangent bundle in the $\check{L}^{2}_{k}$ norm. 
We will
write $V$ for the formal gradient of the cylinder function $f$. The
following proposition details some of its properties.

\begin{proposition}\label{prop:cyl-functions}
    Let $f$ be a cylinder function and let $V$ be its formal gradient
    \eqref{eq:formula-for-gradient},
    regarded as a section of $\cT_{m}$ over $\conf(Y,K,\Phi)=\conf_{m}$. Then
    $V$ has the following properties:
      \begin{enumerate}
        \item  The formal gradient $V$ defines a smooth section,
        $
                        V \in C^{\infty}( \conf_{m} , \cT_{m})
        $.
        \item For any $j \le m$, the first derivative
        $
                    D V \in C^{\infty}(\conf_{m},
                    \Hom(\cT_{m},\cT_{m}))
        $
        extends to a smooth section
        \[
                    D V \in C^{\infty}(\conf_{m},
                    \Hom(\cT_{j},\cT_{j})).                    
        \]
        \item There is a constant $K$ such that
        $
                \|  V(B) \|_{L^{\infty}} \le K
        $
        for all $B$.
        \item For all $j$, there is a constant
        $K_{j}$ such that
        \[
                \|  V(B) \|_{\check{L}^{2}_{j,B^{\varphi}}} \le K_{j} \bigl(1 +
                \|B-B^{\varphi}\|_{\check{L}^{2}_{j,B^{\varphi}}}\bigr)^{j}.
        \]
        \item There exists a constant $C$ such
        that for all $B$ and $B'$, and all $p$
        with $1\le p \le \infty$, we have
        \[
                \|  V(B) - V(B') \|_{L^{p}} \le C
                \|B-B'\|_{L^{p}}.
        \]
        In particular, $V$ is continuous in the $L^{p}$ topologies.
      \end{enumerate}    
\end{proposition}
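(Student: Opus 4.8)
The plan is to reduce all five assertions to a single analytic input — the dependence of parallel transport on the connection — and then to push the resulting estimates through the elementary operations out of which $V$ is assembled. Fixing the $r$-tuple of immersions $\bq$, I would pull a connection $B$ back by each $q_{j}$, trivialize $q_{j}^{*}P$ over $S^{1}\times D^{2}$, and observe that $\Hol_{q_{j}(\dash,z)}(B)$ is the value at $s=1$ of the solution $U_{j}(s,z)$ of the linear equation $\partial_{s}U_{j}=-(q_{j}^{*}B)_{s}\,U_{j}$ with $U_{j}(0,z)=\mathbf{1}$. Because $B-B^{\varphi}\in\check{L}^{2}_{m,B^{\varphi}}$ and this norm agrees with the ordinary $L^{2}_{m}$ norm on the fixed compact subset of $Y\sminus K$ carrying the images of the $q_{j}$, the three-dimensional Sobolev embedding $L^{2}_{m}\hookrightarrow C^{1}$ (valid since $m\ge3$) makes the coefficient $(q_{j}^{*}B)_{s}$ continuous, indeed of class $C^{m-2}$ along the loops. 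Classical linear ODE theory then gives smooth dependence of $U_{j}$ on the coefficient and on the parameter $z$, the derivatives being the usual time-ordered (Duhamel) iterated integrals; hence $B\mapsto\Hol_{\bq}(B)$ is a smooth map from $\conf_{m}$ into the sections of $q_{1}^{*}(G^{r}_{P})$ over $D^{2}$, with explicit multilinear formulas for its derivatives. This is the ``short calculation'' alluded to before the proposition, carried out as in \cite{Taubes-casson} and \cite{K-higherrank}.

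Granting this, (1), (2) and (4) are formal. The gradient $V$ is obtained from $\Hol_{\bq}(B)$ by applying the fibrewise map $(\partial_{j}h)_{\dag}$ — smooth with all derivatives bounded, since $h\in C^{\infty}(G^{r})$ and $G^{r}$ is compact — then extending the sections $H_{j}$ over $S^{1}\times D^{2}$ by $B$-parallel transport (again governed by the same linear ODE), pushing forward by the fixed immersions $q_{j}$, and applying the Hodge star of $g^{\nu}$. Each of these is a smooth map between the relevant $\check{L}^{2}_{k}$ completions, by the multiplication theorems for the weighted Sobolev spaces used in \cite{KM-gtes-I}; composing them yields (1), and re-reading the same estimates at one lower level of regularity yields (2). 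For (4), expanding the time-ordered exponential and counting shows that measuring $\Hol_{\bq}(B)$, hence $V(B)$, in $\check{L}^{2}_{j}$ — or differentiating it $j$ times — brings down at most $j$ factors of $B-B^{\varphi}$ together with its derivatives up to order $j$, and the multiplication theorem $\check{L}^{2}_{j}\times\check{L}^{2}_{j}\to\check{L}^{2}_{j}$ absorbs the products, producing the bound $K_{j}\bigl(1+\|B-B^{\varphi}\|_{\check{L}^{2}_{j,B^{\varphi}}}\bigr)^{j}$.

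Assertion (3) is immediate: the holonomies lie in the compact group $G$, so $|\Hol_{\bq}(B)|$ is bounded independently of $B$, hence so is $|H_{j}|$ since $\partial_{j}h$ is bounded on $G^{r}$; parallel transport preserves pointwise norms and $V(B)$ is supported in the fixed compact image of the solid tori, so $\|V(B)\|_{L^{\infty}}\le K$ with $K$ universal. The step I expect to be the main obstacle is the \emph{uniformity} of the constant in (5): a naive Gronwall estimate for the difference of two holonomies carries a factor $\exp\bigl(\int|q_{j}^{*}B|\bigr)$, which is unbounded over $\conf_{m}$. I would bypass this by using that the parallel-transport matrices are unitary. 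Writing $U_{j},U_{j}'$ for the parallel-transport paths along $s\mapsto q_{j}(s,z)$ determined by $B$ and $B'$, the variation-of-constants formula gives
\[
    U_{j}(1,z)-U_{j}'(1,z)=-\int_{0}^{1}U_{j}(1,z)U_{j}(s,z)^{-1}\bigl((q_{j}^{*}B)_{s}-(q_{j}^{*}B')_{s}\bigr)U_{j}'(s,z)\,ds,
\]
in which every factor $U_{j}(1,z)U_{j}(s,z)^{-1}$ and $U_{j}'(s,z)$ has operator norm $1$; thus $|\Hol_{q_{j}(\dash,z)}(B)-\Hol_{q_{j}(\dash,z)}(B')|\le\int_{S^{1}}|q_{j}^{*}(B-B')|\,ds$ with no exponential factor. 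Combining this with the uniform Lipschitz bound for $(\partial_{j}h)_{\dag}$ on the compact $G^{r}$ and the analogous (and analogously controlled) difference estimate for the parallel-transport extension of the $H_{j}$ yields the pointwise bound
\[
    |V(B)-V(B')|\bigl(q_{j}(s_{0},z)\bigr)\le C\int_{S^{1}}|B-B'|\bigl(q_{j}(s,z)\bigr)\,ds .
\]
Integrating over $Y$ — using that each $q_{j}$ is a fixed immersion of a compact solid torus (bounded Jacobian, finitely many sheets) and Jensen's inequality on the finite-measure circle to move the $p$-th power inside the $s$-integral — then gives $\|V(B)-V(B')\|_{L^{p}(Y)}\le C\|B-B'\|_{L^{p}(Y)}$ for every $p\in[1,\infty]$, and $L^{p}$-continuity of $V$ follows at once. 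The only real subtlety is the one flagged: keeping every constant independent of $B$ and $B'$, which is precisely what the unitarity of holonomy buys.
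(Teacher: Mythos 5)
Your argument is correct, and it is essentially the standard one that the paper itself relies on without writing out: the paper simply defers to \cite{Taubes-casson,K-higherrank} (and the analogous treatment in \cite{KM-book}) for exactly these properties of holonomy perturbations. Your reduction to the linear ODE for parallel transport, the use of compactness of $G$ for (3) and for the Lipschitz bound on $(\partial_{j}h)_{\dag}$, the multiplication theorems for (1), (2), (4), and in particular the variation-of-constants identity with norm-one conjugating factors to get the $B$-independent constant in (5), reproduce the cited proofs faithfully.
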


In order to work with a Banach space of perturbations $f$, we will
consider functions $f : \conf(Y,K,\Phi)\to \R$ which are obtained as the sum
of a series
\[
                f = \sum_{i=1}^{\infty} a_{i} f_{i}
\]
where the $a_{i}$ are real and $\{f_{i}\}_{i\in\mathbb{N}}$ is some
fixed countable collection of cylinder functions. We need to consider
whether such a sum is convergent. More specifically, we would like the
series of gradients $V_{i} = \grad f_{i}$ to converge to a section
\begin{equation}\label{eq:V-series}
            V = \sum_{i=1}^{\infty} a_{i} V_{i}
\end{equation}
of the tangent bundle to $\cT_{m}$, and we would like the limit $V$
to share with $V_{i}$ the properties detailed in the above proposition.
In the first part of this proposition, when we say that a section $V$
belongs to $C^{\infty}$, we do not imply that the norm of the $n$'th derivative
$D^{n}V|_{B}$  is uniformly bounded
independent of $B$. But the norm \emph{is} bounded by a function of
$\|B-B^{\varphi}\|_{\check{L}^{2}_{m,B^{\varphi}}}$: we have for each $n$  a
continuous function $h_{n}(\dash)$
such that
\[
                    \| D^{n}V|_{B}(b_{1}, \dots b_{n})
                    \|_{L^{2}_{l,B^{\varphi}}}
                    \le h_{n}(\|
                    B-B^{\varphi}\|_{\check{L}^{2}_{m,B^{\varphi}}}) \prod_{i=1}^{n}
                    \|b_{i}\|_{\check{L}^{2}_{m,B^{\varphi}}}.
\]
A similar remark applies to the derivatives of $DV$ in the norms that
appear in the second part of the proposition. Because of this, given
any countable collection of cylinder functions
$\{f_{i}\}_{i\in\mathbb{N}}$, we can find constants $C_{i}$ such that
the series \eqref{eq:V-series} converges whenever the sum
\[
                \sum C_{i} | a_{i} |
\]
converges, and such that the limit $V$ of the series is smooth.

Before proceeding further,  we shall choose a suitable countable collection of
cylinder functions $f_{i}$, sufficiently large to ensure that we can
achieve transversality. For each integer $r>0$, we choose a countable
set of $r$-tuples of immersions $q: S^{1}\times D^{2} \to Y$,
\[
                    (q_{1}^{r,j}, \dots, q_{r}^{r,j}), \quad
                    j\in\mathbb{N},                   
\]
satisfying \eqref{eq:q-coincide}
which are dense in the $C^{1}$ topology on the space of such
$r$-tuples of immersions. For each $r$, we also choose a collection of
smooth $G$-invariant functions $\{h^{r}_{k}\}_{k\in\mathbb{N}}$ on
$G^{r}$ which are dense in the $C^{\infty}$ topology.  Finally we combine
these to form a countable collection of cylinder functions
\[
                f_{j,k,r} = h^{r}_{k} (q_{1}^{r,j}, \dots,
                q_{r}^{r,j}).
\]

\begin{definition}\label{def:Pert}
    Fix a countable collection of cylinder functions $f_{i}$ and
    constants $C_{i}>0$ as above. Let $\Pert$ denote the separable Banach space of all
    real sequences $\pert=\{\pert_{i}\}_{i\in\mathbb{N}}$ such that the series
    \[
             \|\pert\|_{\Pert} \stackrel{\mathrm{def}}{=}       \sum_{i} C_{i} |\pert_{i}|
    \]
    converges. For each $\pert \in \Pert$, let $f_{\pert} = \sum \pert_{i} f_{i}$ be
    the corresponding function on $\conf(Y,K,\Phi)$, and let
    \[
                    V_{\pert} = \sum_{i} \pert_{i}V_{i}
    \]
    be the formal gradient of $f_{\pert}$ with respect to the $L^{2}$
    inner product. 
\end{definition}

What our discussion has shown is that, for suitable choice of
constants $C_{i}$, the series \eqref{eq:V-series} will converge and
the limit will also have the properties of cylinder functions that are
given in Proposition~\ref{prop:cyl-functions}. The next proposition
records this, together with the fact that the dependence of the
estimates on $\pert\in \Pert$ is as expected:

\begin{proposition}\label{prop:cyl-functions-param}
    If the constants $C_{i}$ in the definition of the Banach space $\Pert$
    grow sufficiently fast, then the family of sections $V_{\pert}$ of
    $\cT_{m}$ satisfies the following conditions.
          \begin{enumerate}
        \item \label{item:cyl-functions-param-1}
        The map
         \[
         \begin{aligned}
                                (\pert,B) &\mapsto V_{\pert}(B)
         \end{aligned}
          \]
          defines a smooth map
        $
                        V_{\bullet} \in C^{\infty}( \Pert\times
                        \conf_{m} , \cT_{m})
        $.
        \item\label{item:cyl-functions-param-2}
        For any $j \le m$, the first
        derivative in the $B$ variable,
        $
                    D V_{\bullet} \in C^{\infty}(\Pert\times \conf_{m},
                    \Hom(\cT_{m},\cT_{m}))
        $
        extends to a smooth section
        \[
                    D V_{\bullet} \in C^{\infty}(\Pert\times \conf_{m},
                    \Hom(\cT_{j},\cT_{j})).                    
        \]
        \item There is a constant $K$ such that
        $
                \|  V_{\pert}(B) \|_{L^{\infty}} \le K \|\pert\|_{\Pert}
        $
        for all $\pert$ and $B$.
        \item For all $j$, there is a constant
        $K_{j}$ such that
        \[
                \|  V_{\pert}(B) \|_{L^{2}_{j,B^{\varphi}}} \le K_{j} \| \pert \|_{\Pert}\bigl(1 +
                \|B-B^{\varphi}\|_{L^{2}_{j,B^{\varphi}}}\bigr)^{j}.
        \]
        \item There exists a constant $C$ such
        that for all $B$ and $B'$, and all $p$
        with $1\le p \le \infty$, we have
        \[
                \|  V_{\pert}(B) - V_{\pert}(B') \|_{L^{p}} \le C
               \|\pert\|_{\Pert} \|B-B'\|_{L^{p}}.
        \]
        \item The function $f_{\pert}$ whose gradient is $V_{\pert}$
        is bounded on $\conf_{m}(Y,K,\Phi)$.
      \end{enumerate}    
\end{proposition}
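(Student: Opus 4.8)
The plan is to obtain Proposition~\ref{prop:cyl-functions-param} by bootstrapping from the single-cylinder-function estimates of Proposition~\ref{prop:cyl-functions}, together with the observation made just before Definition~\ref{def:Pert} that each derivative $D^{n}V_{i}|_{B}$ is bounded, in the relevant operator norms, by a continuous function of $\|B-B^{\varphi}\|_{\check{L}^{2}_{m,B^{\varphi}}}$ times a product of norms of the arguments. Concretely, for each fixed $i$ and each $n$, Proposition~\ref{prop:cyl-functions} and the multiplication theorems for the $\check{L}^{2}_{k}$ spaces give a continuous function $h_{n,i}(\dash)$ with
\[
        \bigl\| D^{n}V_{i}|_{B}(b_{1},\dots,b_{n}) \bigr\|_{\check{L}^{2}_{j,B^{\varphi}}}
        \le h_{n,i}\bigl(\|B-B^{\varphi}\|_{\check{L}^{2}_{m,B^{\varphi}}}\bigr)\prod_{l=1}^{n}\|b_{l}\|_{\check{L}^{2}_{m,B^{\varphi}}},
\]
and similarly for the operator $DV_{i}$ acting on the lower spaces $\cT_{j}$ and for the $L^{p}$-Lipschitz bounds. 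One then \emph{defines} the constants $C_{i}$ recursively: having enumerated a countable family of pairs $(n,j)$ (with $n,j\le m$) that need to be controlled, choose $C_{i}$ larger than $2^{i}$ times the supremum of $h_{n,i}$ over the interval $[0,i]$, over all those finitely many pairs $(n,j)$, and larger than $2^{i}$ times the $L^{\infty}$-bound constant $K_{i}$ and the Lipschitz constant $C_{i}$ of Proposition~\ref{prop:cyl-functions} for the $i$-th function. With such a choice, on any ball $\|B-B^{\varphi}\|_{\check{L}^{2}_{m,B^{\varphi}}}\le R$ the tail $\sum_{i>R}|\pert_{i}|\,\|D^{n}V_{i}|_{B}\|$ is dominated by $\sum_{i>R}|\pert_{i}|C_{i}2^{-i}\le \|\pert\|_{\Pert}$, so the series \eqref{eq:V-series} and all its term-by-term derivatives converge uniformly on bounded sets.

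Next I would verify each of the six conclusions in turn. For item~\ref{item:cyl-functions-param-1}, the map $(\pert,B)\mapsto V_{\pert}(B)=\sum_{i}\pert_{i}V_{i}(B)$ is linear in $\pert$ and, for each $n$, its $n$-th $B$-derivative is the uniformly-convergent series $\sum_{i}\pert_{i}D^{n}V_{i}|_{B}$; since a uniformly convergent series of smooth maps with uniformly convergent derivatives of all orders is smooth, and linearity in $\pert$ gives smoothness in that variable for free, $V_{\bullet}\in C^{\infty}(\Pert\times\conf_{m},\cT_{m})$. Item~\ref{item:cyl-functions-param-2} follows the same way, reading the estimates in the $\cT_{j}$ operator norms rather than the $\cT_{m}$ ones, exactly as in the second part of Proposition~\ref{prop:cyl-functions}. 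Items~3, 4 and~5 are immediate from the triangle inequality and the per-term bounds: for instance $\|V_{\pert}(B)\|_{L^{\infty}}\le\sum_{i}|\pert_{i}|\,\|V_{i}(B)\|_{L^{\infty}}\le\sum_{i}|\pert_{i}|K_{i}\le K\|\pert\|_{\Pert}$ with $K$ absorbing the factor from $C_{i}\ge 2^{i}K_{i}$, and likewise for the Sobolev growth bound and the $L^{p}$-Lipschitz bound. Item~6, boundedness of $f_{\pert}$ itself, follows because each cylinder function $f_{i}$ is manifestly bounded on $\conf(Y,K,\Phi)$ (the function $h^{r}_{k}$ is continuous on the compact group $G^{r}$, and $\mu$ has total mass one), with $|f_{i}|\le\|h^{r}_{k}\|_{\infty}$; enlarging the $C_{i}$ further so that $C_{i}\ge 2^{i}\|h_{i}\|_{\infty}$ gives $|f_{\pert}|\le\sum_{i}|\pert_{i}|\,\|h_{i}\|_{\infty}\le\|\pert\|_{\Pert}$.

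The only genuinely delicate point — and the one I would expect to be the main obstacle — is the \emph{uniformity} of the single-function estimates of Proposition~\ref{prop:cyl-functions}: I need the estimating functions $h_{n,i}(\dash)$ to be locally bounded in the $\check{L}^{2}_{m,B^{\varphi}}$-norm of $B-B^{\varphi}$ alone (not depending on higher regularity), so that a single $C_{i}$ works for all $B$ in a given ball and all the derivatives simultaneously. This rests on the holonomy map $B\mapsto\Hol_{\bq}(B)$ being a smooth map into the group of $\check{L}^{2}_{m}$ gauge transformations over $D^{2}$ with derivatives controlled by the connection's $\check{L}^{2}_{m}$-norm — the short calculation alluded to in the text, following \cite{Taubes-casson,K-higherrank} — combined with the fact that composing with the smooth $G$-invariant function $h$ and integrating against $\mu$ over the compact disk only improves matters. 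Once that uniformity is in hand, the choice of the $C_{i}$ is a routine diagonalization and the rest of the proposition is bookkeeping with the triangle inequality; so I would state the uniformity carefully as a lemma (or cite it from \cite{KM-gtes-I,Taubes-casson}) and then present the construction of the $C_{i}$ and the six verifications compactly.
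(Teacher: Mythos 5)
Your proposal is correct and follows essentially the same route as the paper: the paper records this proposition as a consequence of the discussion preceding Definition~\ref{def:Pert} (the per-function bounds of Proposition~\ref{prop:cyl-functions} together with the estimate $\| D^{n}V_{i}|_{B}\| \le h_{n,i}(\|B-B^{\varphi}\|_{\check{L}^{2}_{m,B^{\varphi}}})\prod\|b_{l}\|$, followed by choosing the $C_{i}$ to grow fast enough that the series and all its derivatives converge), which is exactly the diagonalization you spell out. Your identification of the key uniformity point — that the estimates depend only on the $\check{L}^{2}_{m}$-norm of $B-B^{\varphi}$ via the smoothness of the holonomy map — is the same ingredient the paper relies on via Proposition~\ref{prop:cyl-functions} and the references to \cite{Taubes-casson,K-higherrank}.
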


We will refer to a perturbation $f=f_{\pert}$ of the Chern-Simons
invariant which arises in this way as a \emph{holonomy perturbation}.

\subsection{Elliptic theory and transversality for critical points}

We fix a Banach space $\Pert$ parametrizing holonomy perturbations as
above, and we consider now the critical points of the perturbed
Chern-Simons functional $\CS+f_{\pert}$, for $\pert\in \Pert$. These critical
points are the connections $B$ in $\conf(Y,K,\Phi)$ satisfying
\begin{equation}\label{eq:perturbed-flat}
             * F_{B} + V_{\pert}(B) = 0.
\end{equation}
These equations are invariant under gauge transformation, and we
denote by
\[
            \Crit_{\pert}   \subset \bonf(Y,K,\Phi)
\]
the image of the set of critical points in the quotient space.
The familiar compactness properties of the space of flat connections modulo gauge
transformations extend to show that $\Crit_{\pert}$ is compact: we have,
more generally, the following lemma.

\begin{lemma}\label{lem:crit-compact}
    Let $\Crit_{\bullet} \subset \Pert \times \bonf(Y,K,\Phi)$ be the
    parametrized union of the critical sets $\Crit_{\pert}$ as $\pert$ runs
    through $\Pert$. Then the projection $\Crit_{\bullet} \to \Pert$ is
    proper.
\end{lemma}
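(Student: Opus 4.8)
The plan is to prove that $\Crit_\bullet \to \Pert$ is proper by a standard Uhlenbeck-type compactness argument, adapted to the perturbed equation $*F_B + V_\pert(B) = 0$ and to the singular (orbifold) setting. Suppose $(\pert_n, [B_n])$ is a sequence in $\Crit_\bullet$ with $\pert_n \to \pert_\infty$ in $\Pert$. I must produce a subsequence for which $[B_n]$ converges in $\bonf(Y,K,\Phi)$ to a critical point of $\CS + f_{\pert_\infty}$. First I would observe that the set $\{\pert_n\} \cup \{\pert_\infty\}$ is bounded in $\Pert$, so by Proposition~\ref{prop:cyl-functions-param}(3) the gradients $V_{\pert_n}(B_n)$ are uniformly bounded in $L^\infty$; hence $\|{*F_{B_n}}\|_{L^\infty}$ is uniformly bounded, and in particular the energy $\int_Y |F_{B_n}|^2$ is uniformly bounded. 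Since we are on a closed $3$-manifold (with the orbifold metric $g^\nu$ along $K$), there is no energy that can escape to bubbles in the usual $4$-dimensional sense; the only issue is choosing good gauge representatives.

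The key step is a patching/gauge-fixing argument: using the a priori $L^\infty$ bound on curvature, apply the orbifold version of Uhlenbeck's gauge-fixing theorem (the $3$-dimensional analogue of the results underlying Proposition~\ref{prop:Uhlenbeck}, or directly the local Coulomb-gauge theorem) to find gauge transformations $g_n \in \G(Y,K,\Phi)$ so that $g_n(B_n) - B^\varphi$ is bounded in $\check L^2_{1,B^\varphi}$, say, uniformly in $n$. Then bootstrap: the equation $*F_{g_n(B_n)} + V_{\pert_n}(g_n(B_n)) = 0$ together with a Coulomb gauge condition $d^*_{B^\varphi}(g_n(B_n) - B^\varphi) = 0$ is an elliptic system for $g_n(B_n) - B^\varphi$; using Proposition~\ref{prop:cyl-functions-param}(4) to control $V_{\pert_n}$ in successively higher Sobolev norms (with the polynomial dependence on $\|B - B^\varphi\|$ absorbed inductively), and the Fredholm package from Proposition~\ref{prop:Fredholm-cone}, one upgrades the bound to $\check L^2_{m,B^\varphi}$. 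Then Rellich (which holds in the orbifold setting, as noted after Proposition~\ref{prop:Fredholm-cone}) gives a subsequence with $g_n(B_n) \to B_\infty$ strongly in $\check L^2_{m',B^\varphi}$ for every $m' < m$, and one arranges $m$ large enough at the outset (taking $m \ge 3$ as in section~\ref{subsec:config-spaces}, or a touch larger) that the limit lies in $\conf(Y,K,\Phi)$ after a further bootstrap step.

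It remains to check that the limit $B_\infty$ is actually a critical point of $\CS + f_{\pert_\infty}$. This follows from the continuity of $V_\bullet$: by Proposition~\ref{prop:cyl-functions-param}(1), $V_{\pert_n}(g_n(B_n)) \to V_{\pert_\infty}(B_\infty)$ (jointly continuous in $(\pert, B)$), while $*F_{g_n(B_n)} \to *F_{B_\infty}$ in an appropriate norm since the curvature depends continuously on the connection in these Sobolev topologies; passing to the limit in \eqref{eq:perturbed-flat} gives $*F_{B_\infty} + V_{\pert_\infty}(B_\infty) = 0$. Hence $[g_n(B_n)] = [B_n] \to [B_\infty] \in \Crit_{\pert_\infty}$ in $\bonf(Y,K,\Phi)$, proving properness. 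I expect the main obstacle to be the gauge-fixing and bootstrapping step in the singular orbifold setting: one must make sure Uhlenbeck's local gauge theorem and the elliptic estimates are applied with the weighted/orbifold norms $\check L^2_{k,B^\varphi}$ rather than ordinary Sobolev norms, and that the holonomy perturbation $V_\pert$ — which is smooth away from $K$ and supported in a compact subset of $Y \sminus K$ — does not interfere with the analysis near $K$ (it does not, precisely because $\bq$ maps into $Y \sminus K$). The non-linear estimate in Proposition~\ref{prop:cyl-functions-param}(4), with its polynomial growth, is exactly what makes the inductive bootstrap close once the base $\check L^2_1$ bound is in hand. Everything else is a routine transcription of the closed-manifold argument of \cite{Floer-original} and \cite{Donaldson-book}.
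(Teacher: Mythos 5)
Your argument is correct and follows essentially the same route as the paper's proof: the uniform $L^{\infty}$ bound on $V_{\pert_{n}}(B_{n})$ coming from the Banach-space structure of $\Pert$ gives an $L^{\infty}$ curvature bound, after which Uhlenbeck gauge fixing in the orbifold setting, elliptic bootstrapping of the equation $*F_{B}+V_{\pert}(B)=0$ in the $\check{L}^{2}_{j}$ norms, and Rellich compactness yield a subsequence converging in $\check{L}^{2}_{m}$, with continuity of $(\pert,B)\mapsto V_{\pert}(B)$ identifying the limit as a critical point for $\pert_{\infty}$. The only organizational difference is that the paper carries out the Coulomb-gauge bootstrap locally on a finite cover of balls and orbifold balls and then patches the local gauge transformations, rather than imposing a single global Coulomb gauge relative to $B^{\varphi}$ as you suggest, but this does not change the substance of the argument.
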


\begin{proof}
    Suppose that $[B_{i}]$ belongs to $\Crit_{\pert_{i}}$ and $\pert_{i}$
    converges to $\pert$ in $\Pert$. 
    The terms $V_{\pert_{i}}(B_{i})$ are bounded in
    $L^{\infty}$, so the curvatures of the connections $B_{i}$ are
    bounded in $L^{\infty}$ also. By Uhlenbeck's theorem, we may
    assume (after replacing the $B_{i}$ by suitable gauge transforms)
    that the connection forms $B_{i}-B^{\varphi}$ are
    a bounded sequence in $\check{L}^{p}_{1}$, for all $p$.
    Uhlenbeck's theorem also allows us to find a
    finite covering of $Y$ by balls $U_{\alpha}$ (or orbifold balls
    centered at points of $K$) together with gauge
    transformations
    $g_{i,\alpha}$ such
    that the connections $B_{i,\alpha}=g_{i,\alpha}(B_{i})$
    are in $g^{\nu}$-Coulomb gauge with respect to some trivialization of
    $P|_{U_{\alpha}}$. The connection forms $B_{i,\alpha}$ on
    $U_{\alpha}$ are also bounded in $\check{L}^{p}_{1}$ for all $p$, and the
    gauge transformations $g_{i,\alpha}$ are bounded in $\check{L}^{p}_{2}$.

    The equations satisfied by $B_{i,\alpha}$ on $U_{\alpha}$ are
    \[
    \begin{aligned}
    *d B_{i,\alpha} &= - *[B_{i,\alpha}\wedge B_{i,\alpha}] -
                g_{i,\alpha} ( V_{\pert_{i}}(B_{i}))\\
                d^{*} {B_{i,\alpha}} &= 0.
                \end{aligned}
    \]
        The terms $[B_{i,\alpha}\wedge B_{i,\alpha}]$ are
    bounded in $\check{L}^{2}_{1}$, because of the continuity of the
    multiplication $\check{L}^{p}_{1} \times \check{L}^{p}_{1} \to
    \check{L}^{2}_{1}$ for $p>
    3$. The term $V_{\pert_{i}}(B_{i})$ is bounded in $\check{L}^{2}_{1}$, as is
    $g_{i,\alpha} ( V_{\pert_{i}}(B_{i}))$ therefore. On a smaller ball
    $U'_{\alpha}\subset U_{\alpha}$, these equations therefore give us
    an $\check{L}^{2}_{2}$ bound on $B_{i,\alpha}$. The bootstrapping argument
    now follows standard lines: on smaller balls $U''_{\alpha}$, the
    connections $B_{i,\alpha}$ are bounded in all $\check{L}^{2}_{j}$ norms,
    and after passing to a subsequence, the gauge transformations
    $g_{i,\alpha}$ can be patched together to form gauge
    transformations $g_{i}$ such that $g_{i}(B_{i})$ converges in the
    $\check{L}^{2}_{m}$ topology.
\end{proof}

For fixed $\pert$, the left-hand side of \eqref{eq:perturbed-flat} defines a smooth section,
\[
        B \mapsto *F_{B} + V_{\pert}(B)
\]
of the bundle $\cT_{m-1}\to \conf_{m}$. Because of the gauge invariance
of the functional, this section is everywhere orthogonal to the orbits
of the gauge group on $\conf(Y,K,\Phi)$, with respect to the $L^{2}$ inner
product. To introduce some notation to express this, let us first
write $\conf^{*}_{m}=\conf^{*}(Y,K,\Phi)$ as usual for the subset of $\conf(Y,K,\Phi)$ consisting
of irreducible connections, and let us decompose the restriction of
$\cT_{j}$ to $\conf^{*}_{m}$ as
\begin{equation}\label{eq:JK-decomp}
                \cT_{j} = \cJ_{j} \oplus \cK_{j},
\end{equation}
where $\cJ_{j,B}$ is the $\check{L}^{2}_{j}$ completion of the tangent space to
the gauge-group orbit through $B$ (i.e. the image of the map $u
\mapsto d_{B}u$) and $\cK_{j,B})$ its $L^{2}$ orthogonal complement in
$\check{L}^{2}_{j}(Y;\g_{P})$. Thus $\cK_{j,B}$ is the space of $b$ in
$\check{L}^{2}_{j}(Y;\g_{P})$ satisfying the Coulomb condition,
\[
                d^{*}_{B} b =0.
\]
On $\conf^{*}_{m}$, the decomposition \eqref{eq:JK-decomp} is a smooth
decomposition of a Banach vector bundle.
The gauge invariance of our perturbations means that $V_{\pert}$ is a
section the summand $\cK_{m} \subset \cT_{m}$ over $\conf^{*}_{m}$.

\begin{definition}
We say that a solution $B_{1} \in \conf^{*}(Y,K,\Phi)$ to the equation
\eqref{eq:perturbed-flat} is \emph{non-degenerate} if the section
$B \mapsto  *F_{B} + V_{\pert}(B)$ of the bundle $\cK_{m-1}\to
\conf^{*}_{m}$ is transverse to the zero section at $B=B_{1}$.
\end{definition}

The space of holonomy perturbations is sufficiently large to ensure
that all critical points will be non-degenerate for a suitably chosen
$\pert$:

\begin{proposition}\label{prop:residual-crit}
    There is a residual subset of the Banach space $\Pert$ such that for
    all $\pert$ in this subset, all the critical points of the perturbed
    functional $\CS + f_{\pert}$ in $\conf^{*}(Y,K,\Phi)$ are non-degenerate.
\end{proposition}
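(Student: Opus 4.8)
The plan is to apply the Sard–Smale theorem to a parametrized version of the critical-point equation, in the standard Floer-theoretic manner (cf.\ \cite{Floer-original,Donaldson-book,KM-gtes-I}). First I would form the parametrized zero locus: consider the bundle $\cK_{m-1}\to \Pert\times\conf^{*}_{m}$ (pulling back the bundle $\cK_{m-1}\to\conf^{*}_m$ from the second factor, using the decomposition \eqref{eq:JK-decomp}), and the section
\[
    \Xi(\pert,B) = *F_{B}+V_{\pert}(B).
\]
By Proposition~\ref{prop:cyl-functions-param}, parts \ref{item:cyl-functions-param-1} and \ref{item:cyl-functions-param-2}, this is a smooth section; its zero locus, modulo the (free) action of $\G(Y,K,\Phi)$ on $\conf^*$, is $\Crit_\bullet^{*}$, the irreducible part of the parametrized critical set of Lemma~\ref{lem:crit-compact}. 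The key step is to show that $\Xi$ is transverse to the zero section. At a zero $(\pert,B)$, the derivative in the $\conf_m$ directions is the Hessian operator $b\mapsto *d_B b + DV_\pert|_B(b)$ restricted to the Coulomb slice $\cK_{m-1,B}$; this is a Fredholm operator of index $0$ (by the elliptic theory of subsection~\ref{subsec:orbifold-metrics}, in particular Proposition~\ref{prop:Fredholm-cone}, applied in dimension $3$), but need not be surjective. One must show that the extra derivatives in the $\Pert$ directions, namely $\dot\pert\mapsto V_{\dot\pert}(B)=\sum_i\dot\pert_i V_i(B)$, span a complement to its image. Since the operator is Fredholm, its cokernel is finite-dimensional, and it suffices to show that no nonzero $\omega\in\cK_{m-1,B}$ (or rather in the $L^2$-cokernel, which by elliptic regularity is smooth) is $L^2$-orthogonal to $V_i(B)$ for every cylinder function $f_i$ in our countable dense family.

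This orthogonality step is the main obstacle, and it is where the specific structure of holonomy perturbations enters. The argument runs by contradiction: suppose $\omega\neq 0$ lies in the cokernel and is orthogonal to every $V_i(B)$. Because $B$ is irreducible and $\omega$ satisfies an elliptic equation, $\omega$ is smooth and its zero set is nowhere dense (in fact one uses unique continuation to conclude $\omega$ is nonzero on a dense open set, or at least on some small ball where $B$ is also, after a gauge change, in good position). Pick a point $y_0\in Y\sminus K$ where $\omega(y_0)\neq 0$ and where the pair $(B,\omega)$ is generic in an appropriate sense. One then constructs an $r$-tuple of loops through $y_0$ (or a nearby point) and a $G$-invariant function $h$ on $G^r$ such that the resulting gradient $V$ has a nonzero $L^2$ pairing with $\omega$: concretely, for $r=1$ one uses a small loop $q$ near $y_0$ carrying the $2$-form $\mu$, so that $f(B)=\int h(\Hol_q(B))\mu$ and $V(B)=*\,(q_*(H\mu))$ is, to leading order, supported near $y_0$ with value controlled by $\partial h$ evaluated at the holonomy; choosing $h$ so that $\partial h$ at that holonomy is an arbitrary prescribed element of $\g$ (using the invariance of $h$ only under the adjoint action, which leaves enough freedom), one arranges $\langle V(B),\omega\rangle_{L^2}\neq 0$. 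The formula \eqref{eq:formula-for-gradient} for the gradient is exactly what makes this computation tractable. Density of the chosen countable family $\{f_i\}$ in the $C^1$ (resp.\ $C^\infty$) topologies then guarantees that some $f_i$ already has this property, contradicting the assumption on $\omega$. This is essentially the argument of \cite{Floer-original} and \cite[Section 5]{Donaldson-book}, adapted to the singular setting; the only new points are that the loops must avoid $K$ (automatic, since $q_j$ maps into $Y\sminus K$) and that the relevant Sobolev multiplication and elliptic estimates are the orbifold ones, already set up in subsection~\ref{subsec:orbifold-metrics}.

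Granting transversality of $\Xi$, the parametrized zero locus $\Xi^{-1}(0)$ is a (Banach) submanifold of $\Pert\times\conf^*_m$, and dividing by the free gauge action gives a smooth Banach manifold $\mathcal{Z}$ mapping to $\Pert$. The compactness statement of Lemma~\ref{lem:crit-compact} shows this projection $\mathcal{Z}\to\Pert$ is proper; combined with the fact that each fiber is cut out by a Fredholm map of index $0$, the projection is a Fredholm map of index $0$ between separable Banach manifolds. By the Sard–Smale theorem its set of regular values is residual in $\Pert$. For $\pert$ a regular value, $0$ is a regular value of the section $B\mapsto *F_B+V_\pert(B)$ of $\cK_{m-1}\to\conf^*_m$, which is precisely the non-degeneracy condition in the definition preceding the proposition. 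Hence for all $\pert$ in this residual set, every irreducible critical point of $\CS+f_\pert$ is non-degenerate, as claimed. One small technical remark to include: since $\Pert$ and the relevant configuration spaces are separable, Sard–Smale applies without the usual second-countability worries; and because the non-degeneracy is a local condition near a compact set of critical points, one may equally run the argument with $C^k$ perturbations for large finite $k$ and then pass to $C^\infty$ by the standard intersection-of-residual-sets trick, though the Banach space $\Pert$ of rapidly-decaying sequences already circumvents this.
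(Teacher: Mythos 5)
Your overall strategy---parametrized transversality plus Sard--Smale, with the key step being that the gradients $V_i(B)$ of the cylinder functions span a complement to the image of the Hessian on the Coulomb slice---is the standard argument, and it is essentially what the paper has in mind when it cites \cite{Floer-original,Donaldson-book,KM-book} and the density of $f_{\pert}|_{S}$ in $C^{\infty}(S)$. But as written your proposal has two genuine problems.

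First, you assert that the gauge action on $\conf^{*}(Y,K,\Phi)$ is free (so that the parametrized zero set descends to a Banach \emph{manifold}). For the general simple, simply-connected $G$ for which this proposition is stated, that is false: as the paper stresses in subsection~\ref{subsec:connections-and-moduli}, an irreducible connection can have finite stabilizer strictly larger than $Z(G)$ (e.g.\ $G=\Spin(2n+1)$ or $G_{2}$), so $\bonf^{*}(Y,K,\Phi)$ is only a Banach orbifold and your quotient step, and the subsequent Sard--Smale application, need modification. Since the perturbations are gauge-invariant, near such a critical point one is doing transversality for functions invariant under a nontrivial finite group, and generic invariant perturbations need a separate (equivariant) argument; this is exactly the point the paper handles by invoking Wasserman, and your proof is silent on it.

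Second, your sketch of the crucial orthogonality step is wrong as stated. For $r=1$, the invariance of $h$ under conjugation forces the element $(\partial h)_{\dag}(\Hol_{q}(B))$ to lie in the centralizer of the holonomy, so you cannot ``choose $h$ so that $\partial h$ at that holonomy is an arbitrary prescribed element of $\g$''; a single class function of a single holonomy only produces gradient directions commuting with that holonomy. This is precisely why Definition~\ref{def:cylinder-function} uses $r$-tuples of loops agreeing along a common arc and $G$-invariant functions on $G^{r}$ (and why, per the remark following it, the alternative class-function construction only suffices for $\SU(N)$): one needs enough loops, based at a common segment, so that the resulting holonomy functions separate points and tangent vectors of $\bonf^{*}$, and it is that separation property---not a one-loop computation---that kills a putative cokernel element $\omega$. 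With those two repairs (the equivariant/orbifold handling of finite stabilizers, and the correct multi-loop separation argument) your outline matches the intended proof.
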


\begin{proof}
    For critical points whose stabilizer coincides with the center
    $Z(\G)$, this proposition follows from the fact that, given any
    compact (finite-dimensional) submanifold $S$ of
    $\bonf^{*}(Y,K,\Phi)$, the functions $f_{\pert}|_{S}$ are dense in
     $C^{\infty}(S)$. This argument is just as in
    \cite{Floer-original}, \cite{Donaldson-book} or \cite{KM-book}.
    For groups $G$ other than $\SU(N)$, we must deal with the fact
    that irreducible connections may have finite stabilizer larger
    than $Z(\G)$. Our holonomy perturbations are still dense in
    $C^{\infty}(S)$ for $S$ a compact sub-orbifold of
    $\bonf^{*}(Y,K,\Phi)$ however, so this extra complication can be
    dealt with as in  \cite{Wasserman}.
\end{proof}

This says nothing yet about the reducible critical points; but we will
be working eventually with configurations $(Y,K,\Phi)$ satisfying the
non-integral condition of Definition~\ref{def:non-integral}, and we
have the following version of Lemma~\ref{lem:coprime-sing-G} for small
perturbations of the equations:

\begin{lemma}\label{lem:coprime-pert}
     Suppose $(Y,K,\Phi)$ satisfies the non-integral condition.
    Then there exists $\epsilon>0$ such that for all $\pert$ with $\| \pert \|_{\Pert} \le
    \epsilon$, the critical points of $\CS + f_{\pert}$ in
    $\conf(Y,K,\Phi)$ are
    all irreducible.
\end{lemma}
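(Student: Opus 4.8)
The statement to prove is Lemma~\ref{lem:coprime-pert}: under the non-integral condition on $(Y,K,\Phi)$, there is an $\epsilon>0$ so that for $\|\pert\|_{\Pert}\le\epsilon$ every critical point of $\CS+f_{\pert}$ in $\conf(Y,K,\Phi)$ is irreducible. I would argue by contradiction, using the compactness lemma just proved (Lemma~\ref{lem:crit-compact}) together with a structural analysis of reducible solutions, imitating the proof of Proposition~\ref{prop:no-reducibles} / Lemma~\ref{lem:coprime-sing-G}.

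Suppose no such $\epsilon$ exists. Then we can find a sequence $\pert_i\to 0$ in $\Pert$ and reducible critical points $[B_i]\in\Crit_{\pert_i}$. By Lemma~\ref{lem:crit-compact} (properness of $\Crit_\bullet\to\Pert$), after passing to a subsequence, $[B_i]$ converges in $\bonf(Y,K,\Phi)$ to a point $[B_\infty]$, which is necessarily a critical point of the \emph{unperturbed} functional $\CS$, i.e.\ a flat connection with the prescribed meridional holonomy; so $[B_\infty]\in\Crit$. The next step is to show that $[B_\infty]$ is itself reducible. Reducibility of $B_i$ means there is a nonzero $d_{B_i}$-parallel section $\psi_i$ of $\g_P$ over $Y\sminus K$; normalizing $\|\psi_i\|_{L^2}=1$ and using the $\check L^2_m$ convergence of (gauge transforms of) the $B_i$, elliptic estimates for the equation $d_{B_i}\psi_i=0$ give uniform higher Sobolev bounds on $\psi_i$, so a subsequence converges to a nonzero $d_{B_\infty}$-parallel section $\psi_\infty$ of $\g_P$. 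Hence $[B_\infty]$ is a reducible flat connection in $\Crit$. But this directly contradicts Lemma~\ref{lem:coprime-sing-G}, whose hypothesis is exactly the non-integral condition that we have assumed for $(Y,K,\Phi)$. This contradiction proves the lemma.

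The main technical point — and the step I'd expect to require the most care — is the passage from reducibility of the perturbed solutions $B_i$ to reducibility of the flat limit $B_\infty$: one must be sure the parallel sections $\psi_i$ do not degenerate in the limit. This is handled by the normalization $\|\psi_i\|_{L^2}=1$ plus bootstrapping: since $d_{B_i}\psi_i=0$ and the $B_i-B^{\varphi}$ are bounded in every $\check L^2_j$ (as in the proof of Lemma~\ref{lem:crit-compact}), the sections $\psi_i$ are bounded in every $\check L^2_{j+1}$, so a subsequence converges strongly in $C^0$ and the unit-norm condition survives; the limit $\psi_\infty$ is nonzero and parallel for $B_\infty$. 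One should also note that the stabilizer having positive dimension is equivalent to the existence of such a nonzero parallel section of $\g_P$ (this equivalence is already invoked in subsection~\ref{subsec:reducibles}), so ``reducible'' is detected precisely by this linear-algebra condition, which behaves well under limits. Everything else — compactness, the Uhlenbeck gauge-fixing, the elliptic bootstrap — is standard and was already set up in the proof of Lemma~\ref{lem:crit-compact}.
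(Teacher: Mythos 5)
Your proposal is correct and follows exactly the route the paper takes: its proof of Lemma~\ref{lem:coprime-pert} is a one-line appeal to Lemma~\ref{lem:coprime-sing-G} together with the properness statement of Lemma~\ref{lem:crit-compact}, which is precisely the contradiction argument you spell out. Your elaboration of the limit of the normalized parallel sections $\psi_i$ simply makes explicit the step the paper leaves implicit, and it is sound.
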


\begin{proof}
    This follows from  Lemma~\ref{lem:coprime-sing-G} and the compactness
    result, Lemma~\ref{lem:crit-compact}.
\end{proof}

When a critical point $B$ is non-degenerate, its gauge orbit
$[B]$ in $\Crit_{\pert}$ is an isolated point of $\Crit_{\pert}$. It
therefore follows that if $\pert$ has norm less than $\epsilon$ and
belongs also to the residual set described in
Proposition~\ref{prop:residual-crit}, then $\Crit_{\pert}$ is a finite
subset of $\bonf(Y)$ and is contained in $\bonf^{*}(Y,K,\Phi) =
\conf^{*}(Y,K,\Phi)/\G(Y,K,\Phi)$. We record this as a proposition.

\begin{proposition}\label{prop:3d-transversality-summary}
    If $(Y,K,\Phi)$ satisfies the non-integral condition of
    Definition~\ref{def:non-integral}, then
    there exists $\epsilon > 0$ and a residual subset of the
    $\epsilon$-ball in $\Pert$, such that for all $\pert$ in this subset the
    set of critical points $\Crit_{\pert}$ in the quotient space
    $\bonf(Y,K,\Phi)$ is a finite set and consists only of non-degenerate,
    irreducible critical points.
\end{proposition}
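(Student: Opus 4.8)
The plan is to assemble the statement from the three preceding results — the exclusion of reducibles in Lemma~\ref{lem:coprime-pert}, the transversality statement Proposition~\ref{prop:residual-crit}, and the compactness Lemma~\ref{lem:crit-compact} — together with the elementary observation that a non-degenerate critical point is isolated in $\Crit_\pert$. The substantive work is already done; what remains is to fit the pieces together and to verify carefully that non-degeneracy implies isolation.

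First I would apply Lemma~\ref{lem:coprime-pert} to $(Y,K,\Phi)$, which by hypothesis satisfies the non-integral condition of Definition~\ref{def:non-integral}. This produces an $\epsilon>0$ such that, for every $\pert$ in the open ball $B_\epsilon\subset\Pert$ of radius $\epsilon$, the perturbed equation $*F_B+V_\pert(B)=0$ has only irreducible solutions; equivalently $\Crit_\pert\subset\bonf^*(Y,K,\Phi)$. Next, Proposition~\ref{prop:residual-crit} furnishes a residual subset $\mathcal{R}\subset\Pert$ — a countable intersection of open dense sets — over which every critical point lying in $\conf^*(Y,K,\Phi)$ is non-degenerate. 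Since $B_\epsilon$, as an open subset of a complete metric space, is itself a Baire space, the intersection $\mathcal{R}\cap B_\epsilon$ is residual in $B_\epsilon$, and this is the subset claimed in the proposition. For $\pert\in\mathcal{R}\cap B_\epsilon$, every critical point of $\CS+f_\pert$ is irreducible by the choice of $\epsilon$, hence lies in $\conf^*(Y,K,\Phi)$, hence is non-degenerate by membership in $\mathcal{R}$.

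It then remains to see that $\Crit_\pert$ is finite. The key point is that non-degeneracy forces each orbit $[B]\in\Crit_\pert$ to be an isolated point of $\Crit_\pert$. Indeed, the deformation theory of the equation $*F_B+V_\pert(B)=0$ at an irreducible solution $B$ is governed by the extended Hessian of $\CS+f_\pert$ at $B$: a first-order elliptic operator built from $\pm d_B^*\oplus d_B$ and $*d_B$ acting on $\g_P$-valued forms, modified by the lower-order term $DV_\pert$, which is compact by Proposition~\ref{prop:cyl-functions-param}. By Proposition~\ref{prop:Fredholm-cone}, with the cone parameter $\nu$ chosen large enough as arranged in Section~\ref{subsec:config-spaces}, this operator is Fredholm; being formally self-adjoint it has index zero; and non-degeneracy of $B$ is exactly the statement that it is invertible. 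The implicit function theorem then shows $[B]$ is isolated in $\Crit_\pert$. Finally, by the properness of $\Crit_\bullet\to\Pert$ (Lemma~\ref{lem:crit-compact}), the fiber $\Crit_\pert$ over the point $\pert$ is compact, and a compact space all of whose points are isolated is finite. This completes the argument.

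I do not anticipate a serious obstacle, since every nontrivial ingredient is already in hand. The one step that deserves genuine care is ``non-degenerate $\Rightarrow$ isolated'': one must confirm that the (extended) Hessian along the Coulomb slice is truly Fredholm of index zero in the weighted Sobolev setting, which is precisely what the choice of a sufficiently large cone-angle parameter (Proposition~\ref{prop:Fredholm-cone}) provides, and that the holonomy-perturbation term contributes only a compact operator (Proposition~\ref{prop:cyl-functions-param}). Everything else is formal: the Baire-category passage from $\Pert$ to the ball $B_\epsilon$, and the ``compact plus discrete equals finite'' conclusion.
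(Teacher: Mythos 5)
Your proposal is correct and follows essentially the same route as the paper: rule out reducibles for small $\pert$ via Lemma~\ref{lem:coprime-pert}, get non-degeneracy on the residual set of Proposition~\ref{prop:residual-crit}, observe that non-degenerate critical points are isolated, and conclude finiteness from the compactness of $\Crit_{\pert}$ supplied by Lemma~\ref{lem:crit-compact}. The extra detail you supply (Fredholm/index-zero Hessian on the Coulomb slice and the Baire argument for residuality in the $\epsilon$-ball) is exactly what the paper leaves implicit.
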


Another way to look at the condition of non-degeneracy is to look at
the operator defined by the derivative of $\grad(\CS  + f_{\pert})$ on
$\conf(Y,K,\Phi)$, formally the Hessian of the functional. This Hessian is a
section
\[
                \Hess \in C^{\infty}(\conf_{m} , \Hom(\cT_{m}, \cT_{m-1}))
\]
and is given by
\[
                \Hess_{B}(b) = *d_{B} b + DV|_{B} (b).
\]
At a critical point $B$, the Hessian annihilates $\cJ_{B}$ and maps
$\cK_{B}$ to itself; and as an operator $\cK_{j,B} \to \cK_{j-1,B}$ it
is a compact perturbation of the Fredholm operator $*d_{B}$, because
$DV_{B}$ maps $\check{L}^{2}_{j}$ to $\check{L}^{2}_{j}$. As an unbounded self-adjoint
operator on
$\cK_{j,B}$ it has discrete spectrum: the spectrum consists of
eigenvalues, the eigenspaces are finite-dimensional and the sum of the
eigenspaces is dense. The non-degeneracy condition is the condition
that $\Hess_{B}$ is invertible, or equivalently the condition that $0$
is not an eigenvalue.

At a connection $B$ that is not a critical point of the perturbed
functional, the operator $\Hess_{B}$ does not leave invariant summands
$\cJ_{B}$ and $\cK_{B}$; and as an operator $\cT_{m,B}\to \cT_{m-1,B}$, it
is not Fredholm. To correct this, one can introduce the
\emph{extended Hessian}, which is the operator
\[
                    \EHess_{B} = 
                    \begin{bmatrix}
                        0 &  - d^{*}_{B} \\
                        -d_{B} & \Hess_{B}
                    \end{bmatrix}
\]
acting on the spaces
\[
            \EHess_{B} : \check{L}^{2}_{j}(Y;\g_{P}) \oplus \cT_{j} \to
           \check{L}^{2}_{j-1}(Y;\g_{P}) \oplus \cT_{j-1}.
\]
This is a self-adjoint Fredholm operator which varies smoothly with $B\in
\conf(Y,K,\Phi)$; it is a compact perturbation of the family of elliptic operators
\[
                \begin{bmatrix}
                        0 &  - d^{*}_{B} \\
                        -d_{B} & *d_{B}
                    \end{bmatrix}.
\]
The extended Hessian also has discrete spectrum consisting of
(real) eigenvalues of finite multiplicity; the sum of the eigenspaces
is again dense.
At a critical point $B$, the extended Hessian can be decomposed into
the direct sum of two operators, one of which is the restriction of
$\Hess_{B}$ as an operator $\cK_{j} \to \cK_{j-1}$. The other summand
is invertible at irreducible critical points. It follows that, for a
critical point $B$, the inevitability of $\EHess_{B}$ is equivalent
to the two conditions that $B$ be both irreducible and non-degenerate.

In the case that the perturbation is zero, the set of critical points
$\Crit\subset \bonf(Y,K,\Phi)$ can be identified with a space of
representations $\rho$ of the fundamental group of $Y\sminus K$, as explained at
\eqref{eq:homomorphisms}. In this situation, a representation $\rho$
determines a local coefficient system $\g_{\rho}$ on $Y\sminus
K$, with fiber $\g$. This has cohomology groups $H^{i}(Y\sminus
K;\g_{\rho})$. The following lemma (which is standard in the absence
of the knot $K$) provides a criterion for the non-degeneracy of
the corresponding connection $B$ as a critical point of $\CS$.

\begin{lemma}\label{lem:non-degenerate-rho}
    In the above situation, the kernel of $\Hess_{B}$ on $\cK_{j}$  is
    isomorphic to
    \[
                \ker: H^{1}(Y\sminus K; \g_{\rho}) \to
                    H^{1}(\underline{m} ; \g_{\rho})
    \]
    where $\underline{m}$ is any collection of loops representing the
    meridians of all the components of $K$. A critical point is
    therefore non-degenerate if and only if the above kernel is zero.
\end{lemma}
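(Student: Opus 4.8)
The plan is to reduce the statement to a weighted Hodge-theoretic description of the kernel and then to match that weighted cohomology with the indicated topological kernel by a local analysis near $K$. Since the perturbation is zero, at a flat connection $B$ the Hessian is simply $\Hess_{B}(b) = *d_{B}b$, and the Coulomb slice $\cK_{j,B}$ is the set of $b\in\check{L}^{2}_{j}(Y\sminus K;\g_{P}\otimes\Lambda^{1})$ with $d_{B}^{*}b=0$. Hence $\ker(\Hess_{B}|_{\cK_{j}})$ is exactly the space of $b$ with $d_{B}b=d_{B}^{*}b=0$, i.e.\ the $B$-harmonic $1$-forms measured in the weighted norm. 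Because $\nu$ was taken large (so that the $3$-dimensional analogue of Proposition~\ref{prop:Fredholm-cone} applies and $d_{B}^{*}\oplus d_{B}$ on these spaces is Fredholm with integration by parts valid), the usual Hodge decomposition holds, and this space is canonically isomorphic to the first cohomology of the elliptic complex $\bigl(\check{L}^{2}_{\bullet}(Y\sminus K;\g_{P}\otimes\Lambda^{\bullet}),\,d_{B}\bigr)$; call it $H^{1}_{\mathrm{w}}(Y\sminus K;\g_{\rho})$.

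Next I would identify $H^{1}_{\mathrm{w}}(Y\sminus K;\g_{\rho})$ with $\ker\!\bigl(H^{1}(Y\sminus K;\g_{\rho})\to H^{1}(\underline{m};\g_{\rho})\bigr)$ by examining weighted forms near $K$. Along $K$ the bundle $\g_{P}$ splits as $\g_{\varphi}\oplus\fo_{\varphi}$, and the two summands carry quite different norms: on $\fo_{\varphi}$ one uses the strongly weighted space, while on $\g_{\varphi}$ one uses an ordinary Sobolev space. On $\fo_{\varphi}$ the meridional holonomy $\exp(-2\pi\Phi)$ acts with no eigenvalue $1$ (its eigenvalues are $e^{\pm2\pi i\alpha(\Phi)}$), so for $\nu$ large the weighted de Rham complex of the deleted tubular neighbourhood has vanishing cohomology in that summand; thus the $\fo_{\varphi}$-part of a closed weighted form can be modified, without changing its global class, to be supported away from $K$, and it contributes nothing to the meridional restriction. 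On $\g_{\varphi}$ the meridional holonomy is trivial, the weighted condition is equivalent to smooth extension across $K$, and a $1$-form that extends smoothly across $K$ and is closed on $Y\sminus K$ is closed across $K$ and hence has zero meridional period (the meridian bounds a normal disc). This exhibits a natural map $H^{1}_{\mathrm{w}}\to\ker(\text{restriction to }\underline{m})$; for the reverse inclusion one checks that a class in $H^{1}(Y\sminus K;\g_{\rho})$ with vanishing meridional restrictions has a representative that extends across $K$ in the $\g_{\varphi}$-directions and decays in the $\fo_{\varphi}$-directions, so lies in the weighted complex, and that this is injective. A clean way to organise both directions is a Mayer--Vietoris comparison of the weighted complex on $Y\sminus K$ with the ordinary de Rham complexes on a collar $\nu\sminus K$ and on the complement of a slightly smaller tube, feeding in the two local computations above as the inputs for the collar and for its boundary torus.

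The hard part is this second step: making the passage between the functional-analytic group $H^{1}_{\mathrm{w}}$ and the topological one completely rigorous, and in particular getting the boundary condition to be precisely \emph{restriction to the meridians vanishes} rather than the strictly stronger \emph{restriction to the boundary tori vanishes}. That distinction rests entirely on the asymmetric treatment of $\g_{\varphi}$ versus $\fo_{\varphi}$ near $K$ --- the $\g_{\varphi}$-directions are permitted to carry longitudinal periods --- and on the orbifold nature of the metric $g^{\nu}$; the local Fredholm and vanishing facts needed here are the $3$-dimensional counterparts of those proved in \cite{KM-gtes-I}, and I would invoke them rather than re-derive them. Once the isomorphism is established, the final assertion of the lemma is immediate, since non-degeneracy of $B$ means $\ker(\Hess_{B}|_{\cK_{j}})=0$, which is now equivalent to the vanishing of the displayed kernel.
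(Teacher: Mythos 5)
Your proposal is correct and follows essentially the same route as the paper: the paper's (much terser) proof likewise identifies $\ker(\Hess_{B}|_{\cK_{j}})$ with $\ker(d_{B})/\im(d_{B})$ on the weighted function spaces for the orbifold metric and then applies a Mayer--Vietoris argument to the decomposition of $Y$ into a tubular neighborhood of $K$ and the complement of a smaller neighborhood. Your added detail --- the Hodge-theoretic identification via Proposition~\ref{prop:Fredholm-cone} and the local computations on the $\g_{\varphi}$ and $\fo_{\varphi}$ summands that explain why only meridional (not full boundary-torus) restrictions are killed --- is exactly the content the paper leaves implicit.
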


\begin{proof}
    The kernel of the Hessian on $\cK_{j}$ is isomorphic to
    $\ker(d_{B}) / \im(d_{B})$ on our function spaces on $Y$ with the
    orbifold metric. We can decompose $Y$ as a union of two pieces, one
    of which is a tubular neighborhood of $K$ and the other of which is the
    complement of a smaller neighborhood. The isomorphism of the lemma
    then follows from a Mayer-Vietoris sequence, using this
    decomposition.
\end{proof}

Suppose now that $B_{0}$ and $B_{1}$ are two irreducible,
non-degenerate critical points. Let $B(t)$ be a path in
$\conf(Y,K,\Phi)$
from $B_{0}$ to $B_{1}$. We define
\[
            \gr(B_{0}, B_{1}) \in \Z
\]
to be the spectral flow of the one-parameter family of operators
$\EHess_{B(t)}$. Because $\conf(Y,K,\Phi)$ is contractible, this number
does not depend on the path, but only on the endpoints. Now let
\[
                \beta_{0} = [B_{0}], \quad \beta_{1} = [B_{1}]
\]
be the corresponding critical points in the quotient space
$\bonf=\bonf(Y,K,\Phi)$.
The path $B(t)$ determines a path $\zeta$ from $\beta_{0}$ to
$\beta_{1}$. Let $z \in \pi_{1}(\bonf, \beta_{0},\beta_{1})$ be the
relative homotopy class of $\zeta$. The homotopy class of $z$ again
depends only on $B_{0}$ and $B_{1}$. To turn this around, if
$\beta_{0}$ and $\beta_{1}$ belong to $\Crit_{\pert} \subset
\bonf(Y,K,\Phi)$ and
are both irreducible and non-degenerate, and if $z$ is a relative
homotopy class of paths from $\beta_{0}$ to $\beta_{1}$, we define
\[
            \gr_{z}(\beta_{0},\beta_{1}) \in \Z
\]
to be equal to $\gr(B_{0}, B_{1})$, where $B_{0}$ and $B_{1}$ are the
endpoints of any path whose image in $\bonf(Y,K,\Phi)$ belongs to the
homotopy class $z$.

The fundamental group of $\bonf(Y,K,\Phi)$ is equal to the group of
components of $\G(Y,K,\Phi)$, which is described as
\eqref{eq:pi1-of-bonf} earlier. If $B$ is
a non-degenerate, irreducible critical point, and if $B'$ is obtained
from $B$ by applying a gauge transformation $g$ which is not in the
identity component of $\G(Y,K,\Phi)$, then a path from $B'$ to $B$ gives rise
to a homotopy class of closed loops $z$ based at the corresponding
point $\beta$ in $\bonf(Y,K,\Phi)$. We can compute the spectral flow around
this loop in terms of the data $(k,l) = d(g)$:

\begin{lemma}
    Let $B' = g(B)$ in $\conf(Y,K,\Phi)$, and write \[d(g) = (k,l) \in
    \Z \times L(G_{\Phi}).\]
    Then for the corresponding
    element $z$ of $\pi_{1}(\bonf(Y,K,\Phi), \beta)$ obtained from a
    path of connections from $B$ to $B'$, we have
    \[
            \gr_{z}(\beta,\beta) = 4h^{\vv} k + 4\weyl(\ll)
    \]
    where as usual $h^{\vv}$ is the dual Coxeter number of $G$ and
    $\weyl$ is the Weyl vector.
\end{lemma}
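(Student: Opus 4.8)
The plan is to recognize the spectral flow around the loop $z$ as a four‑dimensional index and then read that index off from the dimension formula already established in Lemma~\ref{lem:dimension-fmla}. To begin, I would pick a path $B(t)$, $t\in[0,1]$, in $\conf(Y,K,\Phi)$ from $B$ to $B'=g(B)$ and, putting it in temporal gauge, regard it as a singular connection $\mathbb{A}$ on $\R\times(Y\sminus K)$ which is translation‑invariant near the two ends. Since $g$ identifies the ends, $\mathbb{A}$ descends to a singular connection on the bundle $S^{1}\times_{g}P$ over $S^{1}\times Y$, carrying the reduction $S^{1}\times_{g}\varphi$ along $S^{1}\times K$; and, as recalled in Section~\ref{subsec:config-spaces}, the instanton number and monopole charge of this data are exactly $(k,\ll)=d(g)$.

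Next I would invoke the standard relation between spectral flow and the index of the associated cylinder operator. In temporal gauge the linearized anti‑self‑duality operator coupled to $\mathbb{A}$ has the form $\partial_{t}+\EHess_{B(t)}$, so by the Atiyah--Patodi--Singer spectral‑flow formula (used in just this way in the construction of Floer's instanton homology) the spectral flow $\gr_{z}(\beta,\beta)$ of the family $\EHess_{B(t)}$ equals the index of the operator $\mathcal{D}$ of \eqref{eq:D-4d} for $\mathbb{A}$ on the closed pair $(S^{1}\times Y,\,S^{1}\times K)$, which is by definition the formal dimension of the corresponding moduli space $M(S^{1}\times Y,S^{1}\times K,S^{1}\times_{g}P,S^{1}\times_{g}\varphi)$. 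The codimension‑two singularity along $S^{1}\times K$ is handled exactly as in the closed four‑manifold case of Section~\ref{subsec:connections-and-moduli}; the perturbation term $DV$ enters only as a compact perturbation of the family and does not affect this index; and the index being a topological invariant independent of the path $B(t)$ is consistent with $\gr_{z}$ depending only on the class $z$.

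Finally I would evaluate the formal dimension by \eqref{eq:dimension-fmla-2} with $X=S^{1}\times Y$ and $\Sigma=S^{1}\times K$, instanton number $k$ and monopole charge $\ll$. Two of the four terms vanish. First, $S^{1}\times K$ is a disjoint union of $2$‑tori, so $\chi(\Sigma)=0$. Second, for $X=S^{1}\times Y$ the K\"unneth formula gives $b^{1}(X)=1+b_{1}(Y)$ and $H^{2}(X)\cong H^{1}(Y)\oplus H^{2}(Y)$, on which the cup‑product pairing is the hyperbolic form (so $b^{+}(X)=b_{1}(Y)$); hence $b^{+}(X)-b^{1}(X)+1=b_{1}(Y)-(1+b_{1}(Y))+1=0$. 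What remains of \eqref{eq:dimension-fmla-2} is precisely $4h^{\vv}k+4\weyl(\ll)$, which is the claimed identity.

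The one step here that is not purely formal is the second: making the spectral‑flow $=$ index identification rigorous in the presence of the orbifold‑type singularity, with all normalizations and signs matched to the conventions fixed for $\gr$. This is, though, exactly the analytic input that underlies the definition of the Floer grading itself, and it is obtained by transporting the non‑singular argument to the singular setting via the Fredholm package of \cite{KM-gtes-I}. Alternatively, one may observe that $\gr_{z}(\beta,\beta)$ defines a homomorphism $\pi_{1}(\bonf(Y,K,\Phi))=\Z\oplus\LL(G_{\Phi})\to\Z$ and determine it on a generating set, which again reduces to the same index computation on a mapping torus.
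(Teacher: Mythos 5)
Your proposal is correct and follows essentially the same route as the paper, which proves the lemma by reinterpreting the spectral flow of the extended Hessians along the path as the index of the (gauge-fixed, linearized) anti-self-duality operator on the mapping torus, i.e.\ the formal dimension of a singular-instanton moduli space on the pair $(S^{1}\times Y, S^{1}\times K)$. Your evaluation of \eqref{eq:dimension-fmla-2} there, using $\chi(S^{1}\times K)=0$ and $b^{+}-b^{1}+1=0$ for $S^{1}\times Y$, supplies the computation the paper leaves implicit and lands on $4h^{\vv}k+4\weyl(\ll)$ as required.
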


The proof of this lemma follows the expected line, reinterpreting the
spectral flow of the family of operators as the index of an operator
associated to $S^{1}\times Y$. The corresponding operator in this
context is (a perturbation of) the linearized anti-self-duality
equation with gauge fixing, so the index is the formal dimension of a
moduli space of singular instantons on the pair $(S^{1}\times Y,
S^{1}\times K)$. This relationship between the Chern-Simons functional
on $\conf(Y,K,\Phi)$ and singular instantons in dimension 4 is the
subject of the next subsection.

\subsection{The 4-dimensional equations and transversality for
trajectories}

Fix now some $\pert\in \Pert$, and write $V$ for $V_{\pert}$ and $f$ for $f_{\pert}$.
Let $B(t)$ be a path in $\conf(Y,K,\Phi)$, defined say on a bounded interval
$I\subset \R$. The path is a trajectory for the downward gradient flow
of $\CS + f$ if it satisfies the equation
\[
                \frac{d B}{dt} = - *F_{B} - V(B).
\]
The path $B(t)$ defines a  connection $A$ on the pull-back bundle over
$I\times Y$. This connection $A$ is in temporal gauge (that is, it has no $dt$
component when expressed in local trivializations obtained by
pull-back), and it has a singularity along $I\times K$ modelled on the
singular connection $A^{\varphi}$ obtained by pulling back
$B^{\varphi}$. In terms of $A$, the above equation can be written
\begin{equation}\label{eq:4d-equation}
                    F_{A}^{+} + (dt \wedge V(A))^{+} = 0,
\end{equation}
where $V(A)$ denotes the $1$-form in the $Y$ directions obtained by
applying $V$ to each $B(t)$, regarded as giving a $1$-form on $I\times
Y$.
In the form \eqref{eq:4d-equation},
the equation  is fully gauge-invariant under the
$4$-dimensional gauge group.

As an abbreviation, let us write $\hat{V}$ for the perturbing term
here,
\[
                    \hat{V}(A) = (dt \wedge V(A))^{+},
\]
so that the equations are
\begin{equation}\label{eq:perturbed-4d-short}
                F_{A}^{+} + \hat{V}(A) = 0.
\end{equation}

This perturbing term for the $4$-dimensional equations shares the same
basic properties as the perturbation $V(B)$ for the $3$-dimensional
equations. To state these, we suppose the interval $I$ is compact and
write
\[
\begin{aligned}
        Z &= I \times Y \\
        L &= I \times K
\end{aligned}
\]
so that $L$ is an embedded $2$-manifold with boundary in $Z$. We will
continue to write $P
\to Z$ for what is strictly the pull-back of $P$ from $Y$, and
$\varphi$ for the translation-invariant section of $O_{P}$ along $L$
obtained by pulling back the section $\varphi$ from $K$. 
We write $\conf_{m}(Z,L,P,\varphi)$ for the space of connections
singular connections $A = A^{\varphi}+a$ with $a$
of class $\check{L}^{2}_{m}$. As an abbreviation, and to distinguish
it from the similar space of $3$-dimensional connections
$\conf_{m}=\conf(Y,K,\Phi)$,
we write
\[
        \conff_{m} = \conf(Z,L,P,\varphi).
\]
In a similar way, we write
$\cTf_{j}$ for the $\check{L}^{2}_{j}$ completion
of the tangent bundle of $\conff_{m}$, and we write $\cS_{j} \to
\conff_{m}$ for the
(trivial) vector bundle with fiber $\check{L}^{2}_{j}(Z;
\g_{P}\otimes\Lambda^{+}(Z))$.
We assume from now on that $m$ is at least $3$, so that our
connections are again continuous on $Z\setminus L$.
Then
we have the following facts about $\hat{V}$, mirroring
Proposition~\ref{prop:cyl-functions-param}.

\begin{proposition}
    Let $A\mapsto \hat{V}(A)$ be the perturbing term for the
    $4$-dimensional equations, regarded as a section of the bundle
    $\cS_{m}$ over $\conff_{m}$. Then
      \begin{enumerate}
        \item  The section
        $\hat{V}$ is smooth:
        \[
                        \hat{V} \in C^{\infty}( \conff_{m} , \cS_{m}).
        \]
        \item For  any $j \le m$, the first derivative
        \[
                    D\hat{V} \in C^{\infty}(\conff_{m},
                    \Hom(\cTf_{m},\cS_{m}))
        \]
        extends to a smooth section
        \[
                    D \hat{V} \in C^{\infty}(\conff_{m},
                    \Hom(\cTf_{j},\cS_{j})).                    
        \]
        \item There is a constant $K$ such that
        $
                \| \hat{ V}(A) \|_{L^{\infty}} \le K
        $
        for all $A$.
        \item For all $j\le m$, there is a constant
        $K_{j}$ such that
        \[
                \|  \hat{V}(A) \|_{\check{L}^{2}_{j,A^{\varphi}}} \le K_{j} \bigl(1 +
                \|A-A^{\varphi}\|_{\check{L}^{2}_{j,A^{\varphi}}}\bigr)^{j}.
        \]
        \item There exists a constant $C$ such
        that for all $A$ and $A'$, and all $p$
        with $1\le p \le \infty$, we have
        \[
                \|  \hat{V}(A) - \hat{V}(A') \|_{L^{p}} \le C
                \|A-A'\|_{L^{p}}.
        \]
        In particular, $\hat{V}$ is continuous in the $L^{p}$ topologies.
      \end{enumerate}
      In each of these cases, the dependence on $\pert\in \Pert$ can also be
      included, as in the statement of
      Proposition~\ref{prop:cyl-functions-param}. 
\end{proposition}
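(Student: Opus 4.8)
The plan is to deduce every clause from the corresponding clause of Proposition~\ref{prop:cyl-functions-param} for the $3$-dimensional perturbation $V=V_{\pert}$, by exhibiting $\hat{V}$ as the result of applying the $3$-dimensional construction slice-by-slice to the restrictions $B(t)=A|_{\{t\}\times Y}$ and then composing with a harmless algebraic operation on forms. First I would record that $V(A)\mapsto\hat{V}(A)=(dt\wedge V(A))^{+}$ is the composition of $\omega\mapsto dt\wedge\omega$ with the fibrewise orthogonal projection $\Lambda^{2}(Z)\to\Lambda^{+}(Z)$. Since the metric on $Z=I\times Y$ is the product $dt^{2}+g^{\nu}$, the form $dt$ is Levi--Civita parallel and of constant unit length on $Z\setminus L$, so this composition is a parallel bundle endomorphism of fibrewise operator norm at most $1$; it therefore induces a bounded operator on each completion $\check{L}^{2}_{j,A^{\varphi}}$ which commutes with the covariant derivative. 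Consequently each of the five assertions about $\hat{V}$ as a section of $\cS_{m}$ reduces to the corresponding assertion about $V(A)$, regarded as a $\g_{P}$-valued $1$-form on $Z$.

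Next I would observe that computing $V(B(t))$ from the slices of a connection $A$ on $Z$ is itself a holonomy perturbation on the pair $(Z,L)$: it is built, exactly as in \eqref{eq:formula-for-gradient}, from the holonomies of $A$ around the loops $s\mapsto(t,q_{j}(s,z))$ together with the functions $\partial_{j}h$ and the cut-off form $\mu$, the only new feature being that these loops are parametrized by $(t,z)\in I\times D^{2}$ rather than by $z\in D^{2}$. Because the loops lie in the slices $\{t\}\times Y$ and do not move in the $t$-direction, the holonomy is independent of the $dt$-component of $A$, which makes $\hat{V}(A)$ a well-defined, fully $4$-dimensionally gauge-equivariant section of $\cS_{m}$ agreeing in temporal gauge with $(dt\wedge V(A))^{+}$. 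Moreover the images of the loops lie in a compact subset of the interior of $Z\setminus L$ and $\mu$ is supported in the interior of $D^{2}$, so $\hat{V}(A)$ depends only on the restriction of $A$ to a product-like region $I\times U$ with $U\subset Y\setminus K$, and the orbifold singularity of the metric never enters. On such a region the analysis of the holonomy maps---smoothness as maps into $\check{L}^{2}_{m}$, the extension of $D\hat{V}$ to $\Hom(\cTf_{j},\cS_{j})$ for $j\le m$, the polynomial $\check{L}^{2}_{j}$ bounds, the uniform $L^{\infty}$ bound, and the $L^{p}$-Lipschitz estimate---runs word for word as in Propositions~\ref{prop:cyl-functions} and~\ref{prop:cyl-functions-param}: the holonomy of an $\check{L}^{2}_{m}$ connection (and $m\ge 3$, so $A$ is continuous on $Z\setminus L$) around a smooth family of loops is again of class $\check{L}^{2}_{m}$, by differentiating the holonomy ordinary differential equation as in \cite{Taubes-casson,K-higherrank}, and the multiplication theorems and Rellich lemma that drive the estimates hold in dimension $4$ just as in dimension $3$. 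The dependence on $\pert\in\Pert$ is carried along exactly as in Proposition~\ref{prop:cyl-functions-param}.

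The part that will require the most care---and the main technical content of the argument---is checking that the holonomy section $(t,z)\mapsto\Hol_{\bq}(A)$ over $I\times D^{2}$ genuinely inherits the stated regularity and estimates as $A$ ranges over $\conff_{m}$: one must confirm that the gauge-equivariant $4$-dimensional definition matches the slice-wise one, and, for the extension of $D\hat{V}$ to the weaker norms $\Hom(\cTf_{j},\cS_{j})$, that the smoothing which the derivative of the holonomy provides in the $Y$-directions is not destroyed by the extra integration over $t$. Both points follow from the product (Fubini-type) structure of the region $I\times U$, so there is no genuine obstacle beyond the bookkeeping, but that is where the work lies.
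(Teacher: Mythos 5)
Your proposal is correct and matches what the paper does: the paper states this proposition without separate proof, asserting that $\hat{V}(A)=(dt\wedge V(A))^{+}$ inherits the properties of the $3$-dimensional perturbation because it is itself a holonomy-type perturbation built from loops lying in the slices $\{t\}\times Y$, with the analytic content (regularity of the holonomy of an $\check{L}^{2}_{m}$ connection over the family of loops, the multiplication theorems, and the resulting estimates) carried out exactly as in the cited treatments of cylinder holonomy perturbations. The only caution is that the purely slice-wise reduction in your first paragraph would lose regularity under restriction to $\{t\}\times Y$, so it is the direct $4$-dimensional holonomy estimate of your second paragraph—not the literal composition with the $3$-dimensional proposition—that does the work, which is precisely the paper's (implicit) route.
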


For a solution $A$ in $\conff_{m}$ on a compact cylinder $Z =
[t_{0},t_{1}] \times
Y$, we define the (perturbed) topological energy
as twice the  change in the functional
$\CS + f_{\pert}$: that is,
\begin{equation}\label{eq:perturbed-energy-def1}
                \cE_{\pert}(A) =2 \bigl( (\CS + f_{\pert})(B(t_{0})) -  (\CS +
                f_{\pert})(B(t_{1})) \bigr),
\end{equation}
where $B(t)$ is the $3$-dimensional connection obtained by
restricting $A$ to $\{t\}\times Y$. Because of the last condition in
Proposition~\ref{prop:cyl-functions-param}, the perturbing term here
only affects the energy by a bounded amount. The Chern-Simons
functional is invariant only under the identity-component of the gauge
group, so $\cE_{\pert}(A)$ is not determined by knowing only the
gauge-equivalences classes of the two endpoints,
$\beta_{0}=[B(t_{0})]$ and $\beta_{1}=[B(t_{1})]$. The energy
\emph{is} determined by the endpoints $\beta_{0}$, $\beta_{1}$ in
$\bonf(Y,K,\Phi)$ and the homotopy class of the path $z$ between them
given by $[B(t)]$. Accordingly, we may write the energy as
\[
            \cE_{z}(\beta_{0},\beta_{1}).
\]

We turn next to the Fredholm theory for solutions to the perturbed
equations on the infinite cylinder. We write
\[
\begin{aligned}
        Z &= \R \times Y \\
        L &= \R \times K.
\end{aligned}
\]
Let us suppose that the holonomy perturbation is chosen as in
Proposition~\ref{prop:3d-transversality-summary}, so that the critical
points are irreducible and non-degenerate. Let $\alpha$ and $\beta$ be
two elements of $\Crit_{\pert}$ and $z$ a homotopy class of paths between
them. Let $B_{\alpha}$ and $B_{\beta}$ be corresponding elements of
$\conf(Y,K,\Phi)$, chosen so that a path from $B_{\alpha}$ to $B_{\beta}$
projects to $\bonf(Y,K,\Phi)$ to give a path belonging to the class $z$.
Let $A_{0}$ be a singular connection on the pull-back of $P$ to
the infinite cylinder $Z$, such that the restrictions of
$A_{0}$ to $(-\infty, -T]$ and $[T,\infty)$ are equal to the pull-back
of $B_{\alpha}$ and $B_{\beta}$ respectively, for some $T$. Define
\begin{equation}\label{eq:conf-z-def}
        \conf_{z}(\alpha,\beta)
        = \{\, A \mid  A - A_{0} \in \check{L}^{2}_{m,A_{0}}(Z;
        \g_{P}\otimes \Lambda^{1}(Z)) \, \}.
\end{equation}
This space depends on the choice of $A_{0}$, not just on $\alpha$,
$\beta$ and $z$. But any two choices are related by a gauge
transformation. We define $\G(Z)$ to be the group of gauge
transformations $g$ of $P$ on $Z$ satisfying
\[
                g - 1 \in \check{L}^{2}_{m+1,A_{0}}(Z; G_{P}),
\]
and we have the quotient space
\[
        \bonf_{z}(\alpha,\beta) =   \conf_{z}(\alpha,\beta) / \G(Z). 
\]
It is an important consequence of the non-degeneracy of the critical
points, that every solution $A$ to the perturbed equations on
$\R\times Y$ which has finite total energy is gauge-equivalent to a
connection in $\conf_{z}(\alpha,\beta)$, for some $\alpha$, $\beta$
and $z$.

\begin{definition}\label{def:moduli-space}
    The moduli space $M_{z}(\alpha,\beta) \subset
    \bonf_{z}(\alpha,\beta)$ is the space of
    gauge-equivalence classes of solutions to the perturbed equations,
    $F^{+}_{A} + \hat{V}(A) = 0$.
\end{definition}

Because we have assumed that all critical points are irreducible, the
configuration space $\conf_{z}(\alpha,\beta)$ consists also of
irreducible connections. The action of the gauge group therefore has
only finite stabilizers, and
$\bonf_{z}(\alpha,\beta)$ is a
Banach orbifold (or a Banach manifold in the case that $G=\SU(N)$):
coordinate charts can be obtained in the usual
way using the Coulomb condition. 
The local structure of
$M_{z}(\alpha,\beta)$ is therefore governed by the linearization of
the perturbed equations together with the Coulomb condition: at a
solution $A$ in $\conf_{z}(\alpha,\beta)$, this is the operator
\begin{equation}\label{eq:QA-operator}
                Q_{A} =    - d^{*}_{A} \oplus \bigl (d^{+}_{A} +
                    D\hat{V}|_{A}\bigr)
\end{equation}
from  $\check{L}^{2}_{m,A_{0}}(Z;\g_{P}\otimes \Lambda^{1}(Z))$  to
 $\check{L}^{2}_{m-1,A_{0}}(Z;\g_{P}\otimes
 (\Lambda^{0}\oplus\Lambda^{+})(Z))$.

This operator is Fredholm, and its index is equal to the spectral flow of the
extended Hessian:
\[
            \ind Q_{A} = \gr_{z}( \alpha,\beta).
\]

\begin{definition}
    A solution $A$ to the perturbed equations in
    $\conf_{z}(\alpha,\beta)$ is \emph{regular} if $Q_{A}$ is
    surjective. We say that the moduli space $M_{z}(\alpha,\beta)$ is
    regular if $A$ is regular for all $[A]$ in the moduli space.
\end{definition}

If the moduli space is regular, then it is a (possibly empty) smooth
orbifold of dimension $\gr_{z}(\alpha,\beta)$. At this point, one
would like to argue that for a generic choice of perturbation $\pi$,
all the moduli spaces $M_{z}(\alpha,\beta)$ are regular. However,
although such a result is true for the case of $\SU(2)$ (and is proved
in \cite{Floer-original} and \cite{Donaldson-book}), the presence of
non-trivial finite stabilizers is an obstruction to extending the
transversality arguments to general simply-connected simple groups
$G$. When the stabilizers are all equal to the center $Z(\G)$, then
the arguments from the $\SU(2)$ case carry over without change. We
therefore have:

\begin{proposition}\label{prop:4d-transversality}
    Suppose that $\pi_{0}$ is a perturbation such that  all the
    critical points in $\Crit_{\pi_{0}}$ are non-degenerate and have
    stabilizer $Z(\G)$.   Then there exists $\pert\in \Pert$ such that:
    \begin{enumerate}
        \item $f_{\pert}=f_{\pert_{0}}$ in a neighborhood of all the critical
        points of $\CS+f_{\pert_{0}}$;
        \item the set of critical points for these two perturbations
        are the same, so $\Crit_{\pert} = \Crit_{\pert_{0}}$;
        \item for all critical points $\alpha$ and $\beta$ in
        $\Crit_{\pert}$ and all paths $z$, the moduli spaces
        $M_{z}(\alpha,\beta)$ for the perturbation $\pert$
        are regular.
        \end{enumerate}
\end{proposition}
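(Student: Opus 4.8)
The plan is to run the standard Floer-type transversality argument: cut down to perturbations that leave the critical set untouched, build parametrized moduli spaces over this cut-down space of perturbations, show they are smooth Banach manifolds, and apply the Sard--Smale theorem, exactly as in \cite{Floer-original} and \cite{Donaldson-book}. The single point at which the hypothesis of the proposition enters — that the critical points have stabilizer exactly $Z(\G)$ — is in guaranteeing that the gauge action on the spaces of trajectories $\conf_z(\alpha,\beta)$ is free modulo the centre, and this is precisely what allows the $\SU(2)$ argument to be transcribed for a general simple, simply-connected $G$.

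First I would localize the perturbation. Since the points of $\Crit_{\pert_0}$ are non-degenerate they are finite in number and isolated in $\bonf(Y,K,\Phi)$, and I would fix small disjoint neighbourhoods of them. I would then restrict to perturbations $\pert$ whose additional cylinder functions have gradient vanishing on these neighbourhoods — arranged by taking the $G$-invariant functions $h$ defining them to be locally constant near the finitely many holonomy tuples in $G^{r}$ that are realized over the critical points — and to a small ball around $\pert_{0}$ inside this subspace. For such $\pert$ the functionals $\CS+f_{\pert}$ and $\CS+f_{\pert_{0}}$ have the same gradient near $\Crit_{\pert_{0}}$, so each old critical point survives as a non-degenerate critical point with stabilizer $Z(\G)$; this gives items (1) and (2) once one knows that no \emph{new} critical points appear. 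That last fact follows from the properness of $\Crit_{\bullet}\to\Pert$ (Lemma~\ref{lem:crit-compact}): for $\pert$ in a sufficiently small ball, any critical point of $\CS+f_{\pert}$ lies, by compactness, near $\Crit_{\pert_{0}}$, hence in the region where the two perturbations agree, hence coincides with one of the old critical points by isolation. So $\Crit_{\pert}=\Crit_{\pert_{0}}$ throughout this ball, and every such critical point is irreducible.

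Next I would form, for each ordered pair $\alpha,\beta\in\Crit_{\pert_{0}}$ and each relative homotopy class $z$ of paths, the universal moduli space of pairs $(\pert,[A])$ with $\pert$ in the ball above and $[A]\in\conf_{z}(\alpha,\beta)$ solving $F^{+}_{A}+\hat{V}_{\pert}(A)=0$. Because $\alpha$ and $\beta$ have stabilizer $Z(\G)$, so does any $A$ in $\conf_{z}(\alpha,\beta)$: a covariant-constant section of $\g_{P}$ would be central on the two ends and hence central everywhere by unique continuation. Thus the gauge action is free modulo the centre and $\bonf_{z}(\alpha,\beta)$ is a Banach manifold. The crux is to show the universal moduli space is a smooth Banach manifold, that is, that the linearization of the equation together with the Coulomb condition, augmented by the derivative in the $\pert$-direction, is surjective at every solution. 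The unaugmented operator is $Q_{A}$ of \eqref{eq:QA-operator}, Fredholm with finite-dimensional cokernel; a cokernel element is a self-dual $\g_{P}$-valued $2$-form $\omega$ solving the formal adjoint equation, and by Aronszajn's unique continuation theorem such an $\omega$, unless identically zero, is non-zero on every non-empty open subset of $(\R\times Y)\setminus(\R\times K)$. One then shows $\omega$ cannot be $L^{2}$-orthogonal to $(dt\wedge V_{i}(A))^{+}$ for all cylinder functions $f_{i}$ in the family: the holonomy loops range over a dense family in $Y\setminus K$, the functions $h$ remain dense (even subject to being locally constant near the critical holonomies), and the gauge action at $[A]$ is free, so the holonomy perturbations separate tangent vectors at $[A]$ and force $\omega\equiv 0$. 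This is the step I expect to be the main obstacle, and it is exactly where freeness of the gauge action is indispensable: for general $G$ one must know that $[A]$ carries no extra finite stabilizer, since an $\omega$ invariant under such a stabilizer could be invisible to the perturbations — which is why the proposition is stated only under the stabilizer hypothesis.

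Finally, granting that each universal moduli space is a smooth Banach manifold, the projection to the space of perturbations is Fredholm of index $\gr_{z}(\alpha,\beta)$, so by the Sard--Smale theorem its set of regular values is residual; for such a perturbation $M_{z}(\alpha,\beta)$ is regular. The set of triples $(\alpha,\beta,z)$ is countable, since $\Crit_{\pert_{0}}$ is finite and $\pi_{1}(\bonf(Y,K,\Phi))$ is countable, so I would intersect these residual sets over all $(\alpha,\beta,z)$; the intersection is still residual in the ball, hence non-empty, and any $\pert$ in it satisfies (1), (2) and (3).
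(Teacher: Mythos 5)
Your proposal is correct and is essentially the argument the paper has in mind: the paper gives no detailed proof, simply observing that under the hypothesis that all critical points have stabilizer $Z(\G)$ the standard $\SU(2)$ transversality argument of \cite{Floer-original} and \cite{Donaldson-book} carries over without change, and your write-up is exactly that argument (localized perturbations fixing the critical set via Lemma~\ref{lem:crit-compact}, universal moduli spaces, unique continuation against the cokernel, density of holonomy perturbations, and Sard--Smale over the countably many triples $(\alpha,\beta,z)$). Your identification of where the central-stabilizer hypothesis enters — freeness of the gauge action on the trajectory spaces, so that $\bonf_{z}(\alpha,\beta)$ is a Banach manifold rather than an orbifold — is precisely the point the paper emphasizes.
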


In order to proceed, we will require non-degeneracy for all critical
points and regularity for all moduli spaces. We therefore impose the
following conditions:

\begin{hypothesis}\label{hyp:stabilizers}
    We will assume henceforth that the triple $(Y,K,\Phi)$ satisfies
    the non-integral condition, and that a small perturbation $\pert$
    is chosen as in Proposition~\ref{prop:3d-transversality-summary}
    so that the critical points are irreducible and non-degenerate. We
    assume furthermore that the stabilizer of each critical point is just
    $Z(\G)$, and that the moduli spaces $M_{z}(\alpha,\beta)$ are all
    regular, as in the previous proposition.
\end{hypothesis}

In practice, we do not know how to ensure the  condition in
Hypothesis~\ref{hyp:stabilizers} that the stabilizers be $Z(\G)$
except by taking $G=\SU(N)$, in which
case it is automatic, given the other conditions. Of course, for any
given $(Y,K,\Phi)$, it is always possible that this  condition is
satisfied, as it were, ``by accident''; but from this point on we
really have $\SU(N)$ in mind. The notation we have set up for a
general simply-connected simple group $G$ is still appropriate,  and
we will continue to use it.

\subsection{Compactness and bubbles}

The basic compactness results for singular instantons on a compact
pair $(X,\Sigma)$, which we summarized in
Proposition~\ref{prop:Uhlenbeck}, can be adapted to the case of
solutions on a compact cylindrical pair
\[
 \begin{aligned}
        Z &= I \times Y \\
        L &= I \times K.
\end{aligned}
\]
The main differences from the closed case are the following. First,
in the
case of a closed manifold, the energy $\cE$ is entirely determined by
the topology of $P$ and $\varphi$. In the case of a finite cylinder,
the energy depends on the (perturbed) Chern-Simons invariants of the
restriction of the connection to the two boundary components, and
is therefore not constrained by the topology: in order to obtain a
compactness results we need to impose a bound on the energy as part of
the hypotheses. Second, since the proofs ultimately
depend on interior estimates, the hypothesis of bounded energy for a
sequence of solutions on $Z$ will only ensure that we have a
subsequence converging on some interior domain. Third, when bubbles
occur, their effect is no longer local, because of our non-local
holonomy perturbations. None of these issues are special to the case
of instantons with singularities: they all occur in the standard
construction of instanton Floer homology, and the issues surrounding
the non-local holonomy perturbations are treated in
\cite{Donaldson-book} and \cite{K-higherrank}.

In the statement of the following proposition (which corresponds to
the first parts of Proposition~\ref{prop:Uhlenbeck}), the $\check{L}^{p}_{k}$
topology refers to the topology on sections of $\g_{P}\otimes
\Lambda^{i}$ defined by using the covariant derivative of
$A^{\varphi}$ and the Levi-Civita derivative of the orbifold metric
$g^{\nu}$, just as $\check{L}^{2}_{k}$ was defined earlier.

\begin{proposition}\label{prop:Uhlenbeck-cylinder}
    Let $(Z,L)$ be the compact cylindrical pair defined above,  and let $I''
    \subset I$ be 
    a compact sub-interval contained in the interior of $I$. Let
    $A_{n}$ be a sequence of solutions to the perturbed equations in
    $\conf(Z,L,P,\varphi)$, and suppose there is a uniform bound on the energy:
    \[
                \cE_{\pert}(A_{n}) \le C, \qquad\text{for all $i$}.
    \]
    Then after passing to a subsequence, we have the following
    situation. There is an interval $I'$ with $I'' \subset I' \subset
    I$, a finite set of points $\bx$ contained in the interior of the
    sub-cylinder $Z' = I'\times Y$, and a solution $A$ to the equations
    in $\conf(Z',L', P,\varphi)$, with the following properties.
    \begin{enumerate}
        \item There is a sequence of isomorphisms of bundles $g_{n}:
        P|_{Z'\sminus\mathbf{x} } \to P$ of class
        $\check{L}^{2}_{m+1}$ such that
        \[
                            g_{n}^{*}(A_{n}) \to
                            A|_{Z'\sminus\mathbf{x}}
        \]
        in the $\check{L}^{p}_{1}$ topology
        on compact subsets of $Z'\sminus\mathbf{x}$ for all $p>1$.

        \item In the sense of measures on $Z'$, the energy densities $2
        |F_{A_{n}}|^{2}$ converge to
        \[
                           2 |F_{A}|^{2} + \sum_{x\in
                           \mathbf{x}}\mu_{x}
                           \delta_{x}
        \]
        where $\delta_{x}$ is the delta-mass at $x$ and $\mu_{x}$ are
        positive real numbers.
    \end{enumerate}
\end{proposition}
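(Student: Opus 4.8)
The plan is to run the proof of the closed-manifold statement, Proposition~\ref{prop:Uhlenbeck}, localized to the interior of the cylinder, carrying the non-local holonomy term along as a controlled lower-order perturbation. First I would fix the Uhlenbeck constant $\epsilon>0$ below which small-curvature gauge fixing and $\epsilon$-regularity hold on geodesic balls of $(Z,L)$ equipped with the orbifold metric $g^{\nu}$ --- ordinary balls away from $L$ and orbifold balls centred on $L$ --- the relevant statements in the weighted Sobolev spaces $\check{L}^{p}_{k}$ being exactly the ones of \cite{KM-gtes-I} (see Proposition~\ref{prop:Fredholm-cone} and the multiplication and Rellich theorems quoted there). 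Choose an intermediate interval $I'$ with $I''\subset I'$ and $\overline{I'}$ in the interior of $I$, and define the bubble set $\bx$ to be the set of points $x$ in the interior of $Z'=I'\times Y$ at which
\[
\limsup_{n}\ \int_{B(x,\rho)\cap Z'}|F_{A_n}|^{2}\ >\ \epsilon
\qquad\text{for every }\rho>0 .
\]
Since $\|\hat V(A_n)\|_{L^{\infty}}\le K$ and $\cE_{\pert}(A_n)\le C$, the curvatures $F_{A_n}$ are bounded in $L^{2}$ over any interior sub-cylinder, so $\bx$ is finite, with at most $C/\epsilon$ points.

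Second, away from $\bx$ I would carry out the standard gauge fixing and bootstrap. On each small ball $U$ disjoint from $\bx$, the orbifold form of Uhlenbeck's theorem puts $A_n$, after a gauge change $g_{n,U}$, into Coulomb gauge relative to a fixed reference connection, with a $\check{L}^{p}_{1}(U)$ bound depending only on the $L^{\infty}$ bound on $F_{A_n}$. The place where the non-locality of the holonomy perturbation enters is that the elliptic bootstrap must be staged semi-globally: using the stated properties of $\hat V$ (its boundedness in $L^{\infty}$, the polynomial $\check{L}^{2}_{j}$-bounds, and the $L^{p}$-Lipschitz estimate), the $L^{\infty}$ bound on $A_n-A^{\varphi}$ first yields a global $\check{L}^{p}_{1}$ bound on $Z'$ via the patched Coulomb gauges; this in turn bounds $\hat V(A_n)$ in $\check{L}^{p}_{1}$; then, treating $\hat V(A_n)$ as a known inhomogeneous term in the elliptic system given by the Coulomb condition together with $d^{+}_{A^{\varphi}}(A_n-A^{\varphi})+\hat V(A_n)=\cdots$, interior elliptic regularity on a smaller ball upgrades the control of $A_n$ to $\check{L}^{p}_{2}$, and one repeats, gaining one derivative at each stage up to the exponent $m$. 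Passing to a subsequence and invoking Rellich gives $\check{L}^{p}_{1}$-convergence of $g_{n,U}^{*}(A_n)$ on compact subsets of $U\setminus\bx$ for all $p>1$; the local gauge transformations are then patched into bundle isomorphisms $g_n:P|_{Z'\setminus\bx}\to P$ of class $\check{L}^{2}_{m+1}$ by the usual argument of \cite{Donaldson-Kronheimer}, the transition functions $g_{n,U}g_{n,U'}^{-1}$ being bounded in $\check{L}^{2}_{m+1}$ and convergent on overlaps. The limit $A$ is a finite-energy solution of $F^{+}_{A}+\hat V(A)=0$ on $Z'\setminus\bx$, and by the removable-singularities theorem for singular instantons of \cite{KM-gtes-I} --- applied at the ordinary points of $\bx\setminus L$ and at the orbifold points of $\bx\cap L$ --- it extends to an element of $\conf(Z',L',P,\varphi)$, which is part (1).

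Third, for part (2): the measures $\mu_n=2|F_{A_n}|^{2}\,d\mathrm{vol}$ on $Z'$ have uniformly bounded total mass, so a subsequence converges weak-$*$ to a positive measure $\mu$. On $Z'\setminus\bx$ the $\check{L}^{p}_{1}$-convergence of $g_n^{*}(A_n)$ to $A$ forces $\mu=2|F_A|^{2}\,d\mathrm{vol}$ there, hence $\mu-2|F_A|^{2}\,d\mathrm{vol}$ is supported on the finite set $\bx$ and equals $\sum_{x\in\bx}\mu_x\delta_x$ with $\mu_x\ge0$; and $\mu_x\ge\epsilon>0$ directly from the definition of $\bx$ together with $\epsilon$-regularity, since a mass $<\epsilon$ at $x$ would give an interior curvature bound near $x$ contradicting $x\in\bx$.

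I expect the main obstacle to be organising this staged, semi-global bootstrap so that the non-local perturbation $\hat V$ is legitimately absorbed: one cannot argue purely on a single ball, because $\hat V(A)$ over that ball depends on $A$ over the immersed solid tori supporting the holonomy perturbation, so control of $\hat V(A_n)$ in successively higher Sobolev norms must be established over all of $Z'$ before it can be fed into the local estimates as a lower-order term. This is exactly the mechanism already implemented for the non-singular theory in \cite{Donaldson-book} and \cite{K-higherrank}, and no essential new difficulty is introduced by the codimension-two singularity once the orbifold Sobolev package of \cite{KM-gtes-I} is available; the remaining ingredients --- finiteness of $\bx$, the patching of gauge transformations, removable singularities, and weak-$*$ compactness of the energy measures --- are routine adaptations of the closed case.
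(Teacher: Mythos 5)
Your route is the one the paper itself intends: it gives no separate proof of Proposition~\ref{prop:Uhlenbeck-cylinder}, but points to the singular-instanton compactness of \cite{KM-gtes-I} (Proposition~\ref{prop:Uhlenbeck}) together with the treatment of non-local holonomy perturbations in \cite{Donaldson-book} and \cite{K-higherrank}; your sketch --- $\epsilon$-regularity and Coulomb gauges in the orbifold Sobolev spaces, a semi-globally staged bootstrap to absorb $\hat{V}$, patching of the local gauges, removable singularities at the punctures, and weak-$*$ convergence of the energy measures --- is exactly that adaptation.

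The one step that fails as written is the order in which you make your choices. You fix $I'$ at the outset and then define $\bx$ to be the set of concentration points lying in the interior of $Z'=I'\times Y$. Nothing prevents the curvature of the $A_{n}$ from concentrating at a point of one of the boundary slices $\{t'_{0}\}\times Y$ or $\{t'_{1}\}\times Y$; such a point is excluded from $\bx$ by your definition, and yet it lies in compact subsets of $Z'\sminus\bx$, so the asserted $\check{L}^{p}_{1}$ convergence of $g_{n}^{*}(A_{n})$ on such subsets would fail there, and the $g_{n}$ would not be controlled on $\partial Z'$ --- which is precisely why the statement allows $I'$ to be larger than $I''$, as the paper remarks immediately after the proposition. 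The repair is immediate: first locate the concentration set of a subsequence in the interior of a compact cylinder slightly larger than $I''\times Y$ (finite, by your $C/\epsilon$ count), and only then choose $I'\supset I''$ so that its two endpoint slices avoid the finitely many concentration times. One further point deserves a sentence in a complete write-up: besides entering the bootstrap, the non-locality of $\hat{V}$ also matters when you identify the equation satisfied by the limit, since a bubble on a slice $\{t_{0}\}\times Y$ could a priori disturb the holonomies defining $\hat{V}$ at time $t_{0}$; this is handled as in \cite{Donaldson-book} and \cite{K-higherrank}, because the holonomies converge for almost every loop in the perturbing solid tori and the defining integral over $D^{2}$ is insensitive to the measure-zero set of loops meeting the bubble point.
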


The reason for passing from a subinterval $I''$ to a larger one $I'$
in the statement above is to ensure that the set of bubble-points
$\bx$ is contained entirely in the interior of $Z'$. This means in
particular that the gauge-transformations $g_{n}$ in the statement of
the proposition are defined on the
two boundary components of $Z'$. Let us write $I'$ as $[t_{0}',
t_{1}']$, so that the boundary components are $\{t'_{0}\}\times Y$ and
$\{t'_{1}\}\times Y$. Using again the map $d$ defined at
\eqref{eq:d-map}, we can consider the elements
\[
                d( g_{n}|_{t'_{i}} ) \in \Z \oplus \LL(G_{\Phi})
\]
for $i=0,1$. If $\bx$ were empty, then these two would be equal, but
in general the difference is a topological quantity accounted for by
the failure of $g_{n}$ to extend over the punctures. This is the same
phenomenon that accounts for the difference between $(k,l)$ and
$(k',l')$ in item~\ref{item:charge-change} of
Proposition~\ref{prop:Uhlenbeck}. Combining
Proposition~\ref{prop:Uhlenbeck} with
Proposition~\ref{prop:energy-inequalities}, we therefore obtain:

\begin{proposition}
    In the situation of Proposition~\ref{prop:Uhlenbeck-cylinder}, we
    can choose the subsequence so that the elements $d( g_{n}|_{t'_{i}} )$ are
    independent of $n$ for $i=0,1$; and for each $x\in \bx$, we
    can find $(k_{x},l_{x}) \in \Z\oplus \LL(G_{\Phi})$ such that
    \[
          d( g_{n}|_{t'_{0}} ) -
          d( g_{n}|_{t'_{1}} )  = \Bigl( \sum_{x\in \bx}k_{x},
          \sum_{x\in \bx} l_{x}\Bigr).
    \]
    (If $x$ does not lie on the surface
    $L'=I'\times K$, then $\ll_{x}$ is zero.)
    The energy $\mu_{x}$ that is lost at $x$ is then given by
    \[
                    \mu_{x} = 8\pi^{2}\bigl( 4 h^{\vv} k_{x} + 2\langle
                    \Phi,l_{x}\rangle\bigr).
    \]
    Furthermore, the pairs $(k_{x}, \ll_{x})$ are subject to the
    constraints of Proposition~\ref{prop:energy-inequalities}, namely
    \[
    \begin{gathered}
    k_{x} \ge 0, \quad \text{and} \\
                    n^{\vv}_{\alpha}k_{x} + \w_{\alpha}(\ll_{x}) \ge 0
                    \end{gathered}
    \]
    for all simple roots $\alpha$. \qed
\end{proposition}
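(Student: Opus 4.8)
The plan is to reduce the statement to a local bubbling analysis at the points of $\bx$, together with the closed-manifold results already in hand, Proposition~\ref{prop:Uhlenbeck} and Proposition~\ref{prop:energy-inequalities}. Starting from the conclusion of Proposition~\ref{prop:Uhlenbeck-cylinder}, I would first pass to a further subsequence and, at each $x\in\bx$, run the standard rescaling (blow-up) argument on the connections $g_{n}^{*}(A_{n})$ near $x$. Using the energy bound $\cE_{\pert}(A_{n})\le C$, and the fact that the holonomy perturbation $\hat{V}$ scales away under dilation (handled exactly as in \cite{Donaldson-book} and \cite{K-higherrank}), this produces a bubble tree at $x$: a finite collection of finite-energy anti-self-dual solutions, each modelled on $\R^{4}$ when $x$ lies off the surface $L'=I'\times K$, or on $(\R^{4},\R^{2})$ when $x\in L'$ --- the latter because near $x$ the surface $L'$ and the orbifold structure transverse to it are preserved by the dilations. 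By the removable-singularity theorem these compactify to solutions $[A_{x,i}]$ on $(S^{4},S^{2})$ with the round metric, carrying topological invariants $(k_{x,i},\ll_{x,i})$, and I would set $(k_{x},\ll_{x})=\sum_{i}(k_{x,i},\ll_{x,i})$, with $\ll_{x}=0$ when $x\notin L'$. This is parts~\ref{item:charge-change} and~\ref{item:on-S4} of Proposition~\ref{prop:Uhlenbeck} transplanted to the cylinder; the only cylinder-specific remark is that bubbling is local, so the ends of $Z$ play no part.

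Granting this, the energy bookkeeping is immediate: each mass $\mu_{x}$ appearing in Proposition~\ref{prop:Uhlenbeck-cylinder} is the total energy of the bubbles at $x$, so by the $(S^{4},S^{2})$ energy formula \eqref{eq:energy-S4},
\[
\mu_{x}=\sum_{i}8\pi^{2}\bigl(4h^{\vv}k_{x,i}+2\langle\Phi,\ll_{x,i}\rangle\bigr)=8\pi^{2}\bigl(4h^{\vv}k_{x}+2\langle\Phi,\ll_{x}\rangle\bigr).
\]
For the constraints, each $[A_{x,i}]$ lies in a nonempty moduli space on $(S^{4},S^{2})$, so Proposition~\ref{prop:energy-inequalities} gives $k_{x,i}\ge0$ and $n^{\vv}_{\alpha}k_{x,i}+\w_{\alpha}(\ll_{x,i})\ge0$ for every simple root $\alpha$; as each of these is the non-negativity of a fixed linear functional of $(k,\ll)$, summing over $i$ yields $k_{x}\ge0$ and $n^{\vv}_{\alpha}k_{x}+\w_{\alpha}(\ll_{x})\ge0$.

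It then remains to identify $\sum_{x\in\bx}(k_{x},\ll_{x})$ with $d(g_{n}|_{t'_{0}})-d(g_{n}|_{t'_{1}})$. Since $\bx$ lies in the interior of $Z'=[t'_{0},t'_{1}]\times Y$, the isomorphism $g_{n}: P|_{Z'\sminus\bx}\to P$ is defined near both end slices, where it restricts to gauge transformations of $(Y,K)$ with classes $d(g_{n}|_{t'_{i}})\in\Z\oplus\LL(G_{\Phi})$ as in \eqref{eq:d-map}. The key point is that $g_{n}$ is defined over $Z'$ with small disjoint balls $B_{x}$ removed; over that region $P$ and the reduction along $L'$ are trivial, so the obstruction to extending $g_{n}$ across each $B_{x}$ is a well-defined element of $\pi_{3}(G)\oplus\pi_{1}(G_{\Phi})$ (just $\pi_{3}(G)$ when $x\notin L'$), and by the same relative Chern--Weil computation that underlies the relations $k=k'+\sum_{x}k_{x}$, $\ll=\ll'+\sum_{x}\ll_{x}$ in part~\ref{item:charge-change} of Proposition~\ref{prop:Uhlenbeck}, this obstruction is precisely the bubble charge $(k_{x},\ll_{x})$. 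Were $g_{n}$ to extend over all of $Z'$, the path $t\mapsto g_{n}|_{\{t\}\times Y}$ would be a homotopy in $\G(Y,K,\Phi)$ between the two end restrictions, forcing equality of their $d$-invariants; removing the balls $B_{x}$ breaks this path at the $t$-intervals meeting the $B_{x}$, and the jump of $d$ across the interval at $x$ is exactly $(k_{x},\ll_{x})$, so $d(g_{n}|_{t'_{0}})-d(g_{n}|_{t'_{1}})=\sum_{x\in\bx}(k_{x},\ll_{x})$. In particular this difference is independent of $n$, and after composing each $g_{n}$ with a fixed pulled-back gauge transformation of $(Y,K)$ and passing to a further subsequence, each $d(g_{n}|_{t'_{i}})$ is separately independent of $n$ --- an innocuous change, since it only alters the limit $A$ by a gauge transformation. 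I expect the main obstacle to be the analytic core of the first step: verifying that the rescaled limits are genuine finite-energy solutions on $(S^{4},S^{2})$ and that the dilation loses no energy, so that the $(k_{x},\ll_{x})$ account for all of $\mu_{x}$. This, however, runs in complete parallel with the closed case of Proposition~\ref{prop:Uhlenbeck} and with the holonomy-perturbed Floer theory of \cite{Donaldson-book} and \cite{K-higherrank}; what is genuinely new is only the soft topological bookkeeping of the last step.
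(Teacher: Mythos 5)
Your proposal is correct and is essentially the paper's own argument: the paper proves this proposition in the paragraph immediately preceding its statement, by observing that the jump $d(g_{n}|_{t'_{0}})-d(g_{n}|_{t'_{1}})$ is exactly the obstruction to extending $g_{n}$ across the punctures — the same mechanism as item~\ref{item:charge-change} of Proposition~\ref{prop:Uhlenbeck} — and then quoting Proposition~\ref{prop:Uhlenbeck} together with Proposition~\ref{prop:energy-inequalities} for the energy formula and the inequalities (which, being linear, pass to the sums over the bubble tree exactly as you say). Your rescaling/bubble-tree discussion and the Chern--Weil bookkeeping simply spell out what the paper cites wholesale, and your treatment of the remaining point (constancy of the individual $d(g_{n}|_{t'_{i}})$ after passing to a subsequence) is at the same level of detail as the paper's.
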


As in the case of
a closed manifold, the energy lost at the bubbles is accounted for by
solutions on the pair $(S^{4}, S^{2})$ (equipped now with a
conformally-flat orbifold
metric as in \cite{KM-gtes-I}).

The compactness results above, for solutions on a  compact cylinder,
lead in a standard way to compactness results for solutions on the infinite cylinder
$\R\times Y$ when transversality hypotheses are assumed, as in
Hypothesis~\ref{hyp:stabilizers}.
To introduce notation for this, if $z$ is not the class of a
constant path at $\alpha=\beta$, we let $\Mu_{z}(\alpha,\beta)$
denote
the quotient $M_{z}(\alpha,\beta)/\R$, where $\R$ acts by
translations. (For $\alpha=\beta$ and $z$ the constant path, we regard
$\Mu_{z}(\alpha,\beta)$ as the empty set.) We call the elements of
$\Mu_{z}(\alpha,\beta)$ the \emph{unparametrized trajectories}. By a
\emph{broken} (unparametrized) trajectory from $\alpha$ to $\beta$, we
mean a collection
\[
                [A_{i}] \in \Mu_{z_{i}}(\beta_{i-1},\beta_{i})
\]
for $i=1,\dots,l$, with $\beta_{0}=\alpha$ and $\beta_{l}=\beta$. The
case $l=0$ is allowed. We write $\Mubk_{z}(\alpha,\beta)$ for the
space of all unparametrized broken trajectories from $\alpha$ to
$\beta$ with the additional property that the composite of the paths
$z_{i}$ is in the homotopy class $z$.

In finite-dimensional Morse theory, the spaces of broken trajectories
of this sort are compact. For the instanton theory, compactness holds
only in situations where we can rule out the possibility that bubbles
may occur. Given our transversality hypotheses, we can rule out
bubbles on the grounds of the dimension of the moduli spaces involved.
In particular, from Corollary~\ref{cor:lose-4}, we deduce:

\begin{proposition}\label{prop:broken-compactness}
    If the dimension of $M_{z}(\alpha,\beta)$ is less than $4$, then
    the space of unparametrized broken trajectories
    $\Mubk_{z}(\alpha,\beta)$ is compact. In particular, if
    $\gr_{z}(\alpha,\beta)=1$, then $\Mu_{z}(\alpha,\beta)$ is a
    compact zero-dimensional manifold.
\end{proposition}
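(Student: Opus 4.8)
The plan is to run the standard compactness argument of Floer theory; the only new point is that bubbling must be excluded on dimensional grounds, which is exactly what Corollary~\ref{cor:lose-4} (underlain by Corollary~\ref{cor:at-least-4}) is for. First I would reduce to a sequence $[A_n]$ of points in a single moduli space $M_{z}(\alpha,\beta)$ with $\gr_{z}(\alpha,\beta)=\dim M_{z}(\alpha,\beta)<4$; a sequence of broken trajectories is treated the same way, piece by piece. The relevant energy is bounded: every trajectory in $M_{z}(\alpha,\beta)$ has perturbed energy exactly $\cE_{z}(\alpha,\beta)$ (by \eqref{eq:perturbed-energy-def1} and the gauge-invariance and boundedness of $f_{\pert}$ from the last clause of Proposition~\ref{prop:cyl-functions-param}), and for broken trajectories the total energy is bounded by the same quantity. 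Exhausting $\R\times Y$ by compact sub-cylinders and applying Proposition~\ref{prop:Uhlenbeck-cylinder} on each, together with the usual translation (``energy-redistribution'') argument and the exponential decay to the non-degenerate critical points guaranteed by Hypothesis~\ref{hyp:stabilizers}, one extracts a subsequence converging to a broken trajectory $([A_1],\dots,[A_l])$, with $[A_i]\in\Mu_{z_i}(\beta_{i-1},\beta_i)$ and composite path-class $z$, possibly together with a finite set of interior bubble points at which energy is lost. The charge bookkeeping at the bubble points and the identification of the lost energy with solutions on $(S^4,S^2)$ is exactly as in the proposition preceding this one and as in \cite{Donaldson-book, K-higherrank}; the singularity along $K$ plays no special role here.

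The heart of the matter is then a dimension count. Spectral flow is additive along composites of paths, and the ASD index is additive under bubbling, so
\[
 \gr_{z}(\alpha,\beta)=\sum_{i=1}^{l}\gr_{z_i}(\beta_{i-1},\beta_i)+\Delta,
\]
where $\Delta\ge 0$ is the total amount by which the dimension drops at the bubble points. Each term $\gr_{z_i}(\beta_{i-1},\beta_i)$ is at least $1$, because $M_{z_i}(\beta_{i-1},\beta_i)$ is regular and contains the \emph{nonconstant} trajectory $A_i$, on which the translation $\R$-action is free. By Corollary~\ref{cor:at-least-4} each nontrivial bubble contributes at least $4$ to $\Delta$ --- this is the cylinder analogue of Corollary~\ref{cor:lose-4}. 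Hence $\gr_{z}(\alpha,\beta)\ge l+4b$, where $b$ is the number of bubble points; since $\gr_{z}(\alpha,\beta)<4$ this forces $b=0$ and $l\le 3$. With bubbling excluded, the convergence in Proposition~\ref{prop:Uhlenbeck-cylinder} is genuine convergence modulo gauge on compact sets with no loss of energy, so the limiting broken trajectory lies in $\Mubk_{z}(\alpha,\beta)$; this is the asserted sequential compactness, and the bound $l\le 3$ shows the limiting process really terminates.

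For the final clause, set $\gr_{z}(\alpha,\beta)=1$. Then $\dim M_{z}(\alpha,\beta)=1<4$, so $\Mubk_{z}(\alpha,\beta)$ is compact by the above; but additivity of spectral flow shows that a broken trajectory with $l\ge 2$ pieces would need $\sum_{i}\gr_{z_i}=1$ with each summand $\ge 1$, which is impossible, while $l=0$ would force $\gr_{z}=0$. Thus $\Mubk_{z}(\alpha,\beta)=\Mu_{z}(\alpha,\beta)$, which is therefore compact; and since $M_{z}(\alpha,\beta)$ is a regular $1$-manifold carrying a free $\R$-action, $\Mu_{z}(\alpha,\beta)=M_{z}(\alpha,\beta)/\R$ is a smooth --- hence finite --- $0$-manifold. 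The main obstacle I anticipate is the passage from the compact-cylinder statement of Proposition~\ref{prop:Uhlenbeck-cylinder} to convergence on the whole line: because the holonomy perturbations are non-local, the effect of a bubble is not confined to a small neighbourhood, so some care is needed in setting up the limiting argument. But this is precisely the point addressed in \cite{Donaldson-book, K-higherrank}, and it is insensitive to the presence of $K$; the genuinely new input, the energy/dimension estimate forbidding bubbles, has already been supplied by Corollaries~\ref{cor:at-least-4} and~\ref{cor:lose-4}.
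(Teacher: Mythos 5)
Your proposal is correct and follows the same route the paper intends: the standard chain-convergence compactness argument (Proposition~\ref{prop:Uhlenbeck-cylinder}, with the non-local perturbation issues deferred to \cite{Donaldson-book, K-higherrank}), with bubbling excluded by the dimension count of Corollaries~\ref{cor:at-least-4} and~\ref{cor:lose-4} together with the fact that each nonconstant factor of a broken trajectory contributes at least $1$ to the grading by regularity. The paper states the proposition as an immediate consequence of exactly this reasoning, so there is nothing to add.
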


The bound of $4$ in this proposition can be improved in particular
cases, depending on the group $G$ and the choice of $\Phi$. The
correct condition in general is that $\gr_{z}(\alpha,\beta)$ is
smaller than the smallest dimension of any positive-dimensional framed moduli
space on $(S^{4},S^{2})$. See
Corollary~\ref{cor:at-least-2N} for example.

There is a significant additional question that does not arise in the
case that $K$ is absent. The compactness result that we have just
stated concerns a  single moduli space. There are only finitely many
critical points, but for each pair $(\alpha, \beta)$ there are
infinitely many possibilities for $z$. When $K$ is empty,
$\pi_{1}(\bonf(Y))$ is $\Z$ and $\gr_{z}(\alpha,\beta)$ is a
non-constant linear function of $z$: the moduli space will be empty
when $\gr_{z}(\alpha,\beta)$ is negative, and one should expect the moduli
space to be non-empty (and of large dimension) once
$\gr_{z}(\alpha,\beta)$ becomes large. When $K$ is present,
$\pi_{1}(\bonf(Y,K,\Phi))$ is larger, and knowledge of $\gr_{z}(\alpha,\beta)$
no longer determines $z$. There may be infinitely many non-empty
moduli spaces, all of the same dimension. What we do have is a
finiteness result when a bound on the energy is known. Let us again write
\[
            \cE_{z}(\alpha,\beta)
            = 2\Bigl(  (\CS + f_{\pi})(B_{\alpha}) -
            (\CS + f_{\pi})(B_{\beta})\Bigr);
\]
for the (perturbed) topological energy along a homotopy class of paths $z$.
This is the energy for any solution in the moduli
space $M_{z}(\alpha,\beta)$. For a proof of the following finiteness
result, see \cite[Proposition-something]{KM-book}.

\begin{proposition}\label{prop:energy-finite}
    Given any $C>0$, there are only finitely many $\alpha$, $\beta$
    and $z$ for which the moduli space $M_{z}(\alpha,\beta)$ is
    non-empty and has topological energy at most $C$.
\end{proposition}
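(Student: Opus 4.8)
The plan is to combine the energy formula $\cE_{z}(\alpha,\beta) = 8\pi^{2}(4h^{\vv}k + 2\langle\Phi,\ll\rangle)$, which expresses the topological energy of a path in terms of the relative second-cohomology data $(k,\ll) \in \Z\oplus\LL(G_{\Phi})$ attached to $z$, with the finiteness of the critical set $\Crit_{\pert}$. Since there are only finitely many critical points $\alpha$, it suffices to fix $\alpha$ and $\beta$ and show that only finitely many homotopy classes $z$ of paths from $\beta_{\alpha}$ to $\beta_{\beta}$ carry a non-empty moduli space of energy $\le C$. Now $\pi_{1}(\bonf(Y,K,\Phi))$ is the group of components of the gauge group, which by \eqref{eq:pi1-of-bonf} is $\Z\oplus\LL(G_{\Phi})^{r}$; two paths $z,z'$ from $\beta_{\alpha}$ to $\beta_{\beta}$ differ by an element of this group, and under the map $d$ of \eqref{eq:d-map} the effect on the pair $(k,\ll)$ is by the corresponding lattice translation. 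So the energy $\cE_{z}(\alpha,\beta)$, as $z$ ranges over all homotopy classes, takes values in a single coset of the subgroup of $\R$ generated by the values $8\pi^{2}(4h^{\vv}k + 2\langle\Phi,\ll\rangle)$ for $(k,\ll)$ in $\Z\oplus\LL(G_{\Phi})$ (summed over components for $\ll$).

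First I would make precise the assignment $z\mapsto(k(z),\ll(z))$: given a path of connections from $B_{\alpha}$ to $B_{\beta}$ realizing $z$, form the associated connection on $[0,1]\times Y$, glue in fixed collars, and read off its instanton number and monopole charge exactly as in the discussion following \eqref{eq:d-map} and in item~\ref{item:charge-change} of Proposition~\ref{prop:Uhlenbeck}. Changing $z$ within the homotopy classes of paths from $\beta_{\alpha}$ to $\beta_{\beta}$ changes $(k,\ll)$ by an element of the image of $d$. The key point is then: the set of real numbers $\{\,4h^{\vv}k + 2\langle\Phi,\ll\rangle : (k,\ll)\in\Z\oplus\LL(G_{\Phi})\,\}$ is a \emph{finitely generated subgroup of $\R$}, hence either discrete or dense. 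If it is discrete, only finitely many of its elements lie below any bound $C/(8\pi^{2})$ (intersected with the relevant coset), and we are done immediately; moreover any two classes $z$ with the same $(k,\ll)$ give the same moduli space after translation, so distinct moduli spaces correspond to distinct lattice points. If it is dense, the argument above fails and we must use positivity of energy on the moduli space itself: a non-empty $M_{z}(\alpha,\beta)$ forces $\cE_{z}(\alpha,\beta) \ge 0$, and in fact — after appealing to the bubble-type inequalities of Proposition~\ref{prop:energy-inequalities} together with the factorization of an arbitrary path's energy through critical points, or more simply through the observation that a non-empty moduli space on the cylinder can be capped off by solutions on $(S^{4},S^{2})$ to bound energy from below by a definite positive multiple of a linear form — one obtains that the admissible $(k,\ll)$ lie in a region of the lattice on which the linear form $4h^{\vv}k + 2\langle\Phi,\ll\rangle$ is bounded below by a proper, \emph{cofinite} set of linear inequalities (the inequalities $k\ge 0$ and $n^{\vv}_{\alpha}k + \w_{\alpha}(\ll)\ge 0$ of Proposition~\ref{prop:energy-inequalities}, suitably interpreted relativewise). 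A lattice polytope cut out by these inequalities and by the upper bound $\cE \le C$ is compact, hence contains only finitely many lattice points.

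The main obstacle I anticipate is precisely the non-monotone-looking gap between the energy functional and the genuinely geometric constraints: a priori there is no reason the set of realized $(k,\ll)$ with bounded energy is finite unless one knows either discreteness of the energy spectrum (which one does \emph{not} have in general — indeed the remark after \eqref{eq:energy-S4} says this set is typically dense) or a system of one-sided inequalities on $(k,\ll)$ that, combined with an energy upper bound, carves out a compact set. The correct route is the latter: one must show that every non-empty $M_{z}(\alpha,\beta)$ of finite energy gives rise, via broken trajectories and bubbling off onto $(S^{4},S^{2})$, to constraints on the relative charges of exactly the form in Proposition~\ref{prop:energy-inequalities}, so that the set of relevant $z$ maps to the lattice points of a compact polytope. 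Once that reduction is in place the counting is immediate. For the full details of this compactness-and-lattice argument in the knot-free setting I would cite \cite{KM-book}, whose proof adapts essentially verbatim; the only new ingredient in the present situation is bookkeeping the monopole charge $\ll$ alongside $k$, which the energy formula \eqref{eq:energyl-fmla-2} and the inequalities of Proposition~\ref{prop:energy-inequalities} handle uniformly.
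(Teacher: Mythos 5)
There is a genuine gap, and it sits exactly where you placed your ``dense'' branch. The inequalities of Proposition~\ref{prop:energy-inequalities} are statements about \emph{closed} solutions on $(S^{4},S^{2})$, proved via Munari's correspondence with holomorphic data on $\CP^{2}$; they do not have a ``relative'' analogue for the charges $(k,\ll)$ of a non-empty moduli space $M_{z}(\alpha,\beta)$ on the cylinder, and you give no derivation of one. A trajectory between two irreducible critical points imposes no positivity on its relative instanton and monopole numbers beyond $\cE_{z}(\alpha,\beta)\ge 0$; there is no way to ``cap off'' such a trajectory by solutions on $(S^{4},S^{2})$, and closing up through other trajectories only reproduces the period formula, not new one-sided constraints. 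Without those extra inequalities, the set of classes with $0\le \cE_{z}\le C$ is a slab $\{0\le 4h^{\vv}k+2\langle\Phi,\ll\rangle\le C/8\pi^{2}\}$ in a lattice of rank $\ge 2$, which is non-compact and contains infinitely many points -- this is precisely why the paper remarks that the energy spectrum is typically dense, so no lattice-point count can close the argument. (Two further soft spots: your ``discrete'' branch is vacuous in the typical case; and for links with $r\ge 2$ components the assignment $z\mapsto(k,\ll)$ summed over components is not injective, so even a discrete energy spectrum would not bound the number of classes, contrary to your claim that classes with equal $(k,\ll)$ give ``the same moduli space after translation''.)

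The actual proof, which the paper delegates to \cite{KM-book}, is a compactness argument rather than charge bookkeeping: if there were infinitely many distinct triples $(\alpha,\beta,z)$ with non-empty moduli spaces of energy at most $C$, one fixes $\alpha,\beta$ (finitely many critical points) and chooses solutions $[A_{n}]\in M_{z_{n}}(\alpha,\beta)$ with the $z_{n}$ distinct. The uniform energy bound and Proposition~\ref{prop:Uhlenbeck-cylinder} (together with the non-degeneracy of critical points) give a subsequence chain-converging to a broken trajectory with finitely many bubbles, and chain-convergence forces $z_{n}$ to agree, for large $n$, with the composite class of the limit pieces corrected by the bubble charges $(k_{x},\ll_{x})$; hence the $z_{n}$ take only finitely many values, a contradiction. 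The inequalities of Proposition~\ref{prop:energy-inequalities} enter only at this last step, to control the bubble charges, not as constraints on the relative class of the cylinder solution itself. So your instinct that bubbling and broken trajectories must be invoked is right, but the mechanism is compactness pinning down the homotopy class of a convergent sequence, not the carving out of a compact polytope of admissible $(k,\ll)$.
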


For the construction of the Floer homology, the important comparison
is between the topological energy $\cE_{z}(\alpha,\beta)$ and the
spectral flow, or relative grading, $\gr_{z}(\alpha,\beta)$.
We can look at the special case where $\alpha=\beta$ so that a lift of
a path in the class $z$ gives a path of connections on $Y\sminus K$
from $B$ to $B'$, where $B'$
differs from $B$ by a gauge transformation $g\in \G(Y,K,\Phi)$. We
write $B' =
g(B)$. Let us again set
\[
            d(g) = (k,\ll) \in \Z \times \LL(G_{\Phi})
\]
as in \eqref{eq:d-map}. Then for the corresponding homotopy class $z$ of
closed paths based at $\beta=[B]$, we have
\[
                \gr_{z}(\beta,\beta) =
                       4 h^{\vv} k + 4\weyl(\ll)
\]
and
\[
                \cE_{z}(\beta,\beta) =
                         8 \pi^{2}\bigl ( 4  h^{\vv} k + 2 \langle
                         \Phi, \ll \rangle \bigr ).
\]
These formulae can  be computed, for example, by applying the
dimension and energy formulae for the closed manifold $S^{1}\times Y$
containing the embedded surface $S^{1}\times K$. In the case that
$\Phi$ satisfies the monotone condition
(Definition~\ref{def:monotone}), these two linear forms in $k$
and $\ll$ are proportional. Since there are only finitely many
critical points in all, we see:

\begin{lemma}
    If $\Phi$ satisfies the monotone condition, then there is a
    constant $C_{0}$ such that  for all $\alpha$, $\beta$ and $z$, we have
    \[
                    \Bigl | \cE_{z}(\alpha,\beta)-  8 \pi^{2}
                    \gr_{z}(\alpha,\beta)\Bigr| \le C_{0} .
    \]
\end{lemma}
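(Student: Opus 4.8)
The plan is to reduce the statement to the two displayed identities immediately preceding the lemma, which compute $\cE_{z}$ and $\gr_{z}$ for \emph{loops} at a single critical point in terms of the pair $(k,\ll) = d(g)\in\Z\oplus\LL(G_{\Phi})$, and then to handle the general case by a ``base-point + loop'' decomposition of the homotopy class $z$. Concretely, fix once and for all a set of representatives $B_{\alpha}\in\conf(Y,K,\Phi)$ for the finitely many critical points $\alpha\in\Crit_{\pert}$ (this finiteness is Proposition~\ref{prop:3d-transversality-summary}), and for each ordered pair $(\alpha,\beta)$ fix one reference homotopy class $z_{0}(\alpha,\beta)$ of paths from $\alpha$ to $\beta$. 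Any other class $z$ from $\alpha$ to $\beta$ is then $z = w\cdot z_{0}(\alpha,\beta)$ for a unique $w\in\pi_{1}(\bonf(Y,K,\Phi),\alpha)$, and both $\cE_{z}$ and $\gr_{z}$ are additive under this concatenation. So it suffices to control the two quantities on the finite set of reference classes (trivially bounded, being finite in number) and to show the defect $\cE_{z}-8\pi^{2}\gr_{z}$ is \emph{exactly additive and vanishes} on the loop part $w$.

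The key step is therefore: for a loop $w$ at $\alpha$, with $d$ applied to the corresponding gauge transformation giving $(k,\ll)$, the two formulae quoted just above the lemma give
\[
\cE_{w}(\alpha,\alpha) - 8\pi^{2}\gr_{w}(\alpha,\alpha)
= 8\pi^{2}\bigl(4h^{\vv}k + 2\langle\Phi,\ll\rangle\bigr)
  - 8\pi^{2}\bigl(4h^{\vv}k + 4\weyl(\ll)\bigr)
= 16\pi^{2}\bigl(\langle\Phi,\ll\rangle - 2\weyl(\ll)\bigr).
\]
Now invoke the monotone condition in the form \eqref{eq:monotone-non-simple} (equivalently Definition~\ref{def:monotone} together with Proposition~\ref{prop:Einstein}), which says precisely that $\langle\Phi,\ll\rangle = 2\weyl(\ll)$ for all $\ll\in\fz(G_{\Phi})$, hence in particular for all $\ll$ in the lattice $\LL(G_{\Phi})$. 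Thus the loop contribution to the defect is identically zero. Combining this with additivity, for arbitrary $z=w\cdot z_{0}(\alpha,\beta)$ we get $\cE_{z}(\alpha,\beta)-8\pi^{2}\gr_{z}(\alpha,\beta) = \cE_{z_{0}(\alpha,\beta)}(\alpha,\beta)-8\pi^{2}\gr_{z_{0}(\alpha,\beta)}(\alpha,\beta)$, which is one of finitely many real numbers; taking $C_{0}$ to be the maximum of their absolute values finishes the argument.

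The one genuinely non-formal point — which I would treat carefully rather than as a calculation — is the additivity of $\cE_{z}$ and $\gr_{z}$ under concatenation of paths, and the identification of the loop contribution with $d(g)=(k,\ll)$. Additivity of $\cE_{z}$ is immediate from its definition \eqref{eq:perturbed-energy-def1} as a difference of values of $\CS+f_{\pert}$ along a lifted path (telescoping), while additivity of $\gr_{z}$ is additivity of spectral flow of $\EHess$ along concatenated paths, standard once non-degeneracy of the endpoints is known (Hypothesis~\ref{hyp:stabilizers}). The identification of the loop term with $(k,\ll)$, and the resulting two boxed formulae, is exactly the content of the lemma stated just before this one (whose proof the text attributes to the dimension and energy formulae on $S^{1}\times Y \supset S^{1}\times K$, i.e.\ \eqref{eq:dimension-fmla-2} and \eqref{eq:energyl-fmla-2}); I would simply cite that. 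So the main obstacle is purely bookkeeping: making sure the base-point choices and the $\pi_{1}(\bonf)$-torsor structure on homotopy classes of paths are set up so that ``$z = w\cdot z_{0}$'' and the additivity statements are literally correct, including the case $\alpha=\beta$ with $z$ the constant path, where the defect is $0$ and is consistent with the above.
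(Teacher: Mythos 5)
Your proposal is correct and follows essentially the same route as the paper: the text derives the two formulae $\gr_{z}(\beta,\beta)=4h^{\vv}k+4\weyl(\ll)$ and $\cE_{z}(\beta,\beta)=8\pi^{2}(4h^{\vv}k+2\langle\Phi,\ll\rangle)$ for closed loops, observes that monotonicity makes these proportional (so the defect vanishes on loops), and concludes from the finiteness of the set of critical points. Your write-up merely makes explicit the torsor/additivity bookkeeping that the paper leaves implicit, which is fine.
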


From Proposition~\ref{prop:energy-finite} we now deduce:

\begin{corollary}\label{cor:energy-finite-monotone}
     If $\Phi$ satisfies the monotone condition, then
    given any $D>0$, there are only finitely many $\alpha$, $\beta$
    and $z$ for which the moduli space $M_{z}(\alpha,\beta)$ is
    non-empty and has formal dimension at most $D$.
\end{corollary}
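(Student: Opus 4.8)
The plan is to deduce Corollary~\ref{cor:energy-finite-monotone} directly by combining the preceding lemma with Proposition~\ref{prop:energy-finite}. The key observation is that under the monotone hypothesis, the lemma tells us that the topological energy and the relative grading are comparable up to an additive constant: specifically $|\cE_{z}(\alpha,\beta) - 8\pi^{2}\gr_{z}(\alpha,\beta)| \le C_{0}$ for all $\alpha$, $\beta$ and $z$. Consequently a bound on the formal dimension $\gr_{z}(\alpha,\beta)$ translates into a bound on the topological energy.

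The argument I would write is essentially the following. Fix $D > 0$. Suppose $M_{z}(\alpha,\beta)$ is non-empty with formal dimension $\gr_{z}(\alpha,\beta) \le D$. By the lemma,
\[
    \cE_{z}(\alpha,\beta) \le 8\pi^{2}\gr_{z}(\alpha,\beta) + C_{0}
    \le 8\pi^{2} D + C_{0}.
\]
Set $C = 8\pi^{2} D + C_{0}$. Then every such moduli space has topological energy at most $C$, so by Proposition~\ref{prop:energy-finite} there are only finitely many triples $(\alpha,\beta,z)$ for which $M_{z}(\alpha,\beta)$ is non-empty with energy at most $C$, and a fortiori only finitely many with formal dimension at most $D$. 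This completes the proof.

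There is really no serious obstacle here: the corollary is a formal consequence of two results already established in the excerpt, and the proof is just the chain of inequalities above together with an application of the finiteness proposition. The only point that deserves a sentence of care is that the constant $C_{0}$ from the lemma is genuinely uniform over all $\alpha$, $\beta$, $z$ — which the lemma asserts, relying on there being only finitely many critical points, so that the additive discrepancies coming from the constant terms (the non-linear dependence of $\gr$ and $\cE$ on the endpoints $\alpha,\beta$, as opposed to their linear dependence on $(k,\ll)$) are bounded. Given that, the deduction is immediate, and I would present it in the three or four lines sketched above rather than belaboring it.
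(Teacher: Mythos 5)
Your argument is exactly the paper's: the corollary is stated as an immediate consequence of the preceding lemma (the uniform bound $|\cE_{z}(\alpha,\beta)-8\pi^{2}\gr_{z}(\alpha,\beta)|\le C_{0}$, uniform because there are only finitely many critical points) together with Proposition~\ref{prop:energy-finite}, which is precisely the chain of inequalities you give. The proposal is correct and needs no changes.
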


\subsection{Orientations}
\label{subsec:orientations}

If $\alpha$ and $\beta$ are not necessarily critical points, we can
still construct the operator $Q_{A}$ from an arbitrary $A$
corresponding to a path $\zeta$ joining $\alpha$ to
$\beta$. The operator is
Fredholm if the extended Hessian is invertible at both $\alpha$ and
$\beta$. Under these circumstances, let us define
\[
            \Lambda_{\zeta}(\alpha,\beta)
\]
to be the (two-element) set of orientations for the determinant line
of the Fredholm operator $Q_{A}$. As $\zeta$ varies in the paths
belonging to a particular homotopy class of paths $z$, the family of
determinant lines of the corresponding operators $Q_{A}$ forms an
orientable real line bundle over the space of paths: this
orientability can be deduced from the corresponding statement in the
case of a closed manifold, Proposition~\ref{prop:orientable-sing}.
An orientation
for any one determinant line in this connected family therefore
determines an orientation for any other.  Thus it makes sense to write
\[
             \Lambda_{z}(\alpha,\beta)
\]
in place of $\Lambda_{\zeta}(\alpha,\beta)$, for $z$ a homotopy class
of paths from $\alpha$ to $\beta$. If $z'$
is a homotopy class of paths from $\beta$ to $\beta'$, then there is a
natural composition law,
\[
            \Lambda_{z}(\alpha,\beta) \times
            \Lambda_{z'}(\beta,\beta') \to \Lambda_{z'\comp
            z}(\alpha,\beta')  .
\]
(Note that our notation for a composite path puts the first path on
the right.) Because of the requirement that the Hessian is invertible
at the two end-points, the two-element set $\Lambda_{z}(\alpha,\beta)$
cannot be thought of as depending continuously on $\alpha$ and $\beta$
in $\bonf(Y,K,\Phi)$.

A priori, $\Lambda_{z}(\alpha,\beta)$ depends on $z$, not just on
$\alpha$ and $\beta$; but we can specify a rule, compatible with the
composition law, that identifies $\Lambda_{z}(\alpha,\beta)$ and
$\Lambda_{z'}(\alpha,\beta)$ for different homotopy classes $z$ and
$z'$. This can be done, for example, using excision to transfer the
question to a closed pair $(X,\Sigma)$ and then using the
constructions which were used to compare orientations in
Proposition~\ref{prop:orientable-sing}.  This observation allows us to
write $\Lambda(\alpha,\beta)$, omitting the $z$.

Because $G$ is simple, the bundle $P$ admits a product connection
$B^{0}$ for which $\varphi$ is parallel. We add a singular term in the
standard way, to obtain a connection $B^{\varphi}$ with a
codimension-two singularity; the monodromy of this connection lies in
the one-parameter subgroup generated by $\Phi$. We let
$\theta^{\varphi}$ denote the corresponding point in
$\bonf(Y,K,\Phi)$. This point is neither irreducible or
non-degenerate, so we cannot define $\Lambda(\theta^{\varphi},\alpha)$
as above because the operator $Q_{A}$ will not be Fredholm as it
stands. To remedy this, we
 we can regard $Q_{A}$ as acting weighted Sobolev space, on which this
operator is Fredholm. That is, we choose a connection
$A$ in $\conf_{\loc}$ from $B^{\varphi}$ to $B_{\alpha}$ and define
$\Lambda(\theta^{\varphi},\alpha)$ as the set of orientations of the
determinant line of the operator $Q_{A}$ acting in the topologies
\[
                   Q_{A} : e^{-\epsilon t}\check{L}^{2}_{m,A_{0}}
                   \to e^{-\epsilon t}\check{L}^{2}_{m-1,A_{0}}
\]
on the infinite cylinder. Here $\epsilon$  is a small positive
constant, smaller than the smallest positive
eigenvalue of the extended Hessians $\theta$ and $\alpha$.

Now that we have a basepoint $\theta^{\varphi}$, we can define a
$2$-element set 
\[
            \Lambda(\alpha) = \Lambda(\theta^{\varphi}, \alpha).
\]
 We could
equally well define $\Lambda(\alpha)$ as
$\Lambda(\alpha,\theta^{\varphi})$
(with the same weighted Sobolev spaces), because of the composition
law and the fact that $\Lambda(\alpha,\alpha)$ is canonically trivial.

With this
understood, the composition law for the orientation lines gives us a
map
\[
            \Lambda(\alpha) \times \Lambda(\beta) \to
            \Lambda(\alpha,\beta).
\]
If $\alpha$ and $\beta$ are now critical points and $[A]$ is a
solution of the equations belonging to the (regular) moduli space
$M_{z}(\alpha,\beta)$, then $\Lambda(\alpha,\beta)$ is isomorphic to
the set of orientations of the moduli space at $[A]$. Using the above
composition law, we can turn this round and say that an orientation of
$M_{z}(\alpha,\beta)$ at $[A]$ determines an isomorphism
$\Lambda(\alpha) \to \Lambda(\beta)$.

In particular, we can consider the case that
$\gr_{z}(\alpha,\beta)=1$. In this case, the moduli space of
unparametrized trajectories $\Mu_{z}(\alpha,\beta)$ is a  finite set
of points, and $M_{z}(\alpha,\beta)$ is a finite set of copies of
$\R$, acted on by the translations of the cylinder. Thus
$M_{z}(\alpha,\beta)$ is canonically oriented. To be quite specific,
if $\tau_{t}$ denotes the translation $(s,y) \mapsto (s+t,y)$ of
$\R\times Y$, we make $\R$ act on $M_{z}(\alpha,\beta)$ by $[A]\mapsto
\tau^{*}_{t}[A]$, and we use this to give each orbit of $\R$ an
orientation. For each $[A]$ in $M_{z}(\alpha,\beta)$, we therefore
obtain an isomorphism
\begin{equation}\label{eq:epsilon-sign}
                    \epsilon[A] : \Lambda(\alpha) \to \Lambda(\beta).
\end{equation}

\subsection{Floer homology}
\label{subsec:Floer-basic}

We can now define the Floer homology groups. The situation is that we
have a compact, connected, oriented $3$-manifold $Y$ with an oriented
knot or link $K\subset Y$, a choice of simple, simply-connected Lie
group $G$ and a $\Phi$ in the fundamental Weyl chamber with
$\theta(\Phi)<1$. A Riemannian metric $g^{\nu}$ with an orbifold
singularity along $K$ is given. We continue to suppose that the non-integrality
condition (Definition~\ref{def:non-integral}) holds and that a
perturbation $\pert\in \Pert$ is chosen so as to satisfy
Hypothesis~\ref{hyp:stabilizers}. We also need to suppose that $\Phi$
satisfies the monotone condition, Definition~\ref{def:monotone}.

For a $2$-element set $\Lambda=\{\lambda,\lambda'\}$ we use
$\Z\Lambda$ to mean the infinite cyclic group obtained
from the rank-2 abelian group $\Z\lambda \oplus \Z\lambda'$ by
imposing the condition $\lambda = -\lambda'$. Thus a choice of element
of $\Lambda$ determines a generator for $\Z\Lambda$.
We define $C_{*}(Y,K,\Phi)$ to be the
free abelian group
\begin{equation}\label{eq:chain-complex}
            C_{*}(Y,K,\Phi) = \bigoplus_{\beta\in \Crit_{\pert}}
            \Z\Lambda(\beta),
\end{equation}
If $\gr_{z}(\alpha,\beta)=1$ and $[\breve A]$ denotes the $\R$-orbit of
some $[A]$ in $M_{z}(\alpha,\beta)$, then from
\eqref{eq:epsilon-sign} above we obtain an isomorphism
\[
                \epsilon[\breve A] : \Z\Lambda(\alpha) \to
                \Z\Lambda(\beta).
\]

Combining all of these, we define
\[
                \partial : C_{*}(Y,K,\Phi) \to  C_{*}(Y,K,\Phi)
\]
by
\begin{equation}\label{eq:boundary-map-def}
                \partial = \sum_{(\alpha,\beta,z)} \sum_{[\breve{A}]}
                 \epsilon[\breve{A}]
\end{equation}
where the first sum runs over all triples with
$\gr_{z}(\alpha,\beta)=1$.

That the above sum is finite depends on the monotonicity condition.
The point is that for any pair $(\alpha,\beta)$, there will be
infinitely many homotopy classes of paths $z$ with
$\gr_{z}(\alpha,\beta)=1$ (as long as $K$ is non-empty).
Thus the first sum in the definition of $\partial$ has an
infinite range. The monotone condition, however, ensures that only
finitely many of the $1$-dimensional moduli spaces
$M_{z}(\alpha,\beta)$ will be non-empty: this is the statement of
Corollary~\ref{cor:energy-finite-monotone}.

Based as usual on a gluing theorem and consideration of the compactification of moduli spaces
$\Mu_{z}(\alpha,\gamma)$ with $\gr_{z}(\alpha,\gamma)=2$, one shows
that $\partial\comp\partial =0$. It is important here that the moduli
spaces of broken trajectories $\Mubk_{z}(\alpha,\gamma)$ are compact
when $\gr_{z}(\alpha,\gamma)=2$, as follows from
Proposition~\ref{prop:broken-compactness}.

\begin{definition}\label{def:basic-I}
   When the non-integrality and transversality assumptions of
   Hypothesis~\ref{hyp:stabilizers} holds and $\Phi$ satisfies the
   monotone condition, we define
   $\I_{*}(Y,K,\Phi)$ to be the homology of the complex
   $(C_{*}(Y,K,\Phi),\partial)$.
\end{definition}

Since $\gr_{z}(\alpha,\beta)$ taken modulo $2$ is independent of the
path $z$, we can regard $\I_{*}(Y,K,\Phi)$ as having an affine grading by
$\Z/2$. For particular choices of $G$ and $\Phi$, the greatest common
divisor of $\gr_{z}(\beta,\beta)$, taken over all closed paths, may
be a proper multiple of $2$, in which case 
$\I_{*}(Y,K,\Phi)$ has an affine $\Z/(2d)$-grading for $d>1$. For
example, if $G=\SU(N)$ and $\Phi$ has just two distinct eigenvalues,
then the homology is graded by $\Z/(2N)$.

Rather than being left as a relative (i.e.~affine) grading,
the mod $2$ grading can be made canonical. For a critical point
$\alpha$, the grading of $\alpha$ mod $2$ can be defined as the mod
$2$ reduction of $\gr_{z}(\theta^{\varphi},\alpha)$, where
$\theta^{\varphi}$ is the reducible configuration constructed earlier
and $z$ is any homotopy class of paths. The result is independent of
the choices made.

\subsection{Cobordisms and invariance}
\label{subsec:cobordims-1}

The Floer group $\I_{*}(Y,K,\Phi)$ depends only on $(Y,K)$ as a smooth
oriented pair and on the choice of $\Phi$: it is independent of the
remaining choices made. These choices include the choice of Riemannian
metric and the perturbation $\pert$: changing either of these may
change the set of critical points that form the generators of the
complex. More subtly, the choice of cut-off function involved in the
construction of the base connection $B^{\varphi}$ may effect the
$2$-element set $\Lambda(\alpha)$ used in fixing signs.  As in Floer's
original approach, the independence of the Floer groups on these
choices can be seen as a consequence of a more general property,
namely the fact that a cobordism between pairs gives rise to a
homomorphism on Floer homology.

To say this more precisely, let $(Y_{0},K_{0})$ and $(Y_{1},K_{1})$
be two pairs. By a \emph{cobordism} between them we will mean a connected,
oriented manifold-with-boundary, $W$, containing  a properly embedded
oriented surface-with-boundary, $S$, together with an
orientation-preserving diffeomorphism of pairs
\[
                r : ( \bar{Y}_{0}, \bar{K_{0}}) \amalg
                (Y_{1},K_{1}) \to (\partial W, \partial S).
\]                
If $(W,S)$ and $(W'S')$ are two cobordisms between the same pairs, then an
isomorphism between them means a diffeomorphism between the underlying
manifolds  commuting with $r$. Isomorphism classes of cobordisms can
be composed in the obvious way, and in this manner we obtain a
category, whose objects are the pairs $(Y,K)$ and whose morphisms are
the isomorphism classes of cobordisms. If $(W_{1},S_{1})$ is a cobordism from
$(Y_{0},K_{0})$ to $(Y_{1},K_{1})$ and $(W_{2},S_{2})$
is a cobordism from $(Y_{1},K_{1})$ to $(Y_{2},K_{2})$,
we denote by
\begin{equation}\label{eq:composite-cobord}
            (W ,S)= ( W_{2} \comp W_{1}, S_{2} \comp S_{1})
\end{equation}
the composite cobordism from $(Y_{0},K_{0})$ to $(Y_{2},K_{2})$.

We adopt from \cite{KM-book} the appropriate definition of a
\emph{homology orientation} for a cobordism $W$ from $Y_{0}$ to $Y_{1}$:
a homology orientation $o_{W}$ is a choice of orientation for the line
    \[
        \Lambda^{\max}
        H^{1}(W;\R) \otimes \Lambda^{\max} I^{+}(W) \otimes
        \Lambda^{\max} H^{1}(Y_{1};\R)
    \]
where $I^{+}(W)$ is a maximal positive-definite subspace for the
non-degenerate quadratic pairing on the image of $H^{2}(W,\partial
W;\R)$ in $H^{2}(W;\R)$. (Note that the links $K_{i}$ and the
$2$-dimensional cobordism $S$ are not involved here, and are omitted
from our notation.) This definition can be made to look less
arbitrary by regarding this as an orientation for the determinant line
of the operator $-d^{*}\oplus d^{+}$ on the cylindrical-end manifold
obtained from $W$, acting on weighted Sobolev spaces with a consistent
choice of weights. There is a composition law for homology
orientations: if $W=W_{2}\comp W_{1}$ and homology orientations
$o_{W_{i}}$ are given, we can construct a homology orientation
$o_{W_{2}}\comp o_{W_{2}}$ for $W$. This is most easily seen from the
second description of what a homology orientation is. We thus have a
modified category in which the morphisms are 
cobordisms of pairs, $(W,S)$, equipped with homology orientations,
up to isomorphism.

Let $(W,S)$ be a cobordism from $(Y_{0}, K_{0})$ to $(Y_{1},
K_{1})$. Suppose that each $Y_{i}$ is equipped with a Riemannian
metric and that perturbations $\pert_{i}$ are chosen satisfying
Hypothesis~\ref{hyp:stabilizers}. We continue to suppose also that
$\Phi$ satisfies the monotone condition. Let base connections
$B_{i}^{\varphi_{i}}$ be chosen for each. In this case, we have Floer homology groups
$  \I_{*}(Y_{i},K_{i},\Phi)$, for $i=0,1$,
which depend a priori on the choices made. Let us temporarily
denote this collection of choices (of metric, perturbation and base
connection) by $\pertstuff_{i}$, and so write the
groups as
\[
                \I_{*}(Y_{i},K_{i},\Phi)_{\pertstuff_{i}}, \qquad i=0,1.
\]
The fact that cobordisms give rise to maps can be stated as follows.

\begin{proposition}\label{prop:functor}
    Suppose that $\Phi$ satisfies the monotone condition.
    For $i=0,1$, let $(Y_{i},K_{i})$ be pairs as above, and suppose
    that Hypothesis~\ref{hyp:stabilizers} holds for both.
    Let $\pertstuff_{i}$ be choices of Riemannian metric,
    connection $B^{\varphi}$ and perturbation as above. Let
    $(W,S)$ be a cobordism from $(Y_{0},K_{0})$ to $(Y_{1},K_{1})$,
    and let
    a homology orientation $o_{W}$ for the cobordism $W$ be given.
    Then $(W,S,o_{W})$ gives rise to a homomorphism
    \begin{equation}\label{eq:cobordism-map}
       \I_{*}(W,S,\Phi, o_{W}) :   \I_{*}(Y_{0},K_{0},\Phi)_{\pertstuff_{0}}
                 \to  \I_{*}(Y_{1},K_{1},\Phi)_{\pertstuff_{1}}
    \end{equation}
    which depends only on the isomorphism class of the cobordism with
    its homology orientation.  Furthermore, composition of cobordisms
    becomes composition of maps and the trivial product cobordism
    gives the identity map.
\end{proposition}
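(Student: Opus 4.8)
# Proof proposal for Proposition \ref{prop:functor}

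The plan is to follow Floer's original strategy, as adapted to the singular setting, and to build the map \eqref{eq:cobordism-map} by counting singular anti-self-dual connections on the cylindrical-end $4$-manifold obtained from $(W,S)$. First I would attach cylindrical ends $(-\infty,0]\times(Y_0,K_0)$ and $[0,\infty)\times(Y_1,K_1)$ to $(W,S)$, extend the orbifold metrics $g^{\nu}$ over the ends in a translation-invariant way near infinity, and extend $\Phi$ (hence the singular model connection and the reduction $\varphi$ along $S$) over all of $S$. One then chooses a perturbation $\hat V$ of the $4$-dimensional equations which agrees with the cylindrical perturbations $\hat V_{\pert_i}$ on the two ends and is supported in a compact region; the holonomy perturbations of section~\ref{subsec:holomy-perts} are again the tool, and the abstract properties in Proposition~\ref{prop:cyl-functions-param} carry over to $W$. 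For each pair of critical points $\alpha\in\Crit_{\pert_0}$, $\beta\in\Crit_{\pert_1}$ and each homotopy class $z$ of paths of connections from $\alpha$ to $\beta$ over $W$, one forms the moduli space $M_z(W,S;\alpha,\beta)$. A Sard--Smale argument, exactly as for $M_z(\alpha,\beta)$ on the cylinder (and valid once Hypothesis~\ref{hyp:stabilizers} holds, so that stabilizers are just $Z(\G)$), makes these regular for generic interior perturbation; when $\gr_z(W,S;\alpha,\beta)=0$ the moduli space is a finite set of points, each carrying a sign from the orientation line $\Lambda(\alpha,\beta)$ via the composition law of section~\ref{subsec:orientations} and a choice of homology orientation $o_W$ (this is precisely where $o_W$ enters: it orients the determinant line of $-d^*\oplus d^+$ on $W$ and thereby pins down the identification of $\Lambda$-lines). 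Summing these signed counts over $z$ defines a chain map $\I_*(W,S,\Phi,o_W)$ on the Floer complexes; finiteness of the sum over $z$ again uses monotonicity and the energy estimate underlying Corollary~\ref{cor:energy-finite-monotone}, applied now with the drop of $\CS+f$ across $W$ replaced by a bounded quantity.

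Next I would verify that this is a chain map and that its induced map on homology is independent of the auxiliary choices. The chain-map identity $\partial_1\circ\I(W,S) - \I(W,S)\circ\partial_0 = 0$ comes from examining the ends of the $1$-dimensional moduli spaces $M_z(W,S;\alpha,\beta)$ with $\gr_z=1$: by the compactness result Proposition~\ref{prop:Uhlenbeck-cylinder} together with the dimension-counting that rules out bubbles (Corollary~\ref{cor:lose-4} and Corollary~\ref{cor:at-least-2N}, since any bubble costs at least $4$ in dimension and the relevant spaces have dimension $\le 1$), the only boundary phenomena are broken trajectories splitting off a factor over an incoming or outgoing cylinder; these boundary points are counted by the two composites. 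Independence of the interior perturbation and metric is the usual ``chain-homotopy from a family of cobordism moduli spaces'' argument: a path of choices on $W$ gives a parametrized moduli space whose $1$-dimensional part defines a chain homotopy $K$ with $\partial K + K\partial = \I_1 - \I_0$. Independence of the end data $\pertstuff_i$ is then subsumed: one takes $W$ to be a product cylinder $[0,1]\times(Y,K)$ with two different sets of choices on its two ends, and the resulting map is shown to be an isomorphism by composing with the reverse cylinder and using the composition law below, together with the fact that the product cylinder with \emph{identical} end data induces the identity (its $0$-dimensional moduli spaces consist only of the constant trajectories, by the energy/dimension bookkeeping).

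Finally, functoriality. For a composite $(W,S) = (W_2\circ W_1, S_2\circ S_1)$ with a chosen neck parameter $T$ separating the two pieces, a standard stretching-the-neck and gluing argument identifies, for $T$ large, the $0$-dimensional moduli spaces $M_z(W,S;\alpha,\gamma)$ with the fibered product $\coprod_\beta M_{z_1}(W_1,S_1;\alpha,\beta)\times M_{z_2}(W_2,S_2;\beta,\gamma)$, compatibly with signs (here the composition law $o_{W_2}\circ o_{W_1}$ for homology orientations is exactly what makes the sign bookkeeping work out), giving $\I(W_2,S_2)\circ\I(W_1,S_1) = \I(W,S)$ on homology; invariance of $\I(W,S)$ under the neck length was already established in the previous paragraph. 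The identity statement for the trivial product cobordism is the same fact used for invariance. I expect the \textbf{main obstacle} to be the gluing/neck-stretching step and the attendant sign verification: one must show the glued connections exhaust the moduli space for large $T$ (surjectivity of gluing), that no energy escapes into bubbles during the stretch — which is where the dimension bounds of Corollary~\ref{cor:lose-4} and the monotonicity-driven energy control of Corollary~\ref{cor:energy-finite-monotone} are essential — and that the excision argument comparing determinant lines respects orientations, so that the composition law for $\Lambda(\alpha,\beta)$ is genuinely compatible with the composition law for homology orientations. All of the analytic ingredients (Fredholm theory on the orbifold cylinder, Uhlenbeck compactness with singularities, transversality away from extra stabilizers) are in place from the earlier sections, so the argument is structurally identical to the classical one of \cite{Floer-original} and \cite{Donaldson-book}, with the singular-connection modifications handled exactly as in \cite{KM-gtes-I}.
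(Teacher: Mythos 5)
Your proposal is correct and follows essentially the same route as the paper, which itself only sketches the construction (cylindrical-end moduli spaces indexed by paths along $(W,S)$, holonomy perturbations equal to $\pert_{i}$ on the ends plus auxiliary collar perturbations for regularity, orientations fixed by $o_{W}$ and the $\Lambda$-sets, finiteness of the count from monotonicity) and defers the chain-map, chain-homotopy and gluing arguments to the standard references \cite{KM-book}, \cite{Floer-original}, \cite{Donaldson-book}. The details you supply for those deferred steps are exactly the intended ones.
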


\begin{remark}
    The choice of homology orientation $o_{W}$ affects only the
    overall sign of the map $\I_{*}(W,S,\Phi,o_{W})$, and affects it
    non-trivially only if the dimension of $G$ is odd:
    \cf~Proposition~\ref{prop:orientable-sing}.
\end{remark}

In particular, by taking $W$ to be a cylinder and setting
\[
 (Y_{0},K_{0})=(Y_{1},K_{1})=(Y,K),
\]
 we see that the Floer group
$\I_{*}(Y,K)_{\pertstuff}$ is independent of the auxiliary choices
$\pertstuff$, up to canonical isomorphism. Usually, we omit mention of
 $o_{W}$ and $\pertstuff$ from our notation (just as we have already
 silently omitted $r$), and we simply
write
\[
       \I_{*}(W,S,\Phi) :   \I_{*}(Y_{0},K_{0},\Phi)
                 \to  \I_{*}(Y_{1},K_{1},\Phi)
\]
with the unstated understanding that the identifications $r$ (when
needed)
are implied and that $o_{W}$ is needed to fix the
overall sign of this map if the dimension of $G$ is odd.

The proof of Proposition~\ref{prop:functor} follows standard lines,
and can be modelled (for example) on the arguments from \cite{KM-book}.
We content ourselves here with some remarks about the construction of
the maps $\I_{*}(W,S,\Phi)$.

For $i=0,1$, let $\beta_{i}$ be a  critical point in $\bonf(Y_{i},
K_{i},\Phi)$. On the pair $(W,S)$, let us consider a $G$-bundle $P$
equipped with a section $\varphi$ of $O_{P}$ along $S$ and
corresponding connection $A$ with singularity along $S$, subject to
the constraint that the restrictions of $A$ to the two ends should
define singular connections belonging the gauge-equivalences classes
of $\beta_{0}$ and $\beta_{1}$.  There is an obvious notion of a
continuous family of such data, $(P_{t}, \varphi_{t}, A_{t})$
parametrized by any space $T$, and we can therefore consider the set of
deformation-classes of such data. We will refer to such an equivalence
class as a \emph{path from $\beta_{0}$ to $\beta_{1}$ along the
cobordism $(W,S)$}. In the case of a cylindrical cobordism, such a
path is the same as a homotopy class of paths from $\beta_{0}$ to
$\beta_{1}$ in $\bonf(Y,K,\Phi)$. If $S$ has any closed components,
then different paths along $(W,S)$ may also be distinguished by having
different monopole charges on the closed components. If $(W,S)$ is a
composite cobordism, as in \eqref{eq:composite-cobord}, and if $z_{1}$
and $z_{2}$
are paths along $(W_{1},S_{1})$ and $(W_{2},S_{2})$ from $\beta_{0}$
to $\beta_{1}$ and from $\beta_{1}$ to $\beta_{2}$ respectively, then
there is a well-defined composite path along $(W,S)$, obtained by
choosing any identification of the two bundles on $Y_{1}$ respecting
the sections $\varphi_{i}$ and the connections.

\begin{remark}
There is a small point
to take note of here. By assumption, the critical point $\beta_{1}$,
like all critical points, is irreducible and has stabilizer $Z(G)$.
When forming the composite path by identifying the two bundles along
$Y_{1}$, there is therefore a $Z(G)$'s worth of choice in how the
identification is made. Despite this choice, the composite path is
well-defined, because the automorphisms of the connection on $Y_{1}$
extend to the $4$-manifolds.
\end{remark}

Let $W^{+}$ be the  manifold obtained by attaching
cylindrical ends to the two boundary components of $W$, and let this
manifold be given a Riemannian metric $g^{\nu}_{W}$ which is a product
metric on each of the two cylindrical pieces. Let $S^{+}\subset W^{+}$
be obtained similarly from $S$.

Let critical points $\beta_{i}$ in $\bonf(Y_{i}, K_{i},\Phi)$ be given
for
$i=0,1$, and let $z$ be a path along $(W,S)$ from $\beta_{0}$ to
$\beta_{1}$. Let $(P_{W}, \varphi_{W}, A_{W})$ be a representative for
$z$, and extend this data to the cylindrical ends by pull-back.
Imitating the definition of $\conf_{z}(\alpha,\beta)$ from
\eqref{eq:conf-z-def}, we define a configuration space of singular
connections $\conf_{z}(W,S,\Phi; \beta_{0},\beta_{1})$ as the space of
all $A$ differing from $A_{W}$ by a term belonging to
$\check{L}^{2}_{m}$, and we write
$\bonf_{z}(W,S,\Phi;\beta_{0},\beta_{1})$ for the corresponding
quotient space.

Let $\pert_{0}$ and $\pert_{1}$ be the chosen holonomy perturbations on
$Y_{0}$ and $Y_{1}$ respectively. We perturb the $4$-dimensional
equations on $W^{+}$  by adding a term supported on the
cylindrical ends: this term will be a $t$-dependent holonomy
perturbation $\pi_{W}$ equal to $\pert_{i}$ on the two ends.
In more detail, in a collar $[0,1) \times Y_{0}$ of one of the
boundary components $Y_{0}\subset W$, the perturbed equations take the
form
\[
            F^{+}_{A} + \beta(t)\hat{U_{0}}(A) +
            \beta_{0}(t)\hat{V}_{0}(A) =0
\]
where, as in \eqref{eq:perturbed-4d-short}, $\hat{V}_{0}$ is the
perturbing term defined by $\pert_{0}\in \Pert$ and $\hat{U}_{0}$ is
defined by a choice of an auxiliary element of $\Pert$. The cut-off
function $\beta$ is supported in the interior of the interval, while
$\beta_{0}$ is equal to $1$ near $t=0$ and equal to $0$ near $t=1$.
(This choice of perturbation follows \cite[Section~24]{KM-book}.) 
We write
\[
            M_{z}(W,S,\Phi;\beta_{0},\beta_{1})
            \subset  \bonf_{z}(W,S,\Phi;\beta_{0},\beta_{1})
\]
for the moduli space of solutions to the perturbed equations on
$W^{+}$. For generic choice of auxiliary perturbation $\hat{U}_{i}$ on
the two collars, the moduli space is cut out transversely by the
equations and (under our standing assumptions of
Hypothesis~\ref{hyp:stabilizers}) is a smooth manifold. A choice of
homology orientation $o_{W}$ and an element of $\Lambda(\beta_{0})$
and $\Lambda(\beta_{1})$ determines an orientation of the moduli
space. As in the closed case, if $G$ is even-dimensional, then $o_{W}$
is not needed. The map \eqref{eq:cobordism-map} is defined in the
usual way by counting with sign the points of all zero-dimensional
moduli spaces $M_{z}(W,S,\Phi;\beta_{0},\beta_{1})$. As in the
definition of the boundary map $\partial$, the monotonicity condition
ensures that this is a  finite sum, because for fixed $\beta_{0}$ and
$\beta_{1}$, the dimension of the moduli space corresponding to a path
$z$ along $(W,S)$ is an affine-linear function of the
topological energy.

\subsection{Local coefficients}
\label{subsec:local-coeffs}

There is a standard way in which the construction of Floer homology
groups can be generalized, by introducing a local system of
coefficients, $\Gamma$, on the configuration space (in this case, the
configuration space $\bonf(Y,K,\Phi)$ of singular connections modulo
gauge transformations on the $3$-manifold). Thus we suppose that for
each point $\beta$ in the configuration space we have an abelian group
$\Gamma_{\beta}$ and for each homotopy class of paths $z$ from
$\alpha$ to $\beta$ and isomorphism $\Gamma_{z}$ from
$\Gamma_{\alpha}$ to $\Gamma_{\beta}$ satisfying the usual composition
law. If we make the same assumptions as before (the conditions of
Hypothesis~\ref{hyp:stabilizers} and the monotonicity condition,
Definition~\ref{def:monotone}), then we can modify the definition of
the chain group $C_{*}(Y,K,\Phi)$ by setting
\[
            C_{*}(Y,K,\Phi;\Gamma) = \bigoplus_{\beta\in \Crit_{\pert}}
            \Z\Lambda(\beta) \otimes \Gamma_{\beta}
\]
and taking the boundary map to be
\begin{equation}\label{eq:boundary-Gamma}
                     \partial = \sum_{(\alpha,\beta,z)} \sum_{[\breve{A}]}
                 \epsilon[\breve{A}] \otimes \Gamma_{z}.
\end{equation}
The homology of this complex, $\I_{*}(Y,K,\Phi;\Gamma)$ is the Floer
homology with coefficients $\Gamma$.

If we are given two pairs, $(Y_{0}, K_{0})$ and $(Y_{1}, K_{1})$ with
local systems $\Gamma^{0}$ and $\Gamma^{1}$, and if $(W,S)$ is a
cobordism between the pairs, then we have a natural notion of
morphism, $\Delta$, 
of local systems along $(W,S)$: such a $\Delta$ assigns to each path
$z$ from $\beta_{0}$ to $\beta_{1}$
along $(W,S)$ (in the sense of the previous subsection) a
homomorphism
\begin{equation}\label{eq:Delta-z}
            \Delta_{z} : \Gamma_{\beta_{0}}^{0} \to
            \Gamma_{\beta_{1}}^{1}
\end{equation}
respecting the composition maps with paths in
$\bonf(Y_{i},K_{i},\Phi)$ on the two sides. (See \cite{KM-book}, for
example.) Using such a morphism $\Delta$, we can adapt the definition
of the map $\I_{*}(W,S,\Phi)$ in an obvious way to obtain a homomorphism
\[
                \I_{*}(W,S,\Phi;\Delta)
                : \I_{*}(Y_{0}, K_{0}, \Phi;\Gamma^{0})
                \to \I_{*}(Y_{1}, K_{1}, \Phi;\Gamma^{1}).
\]

To give an example, we begin with a standard local system
$\Gamma^{S^{1}}$ on the
circle $S^{1}$, regarded as $\R/\Z$, defined as follows. We write
$R$ for the ring of finite Laurent series with integer coefficients in
a variable $t$. This is the group ring $\Z[\Z]$, and we can regard it
as lying inside the group ring $\Z[\R]$: the ring of formal finite
series
\[
                    \sum_{x\in \R} a_{x} t^{x}.
\]
For each $\lambda$ in $\R$, we have an $R$-submodule $t^{\lambda}R
\subset \Z[\R]$ generated by the element $t^{\lambda}$: this is the
$R$-module of all finite series of the form
\[
                    \sum_{x\in \lambda + \Z} a_{x} t^{x}.
\]
As $\lambda$ varies in $\R/\Z$ these form a local system of
$R$-modules, $\Gamma^{S^{1}}$ over $S^{1}$: the map
$\Gamma^{S^{1}}_{z}$ corresponding to a path $z$ is given by
multiplication by $t^{\lambda_{1}-\lambda_{0}}$ if $z$ lifts to a path
in $\R$ from $\lambda_{0}$ to $\lambda_{1}$. If we are now given a circle-valued function
\[
            \mu : \bonf(Y,K,\Phi)    \to S^{1} =\R/\Z                
\]
then we can pull back the standard local system $\Gamma^{S^{1}}$ to
obtain a local system
\[
                \Gamma^{\mu} = \mu^{*}(\Gamma^{S^{1}})
\]
on $\bonf(Y,K,\Phi)$.

This construction can be applied using a class
of naturally-occurring circle-valued functions on the configuration
space of singular connections.
These functions can be defined, roughly speaking, by taking the
holonomy of a connection $B$ along a longitudinal curve close to a
component of the
link $K$ and applying a character of $G_{\Phi}$. To say this more
precisely, 
we choose a framing of the link $K\subset Y$, so has to have
well-defined coordinates on the tubular neighborhood, up to isotopy,
identifying the neighborhood with $D^{2}\times K$. Suppose first that
$K$ has just one component, and for each
sufficiently small $\epsilon
> 0$, let $T_{\epsilon}$ be the torus obtained as the product of the
circle of radius $\epsilon$ in $D^{2}$ with knot $K$. Use the
coordinates to identify $T_{\epsilon}$ with $S^{1}\times K$. If $B$ is
a connection in $\conf(Y,K,\Phi)$, then by restricting to
$T_{\epsilon}$ we obtain in this way a sequence of $G$-connections on
$S^{1}\times K$; and the definition of the space $\check{L}^{2}_{m}$
in which we work guarantees that these have a  well-defined limit, up
to gauge transformation, which is a flat connection $B_{0}$ on $S^{1}\times
K$. The holonomy of $B_{0}$ along a curve belonging to the $S^{1}$
factor is $\exp(\varphi)$, and the holonomy along the longitudinal
curve belonging to the $K$ factor lies in the commutant.
Choose a character
\[
                    s : G_{\Phi} \to U(1)
\]
and let
\[
                    \w : \g_{\Phi} \to \R
\]
be the corresponding weight, so that $s(\exp(x)) = e^{2\pi i \w(x)}$.
We can apply $s$ to the holonomy of $B_{0}$ along the
longitudinal curve to obtain a well-defined element of $U(1)$,
depending only on the gauge-equivalence class of $B$. Thus we obtain
from $s$ a function
\begin{equation}\label{eq:construction-of-mu}
                \mu_{s} : \bonf(Y,K,\Phi) \to U(1) = \R/\Z
\end{equation}
by applying $s$ to the holonomy along the longitudinal curve. In this
way we obtain a local system $\Gamma^{\mu_{s}}$ by pull-back.

The choice of framing of $K$ is essentially immaterial. The set of
framings is an affine copy of $\Z$;
and if we change the chosen framing of $K$ by $1$, then $\mu_{s}$ is changed by
the addition of the constant $\w(\Phi)$ mod $\Z$. The
corresponding local systems are canonically isomorphic, via
multiplication by $t^{\w(\Phi)}$.

If $K$ has more than one
component, we can apply this construction to each one, perhaps using
different characters $s$, and form the product. Alternatively, one
could define a local system over a ring of Laurent series in a number
of variables $t_{i}$, one for each component of $K$.

In the above construction, the reason for taking such a
specifically-defined function $\mu_{s}$, rather than a general
circle-valued function belonging to the same homotopy class, is that
the naturality inherent in the construction leads to a Floer homology
group that is a topological invariant of the pair, rather than a group
that is an invariant only up to isomorphism. The point is that if we
have a cobordism of pairs, $(W,S)$, with a chosen framing of a tubular
neighborhood of $S$ (or at least of the components of $S$ having
non-empty boundary), then we obtain a natural morphism $\Delta$
between the corresponding local systems associated to the framed knots
at the two ends. The map $\Delta_{z}$ corresponding to a path $z$
along $(W,S)$ can be defined as follows. Fix data $(P,\varphi,A)$ on
$W$ corresponding to $z$. For each small positive
$\epsilon$, we have a copy of $S^{1}\times S$ in $W$, as the boundary
of the $\epsilon$-neighborhood of $S$ in its framed tubular
neighborhood $D^{2}\times S$, and we therefore obtain connections
$A_{\epsilon}$ on $S^{1}\times S$. The limit of these connections is a
connection $A_{0}$ on $S^{1}\times S$ whose curvature $2$-form has the
$S^{1}$ direction in its kernel. Thus $A_{0}$ gives a $G_{\Phi}$-connection
on $S^{1}\times S$; and applying the character $s$ we obtain a $U(1)$-connection
$s(A_{0})$  on
$S^{1}\times S$. For any $p$ in $S^{1}$, we have a parallel copy of
$S$ as $\{p\}\times S$, and the map $\Delta_{z}$
can then be defined as multiplication by $t^{\nu}$, where
\begin{equation}\label{eq:local-coeff-cobordism-int}
            \nu = \frac{i}{2\pi} \int_{\{p\}\times S} F_{s(A_{0})}.
\end{equation}
Because the curvature $2$-form of $s(A_{0})$ annihilates the circle
directions, we  see that we could have taken any section of
$S^{1}\times S$ instead of the constant section $\{p\}\times S$, and
the above integral would be unchanged. So in the end, the map
$\Delta_{z}$ is independent of the choice of framing of $S$. 

Local systems can also be made use of to define Floer groups in the
case that $\Phi$ does not satisfy the monotone condition. When $\Phi$
is not monotone, the sum \eqref{eq:boundary-Gamma} which defines the boundary
operator may have infinitely many non-zero terms; but the sum can
still be made sense of if each $\Gamma_{\alpha}$ is a topological
group and the local system is  such that the series converges. A
typical instance of such a construction replaces the ring $R$ of
finite Laurent series which we used above by the ring of Laurent
series that are infinite in one direction.

\subsection{Non-simple groups}
\label{subsec:unitary-3d}

We have been considering instanton Floer homology in the case that
$G$ is a simple group. When discussing instanton moduli spaces, we saw in
section~\ref{subsec:unitary-4d} how the definitions are readily
adapted to the case which of a non-simple group such as the unitary
group. We now carry this over to the Floer homology setting.
We again suppose that $G$ has a simply-connected commutator subgroup.
We write $Z(G)$ for its center and $\bar{Z}(G)$ for $G/[G,G]$. Unlike
the case in which $G$ itself is simply-connected, it is no longer the
case that a $G$-bundle $P\to Y$ must be trivial: its isomorphism type
is determined by the $\bar{Z}(G)$-bundle $\gdet(P)$, or
equivalently by the characteristic class $c=c(P)$ in $H^{2}(Y;L(G))$ of
\eqref{eq:c-char-class}.

To preserve the functoriality of the Floer homology groups, we need to
adopt the alternative viewpoint for the configuration space and gauge
group which we mentioned briefly in subsection~\ref{subsec:unitary-4d}.
We fix $\bar{Z}(G)$-bundle $\delta\to Y$ with an isomorphism $q :
\gdet(P)\to \delta$, and we fix a connection $\Theta$ in $\delta$. As
before, we let $\Theta^{\varphi}$ denote the corresponding singular
connection in $\delta$ (equation \eqref{eq:Theta-varphi}),
and we construct a space $\conf(Y,K,\Phi)_{\delta}$ 
of singular connections, $B$,  with the constraint that
$\gdet(B) = q^{*}(\Theta^{\varphi})$.
The gauge group $\G(Y,K,\Phi)$
consists of gauge transformations $g$ of class $\check{L}^{2}_{m+1}$
with $\gdet(g)=1$ and we have a quotient space
$\bonf(Y,K,\Phi)_{\delta}$.

The construction of the Floer groups then proceeds as before, with
straightforward modifications of the same type as we dealt with in
section~\ref{subsec:unitary-4d}. We deal with some of these
modifications in the next few paragraphs.

\paragraph{The Chern-Simons functional.}

The appropriate Chern-Simons functional on $\conf(Y,K,\Phi)_{\delta}$
in the present setting is the one which ignores the central component
of the connection: it can be defined by the same formula
\eqref{eq:CS-formula} as before, if we understand that the inner
products in \eqref{eq:CS-formula} are defined using the semi-definite
Killing form. Critical points of the unperturbed Chern-Simons
functional on $\conf(Y,K,\Phi)_{\delta}$ are singular connections $B$
such that the induced connection $\bar{B}$ with structure group $G/Z(G)$ in the
adjoint bundle is flat. The formal gradient flow lines of this
functional correspond to connections $A$ in temporal gauge on the
cylinder with the property that $\bar{A}$ is anti-self-dual.

\paragraph{The non-integral condition.}
The most important change involves
the non-integrality condition, Definition~\ref{def:non-integral},
which we used to rule out reducible critical points and which formed
part of our standing Hypothesis~\ref{hyp:stabilizers}. In the case
that $G$ is not simple, the corresponding condition can be read off
from the $4$-dimensional version,
Proposition~\ref{prop:no-reducibles-2}:

\begin{definition}\label{def:non-integral-non-simple}
     Let the components of $K$ again be $K_{1}$,\dots,$K_{r}$.
     For non-simple groups $G$,
    we will say that the bundle $P$ on
    $(Y,K,\Phi)$ satisfies the \emph{non-integral
    condition} if, in the notation of section~\ref{subsec:unitary-4d}, the expression
\begin{equation*}
            w_{\alpha}(c(P)) +
            \sum_{j=1}^{r}(\bar{w}_{\alpha}\comp\sigma_{j})(\bar\Phi)
            \mathrm{P.D.}[K_{j}]
\end{equation*}
    is a
    non-integral cohomology class for every choice of fundamental weight $w_{\alpha}$
    and Weyl group elements
    $\sigma_{1},\dots,\sigma_{r}$.
\end{definition}

The simplest example in which this non-integrality holds is the case
corresponding to Corollary~\ref{cor:example-coprime}, in which $G$ is
the unitary group $U(N)$ and all the components of $K_{i}$ are
null-homologous: in this case, the non-integral condition is
equivalent to saying that the pairing of $c_{1}(P)$ with some integral
homology class in $Y$ is coprime to  $N$.

As previously, we need to suppose that this non-integrality condition
holds and that further, as in Hypothesis~\ref{hyp:stabilizers}, the
stabilizer in $\G(Y,K,\Phi)_{\delta}$ of every critical point is exactly
$Z(G)\cap [G,G]$, rather than some larger finite group
(a condition which is automatic in the non-integral
case if $G=U(N)$).  Under these conditions, and when $\Phi$ satisfies
the monotone condition \eqref{eq:monotone-non-simple},
we will arrive at a Floer homology group
\[
                \I_{*}(Y,K,\Phi)_{\delta}
\]
depending on the choice of bundle $\bar{Z}(G)$-bundle $\delta$.

\paragraph{Holonomy perturbations.}

The definition of holonomy perturbations does not need any changes in the case
of more general $G$. The basic ingredient is still a choice of
function
\[
            h : G^{r} \to \R
\]
invariant under the diagonal action of $G$, acting by the adjoint
representation on each factor. Holonomy perturbations still separate
points in the quotient space $\bonf(Y,K,\Phi)_{\delta}$. Note that the
choice of connection $\Theta$ is involved in the construction, because
we are taking the holonomy of a $G$-connection $B$ in the bundle $P$
which satisfies $\gdet(B)=\Theta^{\varphi}$. If we chose $h$ so that it was
pulled back from $(G/Z(G))^{r}$, then the choice of $\Theta$ would
again become irrelevant; but functions $h$ of this sort are not a
large enough class, as they do not allow our holonomy perturbations to
separate points and tangent vectors in $\bonf(Y,K,\Phi)_{\delta}$.

\paragraph{Orientations.}

In the case of a simple group $G$, we defined a $2$-element set
$\Lambda(\alpha,\beta)$ for a pair of configurations $\alpha$ and
$\beta$; and we then defined $\Lambda(\alpha)$ as being
$\Lambda(\theta^{\varphi}, \alpha)$, where $\theta^{\varphi}$ was a
specially chosen connection. The important features of our choice of
$\theta^{\varphi}$ were first that $\theta^{\varphi}$ was reducible
and second that, although the construction depended on details such as
a choice of cut-off function, any two choices differed by a small
isotopy, so that an essentially unique path connects any two choices.

When $P$ is not simple and $\gdet(P)$ is non-trivial, we do not
have a distinguished gauge-equivalence class of \emph{trivial} connections in
$P$ from which to construct $\theta^{\varphi}$, but we can instead
proceed as we did in section~\ref{subsec:unitary-4d}. We fix again a
homomorphism
$\fe : \bar{Z}(G) \to T $ which is right-inverse to $\gdet$ (see
\eqref{eq:gdet-inverse}). As in the $4$-dimensional case, we obtain
a $G$-connection $\fe(\Theta)$ on a
bundle isomorphic to $P$, with $\gdet(\fe(\Theta))=\Theta$. After adding
the singular term
along $K$, we obtain a distinguished
gauge-equivalence class of connections, $\theta^{\varphi}$, in
$\bonf(Y,K,\Phi)_{\delta}$. Once $\fe$ is fixed, this gauge-equivalence class
depends only on the details of how the singular term is constructed,
through the choice of cut-off function for example. This puts us  in
a  position to define $\Lambda(\alpha)$ as we did before.

\paragraph{Cobordisms.}

Let $(W,S)$ now be a cobordism of oriented pairs, and write its two
boundary components as $(Y_{i}, K_{i})$ for $i=0,1$, so that
\[
                \partial(W,S) = (\bar{Y}_{0},\bar{K}_{0}) \amalg
                 (\bar{Y}_{1},\bar{K}_{1}). 
\]
(In the slightly more categorical language that we used earlier in
section~\ref{subsec:cobordims-1}, we are supposing here that the
identification map $r$ is the identity.)
Let $\delta_{W}$ be a $\bar{Z}(G)$-bundle on $W$, and let write
\[
            \delta_{i} = \delta_{W}|_{Y_{i}}, \qquad{i=0,1}.
\]
Fix $G$-bundles $P_{0}$ and $P_{1}$ on $Y_{0}$ and $Y_{1}$
with isomorphisms
$q_{i}:\gdet(P_{i})\to \delta_{i}$. Let $\Theta_{0}$ and $\Theta_{1}$
be chosen connections in $\delta_{0}$ and $\delta_{1}$.

We wish to show how the data $(W,S,\delta_{W})$ (together with a
homology orientation of $W$) gives rise to a homomorphism from
$\I_{*}(Y_{0}, K_{0},\Phi)_{\delta_{0}}$ to $\I_{*}(Y_{1},
K_{1},\Phi)_{\delta_{1}}$.
The first step is to extend our previous
notion of a ``path along $(W,S)$'' between critical points $\beta_{0}$ and
$\beta_{1}$ belonging the configuration spaces
$\bonf(Y_{i},K_{i},\Phi)_{\delta_{i}}$ for $i=0,1$. To do this, we let
$\beta_{i}$ be represented by singular connections $B_{i}$ on $P_{i}
\to Y_{i}$ and we define a path $z$ to be defined by data consisting
of:
\begin{itemize}
\item a bundle $P\to W$;
\item an isomorphism $q_{W} : \gdet(P)\to \delta_{W}$;
\item a reduction of structure group defined by a
section $\varphi$ of $O_{P}$ along $S$; and
\item an isomorphism $R$ from $(P_{0}\amalg P_{1})$ to
$P|_{\partial
W}$,
respecting the reduction of structure group along $K_{i}$ and such
that $\gdet(R)$ fits into the obvious
commutative diagram involving  the other maps
on the $\bar{Z}(G)$-bundles -- a condition which appears as
\[q_{W}\comp \gdet(R) = (q_{0}\amalg q_{1}).\]
\end{itemize}

For any path $z$ in this sense, we can construct a moduli space,
generalizing our earlier $M_{z}(W,S,\Phi; \beta_{0},\beta_{1})$. To do
this, we use the data $P$, $P_{i}$, $B_{i}$ and $R$ to construct a
bundle $P^{+}$ on the cylindrical-end manifold $W^{+}$, together with
a connection on the two cylindrical ends (obtained by pulling back the
$B_{i}$). We extend this connection arbitrarily to a connection
$A_{W}$ on the whole of
$W^{+}$, with a singularity along $S^{+}$, and we write
$\Theta^{\varphi}_{W}$ for
$\gdet(A_{W})$. We can then define a configuration space
$\conf_{z}(W,S,\Phi;\beta_{0},\beta_{1})_{\delta_{W}}$ and quotient
space $\conf_{z}(W,S,\Phi;\beta_{0},\beta_{1})_{\delta_{W}}$ using
singular connections $A$ satisfying $\gdet(A)=\Theta^{\varphi}_{W}$ and with
$A-A_{W}$ of class $\check{L}^{2}_{m}$. Introducing perturbations as
before, we arrive at a moduli space
$M_{z}(W,S,\Phi;\beta_{0},\beta_{1})_{\delta_{W}}$. The task of
orienting this moduli space is the same as the case of a simple group
$G$, with slight modifications drawn from
section~\ref{subsec:unitary-4d}, so that when homology orientation of
$W$ is given together with elements of $\Lambda(\beta_{0})$ and
$\Lambda(\beta_{1})$, the moduli space is canonically oriented.

We summarize the situation with a proposition, generalizing
Proposition~\ref{prop:functor} to the case of non-simple groups:

\begin{proposition}\label{prop:functor-non-simple}
    Suppose that $\Phi$ satisfies the monotone condition
    \eqref{eq:monotone-non-simple}.
    Let $(W,S)$ be a cobordism with boundary the two pairs $(Y_{i},
    K_{i})$ as above, let $\delta_{W}$ be a  $\bar{Z}(G)$-bundle and
    let $\delta_{i}$ be its restriction to $Y_{i}$.
    Suppose that the non-integrality
    condition Definition~\ref{def:non-integral-non-simple} holds at
    both ends and 
    Hypothesis~\ref{hyp:stabilizers} holds, so that the Floer groups
    $\I_{*}(Y_{i},K_{i},\Phi)_{\delta_{i}}$ are defined.
    Then, after choosing a homology orientation $o_{W}$, there is a
    well-defined homomorphism
     \begin{equation}\label{eq:I-W-with-delta}
                I(W,S,\Phi)_{\delta_{W}} :
                   \I_{*}(Y_{0},K_{0},\Phi)_{\delta_{0}}
                   \to \I_{*}(Y_{1}, K_{1},\Phi)_{\delta_{1}}.
    \end{equation}
    These homomorphisms satisfy the natural composition law when
    a cobordism $(W,S)$ is decomposed as the union of two pieces and
    $\delta_{W}$ is restricted to each piece.
\end{proposition}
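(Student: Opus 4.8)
The plan is to follow the proof of Proposition~\ref{prop:functor} step by step, inserting at each point the bundle-theoretic bookkeeping introduced in section~\ref{subsec:unitary-4d}. First I would fix, for the given $\bar{Z}(G)$-bundle $\delta_{W}$ on $W$ and each path $z$ along $(W,S)$ in the sense defined above (data $(P,q_{W},\varphi,R)$ with $q_{W}\comp\gdet(R) = (q_{0}\amalg q_{1})$), the perturbed configuration space $\bonf_{z}(W,S,\Phi;\beta_{0},\beta_{1})_{\delta_{W}}$ and the moduli space $M_{z}(W,S,\Phi;\beta_{0},\beta_{1})_{\delta_{W}}$ of solutions to $F^{+}_{\bar{A}} + \hat{V}(A) = 0$ on the cylindrical-end manifold $W^{+}$, perturbed near the ends by the $t$-dependent holonomy perturbations built from $\pert_{0}$, $\pert_{1}$ together with auxiliary elements of $\Pert$, as in \cite[Section~24]{KM-book}. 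Transversality for generic auxiliary perturbations is exactly as in the simple case: Hypothesis~\ref{hyp:stabilizers} guarantees that all critical points are irreducible with stabilizer $Z(G)\cap[G,G]$, and Corollary~\ref{cor:lose-4} rules out bubbling in the range of dimensions that matters. The only changes in the linear theory are that $Q_{A}$ is now coupled to $[\g,\g]_{P}$ rather than $\g_{P}$ and that $F^{+}_{\bar{A}}$ replaces $F^{+}_{A}$; these do not affect the Fredholm property, the index, or the identification of the index with $\gr_{z}(\beta_{0},\beta_{1})$. Orientations of these moduli spaces are produced from $o_{W}$ and elements of $\Lambda(\beta_{0})$, $\Lambda(\beta_{1})$ as before, now using the basepoint connection $\theta^{\varphi}$ constructed from the fixed right-inverse $\fe:\bar{Z}(G)\to T$ as in the paragraph on orientations above. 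Counting points of the zero-dimensional moduli spaces with sign defines \eqref{eq:I-W-with-delta}, and the monotone condition \eqref{eq:monotone-non-simple} ensures via Corollary~\ref{cor:energy-finite-monotone} — which applies verbatim once $4h^{\vv}k$ is replaced by $-2p_{1}(\g_{P})$ in the dimension and energy formulae \eqref{eq:dimension-non-simple} and \eqref{eq:energy-non-simple} — that this sum is finite.

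The chain-map property $\partial\comp I = I\comp\partial$ (with the usual signs) is then proved as in \cite{KM-book}, by examining the ends of the one-dimensional moduli spaces $M_{z}(W,S,\Phi;\beta_{0},\beta_{1})_{\delta_{W}}$: the compactification attaches broken configurations with one trajectory splitting off at either end, and Proposition~\ref{prop:broken-compactness} together with the finiteness from Corollary~\ref{cor:energy-finite-monotone} shows that the compactification is a compact $1$-manifold with boundary, whose boundary points are counted by the two sides of the identity. Independence of $I(W,S,\Phi)_{\delta_{W}}$ of the auxiliary data — the metric $g^{\nu}_{W}$, the interior perturbation, the extension $A_{W}$, and the cut-off functions entering $\theta^{\varphi}$ — follows the standard chain-homotopy argument: two choices are joined by a path of such data, and counting points of the parametrized zero-dimensional moduli spaces over that path defines a chain homotopy. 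Here one uses, exactly as in the simple case, that once $\fe$ is fixed any two admissible $\theta^{\varphi}$ differ by a canonically-determined small isotopy, so $\Lambda(\alpha)$ is well-defined up to canonical isomorphism.

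For the composition law, given $(W,S) = (W_{2}\comp W_{1}, S_{2}\comp S_{1})$ with $\delta_{W}|_{Y_{1}} = \delta_{W_{1}}|_{Y_{1}} = \delta_{W_{2}}|_{Y_{1}}$, I would stretch the neck $[-T,T]\times Y_{1}$ and let $T\to\infty$; the gluing theorem identifies the zero-dimensional moduli spaces on the stretched $W^{+}$ with the fibered product over $\Crit_{\pert_{1}}$ of those on $W_{1}^{+}$ and $W_{2}^{+}$. The one genuinely new point is that the composite path along $(W,S)$ is well defined: gluing the bundles over $Y_{1}$ representing a critical point $\beta_{1}$ carries a $Z(G)$'s worth of ambiguity, but, as in the remark following Proposition~\ref{prop:functor}, the automorphisms of the connection representing $\beta_{1}$ lie in $Z(G)$ and extend over the adjacent $4$-manifolds, and $\gdet$ of the ambiguity is trivial, so the glued data and the compatibility $q_{W}\comp\gdet(R) = (q_{0}\amalg q_{1})$ survive up to the equivalence used to define paths. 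Taking $W$ to be the product cylinder then recovers the identity, and hence the invariance of $\I_{*}(Y,K,\Phi)_{\delta}$.

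I expect the main obstacle — beyond this bookkeeping — to be checking that the composition law is compatible with the orientation conventions: one must verify that the gluing isomorphism of determinant lines assembled from $Q_{A_{1}}$ and $Q_{A_{2}}$ agrees, up to sign, with the composition map $\Lambda_{z_{1}}(\beta_{0},\beta_{1})\times\Lambda_{z_{2}}(\beta_{1},\beta_{2})\to\Lambda_{z_{2}\comp z_{1}}(\beta_{0},\beta_{2})$ after the modifications to the orientation construction forced by the choice of $\fe$. This is the same sign analysis carried out for $U(N)$ in \cite{K-higherrank} and summarized in Proposition~\ref{prop:epsilon-signs}, and I would quote that computation, checking only that nothing in the cylindrical-end cobordism setting introduces a sign not already accounted for there.
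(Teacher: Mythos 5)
Your proposal follows the same route as the paper: the paper itself only summarizes this proposition as the outcome of the construction it has just described (paths along $(W,S)$ carrying the data $(P,q_{W},\varphi,R)$, moduli spaces on $W^{+}$ for the equation in $\bar{A}$, orientations via the fixed $\fe$ and $\theta^{\varphi}$, finiteness from monotonicity, and the standard arguments modelled on \cite{KM-book} as in Proposition~\ref{prop:functor}, including the $Z(G)$-gluing remark). Your write-up is a correct and somewhat more explicit rendering of exactly that argument, so there is nothing substantive to add.
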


There is an important point about the naturality of this construction
that is worth spelling out in more detail. Rather than taking
$\delta_{i}$ to be the restriction of $\delta_{W}$ to the boundary
component $Y_{i}$, we could have taken the $\bar{Z}(G)$-bundles
$\delta_{0}$ and $\delta_{1}$ to have been given in advance, in which
case it is more natural to regard the necessary data on the cobordism
$W$ as consisting of
\begin{itemize}
    \item a $\bar{Z}(G)$-bundle $\delta_{W}\to W$; and
    \item a pair of isomorphisms $\tilde{r} = (\tilde{r}_{0},
    \tilde{r}_{1})$ from $\delta_{W}|_{Y_{i}}$ to $\delta_{i}$,
    $i=0,1$.
\end{itemize}
In this setting, the induced map between the two instanton homology
groups $\I_{*}(Y_{i},K_{i},\Phi)_{\delta_{i}}$ does depend on the
choice of isomorphisms $\tilde{r}_{i}$. The following corollary of
Proposition~\ref{prop:functor-non-simple} makes essentially the same
point:

\begin{corollary}
    Given $(Y,K)$ and a $\bar{Z}(G)$-bundle $\delta$ satisfying as
    usual the conditions of Hypothesis~\ref{hyp:stabilizers}, the
    group of components of the
    bundle automorphisms of $\delta\to Y$ acts on the the instanton
    homology group $\I_{*}(Y,K,\Phi)_{\delta}$.
\end{corollary}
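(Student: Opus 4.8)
The plan is to deduce the action as a special case of the functoriality in Proposition~\ref{prop:functor-non-simple}, applied to the trivial product cobordism $W = I\times Y$, $S = I\times K$, while exploiting precisely the dependence on the boundary identifications highlighted in the remark immediately preceding the corollary. First I would fix, once and for all, the auxiliary data $\pertstuff$ (metric, base connection $\theta^{\varphi}$, perturbation $\pert$) computing $\I_{*}(Y,K,\Phi)_{\delta}$, together with a $G$-bundle $P$ with an isomorphism $q:\gdet(P)\to\delta$ and a connection $\Theta$ in $\delta$. On $W=I\times Y$ I put the product metric and the $t$-independent extension of $\pert$, I take $\delta_{W}=\pi^{*}\delta$ for the projection $\pi:W\to Y$, and I equip $W$ with the canonical homology orientation of a cylinder (the relevant line is $\Lambda^{\max}H^{1}(Y;\R)\otimes\Lambda^{\max}H^{1}(Y;\R)$, which carries the tautological orientation, and in any case by the remark after Proposition~\ref{prop:functor} the choice matters only for an overall sign and only when $\dim G$ is odd). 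Given a bundle automorphism $g$ of $\delta\to Y$, I then apply Proposition~\ref{prop:functor-non-simple} with prescribed boundary bundles $\delta_{0}=\delta_{1}=\delta$ and boundary identifications $\tilde r_{0}=\mathrm{id}$, $\tilde r_{1}=g$, obtaining a homomorphism
\[
   \rho(g)\colon \I_{*}(Y,K,\Phi)_{\delta}\longrightarrow \I_{*}(Y,K,\Phi)_{\delta}.
\]

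Next I would check that $\rho(g)$ depends only on the component of $g$ in the group of bundle automorphisms. An isotopy $g_{s}$ from $g_{0}$ to $g_{1}$ yields an isomorphism between the cobordisms-with-data $(W,S,\delta_{W},\mathrm{id},g_{0})$ and $(W,S,\delta_{W},\mathrm{id},g_{1})$, realized by the identity on $(W,S)$ together with the family $g_{s}$ on a collar of the outgoing end; by the invariance clause of Proposition~\ref{prop:functor-non-simple} the two induced maps agree. Hence $\rho$ descends to $\pi_{0}$ of the automorphism group. When $g$ lies in the identity component it is isotopic to $\mathrm{id}$, and the data $(W,S,\delta_{W},\mathrm{id},\mathrm{id})$ is literally the trivial product cobordism, so by the last clause of Proposition~\ref{prop:functor-non-simple} (as in Proposition~\ref{prop:functor}) $\rho(g)=\mathrm{id}$.

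Then I would establish the homomorphism property via the composition law. Decompose the cylinder $W$ as $W_{2}\circ W_{1}$, each piece again a cylinder over $Y$, and put on $W_{1}$ the data with boundary identifications $(\mathrm{id},g_{2})$ and on $W_{2}$ the data with $(\mathrm{id},g_{1})$. Gluing along the middle copy of $Y$ (using the identity there) produces a cobordism whose data is isomorphic to $(W,S,\pi^{*}\delta,\mathrm{id},g_{1}\circ g_{2})$: the glued $\bar{Z}(G)$-bundle is again a pullback of $\delta$, the glued boundary identification at the outgoing end is the composite $g_{1}\circ g_{2}$, and the homology orientations compose to the canonical one. By the composition law of Proposition~\ref{prop:functor-non-simple}, $\rho(g_{1}g_{2})=\rho(g_{1})\circ\rho(g_{2})$, so $\rho$ is a homomorphism from $\pi_{0}$ of the bundle automorphisms of $\delta$ to the group of automorphisms of $\I_{*}(Y,K,\Phi)_{\delta}$, which is the asserted action.

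The step I expect to be the main obstacle is this last one: carefully tracking the bundle-and-identification data through the gluing so that the composite genuinely represents $g_{1}\circ g_{2}$ (and not $g_{2}\circ g_{1}$, nor a twisted version), which requires checking that the glued $\bar{Z}(G)$-bundle stays in the pullback class, that the compatibility diagram $q_{W}\circ\gdet(R)=(q_{0}\amalg q_{1})$ for the gluing isomorphism still commutes, and that the induced paths and monopole charges along the two pieces concatenate correctly. This is pure bookkeeping in the conventions set up in sections~\ref{subsec:cobordims-1} and \ref{subsec:unitary-3d}; all of the analytic content (transversality, compactness, orientations of the relevant moduli spaces) is already packaged inside Proposition~\ref{prop:functor-non-simple} and nothing new is needed there. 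As a sanity check one can also describe $\rho(g)$ directly, via the action of $g$ on $\bonf(Y,K,\Phi)_{\delta}$ permuting the critical points $\Crit_{\pert}$ and the trajectory moduli spaces, and confirm it agrees with the cobordism-theoretic definition.
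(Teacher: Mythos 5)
Your proposal is correct and follows essentially the same route as the paper: the paper's proof is exactly your first step, taking the product cobordism $[0,1]\times(Y,K)$ with the pulled-back bundle $\delta_{W}$ identified with $\delta$ at the two ends by $(1,g)$ and invoking Proposition~\ref{prop:functor-non-simple}. The additional verifications you supply (invariance under isotopy of $g$, the homomorphism property via decomposing the cylinder, and the identity automorphism acting trivially) are left implicit in the paper but follow, as you say, from the same invariance and composition clauses of that proposition.
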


\begin{proof}
    Given an automorphism $g$ of $\delta$, consider the cobordism
    $(W,S)$ that is the cylinder  $[0,1]\times(Y,K)$ and the bundle
    $\delta_{W}$ which is the pull-back. We can
identify $\delta_{W}$ with $\delta$ by using the identity map at the
boundary component $\{0\}\times Y$ and the map $g$ at the other
boundary component $\{1\}\times Y$. From the data $(W,S)$ and
$\delta_{W}$ with these identifications, $(r_{0},r_{1})=(1,g)$,
we obtain a homomorphism from $\I_{*}(Y,K,\Phi)_{\delta}$ to itself.
\end{proof}

Some of the automorphisms of $\delta$ act trivially on the Floer
homology:

\begin{proposition}
    Suppose that $G$ is constructed from $G_{1}=[G,G]$ as in
    \eqref{eq:G-from-G1} and that Condition~\ref{cond:G-conditions}
    holds. If $G_{1}$ is $\SU(N)$, then suppose additionally that $G$
    is $U(N)$ and that $\fe$ is standard. (See
    Proposition~\ref{prop:epsilon-signs}.)

    Then
    an automorphism $g : \delta\to\delta$ of the $\bar{Z}(G)$-bundle
    $\delta\to Y$ acts trivially on $\I_{*}(Y,K,\Phi)_{\delta}$ if $g$ has
    the form $\gdet(f)$ for some $Z(G)$-valued automorphism $f : P \to
    P$ of the corresponding bundle $P \to $Y.
\end{proposition}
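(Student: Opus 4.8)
The plan is to reduce the statement to the corresponding fact about moduli spaces on cylinders, via the description of the cobordism-induced map. If $g = \gdet(f)$ for a $Z(G)$-valued automorphism $f:P\to P$, then I would like to realize the map on Floer homology induced by $g$ as $\I_*$ of the cylindrical cobordism $[0,1]\times(Y,K)$, equipped with the pull-back bundle $\delta_W$, but with the boundary identifications $(r_0,r_1)=(1,g)$ rather than $(1,1)$. By Proposition~\ref{prop:functor-non-simple}, this map depends only on the isomorphism class of the triple $(W,S,\delta_W)$ together with its boundary data. So the first step is: exhibit an isomorphism of this triple-with-boundary-data to the \emph{trivial} cylinder (with identifications $(1,1)$), which by the last clause of Proposition~\ref{prop:functor-non-simple} induces the identity. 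The isomorphism is supplied by $f$ itself: since $f:P\to P$ is a $Z(G)$-valued automorphism, pulling it back over $[0,1]$ and interpolating (using that $Z(G)$ need not be connected is not an obstruction here, because we only need the \emph{bundle} isomorphism on the cylinder, which is the pull-back of $f$, not a path of gauge transformations) gives a bundle automorphism $F$ of $[0,1]\times P$ that is the identity on $\{0\}\times Y$ and equals $f$ on $\{1\}\times Y$; applying $\gdet$ gives an automorphism of $\delta_W$ intertwining the two sets of boundary data.

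The key point that makes this work — and the place where the hypothesis on $G$ and $\fe$ enters — is that $F$ must act trivially at the level of the configuration spaces of $\bar A$-type connections and, crucially, must preserve the orientations used in defining the chain complex and the boundary map. A $Z(G)$-valued automorphism $f$ acts trivially on every $\bar A$ (as noted in subsection~\ref{subsec:unitary-4d}, the condition $f^*(\bar A)=\bar A$ is exactly the requirement that $f$ take values in $Z(G)$), so at the level of configuration spaces and moduli spaces the induced self-map of $\bonf(Y,K,\Phi)_\delta$ is the identity and the moduli spaces $M_z(W,S,\Phi;\beta_0,\beta_1)_{\delta_W}$ for the twisted cylinder are canonically identified with those for the trivial cylinder. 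What remains is to check that this identification is \emph{orientation-preserving}, i.e.\ compatible with the $2$-element sets $\Lambda(\beta)$ and with the signs $\epsilon[\breve A]$ entering \eqref{eq:boundary-map-def}. This is precisely the content of the sign analysis underlying Proposition~\ref{prop:epsilon-signs}: the effect of twisting by a $Z(G)$-bundle $\epsilon$ on orientations is governed by the parity of \eqref{eq:sign-change-1}. Here the relevant $\epsilon$ is, roughly, the one classified by the image in $H^1$ of $f$, which is $4$-dimensionally trivial on the cylinder $[0,1]\times Y$ — more precisely, all the cup-square and mixed terms in \eqref{eq:sign-change-1} vanish because the relevant classes are pulled back from $Y$, a $3$-manifold, so they square to zero in $H^4$. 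This is where the extra hypothesis (that for $G_1=\SU(N)$ we take $G=U(N)$ with $\fe$ standard) is used: it is exactly the situation in which Proposition~\ref{prop:epsilon-signs} guarantees $\mu_\epsilon$ is orientation-preserving when $c_1(\epsilon)^2[X]$ vanishes, and on a cylinder over a $3$-manifold this square always vanishes.

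Putting these together: the twisted cylinder $(W,S,\delta_W,(1,g))$ is isomorphic as a cobordism-with-boundary-data, via $F$, to the trivial cylinder $(W,S,\delta_W,(1,1))$, and the isomorphism respects homology orientations and the orientation data $\Lambda(\beta)$ because of the vanishing of the sign obstruction \eqref{eq:sign-change-1} on a product cobordism. By the final clause of Proposition~\ref{prop:functor-non-simple} the trivial cylinder induces the identity on $\I_*(Y,K,\Phi)_\delta$, hence so does $g$. The main obstacle I anticipate is the orientation bookkeeping: one must be careful that "acts trivially on configuration spaces" really does upgrade to "acts trivially on the signed chain complex," and this genuinely requires invoking the $U(N)$-specific part of Proposition~\ref{prop:epsilon-signs} rather than being a formal consequence of $f$ being central — indeed, for $N\equiv 2\pmod 4$ the analogous statement over a general $4$-manifold \emph{fails}, and it is only the product structure $[0,1]\times Y$ that rescues it. A secondary, more routine point is checking that the interpolation of $f$ over the cylinder can be chosen so that $F$ is of the required Sobolev regularity class $\check L^2_{m+1}$ and is the identity near the ends, which follows from a standard cutoff construction since $f$ is smooth.
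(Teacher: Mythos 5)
The first step of your argument is where it breaks down. The bundle automorphism $F$ of $[0,1]\times P$ that you want — equal to the identity on $\{0\}\times Y$ and to $f$ on $\{1\}\times Y$ — is precisely a path in the gauge group of $P$ from $1$ to $f$, and its image under $\gdet$ would be a path in $\Map(Y,\bar{Z}(G))$ from $1$ to $g$. Such a path exists only when $g$ is null-homotopic, i.e.\ when its class in $H^{1}(Y;\pi_{1}(\bar{Z}(G)))$ vanishes. Your parenthetical claim that disconnectedness is not an obstruction because ``we only need the pull-back of $f$'' does not work: the pull-back of $f$ restricts to $f$ at \emph{both} ends, so it does not intertwine the boundary data $(1,g)$ with $(1,1)$. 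Concretely, for $G=U(N)$ take $f$ scalar-valued with nonzero class in $H^{1}(Y;\Z)$; then $g=\gdet(f)=f^{N}$ is not null-homotopic and no automorphism of $\delta_{W}$ (or of $[0,1]\times P$) with the required boundary values exists. Moreover, if the twisted cylinder were isomorphic as a cobordism-with-boundary-data to the trivial one, then Proposition~\ref{prop:functor-non-simple} alone would show that \emph{every} automorphism of $\delta$ acts trivially, with no orientation check and no hypothesis on $\fe$ — contradicting, for instance, the degree-$4$ action of the generator $\bs$ coming from Lemma~\ref{lem:H-relative-gr}. So this step both fails and, if it succeeded, would prove too much; it also makes your subsequent orientation check logically redundant rather than the heart of the proof.

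The correct mechanism is the one in your second paragraph, and it is essentially the paper's argument — but it must be set up without the fictitious $F$. The paper identifies the twisted $\delta_{W}$, with its boundary identifications $(1,g)$, as $\delta_{1}\bb\gdet(\epsilon)$, where $\epsilon$ is the product $Z(G)$-bundle on the cylinder carrying the two trivializations $1$ and $f$ at the ends; the map induced by $g$ and the identity map are then computed from literally the same moduli spaces (central twists do not change $\bar{A}$), exactly as in the construction of $\mu_{\epsilon}$ at \eqref{eq:mu-epsilon}, and the only question is whether the zero-dimensional moduli spaces are counted with the same signs. That comparison is Proposition~\ref{prop:epsilon-signs} together with the observation that the terms in \eqref{eq:sign-change-1} are cup products of classes pulled back from $Y$, hence even (indeed zero) on the product cobordism; this disposes of the two exceptional cases, $\SU(N)$ (where your extra hypothesis that $G=U(N)$ with standard $\fe$ enters) and $E_{6}$ (via the evenness of $\weyl(\bar{e})\cupprod\weyl(\bar{e})$). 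Recast your proof as this direct comparison of the two maps, dropping the interpolation claim, and it coincides with the paper's.
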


\begin{proof}
    Take $W$ to be the cylinder $[0,1]\times Y$, and
    let $\delta_{W}$ be as in the proof of the previous corollary.
    If $g=\gdet(f)$, then we can describe $\delta_{W}$ as
    $\delta_{1}\otimes \gdet(\epsilon)$, where $\delta_{1}$ is the
    pull-back bundle $[0,1]\times\delta$ and $\epsilon\to W$ is a
    $Z(G)$ bundle equipped with a trivialization at each boundary
    component of $W$. That is, we take $\epsilon$ to be $W\times
    Z(G)$, with the trivialization $1$ at $\{0\}\times Y$ and $f$ at
    $\{1\}\times Y$.

    We are therefore left to compare two maps from
    $\I_{*}(Y,K,\Phi)_{\delta}$ to itself: the first is the
    identity map, arising from the product cobordism $W$ with
    $\delta_{1}$; and the second is the map obtained from $W$ with
    $\delta_{1}\otimes \gdet(\epsilon)$. This is essentially the same
    situation as the construction of the map $\mu_{\epsilon}$ in
    \eqref{eq:mu-epsilon}. In particular, the moduli spaces that are
    involved in defining the two maps are identical, and the only
    question is whether the zero-dimensional moduli spaces arise with
    the same sign.
   Proposition~\ref{prop:epsilon-signs} tells us that
    $\mu_{\epsilon}$ preserves orientation in all cases except
    $\SU(N)$ and $E_{6}$. For the $\SU(N)$ case, $\mu_{\epsilon}$
    still preserves orientation because the cobordism $W$ has even intersection form on its
    relative homology. For the case of $E_{6}$, an examination of the
    proof of Proposition~\ref{prop:epsilon-signs} shows that
    orientation depends on a term $\weyl(\bar{e})\cupprod
    \weyl(\bar{e})$; so again, the even intersection form ensures that
    $\mu_{\epsilon}$ is orientation-preserving.
\end{proof}

The bundle automorphisms of $\delta$ are the maps $Y\to \bar{Z}(G)$
and the group of components is $H^{1}(Y;\pi_{1}(\bar{Z}(G)))$. The
above Proposition tells us that the image of
$H^{1}(Y;\pi_{1}({Z}(G))$ acts trivially. Under the hypotheses of
Condition~\ref{cond:G-conditions}, we have a short exact sequence
\[
                   \pi_{1}( Z(G)) \to \pi_{1}(\bar{Z}(G)) \to Z(G_{1})
\]
in which the first two groups are free abelian and the first map is
multiplication by $p$. From the corresponding long exact sequence in
cohomology, we learn that the largest group that may act effectively on
$\I_{*}(Y,K,\Phi)_{\delta}$ via this construction is isomorphic to the subgroup
\[
           \cH \subset H^{1}(Y; Z(G_{1}))
\]
consisting of the elements with integer lifts:
\[
            \cH = \im \Bigl (H^{1}(Y; \pi_{1}(\bar{Z}(G))) \to H^{1}(Y;
            Z(G_{1}))\Bigr).
\]

As explained in section~\ref{subsec:unitary-4d} (where we treated the
$4$-dimensional case), we can also regard the automorphisms of
$\delta$ as defining, rather directly, automorphisms of the
configuration space $\bonf(Y,K,\Phi)_{\delta}$. Were it the case that
the holonomy perturbations could be chosen to be invariant under the
action of $\cH$ while still achieving the necessary transversality,
then we would have a more direct way of understanding the action of
$\cH$ on the instanton homology: the action  on the set of
critical points would give rise to an action of $\cH$ on the chain complex
$C_{*}(Y,K,\Phi)$ by chain maps. Although perturbations cannot always
be chosen so as to realize the action in this way, the following
situation does arise in some cases:

\begin{proposition}\label{prop:invariant-pert}
    Suppose that a subgroup $\cH'\subset\cH$ acts freely on the set of
    critical points for the unperturbed Chern-Simons functional $\CS$
    in $\bonf(Y,K,\Phi)_{\delta}$. Then a holonomy perturbation
    $\pert$ can be found, as in
    Proposition~\ref{prop:3d-transversality-summary}, that is
    invariant under $\cH'$, such that the critical point set
    $\Crit_{\pert}$ is non-degenerate, the action of $\cH'$ on
    $\Crit_{\pert}$ is still free, and the moduli spaces
    $M_{z}(\alpha,\beta)$ are all regular.
\end{proposition}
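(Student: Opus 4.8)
The plan is to adapt the standard transversality argument of Proposition~\ref{prop:residual-crit} (and the underlying arguments of \cite{Floer-original,Donaldson-book,KM-book}) so that the perturbation is chosen from an $\cH'$-invariant subspace of $\Pert$. First I would construct, in place of the generic Banach space $\Pert$, an $\cH'$-invariant one. Since $\cH'$ acts on $Y$ only through bundle automorphisms of $\delta$ — equivalently through the covering-type action on $\bonf(Y,K,\Phi)_{\delta}$ coming from $H^{1}(Y;Z(G_{1}))$ — it permutes immersed solid tori $q:S^{1}\times D^{2}\to Y\sminus K$ and acts by deck-type transformations on the holonomy; so one can take each generating cylinder function $f_{i}$ and replace it by the average $\sum_{h\in\cH'} h^{*}f_{i}$ over the finite group $\cH'$. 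This produces a countable collection of $\cH'$-invariant cylinder functions; choosing the constants $C_{i}$ as in Definition~\ref{def:Pert} gives a closed, separable Banach subspace $\Pert^{\cH'}\subset\Pert$ consisting of $\cH'$-invariant perturbations, and Propositions~\ref{prop:cyl-functions} and~\ref{prop:cyl-functions-param} continue to hold for this subfamily.

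The key point to check is that $\Pert^{\cH'}$ is still large enough to achieve transversality. Because $\cH'$ acts \emph{freely} on the critical set of the unperturbed functional, we may take an $\cH'$-invariant compact neighbourhood of $\Crit$ in $\bonf^{*}(Y,K,\Phi)_{\delta}$ which is a disjoint union of finitely many free $\cH'$-orbits of small balls; a transverse slice $S$ for the gauge action near one representative critical point in each orbit gives a fundamental domain. The averaged cylinder functions, when restricted to such a slice $S$, are still dense in $C^{\infty}(S)$ — one uses the original (non-averaged) $f_{i}$ supported near $S$ and notes that, because the $\cH'$-translates of $S$ are disjoint, the averaging operator is, near $S$, just the inclusion of functions supported there. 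Hence the usual Sard--Smale argument applied orbit-by-orbit yields a residual subset of $\Pert^{\cH'}$ for which all critical points are non-degenerate; since the perturbation is $\cH'$-invariant, the critical set is $\cH'$-invariant, and freeness of the action on the unperturbed critical set persists for small perturbations (using the compactness Lemma~\ref{lem:crit-compact} exactly as in Lemma~\ref{lem:coprime-pert}: a violation would force, in the limit, two distinct critical points of $\CS$ in the same $\cH'$-orbit to collide). The same reasoning applies to $4$-dimensional transversality for trajectories: the auxiliary perturbations $\hat U$ on the collars in Proposition~\ref{prop:4d-transversality} can be chosen from $\Pert^{\cH'}$ as well, and regularity of all $M_{z}(\alpha,\beta)$ is obtained on a residual subset, since $\cH'$ permutes these moduli spaces and it suffices to make one representative in each $\cH'$-orbit of moduli spaces regular.

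The main obstacle is the density statement for the averaged perturbations on the slices $S$: one must verify that no cancellation occurs under averaging, which is exactly where freeness of the $\cH'$-action is used — it guarantees that the $\cH'$-translates of a sufficiently small slice are disjoint in $\bonf(Y,K,\Phi)_{\delta}$, so that a cylinder function supported near $S$ has $\cH'$-average agreeing with it near $S$. A secondary subtlety is that the holonomy perturbations must genuinely descend to $\cH'$-invariant functions on $\bonf_{\delta}$: this needs that $\cH'$ acts on the configuration space through actual bundle automorphisms (as established in section~\ref{subsec:unitary-4d} and the preceding corollaries), so that $h^{*}\Hol_{q}(B)$ is again a holonomy section and the averaged integrand in \eqref{eq:cylinder-function-fmla} is well defined. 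Granting these two points, the rest is a routine repetition of the arguments already cited.
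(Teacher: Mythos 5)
Your overall strategy (averaging cylinder functions to get an $\cH'$-invariant Banach space of perturbations, then running the transversality arguments on a fundamental domain) is the same as the paper's, but two points need attention. The first is a mischaracterization that is mostly harmless: $\cH'$ does not act on $Y$ and does not permute the immersed solid tori. It acts on $\bonf(Y,K,\Phi)_{\delta}$ by leaving the loops alone and multiplying the holonomies around them by central elements of $[G,G]$ determined by the class in $H^{1}(Y;Z(G_{1}))$. Consequently the pullback of a cylinder function $(q,h)$ under an element of $\cH'$ is the cylinder function with the \emph{same} $q$ and with $h$ composed with central multiplications on $G^{r}$; averaging therefore amounts to choosing $h$ invariant under this central action, which is exactly how the paper constructs invariant perturbations. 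Your density argument on a slice $S$ should be phrased accordingly: cylinder functions are not ``supported near $S$'' in any sense (they are global holonomy functions), so the correct statement is that, because the $\cH'$-translates of $S$ are disjoint (freeness) and cylinder functions are dense in $C^{\infty}$ of the compact set $\bigcup_{h}hS$, the averages are dense in the invariant functions on that set. This is fixable and in the spirit of what you wrote.

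The genuine gap is in the regularity of the trajectory moduli spaces. Saying that ``$\cH'$ permutes the moduli spaces, so it suffices to make one representative in each orbit regular'' does not address the real difficulty, which is whether the \emph{invariant} perturbations are large enough to make any representative regular at all. The standard surjectivity argument perturbs the equation using the values of the holonomy perturbation at the configurations $B(t)$ traversed by a given trajectory; if some such configuration were fixed by a nontrivial element of $\cH'$, the invariance constraint would only allow equivariant variations there and the argument could fail, even though the endpoints $\alpha$ and $h\alpha$ are distinct. The paper closes this with a unique continuation argument: if $h\neq 1$ fixed a point $B(t_{0})$ on a gradient trajectory, then the trajectory and its $h$-image would solve the same flow equation with the same value at $t_{0}$, hence coincide, forcing $h$ to fix the limiting critical points and contradicting the freeness of the action on $\Crit_{\pert}$. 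Thus freeness on the (perturbed) critical set propagates to freeness on all points lying on trajectories between critical points, and only then do the usual transversality arguments for $M_{z}(\alpha,\beta)$ go through without change. Your proposal is missing this step, and without it the claimed regularity of the moduli spaces for an invariant perturbation is not justified.
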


\begin{proof}
    A cylinder function arising from a collection of loops and a map
    $h:G^{r}\to\R$ will be invariant under the action of $\cH'$ on
    $\bonf(Y,K,\Phi)_{\delta}$ provided that $h$ is invariant under an
    associated action of $\cH'$ on $G^{r}$ (given by multiplications
    by central elements). This
    gives us a means to construct invariant perturbations, and
    the statements about the critical point set are straightforward.
    For the moduli spaces $M_{z}(\alpha,\beta)$, we note that, by
    unique continuation, once the action on $\Crit_{\pert}$ is known
    to be free, it must also be free on the subset of
    $\bonf(Y,K,\Phi)$ consisting of all points lying on gradient
    trajectories between critical points. Once the action is known to
    be free here, the transversality arguments go through without
    change.
\end{proof}

\section{Classical knots and variants}
\label{sec:classical}

\subsection{Summing with a 3-torus}

Take $G$ to be the group $U(N)$, let $Y$ be any closed, oriented
$3$-manifold and $K\subset Y$ an oriented knot or link. Let $y_{0}$ be
a base-point in $Y\setminus K$, and let an oriented frame in
$T_{y_{0}}Y$ be chosen. Using the base-point and frame, we can form
the connected sum $Y\# T^{3}$ in a manner that makes the result unique
to within a canonical isotopy class of diffeomorphisms. The knot or
link $K$ now becomes a knot or link in the connected sum. Any
topological invariant that we define for the pair $(Y\# T^{3}, K)$
becomes an invariant of the original pair $(Y,K)$ together with its
framed basepoint.

Regard the
$3$-torus as a product, $S^{1}\times T^{2}$, and let
$\delta_{1} \to T^{3}$ be a $U(1)$ bundle with $c_{1}(\delta_{1})$
Poincar\'e dual to $S^{1}\times \{\mathrm{point}\}$. Extend
$\delta_{1}$ trivially to the connected sum $Y\# T^{3}$, and call the
resulting $U(1)$-bundle $\delta$.  If $P$ is a $U(N)$ bundle on $Y\#
T^{3}$ whose determinant is $\delta$, then $P$ satisfies the
non-integral condition, Definition~\ref{def:non-integral-non-simple},
for any choice of $\Phi$ in the positive Weyl chamber.
(See the remarks immediately following that definition.) The remaining
condition in Hypothesis~\ref{hyp:stabilizers} (that the stabilizer of
each critical point is just $Z(\G)$) is automatically satisfied in the
case of $U(N)$ once the non-integrality condition holds. There is
therefore a well-defined Floer homology group (in the notation of
section~\ref{subsec:Floer-basic}),
\[
        \I_{*}(Y\# T^{3}, K, \Phi)_{\delta}
\]
for any choice $\Phi$ in the positive Weyl chamber which satisfies the
monotone condition. 

Let us consider a particular choice of $\Phi$ in this context, namely
\[
                \Phi = \mathrm{diag}(i/2, 0, \dots, 0).
\]
This $\Phi$ satisfies the monotone condition, and its orbit in
$\g=\u(N)$ is
a copy of $\CP^{N-1}$. The group element $\exp(2\pi  \Phi)$ has order
$2$: it is
\begin{equation}\label{eq:reflection}
           \mathrm{diag}(-1, 1, \dots, 1).         
\end{equation}
We introduce a notation for the corresponding instanton homology
group:

\begin{definition}\label{def:basic-with-T3-def}
    We write $\TI{N}{Y,K}$ for the instanton homology group
    \[\I_{*}(Y\# T^{3},K,\Phi)_{\delta}\] in the case that $G=U(N)$, with
    $\delta$ and $\Phi$ as above. In the case that $Y$ is $S^{3}$, we
    simply write $\TI{N}{K}$; and in the case that $N=2$, we write
    $\tI_{*}(Y,K)$ or $\tI_{*}(K)$. The group $\TI{N}{Y,K}$ has an
    affine grading by $\Z/(2N)$ (see the remark following
    Definition~\ref{def:basic-I}).
\end{definition}

\begin{remark}
If $\Phi$ is changed by the addition of a central element of $\u(N)$,
then the resulting instanton homology is essentially unchanged. Thus,
we could equally well have taken
$\Phi$ to be the element
\begin{equation}\label{eq:Phi-primed}
  \Phi' =  i \mathrm{diag} (1/2,0,\dots,0) - (i/(2N))\mathrm{diag}(1,1,\dots,1)
\end{equation}
in $\su(N)$,
so that
\begin{equation}\label{eq:reflection-primed}
        \exp(2\pi \Phi') = e^{-\pi i /N}\mathrm{diag}(-1,1,\dots,1)
        \in \SU(N).
\end{equation}
\end{remark}

To examine what comes of this definition, let us begin by looking at
$\TI{N}{\emptyset}$, i.e.~the case of the the empty link in $S^{3}$.
In other words, we are looking at $\I_{*}(T^{3})_{\delta_{1}}$.
Let $a$, $b$ and $c$ be standard generators for the fundamental group
of $T^{3}$, with $a$ and $b$ generating the fundamental group of the
$T^{2}$ factor in $T^{3} = T^{2}\times S^{1}$. Let $p\in T^{2}$ be a
point not lying on the $a$ or $b$ curves, and let $D$ be a small disk
around $p$. We can take the line bundle $\delta_{1}$ to be pulled back
from $T^{2}$, with a trivialization on the complement of $D\times
S^{1}$.
Let
$\Theta_{1}$ be a connection in the line bundle $\delta_{1}\to T^{3}$
that is also pulled back from $T^{2}$. We can
take the $\Theta_{1}$ to respect the trivialization of $\delta_{1}$ on
the complement $D\times S^{1}$, so that the
curvature of $\Theta_{1}$ is a  $2$-form supported in that
neighborhood. If $A \in \bonf(T^{3})_{\delta_{1}}$ is a critical point
for the Chern-Simons functional, then the restriction of $A$ to
$T^{2}\setminus D$ is a flat $\SU(N)$ connection whose holonomy around
$\partial{D}$ is the central element $e^{2\pi i/N}$ in
$\SU(N)$. In this way, the critical points correspond to conjugacy
classes of triples
$\{h(a),h(b),h(c)\}$ in $\SU(N)$ (the holonomies around the three
generators) satisfying
\[
\begin{aligned}{}
[h(a),h(b)] &= e^{2\pi i/N} \\
[h(a),h(c)] &= 1\\
[h(b),h(c)] &= 1.
\end{aligned}
\]
There are $N$ different
solutions to these conditions (see also \cite{K-higherrank}): the
element $h(c)$ can be any of the $N$ elements of the center of $\SU(N)$,
and up to
the action of $\SU(N)$, we must have
\begin{equation}\label{eq:T3-solutions}
\begin{aligned}
h(a) &= \epsilon \begin{bmatrix}
                1 & 0 & 0 & \cdots& 0\\
                0 & \zeta  & 0 & \cdots&0\\
                0 & 0 & \zeta^{2} & \cdots &0\\
                  &   &        &   \ddots&0 \\
               0   &   0&   0 &  0 & \zeta^{N-1}
            \end{bmatrix}
            \\
            h(b) &= \epsilon \begin{bmatrix}
                0 & 0 & 0 & \cdots&   1\\
                1 & 0  & 0 & \cdots&0\\
                0 & 1 & 0 & \cdots &0\\
                  &   &        &   \ddots&0 \\
               0   &   0&   0 &  1 & 0
            \end{bmatrix}
            \\
            h(c) &= \zeta^{k} \begin{bmatrix}
                1 & 0 & 0 & \cdots& 0\\
                0 & 1  & 0 & \cdots&0\\
                0 & 0 & 1 & \cdots &0\\
                  &   &        &   \ddots&0 \\
               0   &   0&   0 &  0 & 1
            \end{bmatrix},\qquad k \in \{ 0,1,\dots,N-1\}.
           \end{aligned} 
    \end{equation}
Here, $\zeta=e^{2\pi i/N}$, and
$\epsilon$ is $1$ if $N$ is odd and an $N$th root of $-1$ if $N$
is even.

Thus the Chern-Simons functional has exactly $N$ distinct critical
points in $\bonf(T^{3})_{\delta_{1}}$. These critical points are
irreducible (as they must be, on account of the coprime condition); and
it is shown in
\cite{K-higherrank} that these $N$ critical points are non-degenerate.
We can now describe the critical points in the case of a general
$(Y,K)$.

\begin{proposition}\label{prop:N-copies-crit}
    For any oriented pair $(Y,K)$, the set of critical points of the
    Chern-Simons functional on $\bonf(Y\# T^{3}, K,\Phi)_{\delta}$
    consists of $N$ disjoint copies of the space of representations
    \begin{equation}\label{eq:representation-space}
            \rho : \pi_{1}(Y\setminus K)\to \SU(N) 
    \end{equation}
    satisfying the condition that, for each oriented meridian $m$ of
    the knot or link $K$, the element $\rho(m)$ is conjugate to
    \eqref{eq:reflection-primed}.
\end{proposition}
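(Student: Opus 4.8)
The plan is to apply the connected-sum formula for representation spaces, together with the identification \eqref{eq:crit-is-R} between critical points and flat connections, exactly as sketched in the introduction in the passage surrounding \eqref{eq:connected-sum-reps}. First I would recall that critical points of the Chern-Simons functional on $\bonf(Y\#T^3,K,\Phi)_\delta$ correspond to gauge-equivalence classes of $G$-connections on $(Y\#T^3)\setminus K$ that are flat after projection to the adjoint group, with holonomy around each meridian of $K$ conjugate to $\exp(2\pi\Phi')$; this is the content of the discussion following \eqref{eq:homomorphisms}, adapted to the non-simple group $U(N)$ as in subsection~\ref{subsec:unitary-3d}. Since $\delta$ restricts trivially to $Y$ and to $\delta_1$ on the $T^3$ summand, such a flat (adjoint) connection is the same as a homomorphism
\[
\rho : \pi_1\bigl((Y\#T^3)\setminus K\bigr) \to \PU(N)
\]
with the prescribed meridian conjugacy class, together with the extra bundle-theoretic data recording the obstruction class $c(P)$, which along the $T^3$ factor is forced to be the nontrivial class dual to $S^1\times\{\mathrm{pt}\}$.

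Next I would use van Kampen: since the connected sum is formed at a basepoint $y_0\in Y\setminus K$ away from $K$, we have
\[
\pi_1\bigl((Y\#T^3)\setminus K\bigr) \cong \pi_1(Y\setminus K) * \pi_1(T^3).
\]
A homomorphism from a free product to $\PU(N)$ (or to $\SU(N)$ on suitable lifts) is a pair of homomorphisms agreeing nowhere except that they must be compatible with the central/bundle data at the connect-sum sphere. The meridian condition only constrains the $\pi_1(Y\setminus K)$ factor. So the critical set fibers over the space of flat adjoint connections on $T^3$ with the twisting $\delta_1$, which by the explicit computation recalled in the excerpt (the triples $\{h(a),h(b),h(c)\}$ in \eqref{eq:T3-solutions}) is exactly $N$ nondegenerate points, one for each choice of $h(c)\in Z(\SU(N))$. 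Over each such point the fiber is the space of homomorphisms $\rho:\pi_1(Y\setminus K)\to\SU(N)$ with $\rho(m)$ conjugate to \eqref{eq:reflection-primed} for every meridian $m$ --- here I would invoke the remark following Proposition~\ref{prop:N-copies-crit}'s setup that the $T^3$ solutions all have trivial stabilizer in $\PU(N)$, so that the gluing at the connect-sum point introduces no further moduli (the $Z(G)$-ambiguity in the gluing is absorbed by gauge equivalence, as in the remark in section~\ref{subsec:cobordims-1}). This gives the desired disjoint union of $N$ copies of the representation variety \eqref{eq:representation-space}.

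The main subtlety --- and the step I expect to need the most care --- is bookkeeping the center. On the $T^3$ side one naturally gets $\SU(N)$ representations twisted by a central $2$-cocycle (the class dual to $S^1\times\{\mathrm{pt}\}$ mod $N$), while on the $Y\setminus K$ side, because $\delta|_Y$ is trivial, one gets honest $\SU(N)$ representations; the meridian element \eqref{eq:reflection-primed} already carries the compensating factor $e^{-\pi i/N}$ precisely so that these match up across the connect-sum sphere. I would make this precise by working throughout with $\PU(N)$-connections and recording the lift obstruction in $H^2$; the connected-sum decomposition of $H^2$ then splits the obstruction into its $Y$-part (zero) and $T^3$-part (the fixed nontrivial class), and Mayer--Vietoris for the fundamental group shows the representation data splits accordingly. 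The identification of the $T^3$ factor with exactly $N$ points, and their nondegeneracy, I would simply cite from \cite{K-higherrank} as the excerpt does. Finally I would note that, because each of the $N$ points on the $T^3$ side is isolated and nondegenerate, the homeomorphism type of the full critical set is literally $N$ disjoint copies of the $Y\setminus K$ representation variety, with no twisting in the product, completing the proof.
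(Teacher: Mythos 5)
Your main line of argument is the paper's: identify critical points with (projectively) flat connections, use van Kampen to split the fundamental group of the connected sum as a free product, feed in the $N$ irreducible solutions on the $T^{3}$ summand from \eqref{eq:T3-solutions}, and use the fact that their stabilizer is exactly the center so that the fiber product \eqref{eq:connected-sum-reps} collapses to $N$ disjoint copies of $\Rep(Y,K,\Phi')$. Where you differ is in the step you yourself flag as the delicate one. The paper handles the center/determinant bookkeeping by first replacing $\Phi$ with the traceless $\Phi'$ of \eqref{eq:Phi-primed} and then deleting the solid torus $D\times S^{1}\subset T^{3}$ that carries all of the curvature of $\gdet$; on the complement of $K\amalg(D\times S^{1})$ the critical connections are honest flat $\SU(N)$ connections, so the critical set becomes the set of $\SU(N)$ representations of the free product $\pi_{1}(Y\setminus K)*\pi_{1}\bigl((T^{2}\setminus D)\times S^{1}\bigr)$ with central holonomy $e^{2\pi i/N}$ around $\partial D$ and meridian holonomy exactly in the class \eqref{eq:reflection-primed}, modulo $\SU(N)$ conjugation.

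Your proposed rigorization -- ``work throughout with adjoint connections and record the lift obstruction in $H^{2}$'' -- discards exactly the data the statement depends on, and this is a genuine gap as written. The critical set in $\bonf(Y\#T^{3},K,\Phi)_{\delta}$ is a quotient by the \emph{determinant-one} gauge group, not by all adjoint gauge transformations: two homomorphisms $\rho$ and $\chi\rho$ with $\chi:\pi_{1}(Y\setminus K)\to Z(\SU(N))$ induce the same flat $\PSU(N)$ connection and the same obstruction class, yet in general define distinct critical points; classifying flat adjoint connections with prescribed obstruction up to adjoint gauge therefore undercounts, replacing $\Rep(Y,K,\Phi')$ by a quotient by central twists and obscuring why one gets $N$ copies rather than one (the residual group $\cH$ acting on the fibres of the comparison map is generally larger than the $\cH'\cong\Z/N$ that permutes the copies). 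Likewise the exact $\SU(N)$ conjugacy class \eqref{eq:reflection-primed} of the meridian holonomy is invisible in the adjoint picture: it is pinned by the model connection $B^{\varphi'}$, equivalently by the fixed determinant $\Theta^{\varphi'}$, which is flat and trivial near $K$ once one has passed to $\Phi'$ -- and this is precisely what the deletion of $D\times S^{1}$ keeps track of. Relatedly, your explanation of the factor $e^{-\pi i/N}$ in \eqref{eq:reflection-primed} as compensating ``across the connect-sum sphere'' is off: that factor is just the traceless normalization trading the $U(N)$ element \eqref{eq:reflection} for an $\SU(N)$ representative, and nothing needs matching at the connect-sum region, since the determinant twisting is supported in $D\times S^{1}$, away from both $K$ and the sphere. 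If you fix the determinant and the asymptotic meridian holonomy (rather than only the obstruction class), your fibering argument over the $N$ points of $\Crit\subset\bonf(T^{3})_{\delta_{1}}$ goes through and reproduces the paper's proof.
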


\begin{proof}
    Rather than consider $\bonf(Y\# T^{3}, K,\Phi)_{\delta}$ as in the
    statement of the proposition, we may alternatively consider the
    isomorphic space
    \[
            \bonf(Y\# T^{3}, K,\Phi')_{\delta}
    \]
    with the alternative element $\Phi' \in
    \su(N)$ from \eqref{eq:Phi-primed}. As in the discussion of
    $T^{3}$ above, the
    critical points in $\bonf(Y\# T^{3}, K,\Phi')_{\delta}$ correspond
    to flat
    $\SU(N)$ connections on the complement of $K \amalg ( D\times
    S^{1})$
    in $Y\# T^{3}$, such that the holonomy around $\partial D$ is
    $e^{2\pi i /N}$ and $\rho(m)$ is in the conjugacy class of
    \eqref{eq:reflection-primed} for all oriented meridians $m$. The
    fundamental group of the complement in this connected sum is a
    free product, so the result follows from the fact that there are
    $N$ such flat connections on the $T^{3}$ summand, all of which are
    irreducible: see the discussion surrounding
    \eqref{eq:connected-sum-reps} in the introduction.
\end{proof}

\begin{corollary}
    In the case of the $3$-sphere, and an oriented classical knot or
    link $K \subset S^{3}$,  the set of critical points of the
    Chern-Simons functional on $\bonf(S^{3}\# T^{3}, K,\Phi)_{\delta}$
    consists of $N$ disjoint copies of the space of representations
    \begin{equation}\label{eq:knot-complement-rho}
            \rho : \pi_{1}(S^{3}\setminus K)\to U(N)  
    \end{equation}
    satisfying the condition that, for each oriented meridian $m$ of
    the knot or link $K$, the element $\rho(m)$ is conjugate to
    \eqref{eq:reflection}.
\end{corollary}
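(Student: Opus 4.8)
The plan is to derive this corollary directly from Proposition~\ref{prop:N-copies-crit} by specializing to the case $Y = S^{3}$ and then observing that the two bookkeeping choices in the statement—the element $\Phi$ versus $\Phi'$, and the target group $U(N)$ versus $\SU(N)$—match up under a central twist. First I would apply Proposition~\ref{prop:N-copies-crit} with $Y = S^{3}$, so that the critical set of the Chern-Simons functional on $\bonf(S^{3}\# T^{3}, K,\Phi)_{\delta}$ is $N$ disjoint copies of the space of representations $\rho : \pi_{1}(S^{3}\setminus K)\to \SU(N)$ with $\rho(m)$ conjugate to $\exp(2\pi\Phi') = e^{-\pi i/N}\mathrm{diag}(-1,1,\dots,1)$ for every oriented meridian $m$. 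This already gives the ``$N$ disjoint copies'' assertion; what remains is only to re-express the constraint in terms of $U(N)$ and the element \eqref{eq:reflection}.

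The key step is the passage from $\SU(N)$-representations with the $\Phi'$-condition to $U(N)$-representations with the $\Phi$-condition. Since $S^{3}\setminus K$ deformation retracts onto a $2$-complex, $H^{2}(S^{3}\setminus K;\Z)=0$, so any $U(N)$-bundle on it is trivial and a flat $U(N)$-connection is the same as a representation $\rho : \pi_{1}(S^{3}\setminus K)\to U(N)$; moreover $H^{1}(S^{3}\setminus K;\Z)$ is free, so the determinant homomorphism $\det\comp\rho : \pi_{1}\to U(1)$ always lifts through the $N$-th power map, and hence every such $\rho$ can be written as $\rho = \zeta\otimes\rho_{0}$ where $\zeta:\pi_{1}\to U(1)$ is a homomorphism and $\rho_{0}$ lands in $\SU(N)$. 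The meridian condition is invariant under this description because $\exp(2\pi\Phi') = e^{-\pi i/N}\exp(2\pi\Phi)$ and $\exp(2\pi\Phi) = \mathrm{diag}(-1,1,\dots,1)$ is \eqref{eq:reflection}: an $\SU(N)$-element conjugate to $\exp(2\pi\Phi')$ becomes, after multiplying by the scalar $e^{\pi i/N}$, a $U(N)$-element conjugate to $\exp(2\pi\Phi)$. Since $K$ is connected (or, for a link, since all meridians are conjugate within each component and the same scalar works uniformly), one checks that requiring $\rho(m)$ conjugate to \eqref{eq:reflection} in $U(N)$ cuts out exactly the same subspace of the representation variety, and the correspondence $\rho\leftrightarrow\rho_{0}$ (twisting by the appropriate $U(1)$-character) is a homeomorphism of representation spaces intertwining the two constraints.

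I do not expect a serious obstacle here: the corollary is essentially a restatement of Proposition~\ref{prop:N-copies-crit} once one unwinds the definition of $\Phi$ and the relation \eqref{eq:Phi-primed}. The one point that deserves care is the claim that the twisting map between the $\SU(N)$-picture and the $U(N)$-picture is a genuine homeomorphism of the critical sets and not merely a surjection with central ambiguity; this comes down to the observation that the central scalar $e^{\pi i/N}$ is determined by the meridian constraint itself, so the lift is canonical up to the finite group $H^{1}(S^{3}\setminus K;\Z/N)$, and these finitely many lifts are precisely repackaged into the ``$N$ disjoint copies'' already present in the statement. Thus the proof reduces to: (i) invoke Proposition~\ref{prop:N-copies-crit}; (ii) note $\exp(2\pi\Phi') = e^{\pi i/N}\cdot\eqref{eq:reflection}$; (iii) use the triviality of $H^{2}$ and freeness of $H^{1}$ of the knot complement to identify flat $U(N)$-connections with representations and to split off the determinant character; and (iv) conclude that the meridian condition for $U(N)$ with \eqref{eq:reflection} is equivalent to the meridian condition for $\SU(N)$ with \eqref{eq:reflection-primed}.
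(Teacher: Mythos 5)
Your proposal follows essentially the same route as the paper: Proposition~\ref{prop:N-copies-crit} gives the $N$ copies of the $\SU(N)$-representation space, and the only thing to check is that this space is canonically identified with the $U(N)$-representation space in the statement, by twisting with the central character that sends each meridian to $e^{\pi i/N}$. This is exactly the paper's (one-sentence) argument, whose whole content is that the meridians generate $H_{1}(S^{3}\setminus K)$: that is what makes the twisting character well defined by its values on meridians, and what guarantees that, conversely, a $U(N)$-representation with all meridians in the class of \eqref{eq:reflection}, once multiplied by the inverse character, has determinant trivial on a generating set of $H_{1}$ and hence lands in $\SU(N)$ with meridians in the class of \eqref{eq:reflection-primed}. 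Your middle paragraph asserts this homeomorphism correctly; the detour through $H^{2}=0$ and flat $U(N)$-connections is not needed, since both the proposition and the corollary are already phrased in terms of representation varieties.

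However, the closing ``point that deserves care'' is wrong as stated, and you should delete or repair it. You claim the lift is canonical only up to $H^{1}(S^{3}\setminus K;\Z/N)$ and that ``these finitely many lifts are precisely repackaged into the $N$ disjoint copies.'' Neither half is right. First, once the meridian constraint is imposed there is \emph{no} residual ambiguity: if $\zeta'=\zeta\cdot\chi$ with $\chi$ an order-$N$ character, then $\chi^{-1}\rho_{0}$ sends a meridian $m$ into the class of $\chi(m)^{-1}\exp(2\pi\Phi')$, so the $\Phi'$-condition forces $\chi(m)=1$ for all meridians, hence $\chi=1$ because the meridians generate $H_{1}$; the correspondence is therefore a genuine bijection, not an $N$-to-one or one-to-$N$ matching. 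Second, the $N$ disjoint copies have nothing to do with determinant lifts: they come from the $T^{3}$ summand, namely the $N$ choices of the central holonomy $h(c)$ in \eqref{eq:T3-solutions}, already accounted for in Proposition~\ref{prop:N-copies-crit}. If the identification really were only defined up to an $N$-fold ambiguity absorbed into those copies, the count in the corollary would come out wrong; the proof goes through precisely because the twisting is canonical and leaves the $N$ copies untouched.
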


\begin{proof}
    In the case of $S^{3}$, the meridians generate the first homology
    of the complement of the link, and the space of $U(N)$
    representations that appears here is therefore identical to the
    space of $\SU(N)$ representations in the proposition above.
\end{proof}

As a special case, we have:

\begin{corollary}
For the unknot $K$ in $S^{3}$,
     the set of critical points  \[ \Crit\subset \bonf(S^{3}\# T^{3},
     K,\Phi)_{\delta} \]
    consists of $N$ disjoint copies $\CP^{N-1}$. Furthermore, the
    Chern-Simons function is Morse-Bott: at points of $\Crit$, the
    kernel of the Hessian is equal to the tangent space to $\Crit$.
\end{corollary}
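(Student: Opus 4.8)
The plan is to combine the previous corollary with the very simple structure of the unknot complement, and then to verify the Morse--Bott condition using Lemma~\ref{lem:non-degenerate-rho}. First I would recall that for the unknot $K\subset S^{3}$, the complement $S^{3}\setminus K$ is a solid torus, so $\pi_{1}(S^{3}\setminus K)\cong \Z$, generated by a meridian $m$. A representation $\rho:\pi_{1}(S^{3}\setminus K)\to U(N)$ with $\rho(m)$ conjugate to $\mathrm{diag}(-1,1,\dots,1)$ is therefore determined up to conjugacy by an arbitrary element in the conjugacy class of that group element, and the space of such representations is exactly $U(N)/S(U(1)\times U(N-1))$, which is the orbit $O\cong\CP^{N-1}$. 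By the previous corollary, $\Crit$ is $N$ disjoint copies of this representation space, hence $N$ disjoint copies of $\CP^{N-1}$.

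Next I would turn to the Morse--Bott statement. The claim is that at a critical point corresponding to a representation $\rho$, the kernel of the Hessian of $\CS$ on the slice $\cK_{j}$ is precisely the tangent space to $\Crit$. By Lemma~\ref{lem:non-degenerate-rho}, this kernel is identified with
\[
\ker\bigl(H^{1}(S^{3}\setminus K;\g_{\rho})\to H^{1}(\underline{m};\g_{\rho})\bigr).
\]
Since $S^{3}\setminus K$ is homotopy equivalent to the circle generated by $m$, the restriction map $H^{1}(S^{3}\setminus K;\g_{\rho})\to H^{1}(\underline{m};\g_{\rho})$ is an isomorphism, so this kernel is zero. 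On the other hand the tangent space to $\Crit$ at $[\rho]$ — the tangent space to $O\cong\CP^{N-1}$ sitting inside $\bonf$ — contributes nothing to $\cK_{j}$ either, in the sense that $\cK_{j}$ records only the directions \emph{normal} to the $\G$-orbit, and one must be slightly careful about which ``Hessian kernel'' is meant. The correct comparison is between the kernel of the extended Hessian (or equivalently the full linearized operator on the $4$-manifold $S^{1}\times(Y\#T^{3})$) and the tangent space of $\Crit$. Here the cleanest route is to note that the tangent space to $\Crit$ at a point of a $\CP^{N-1}$ component is exactly the image of the tangent space to $O$ under the inclusion, and that every such tangent vector is realized by a genuine one-parameter family of flat connections, hence lies in the kernel of the Hessian; conversely the computation above shows there are no further kernel directions beyond these. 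So $\dim\ker\Hess = \dim T_{[\rho]}\Crit = 2(N-1)$ at every critical point, which is precisely the Morse--Bott condition.

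The main obstacle — really the only non-formal point — is bookkeeping with the three relevant quadratic forms: the Hessian on the Coulomb slice $\cK_{j}$, the extended Hessian $\EHess$, and the second variation of $\CS$ along $\Crit$. One must make sure that ``the kernel of the Hessian equals the tangent space to $\Crit$'' is interpreted as a statement about the operator governing the deformation theory of critical manifolds (so that the normal Hessian is nondegenerate transverse to $\Crit$), and then the claim reduces to: (i) $\Crit$ is smooth of the expected dimension, which follows because $O$ is a smooth manifold and $\pi_{1}$ of the complement is free abelian of rank one; and (ii) the first cohomology computation $H^{1}(S^{3}\setminus K;\g_{\rho})\xrightarrow{\sim}H^{1}(m;\g_{\rho})$, which is immediate from the homotopy type of the solid torus. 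With these in hand, I would assemble the pieces: $\Crit = N\cdot\CP^{N-1}$ from the previous corollary and the solid-torus description, and the Morse--Bott property from Lemma~\ref{lem:non-degenerate-rho} together with the observation that the whole of $T_{[\rho]}\Crit$ is tangent to the critical set and nothing more survives in the kernel.
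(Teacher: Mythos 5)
The identification of $\Crit$ with $N$ disjoint copies of $\CP^{N-1}$ is fine and is essentially the paper's argument (apart from a small slip: the conjugacy class of $\mathrm{diag}(-1,1,\dots,1)$ is $U(N)/(U(1)\times U(N-1))\cong\CP^{N-1}$, or $\SU(N)/S(U(1)\times U(N-1))$; the coset space $U(N)/S(U(1)\times U(N-1))$ you wrote has dimension $2N-1$). The genuine gap is in the Morse--Bott part. Lemma~\ref{lem:non-degenerate-rho} must be applied to the pair that actually underlies the configuration space, namely $(S^{3}\# T^{3},K)$, so the kernel of $\Hess$ on $\cK_{j}$ is $\ker\bigl(H^{1}((S^{3}\# T^{3})\sminus K;\g_{\rho})\to H^{1}(m;\g_{\rho})\bigr)$, not the kernel for $S^{3}\sminus K$ alone. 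Your computation with the solid torus gives kernel zero, which cannot be right: for $N\ge 2$ the critical set has positive dimension, and every tangent direction to a $\CP^{N-1}$ component is a direction along which gauge-\emph{inequivalent} flat connections vary, so it lies in the Coulomb slice and in $\ker\Hess$. Your attempted reconciliation -- that $\cK_{j}$ ``records only directions normal to the $\G$-orbit'' and therefore sees nothing of $T\Crit$ -- confuses the critical manifold with a single gauge orbit: $\Crit$ is a $2(N-1)$-parameter family of distinct points of $\bonf$, not one orbit, so a zero kernel would flatly contradict the first half of the corollary.

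The correct computation goes through the connected sum. Decompose $(S^{3}\# T^{3})\sminus K$ along the connect-sum $S^{2}$ into $A\simeq S^{3}\sminus K$ and the $T^{3}$ piece $B$, and run Mayer--Vietoris with coefficients $\g_{\rho}$. On $B$ the representation is one of the $N$ irreducible ones, so $H^{0}(B)=0$ and (by the non-degeneracy of the $T^{3}$ critical points cited in the paper) its $H^{1}$ contribution vanishes; on $A$ the restriction map to the meridian is an isomorphism, exactly as you computed. The leftover term $\dim H^{0}(S^{2})-\dim H^{0}(A)-\dim H^{0}(B)=(N^{2}-1)-(N-1)^{2}=2N-2$ is the gluing-parameter contribution, and it is precisely this term that produces a kernel of dimension $2(N-1)=\dim\CP^{N-1}$. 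Since $T\Crit\subseteq\ker\Hess$ always, equality of dimensions gives the Morse--Bott property. So your solid-torus computation is an ingredient of the proof, but by itself it establishes the wrong statement; the missing step is the Mayer--Vietoris bookkeeping on the connected sum, which is where the tangent space to $\Crit$ actually shows up in the kernel.
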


\begin{proof}
Since the fundamental group of the knot complement is $\Z$, a
homomorphism $\rho$ as in the previous corollary is determined by the
image of a meridian $m$ in the conjugacy class \eqref{eq:reflection}.
This conjugacy class in $U(N)$ can
be identified with $\CP^{N-1}$ by sending an element to its
$(-1)$-eigenspace. The kernel of the Hessian can be computed from
Lemma~\ref{lem:non-degenerate-rho}, to establish the Morse-Bott
property.
\end{proof}

We introduce a  notation for the space of representations that appears
in the previous proposition:

\begin{definition}\label{def:Rep-def}
    We write
    \[
                \Rep(Y,K,\Phi') \subset \Hom( \pi_{1}(Y\setminus K) ,
                \SU(N) ) 
    \]
    for the space of homomorphisms $\rho$ such that, for all meridians
    $m$, the element $\rho(m)$ is conjugate to
    \eqref{eq:reflection-primed}.
\end{definition}

Recall from section~\ref{subsec:unitary-3d} that the set of critical
points is acted on by the group $\cH$, which in our present case is
the subgroup
\[
                    \cH \subset H^{1}(Y\# T^{3} ; \Z/N)
\]
consisting of elements with integer lifts.  Let
\[
\begin{aligned}
\cH' &\cong \Z/N \\
\cH' &\subset \cH
\end{aligned}
 \]
be the subgroup of $H^{1}(T^{3}; \Z/N)$
consisting of elements which are non-zero only on the
generator $c$ in $\pi_{1}(T^{3})$ and are zero on the generators $a$
and $b$.

\begin{lemma}
The action of this copy, $\cH'$, of $\Z/N$ on the set of critical
points, $\Crit$, in $\bonf(Y\# T^{3}, K,\Phi)_{\delta}$ is to cyclically permute
the $N$ copies of $\Rep(Y,K,\Phi')$ that comprise
$\Crit$ according to the description in
Proposition~\ref{prop:N-copies-crit}.
\end{lemma}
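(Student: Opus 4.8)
The plan is to trace through how the abstract action of $\cH'$ on $\bonf(Y\# T^{3},K,\Phi)_{\delta}$, defined in section~\ref{subsec:unitary-3d} via automorphisms of the $\bar{Z}(U(N))$-bundle $\delta$, looks in concrete terms on the critical set $\Crit$, using the explicit description of $\Crit$ from Proposition~\ref{prop:N-copies-crit}. Throughout I would work with $\Phi'$ of \eqref{eq:Phi-primed} in place of $\Phi$, exactly as in the proof of Proposition~\ref{prop:N-copies-crit}, so that critical points correspond to flat $\SU(N)$ connections $\rho$ on the complement of $K\amalg(D\times S^{1})$ in $Y\#T^{3}$ with $\rho(\partial D)=\zeta\,\mathrm{Id}$, $\zeta=e^{2\pi i/N}$, and $\rho(m)$ conjugate to \eqref{eq:reflection-primed} for each meridian $m$.

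First I would set up the book-keeping for $\Crit$. Since the connected sum is performed away from $K$ and $D\times S^{1}$, van Kampen gives a free-product decomposition $\pi_{1}\bigl((Y\#T^{3})\setminus(K\amalg D\times S^{1})\bigr)\cong\pi_{1}(Y\setminus K)\ast\pi_{1}\bigl(T^{3}\setminus(D\times S^{1})\bigr)$, so such a $\rho$ is the same thing as an independent pair: its restriction $\rho_{Y}$ to $Y\setminus K$, which is precisely a point of $\Rep(Y,K,\Phi')$ (Definition~\ref{def:Rep-def}), and its restriction $\rho_{T}$ to the $T^{3}$-summand, which by the computation around \eqref{eq:T3-solutions} is one of the $N$ solutions listed there, labelled by the central value $\rho_{T}(c)=\zeta^{k}\,\mathrm{Id}$, $k\in\{0,\dots,N-1\}$, of the holonomy around the third generator $c$. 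This exhibits $\Crit$ as $\coprod_{k=0}^{N-1}\Rep(Y,K,\Phi')$, the $k$-th copy being $\{\rho:\rho_{T}(c)=\zeta^{k}\mathrm{Id}\}$ identified with $\Rep(Y,K,\Phi')$ by $\rho\mapsto\rho_{Y}$.

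Next I would make the $\cH'$-action explicit. Its generator is the mod-$N$ reduction of an integral class $\tilde h\in H^{1}(Y\#T^{3};\Z)$ supported on the $T^{3}$-summand with $\langle\tilde h,c\rangle=1$, $\langle\tilde h,a\rangle=\langle\tilde h,b\rangle=0$; in particular $\tilde h$ vanishes on $\pi_{1}(Y\setminus K)$ and on $\partial D$ (which is null-homologous in the $T^{3}$-summand). Representing $\tilde h$ by an automorphism $g\colon Y\#T^{3}\to\bar{Z}(U(N))=U(1)$ of $\delta$ and unwinding the triple description $(P,q,\bar A)$ of $\bonf_{\delta}$ together with the action $q\mapsto g\circ q$, one sees that keeping a fixed-determinant lift forces the replacement $B\mapsto B+\tfrac1N g^{-1}dg$, a central flat correction with holonomy $\zeta^{\langle\tilde h,\gamma\rangle}$ around $\gamma$. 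This correction is a gauge transformation valued in the center $U(1)$ of $U(N)$ but not in $Z(\SU(N))=\mu_{N}$, hence not in the determinant-one gauge group $\G_{\delta}$, so it genuinely moves the critical point; it is the $3$-dimensional counterpart of the operation $\mu_{\epsilon}$ of \eqref{eq:mu-epsilon} and the ``multiplication by central elements'' appearing in the proof of Proposition~\ref{prop:invariant-pert}. The upshot is that on holonomy representations the generator of $\cH'$ acts by $\rho\mapsto\rho'$ with $\rho'(\gamma)=\zeta^{\langle\tilde h,\gamma\rangle}\rho(\gamma)$, depending only on $\langle\tilde h,\gamma\rangle$ mod $N$. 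Combining with the previous paragraph: $\rho'_{Y}=\rho_{Y}$ because $\tilde h$ vanishes on $\pi_{1}(Y\setminus K)$; $\rho'(\partial D)=\zeta\,\mathrm{Id}$ still; and on the $T^{3}$-summand $\rho'_{T}(a)=\rho_{T}(a)$, $\rho'_{T}(b)=\rho_{T}(b)$, while $\rho'_{T}(c)=\zeta^{k+1}\mathrm{Id}$, so by inspection of \eqref{eq:T3-solutions} the $k$-th solution is carried to the $(k{+}1)$-st (indices mod $N$). Hence the generator sends the $k$-th copy of $\Rep(Y,K,\Phi')$ to the $(k{+}1\bmod N)$-th copy by the identity map, which is exactly the claimed cyclic permutation.

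I expect the one genuinely delicate point to be the middle step: verifying rigorously that the abstract $\cH'$-action on $\bonf_{\delta}$ agrees, on critical points, with the central twist $\rho\mapsto\zeta^{\langle\tilde h,\cdot\rangle}\rho$. This requires a careful pass through the $(P,q,\bar A)$-formalism and the constraint $\gdet(B)=q^{*}\Theta^{\varphi}$, and in particular an honest accounting of why $\mu_{N}$-valued gauge transformations act trivially whereas the $U(1)$-valued correction $\tfrac1N g^{-1}dg$ does not. Once that identification is secured, everything else is the van Kampen and \eqref{eq:T3-solutions} book-keeping indicated above.
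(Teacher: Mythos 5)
Your proposal is correct and follows essentially the same route as the paper: the paper's proof simply reduces to the $N$ critical points on $T^{3}$ described in \eqref{eq:T3-solutions} and notes that $\cH'$ acts there by multiplying $h(c)$ by $N$th roots of unity, which is exactly the central-twist computation ($\rho\mapsto\zeta^{\langle\tilde h,\cdot\rangle}\rho$, fixing $\rho_{Y}$ and shifting $h(c)$) that you carry out in detail. Your extra care with the $(P,q,\bar A)$-formalism just makes explicit what the paper leaves implicit, and is not needed beyond the gauge-invariance of the central value of $h(c)$, which already pins down the cyclic permutation of the $N$ copies.
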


\begin{proof}
    From our description of $\Crit$, it follows that it sufficient to
    check the lemma for the case of the $N$ critical points in
    $\bonf(T^{3})_{\delta_{1}}$ (i.e.~the case that $Y$ is $S^{3}$ and
    $K$ is empty). These critical points are described in \eqref{eq:T3-solutions}, and
    the action of  $\cH'\cong\Z/N$ is to multiply $h(c)$ by
    the $N$th roots of unity.
\end{proof}

We can compare the values of the Chern-Simons functional on the $N$
copies of $\Rep(Y,K,\Phi')$ in $\Crit$. For example, in the case of
the unknot, because $\CP^{N-1}$ is connected, the functional is
constant on each copy and we can compare the $N$ values. The general
case is the next lemma. 

\begin{lemma}\label{lem:H-relative-CS}
    Let $\bs \in \cH'$ be the generator that evaluates to $1$ on the
    generator $c$ in $T^{3}$. Then for any $\alpha=[A]$ in
    $\bonf(Y\#T^{3}, K, \Phi)_{\delta}$ we have
    \begin{equation}\label{eq:CS-group-action}
                \CS(\bs(\alpha)) - \CS(\alpha) = -16\pi^{2}(N-1)
    \end{equation}
    modulo the periods of the Chern-Simons functional.
\end{lemma}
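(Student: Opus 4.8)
The plan is to reduce the computation to the model case $(Y,K)=(S^{3},\emptyset)$, where the configuration space is $\bonf(T^{3})_{\delta_{1}}$, and then to compute the change in the Chern-Simons functional directly on the explicit flat connections listed in \eqref{eq:T3-solutions}. The reduction step is justified exactly as in the proof of Lemma~\ref{lem:H-relative-CS}'s neighbour (Proposition~\ref{prop:N-copies-crit} and the preceding lemma): the action of $\bs\in\cH'$ is supported entirely on the $T^{3}$ summand, and for a connected sum the Chern-Simons functional is additive over the two summands (the neck contributes nothing in the limit), so $\CS(\bs(\alpha))-\CS(\alpha)$ depends only on what happens on $T^{3}$ and is independent of $\alpha$. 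Thus it suffices to compute the difference between the values of $\CS$ at two of the $N$ critical points on $T^{3}$ that differ by the $\Z/N$-action multiplying $h(c)$ by a root of unity.

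Next I would realize that difference as a topological quantity via the standard device: a path in $\conf(T^{3})_{\delta_{1}}$ from one critical point $A$ to its image $\bs(A)$ (which is gauge-equivalent to $A$ by a gauge transformation $g$ of $\delta_{1}$-preserving type, but \emph{not} one lying in $\G$, since $\bs$ acts non-trivially on $\Crit$) gives, upon forming $S^{1}\times_{g}P$ over $S^{1}\times T^{3}$, a closed-manifold configuration whose instanton number $k$ and monopole charge can be read off. Here there is no embedded surface, so the relevant formula is the pure-instanton one: $\CS$ drops by $4\pi^{2}\cdot 2h^{\vv}k$ along such a loop, with $h^{\vv}=N$ for $SU(N)$ — equivalently the topological energy is $8\pi^{2}\cdot 2Nk = 16\pi^{2}Nk$. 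So the whole computation comes down to identifying the characteristic number of the mapping torus built from the automorphism of the $SU(N)$-bundle over $T^{2}\setminus D$ (extended over $D\times S^{1}$ against the $\delta_{1}$-framing) that implements multiplication of $h(c)$ by $\zeta=e^{2\pi i/N}$.

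The key calculation is therefore: the gauge transformation $g:T^{3}\to SU(N)/Z$ (or the relevant lift) that sends the critical point with $h(c)=\zeta^{k}I$ to the one with $h(c)=\zeta^{k+1}I$ can be taken to be a loop in $PU(N)$ winding once around the $c$-circle in a generator of $\pi_{1}(PU(N))\cong\Z/N$; forming $S^{1}\times_{g}$ over $S^{1}\times T^{3}$ and pairing $-\tfrac{1}{8\pi^{2}}\tr(F\wedge F)$ one finds the relevant Pontryagin-type number, which the explicit formulae in \cite{K-higherrank} (and the discussion surrounding \eqref{eq:T3-solutions}) identify. Carrying this through should give a relative Chern-Simons value of $-4\pi^{2}\cdot 2(N-1)\cdot 2 = -16\pi^{2}(N-1)$; the factor $(N-1)$ rather than $N$ enters because the winding is in $PU(N)$, whose fundamental copy of a loop contributes $p_{1}$ of the adjoint bundle proportional to $(N-1)/N$ times $N$, i.e. $(N-1)$, after the normalization $k=-\tfrac{1}{4h^{\vv}}p_{1}(\g)$ used in the paper (compare \eqref{eq:p1-versus-c1-squared}, where $-2p_{1}(\g_{P})=-2(N-1)c_{1}^{2}\pmod{4N}$). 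Alternatively, and perhaps more cleanly, one can evaluate $\CS$ directly from \eqref{eq:CS-formula} using the explicit holonomies \eqref{eq:T3-solutions} on a cube fundamental domain for $T^{3}$, integrating the cubic Chern-Simons $3$-form of the linear-interpolation path; both routes must agree modulo periods.

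The main obstacle I anticipate is pinning down the precise constant, including the sign and the appearance of $(N-1)$ versus $N$: this requires being careful about which normalization of the Killing form is in force (the paper uses $\langle\,,\,\rangle = 2N$ times the trace form on $\su(N)$, and $h^{\vv}=N$), about the distinction between the $SU(N)$ and $PU(N)=SU(N)/Z$ pictures (the bundle on $T^{3}$ is genuinely non-trivial in $H^{2}(T^{3};\Z/N)$, so $c_{1}^{2}$-type corrections of order $(N-1)$ are exactly what produce the shift), and about the fact that the answer is only well-defined modulo the period group $4\pi^{2}\cdot 2N\cdot\Z = 8\pi^{2}N\Z$ of $\CS$, so one must check that $-16\pi^{2}(N-1) \equiv 16\pi^{2}\pmod{8\pi^{2}N}$ is the genuinely canonical representative one wants. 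Everything else — the reduction to $T^{3}$, the interpretation of the difference as a topological energy, the invariance under $\alpha$ — is routine given Proposition~\ref{prop:N-copies-crit} and the energy formulae of section~\ref{subsec:config-spaces}.
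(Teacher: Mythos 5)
Your strategy is essentially the paper's: reduce to the $T^{3}$ summand, realize $\CS(\bs(\alpha))-\CS(\alpha)$ as (half) the topological energy of a mapping-torus configuration on $S^{1}\times T^{3}$, and convert $p_{1}(\g_{P})$ into $c_{1}(P)^{2}$ via \eqref{eq:p1-versus-c1-squared}. However, as written the constant $-16\pi^{2}(N-1)$ -- which is the entire content of the lemma -- is asserted rather than derived, and there are two concrete problems. First, the decisive computation is missing: after gluing $P_{1}$ over $[0,1]\times T^{3}$ by an automorphism $f$ with $\gdet(f)$ equal to the $U(1)$-valued map winding once around $c$, one must identify $c_{1}(P)$ of the resulting $U(N)$ bundle on $S^{1}\times T^{3}$. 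The pulled-back $\delta_{1}$ contributes the class dual to $S^{1}\times c$, while the winding of the determinant around $c$ contributes the class dual to $a\times b$, so $c_{1}(P)^{2}[S^{1}\times T^{3}]=2$ modulo $N$. This ``$2$'' is exactly the unexplained last factor in your expression $-4\pi^{2}\cdot 2(N-1)\cdot 2$; neither the appeal to \cite{K-higherrank} nor the suggested direct integration of the Chern--Simons $3$-form over a fundamental domain is actually carried out, so nothing in the proposal produces this number.

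Second, your normalizations are off by a factor of two: for a loop with $d(g)=(k,\ll)$ the change in $\CS$ is $4\pi^{2}(4h^{\vv}k+2\langle\Phi,\ll\rangle)$, i.e.\ topological energy $8\pi^{2}(4h^{\vv}k+2\langle\Phi,\ll\rangle)$, not $4\pi^{2}\cdot 2h^{\vv}k$ and $16\pi^{2}Nk$ as you state. If you combined your stated constants consistently with the (missing) value $c_{1}^{2}=2$ and with $-2p_{1}(\g_{P})\equiv-2(N-1)c_{1}^{2}\pmod{4N}$, you would land on $-8\pi^{2}(N-1)$, not the claimed answer. The correct chain, as in the paper, is $\CS(\bs(\alpha))-\CS(\alpha)=-8\pi^{2}\,p_{1}(\g_{P})[S^{1}\times T^{3}]$ by \eqref{eq:energy-non-simple}, then $p_{1}(\g_{P})\equiv(N-1)c_{1}(P)^{2}\pmod{2N}$, then $c_{1}(P)^{2}[S^{1}\times T^{3}]=2$, giving $-16\pi^{2}(N-1)$ modulo the period $16\pi^{2}N$. (Your reduction to $T^{3}$ and the independence of $\alpha$ are fine, though the detour through critical points and the stable-parabolic-style additivity argument are unnecessary: the difference is constant on the connected configuration space once it is identified as a characteristic number of the mapping torus.)
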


\begin{proof}
    The calculation reduces to a calculation for a connection $[A]$ on
    $T^{3}$ and its image under $\bs$. Pull back the $U(N)$ bundle
    $P_{1}$ (with $\gdet(P_{1})=\delta_{1}$) to the cylinder
    $[0,1]\times T^{3}$. Identify the two ends to form $S^{1}\times
    T^{3}$, gluing the bundle $P_{1}$ using an automorphism  $f$ of
    $P_{1}$ with $\gdet(f)=u$. Let $P\to S^{1}\times T^{3}$ be the
    resulting $U(N)$ bundle. We have
    \[
                    c_{1}(P) = \mathrm{PD}\bigl(
                            [S^{1}\times c] + [a\times b] 
                    \bigr)
    \]
    modulo multiples of $N$.
    The change in the Chern-Simons
    functional is half the ``topological energy'' $\cE(P)$ on
    $S^{1}\times T^{3}$, so
    \[
    \CS(\bs(\alpha)) - \CS(\alpha) =
          - 8\pi^{2} p_{1}(\g_{P})[S^{1}\times T^{3}],
    \]
     from \eqref{eq:energy-non-simple}. Using the relation
     \eqref{eq:p1-versus-c1-squared}, we obtain
     \begin{equation}\label{eq:c1-squared-calc}
         \CS(\bs(\alpha)) - \CS(\alpha) =
          - 8\pi^{2} c_{1}(P)^{2}[S^{1}\times T^{3}]
     \end{equation}
     modulo periods, and the final result follows from the above
     formula for $c_{1}(P)$, which gives
     \[
                   c_{1}(P)^{2}[S^{1}\times T^{3}] = 2\pmod{N}.  
     \]
\end{proof}

We can also consider the relative grading for the pair of
points $\alpha$ and
$\bs(\alpha)$ in $\bonf(Y\# T^{3},K,\Phi)_{\delta}$ along a suitable
chosen path $z$:
\[
                \gr_{z}(\bs(\alpha),\alpha) \in \Z.
\]
Note that this relative grading is easy to interpret unambiguously,
even when the Hessian at $\alpha$ has kernel, essentially because the
Hessians at $\alpha$ and $\bs(\alpha)$ are isomorphic operators.

\begin{lemma}\label{lem:H-relative-gr}
    Let $z$ be a path in $\bonf(Y\# T^{3},K,\Phi)_{\delta}$ along
    which a single-valued lift of the Chern-Simons functional
    satisfies \eqref{eq:CS-formula}. Then along this path we have
    \[
          \gr_{z}(\bs(\alpha),\alpha) = - 4(N-1).      
    \]
\end{lemma}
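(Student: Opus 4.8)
The plan is to compute the spectral flow $\gr_{z}(\bs(\alpha),\alpha)$ as the index of a linearized operator on a closed $4$-manifold, exactly as was done for Lemma~\ref{lem:H-relative-CS} with the energy. Concretely, the relative grading along a path from $\alpha$ to $\bs(\alpha)$ in $\bonf(Y\#T^{3},K,\Phi)_{\delta}$ equals the index of the operator $Q_{A}$ of \eqref{eq:QA-operator} on the cylinder $\R\times(Y\#T^{3})$ asymptotic to the two critical points, and by the gluing/periodicity argument used throughout this section (reinterpreting spectral flow as the index of an anti-self-duality operator over $S^{1}\times(Y\#T^{3})$, cf.~the lemma preceding section~\ref{subsec:config-spaces}) this index equals the formal dimension of the moduli space of singular instantons on the closed pair $(S^{1}\times(Y\#T^{3}),\,S^{1}\times K)$ for the bundle $P\to S^{1}\times(Y\#T^{3})$ obtained by gluing the two ends of $[0,1]\times(Y\#T^{3})$ via the automorphism $f$ with $\gdet(f)=u$ representing $\bs$.

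The next step is to insert this $(X,\Sigma)=(S^{1}\times(Y\#T^{3}),S^{1}\times K)$ into the dimension formula for non-simple groups, \eqref{eq:dimension-non-simple}, namely
\[
  -2p_{1}(\g_{P})[X] + 4\weyl(\ll) + \tfrac{1}{2}(\dim O)\,\chi(\Sigma) - (\dim G)(b^{+}-b^{1}+1).
\]
For the closed $4$-manifold $X=S^{1}\times(Y\#T^{3})$ and $\Sigma=S^{1}\times K$ one has $\chi(\Sigma)=0$ (it is a union of tori), and the Euler-characteristic-type term $(\dim G)(b^{+}-b^{1}+1)$ contributes the same amount for the path giving $\bs(\alpha)$ as it would for the constant path, so it drops out of the \emph{relative} grading; likewise the monopole charge $\ll$ is unchanged along this path (the gluing automorphism $f$ is central), so the $4\weyl(\ll)$ term cancels in the difference. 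Hence $\gr_{z}(\bs(\alpha),\alpha)$ reduces to $-2p_{1}(\g_{P})[X]$ computed for the glued bundle $P$, relative to the trivial/constant case. This is precisely the quantity that appeared in the proof of Lemma~\ref{lem:H-relative-CS}: using \eqref{eq:p1-versus-c1-squared} we get $-2p_{1}(\g_{P})[X]= -2(N-1)c_{1}(P)^{2}[X]\pmod{4N}$, and the computation there already established $c_{1}(P)^{2}[S^{1}\times T^{3}]=2$, which lifts to $c_{1}(P)^{2}[X]=2$ for the relevant class on $S^{1}\times(Y\#T^{3})$ since the connected-sum factor $Y$ contributes nothing to this self-intersection. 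Therefore $\gr_{z}(\bs(\alpha),\alpha) = -2(N-1)\cdot 2 = -4(N-1)$, as claimed.

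The main obstacle — or rather the one point that needs care rather than cleverness — is bookkeeping the normalization and the choice of path $z$: one must check that the path $z$ along which $\CS$ satisfies \eqref{eq:CS-formula} is exactly the one for which the spectral flow is computed by the \emph{integral} index on $X$ (with no fractional or modular ambiguity), and that all the terms I claimed to be "the same for the constant path" genuinely cancel in the relative grading rather than shifting it by a constant. This is handled by noting that $\gr_{z}$ and $\cE_{z}$ are computed from the \emph{same} operator-index data on $S^{1}\times X$, so the consistency is automatic: the factor relating the two, visible in the lemma before section~\ref{subsec:config-spaces} as $\gr_{z}(\beta,\beta)=4h^{\vv}k+4\weyl(\ll)$ versus $\cE_{z}(\beta,\beta)=8\pi^{2}(4h^{\vv}k+2\langle\Phi,\ll\rangle)$, is exactly what converts the energy computation of Lemma~\ref{lem:H-relative-CS} into the grading computation here. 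Indeed, once one observes that the linear-in-$k$ part of $\cE$ carries a coefficient $8\pi^{2}\cdot 4h^{\vv}=8\pi^{2}\cdot 4N$ while the linear-in-$k$ part of $\gr$ carries $4h^{\vv}=4N$, the ratio $8\pi^{2}$ matches $\CS(\bs(\alpha))-\CS(\alpha)=-16\pi^{2}(N-1)$ against $\gr_{z}(\bs(\alpha),\alpha)=-4(N-1)$, providing an internal cross-check of the sign and the factor.
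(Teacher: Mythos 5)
Your route---closing up the cylinder by an automorphism implementing $\bs$ and reading off the spectral flow as the index of the singular ASD operator on the pair $(S^{1}\times(Y\#T^{3}),S^{1}\times K)$---is a legitimate alternative to the paper's argument, and your bookkeeping of the correction terms is essentially right (indeed $\chi(S^{1}\times K)=0$ and $b^{+}-b^{1}+1=0$ for a mapping torus, so the index \emph{is} the spectral flow, with no need for the ``relative to the constant path'' maneuver). But there is a genuine gap at exactly the point you flag and then declare ``automatic''. The glued bundle is not determined by $\bs$ alone: changing the homotopy class of the path $z$ (equivalently, the choice of closing gauge transformation) changes $p_{1}(\g_{P})[X]$ by multiples of $2h^{\vv}=2N$ and changes the monopole charge by elements of $\LL(G_{\Phi})$. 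Consequently \eqref{eq:p1-versus-c1-squared} together with $c_{1}(P)^{2}[X]=2$ only gives $\gr_{z}(\bs(\alpha),\alpha)\equiv -4(N-1)\pmod{4N}$, and even that presupposes $\ll=0$ for the glued data, which needs the observation that $\bs$ can be represented by a map constant near $K$ (your assertion that ``the gluing automorphism $f$ is central'' is false---$\gdet(f)$ is non-trivial). To upgrade the congruence to the exact integer for the specific path in the hypothesis, you appeal to $\cE_{z}=8\pi^{2}\gr_{z}$ along $z$; but for an \emph{open} path between distinct critical points, monotonicity only says that $\cE_{z}-8\pi^{2}\gr_{z}$ is a constant independent of $z$, not that it vanishes. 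Showing that this constant vanishes for the pair $(\alpha,\bs(\alpha))$ is precisely the content of the lemma, so presenting the numerical match as an ``internal cross-check'' is circular.

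The paper closes this gap with a short symmetry argument: concatenate $z$ with its images under $\bs,\bs^{2},\dots,\bs^{N-1}$ to obtain an honest closed loop based at $\alpha$; along a closed loop both the energy and the spectral flow are the topological expressions in $(k,\ll)$ and are exactly proportional by monotonicity, giving total spectral flow $-4(N-1)N$; and since $\bs$ preserves the Chern--Simons functional and conjugates the Hessian families, each of the $N$ segments contributes equally, whence $\gr_{z}(\bs(\alpha),\alpha)=-4(N-1)$. Alternatively, you could complete your mapping-torus computation without concatenation, but then you must replace the $c_{1}^{2}$-mod-$4N$ step by the following: use the hypothesis on the single-valued lift of $\CS$ (i.e.\ \eqref{eq:CS-group-action} holding on the nose) to conclude that the glued configuration has topological energy exactly $32\pi^{2}(N-1)$, feed this into \eqref{eq:energy-non-simple} (with $\Sigma\cdot\Sigma=0$) to fix the combination $-2p_{1}(\g_{P})[X]+2\langle\Phi,\ll\rangle$, and then use the monotone condition $\langle\Phi,\ll\rangle=2\weyl(\ll)$ to identify this combination with the index \eqref{eq:dimension-non-simple}. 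That determination is the step your write-up skips; without it, your argument establishes the answer only modulo $4N$.
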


\begin{proof}
    Concatenating $z$ with its image under the maps $\bs$,
    $\bs^{2}$, \dots, $\bs^{N-1}$, we obtain a closed loop, along
    which the total energy $\cE$ is $-32\pi^{2}(N-1)N$. From the
    monotone relationship between the dimension and energy, we have
    that the spectral flow along the closed loop is $-4(N-1)N$. The
    spectral flow along each part is therefore $-4(N-1)$, because each
    part makes an equal contribution.
\end{proof}

According to Proposition~\ref{prop:invariant-pert}, we can choose a
holonomy perturbation $\pert$ which is invariant under $\cH'$ while
still making  the critical point set non-degenerate and the moduli
spaces regular. If follows that we have an action of $\cH'$ on
$\TI{N}{Y,K}$ resulting from this geometric action on the
configuration
space. (Without using an invariant perturbation, the action can still
be defined -- using cobordisms --
by the procedure described in subsection~\ref{subsec:unitary-3d}.)
Since the grading in $\TI{N}{Y,K}$ is only defined modulo $2N$, we can
interpret the last lemma
above as saying that the action of $\bs$ gives an automorphism of 
$\TI{N}{Y,K}$ of degree $4$:
\[
                \bs_{*} : \tI^{N}_{j}(Y,K) \to
                \tI^{N}_{j+4}(Y,K).
\]
(The subscript is to be interpreted mod $2N$.)

Whereas the construction using cobordisms only gives us an action on
the homology, the geometric action on the configuration space gives us
an action on the chain complex.  So, rather than consider the action
of $\cH'\cong\Z/N$ on the instanton homology group, we can instead
consider dividing $\bonf(Y\# T^{3}, K ,\Phi)_{\delta}$ by the action
of $\cH'$ and then taking the Morse homology. We can interpret
Proposition~\ref{prop:invariant-pert} as telling us that $\pert$ can
be chosen so that the Morse construction works appropriately on
$\bonf(Y\# T^{3}, K ,\Phi)_{\delta}/\cH'$. The relative grading on the
Morse complex is defined mod $4$ if $N$ is even, and mod $2$ if $N$ is
odd.

\begin{definition}\label{def:bar-version}
    We  define $\bar\tI\mathstrut^{N}_{*}(Y,K)$ to be the homology of the
    quotient chain complex $\bar{C}_{*}(Y\# T^{3},K,\Phi)_{\delta}$:
    the quotient of $C_{*}(Y\# T^{3},K,\Phi)_{\delta}$ by the action
    of $\Z/N$.  We write $\bar\tI\mathstrut^{N}_{*}(K)$ in the case
    that $Y$ is the $3$-sphere.
\end{definition}

We can calculate this group in the case of the unknot.

\begin{proposition}\label{prop:basic-unknot}
    For the unknot $K$ in $S^{3}$, we have
    \[
    \begin{aligned}
    \bar\tI\mathstrut^{N}_{*}(S^{3},K) &\cong
             H_{*}(\CP^{N-1};\Z) \\
              &\cong \Z^{N},\\
                  \tI\mathstrut^{N}_{*}(S^{3},K) &\cong
             H_{*}(\CP^{N-1} \amalg \dots \amalg \CP^{N-1};\Z),
             \quad\text{\textrm{($N$ copies)}}\\
              &\cong \Z^{N^{2}}.
    \end{aligned}
\]
\end{proposition}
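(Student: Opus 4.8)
The plan is to combine the Morse--Bott description of the critical set with a parity argument that forces every differential to vanish. Recall from the corollary following Proposition~\ref{prop:N-copies-crit} that, for the unknot $K\subset S^{3}$, the critical set $\Crit\subset\bonf(S^{3}\#T^{3},K,\Phi)_{\delta}$ is a disjoint union of $N$ copies of $\CP^{N-1}$, and that at every point of $\Crit$ the kernel of the Hessian coincides with the tangent space to $\Crit$ (Morse--Bott nondegeneracy, obtained from Lemma~\ref{lem:non-degenerate-rho}). By the lemma preceding Lemma~\ref{lem:H-relative-CS}, the group $\cH'\cong\Z/N$ permutes the $N$ copies cyclically, so every nonidentity element of $\cH'$ acts on $\Crit$ without fixed points; in particular the $\cH'$-action on $\Crit$ is free and $\Crit/\cH'\cong\CP^{N-1}$.

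First I would set up the Morse--Bott form of the Floer complex: choose a holonomy perturbation supported in a small neighbourhood of $\Crit$ whose restriction to $\Crit$ is a Morse function $f_{0}$ that is \emph{perfect} on each copy of $\CP^{N-1}$ (one critical point in each even index $0,2,\dots,2(N-1)$), and apply the Morse--Bott reduction alluded to in the introduction (the spectral sequence with $E_{1}$-page $H_{*}(\Crit)$ abutting to $\tI^{N}_{*}$), together with Proposition~\ref{prop:3d-transversality-summary} for the remaining transversality. The generators of the resulting complex are the $N\cdot N$ perturbed critical points. The key point is that they all lie in the same $\Z/2$ Floer grading: within a single $\CP^{N-1}$ the Morse indices of $f_{0}$ are even, while between two of the $N$ copies the relative Floer grading is a multiple of $4(N-1)$ by Lemma~\ref{lem:H-relative-gr}, hence even. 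Since the Floer differential (and every higher differential of the Morse--Bott spectral sequence) shifts the $\Z/2$ grading by $1$, it annihilates a complex concentrated in even degrees. Therefore $\tI^{N}_{*}(S^{3},K)$ is the direct sum over the $N$ components of the Morse homology of $\CP^{N-1}$, i.e. $H_{*}(\CP^{N-1}\amalg\cdots\amalg\CP^{N-1};\Z)\cong\Z^{N^{2}}$.

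For the reduced group I would invoke Proposition~\ref{prop:invariant-pert}, which is applicable because $\cH'$ acts freely on $\Crit$: choose the perturbation, and hence $f_{0}$, to be $\cH'$-invariant. Then $C_{*}(S^{3}\#T^{3},K,\Phi)_{\delta}$ is a free $\Z[\Z/N]$-module and the quotient complex $\bar{C}_{*}$ has as underlying group the Morse complex of the single manifold $\Crit/\cH'\cong\CP^{N-1}$; its differential again vanishes by the same parity argument, so $\bar\tI^{N}_{*}(S^{3},K)\cong H_{*}(\CP^{N-1};\Z)\cong\Z^{N}$. (Equivalently, once the differential on $C_{*}$ is known to be zero, both statements follow at once, the reduced one by taking $\cH'$-coinvariants of $\bigoplus^{N}$(Morse complex of $\CP^{N-1}$).)

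The main obstacle is the foundational step: making precise, in this singular-instanton and holonomy-perturbed setting, the reduction of the Morse--Bott Floer complex to the Morse complex of $\Crit$ — that is, producing a perturbation that is simultaneously small enough to create no critical points away from a neighbourhood of $\Crit$, non-degenerate as in Proposition~\ref{prop:3d-transversality-summary}, compatible with the transversality of the trajectory spaces $M_{z}(\alpha,\beta)$, and (for $\bar\tI^{N}$) invariant under $\cH'$ — and then tracking orientations carefully enough that the $\cH'$-quotient complex is genuinely the Morse complex of $\CP^{N-1}$ with its standard signs. Once the Morse--Bott picture is in place, the parity argument that kills all differentials is immediate, and no trajectory counts actually need to be computed.
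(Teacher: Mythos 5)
Your proposal is correct and follows essentially the same route as the paper's own proof: a small $\cH'$-invariant holonomy perturbation whose restriction to the Morse--Bott critical set is a Morse function with only even-index critical points on each $\CP^{N-1}$, combined with the compactness/implicit-function-theorem step to identify the perturbed critical set and the observation that all generators lie in the same mod $2$ grading, so every differential vanishes. Your additional appeal to Lemma~\ref{lem:H-relative-gr} to compare gradings between the $N$ components is a reasonable way to make the unreduced case explicit, and is consistent with what the paper does.
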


\begin{proof}
    The group $\bar\tI\mathstrut^{N}_{*}(S^{3},K)$ is the homology of
    $\bar{C}_{*}(Y\# T^{3},K,\Phi)_{\delta}$, and this chain complex
    has generators corresponding to the points of $\Crit_{\pert}/\cH'$,
    for suitable holonomy perturbation $\pert$. Before perturbation,
    $\Crit_{\pert}$ consists of $N$ copies of $\CP^{N-1}$ and $\cH'$
    is a copy of $\Z/N$ which permutes the $N$ copies cyclically. So
    $\Crit/\cH'$ is a single copy of $\CP^{N-1}$.

    Choose a holonomy perturbation $\pert_{1}$ which is invariant under
    $\cH'$ and is such that the
    corresponding function $f_{1}$ on $\bonf(S^{3}\# T^{3},
    K)_{\delta}$ has the property that $f_{1}|_{\Crit}$ is a standard
    Morse function with even-index critical points on each copy of
    $\CP^{N-1}$. Then set $\pert_{\epsilon}= \epsilon\pert_{1}$ and
    take $\epsilon$ a small, positive quantity. Because the
    Chern-Simons functional is Morse-Bott, an application of the
    implicit function theorem and the compactness theorem for critical
    points shows that, for $\epsilon$ sufficiently small, the
    perturbed critical set $\Crit_{\pert_{\epsilon}}/\cH'$ consists of
    exactly $N$ critical points. As $\epsilon$ goes to zero, these
    converge to the $N$ critical points of $f_{1}|_{\CP^{N-1}}$, and
    the relative grading of the points in $\Crit_{\pert_{\epsilon}}$
    along paths in the neighborhood of $\Crit$ is equal to the
    relative Morse grading of the corresponding critical points of
    $f_{1}|_{\CP^{N-1}}$. It follows that, for this perturbation, the complex has $N$
    generators, all of which are in the same grading mod $2$.
\end{proof}

In the case $N=2$, the invariant $\bar\tI_{*}(K)$ of classical knots
appears to resemble Khovanov homology in the simplest cases. As
mentioned in the introduction, it is natural to ask whether we have
\[
            \bar\tI_{*}(K) \cong \kh(K)
\]
for the $(2,p)$, $(3,4)$ and $(3,5)$ torus knots, for example.

\subsection{Bridge number and representation varieties}

For a  knot $K$ in $S^{3}$, the knot group (i.e.~the fundamental group
of the knot complement) is generated by the conjugacy class of the
meridian. If we choose meridional elements $m_{1},\dots, m_{k}$ which
generate the knot group, then
a homomorphism $\rho$ as in 
\eqref{eq:knot-complement-rho} from the knot group to $U(N)$ is
entirely determined by $k$ elements
\[
                A_{i} = \rho(m_{i})
\]
in the conjugacy class of the reflection \eqref{eq:reflection}: or
equivalently, $k$ points in $\CP^{N-1}$.  The $-1$-eigenspaces of the
reflections $A_{i}$ will span at most a  $k$-plane in $\C^{N}$. It follows that
$\rho$ is conjugate in $U(N)$ to a representation whose image lies in
$U(k)\subset U(N)$.

In this sense, the problem of describing the space of representations
$\rho$ stabilizes at $N=k$. For larger $N$, the homomorphisms $\rho$
to $U(N)$ are obtained from the homomorphisms to $U(k)$ by conjugating
by elements of the larger unitary group.  An upper bound for the
number $k$ of meridians that are needed to generate the knot group is
the bridge number of the knot. So for a $k$-bridge knot, the problem
of computing the set of critical points $\Crit$ in
$\bonf(S^{3}\#T^{3},K,\Phi_{\delta})$ for the group $U(N)$ can be
reduced to the corresponding problem for $U(k)$ (though the critical
point sets are not the same).

The simplest example after the unknot is the trefoil, a $2$-bridge
knot. The group is generated by a pair of meridians, and for $N=2$ a
representation $\rho$ is therefore determined by a pair of points in
$\CP^{1}\cong S^{2}$. The relation between the generators in the knot
group implies that these two points in $S^{2}$ either coincide or make
an angle $2\pi/3$. The set of all such representations for $N=2$ is
therefore parametrized by one copy of $S^{2}$ and one copy of $SO(3)$.
When $N$ is larger, we have essentially the same classification: a
representation is determined by a pair of points in $\CP^{N-1}$, and
either these coincide, or they lie at angle $2\pi/3$ from each other
along the unique $\CP^{1}$ that contains them both. These two
components are a copy of $\CP^{N-1}$ itself and a copy of the unit
sphere bundle in $T\CP^{N-1}$ respectively. The authors conjecture
that for the trefoil, $\bar\tI^{N}_{*}(K)$ is isomorphic to the direct
sum of the homologies of these two components of the critical set of
the unperturbed functional.

For a  general knot $K$,
the critical set $\Crit$, after perturbation, determines the set of
generators of the complex that computes $\tI_{*}^{N}(K)$.
It would be interesting to know whether there is any sort of
stabilization that occurs for the \emph{differentials} in the complex,
as $N$ increases.  The situation is reminiscent of the large-$N$ stabilization
for Khovanov-Rozansky homology that is discussed in
\cite{Dunfield-Gukov-Rasmussen, Rasmussen-differentials}.

\subsection{A reduced variant}

In the construction of $\tI_{*}^{N}(Y,K)$, the important feature of
the manifold $T^{3}$ with which we formed the connected sum was that,
for a suitable choice of $U(N)$ bundle $P_{1}\to T^{3}$, the
corresponding set of critical points $\Crit(T^{3}, P_{1})$ was just a
finite set of reducibles (a single orbit of the finite group
$\cH'\cong\Z/N$). Rather than $T^{3}$, we can consider the pair
$(S^{1}\times S^{2}, L)$, where $L\subset S^{1}\times S^{2}$ is the
$(N+1)$-component link
\[
                L = S^{1} \times \{ p_{0}, \dots p_{N} \}.
\]
Let $P_{0} \to S^{1}\times S^{2}$ be the trivial $\SU(N)$ bundle and
let $\Phi' \in \su(N)$ be the element \eqref{eq:Phi-primed}. In the
space of singular connections $\bonf(S^{1}\times S^{2}, L, \Phi')$,
consider again the set of critical points:
\[
                \Crit(S^{1}\times S^{2}, L, \Phi')
                \subset \bonf(S^{1}\times S^{2}, L, \Phi').
\]
The pair $(S^{1}\times S^{2}, L)$ with this choice of $\Phi'$ fits the
hypotheses of Corollary~\ref{cor:irreducibles-comp}, and it follows
that the critical set consists only of irreducible flat connections.

\begin{lemma}
    The critical set $\Crit(S^{1}\times S^{2}, L, \Phi')$ consits of
    exactly
    $N$ non-degenerate, irreducible points. These form a single orbit
    of the group $\cH = H^{1}(S^{1}\times S^{2}; \Z/N)$.
\end{lemma}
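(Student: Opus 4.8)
The plan is to compute the flat connections on the complement of $L\amalg(\text{tubular neighborhoods})$ in $S^1\times S^2$ directly, reducing the problem to a concrete question about $\SU(N)$-representations of a free group, and then to extract non-degeneracy from a Mayer--Vietoris/cohomology computation.

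First I would set up the fundamental-group picture. Write $S^1\times S^2 = S^1\times S^2$ and let $L = S^1\times\{p_0,\dots,p_N\}$. Removing a tubular neighborhood of each component $S^1\times\{p_j\}$, the complement deformation-retracts onto $S^1\times(S^2\setminus\{p_0,\dots,p_N\})$, whose fundamental group is $\pi_1 = \Z\times F_N$, where the $\Z$ is generated by the $S^1$-factor loop $c$, and $F_N$ is free on meridional generators $m_1,\dots,m_N$ of $N$ of the punctures (the last meridian $m_0$ being the inverse of the product of the others, up to the relation imposed by the sphere). By \eqref{eq:crit-is-R} and the identification of critical points with representations, a point of $\Crit(S^1\times S^2, L, \Phi')$ corresponds to a conjugacy class of homomorphisms $\rho:\Z\times F_N \to \SU(N)$ with $\rho(m_j)$ conjugate to $\exp(2\pi\Phi')$ from \eqref{eq:reflection-primed} for each meridian, and with the central-holonomy constraint around the $S^2$ boundary circles. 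Since $c$ is central in $\pi_1$, $\rho(c)$ commutes with an irreducible set of matrices (once we show irreducibility, which is guaranteed by Corollary~\ref{cor:irreducibles-comp}), hence $\rho(c)$ is scalar; being in $\SU(N)$ it is an $N$-th root of unity, giving the $N$ choices. For each such choice the representation of the $F_N$-part is rigid: the $m_j$ are $N$ reflections (in the $\Phi'$-normalized sense) whose $(-1)$-eigenlines must be in "general position" forced by the product relation on $S^2\setminus\{p_0,\dots,p_N\}$, so up to conjugacy there is a unique such configuration, exactly as in the $T^3$ computation around \eqref{eq:T3-solutions}. This yields precisely $N$ points.

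Next I would verify that these $N$ points form a single $\cH = H^1(S^1\times S^2;\Z/N)$-orbit. We have $H^1(S^1\times S^2;\Z/N)\cong\Z/N$, generated by the class dual to $\{pt\}\times S^2$, which acts on a representation by multiplying $\rho(c)$ by a primitive $N$-th root of unity and leaving the $\rho(m_j)$ fixed (this is the same mechanism as in the Lemma just before this one, where $\cH'$ cyclically permutes the $N$ copies of $\Rep$). Since the $N$ critical points are distinguished precisely by the scalar value of $\rho(c)$, this action is simply transitive on $\Crit$. In particular $\cH$ acts freely.

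For non-degeneracy I would apply Lemma~\ref{lem:non-degenerate-rho}: a critical point $\rho$ is non-degenerate iff the map $H^1(S^1\times S^2\setminus L;\su(N)_\rho)\to H^1(\underline m;\su(N)_\rho)$ restricting to the meridians is injective. Using the product structure $S^1\times(S^2\setminus\{N{+}1\text{ pts}\})$ and the Künneth formula, with $\rho(c)$ acting as a (nontrivial-on-$\su(N)$, since $N\ge 2$) scalar, one finds $H^*(S^1;\su(N)_{\rho(c)})$ vanishes except in the part fixed by $\Ad\rho(c)$, i.e.\ on the center, which is zero in $\su(N)$; so the $S^1$-twisted cohomology is concentrated appropriately and $H^1$ of the complement is computed from $H^1(S^2\setminus\{N{+}1\}\text{ pts};\su(N)_\rho)$ together with the reflection constraints. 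The rigidity of the $F_N$-part (the configuration of $(-1)$-eigenlines being unique up to conjugacy) is exactly the statement that this restriction map is injective. I expect the main obstacle to be this last cohomological step: one must be careful that the twisting by the non-central reflections $\rho(m_j)$ together with the $S^2$-central-holonomy boundary condition really does kill $H^1$ relative to the meridians — i.e.\ that the representation is infinitesimally rigid and not merely rigid. The cleanest route is probably to observe that the relevant deformation complex is the one whose Euler characteristic is the formal dimension of the $4$-dimensional moduli space on $(S^1\times(S^1\times S^2), S^1\times L)$, which by Lemma~\ref{lem:dimension-fmla} and the monotone choice of $\Phi'$ is zero, so that vanishing of $H^1$ relative to the meridians is equivalent to vanishing of $H^0$ (reducibility) and $H^2$ (obstruction) — and $H^0$ vanishes by Corollary~\ref{cor:irreducibles-comp} while $H^2$ vanishes by Poincaré duality from $H^0$; combining these gives the desired non-degeneracy.
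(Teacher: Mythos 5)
Your reduction is the same as the paper's: split $\pi_{1}$ of the complement as $\Z\times\pi_{1}(S^{2}\sminus\{p_{0},\dots,p_{N}\})$, use irreducibility (which the paper, like you, gets from Corollary~\ref{cor:irreducibles-comp}) to force $\rho(c)$ to be one of the $N$ central elements, and let $\cH$ act by multiplying $\rho(c)$ by roots of unity. The gap is at the crux, which is exactly the step you wave at: the claim that for each choice of $\rho(c)$ there is a \emph{unique} irreducible conjugacy class of homomorphisms $\sigma$ of the punctured-sphere group sending every meridian into the class of \eqref{eq:reflection-primed}. Your appeal to ``exactly as in the $T^{3}$ computation around \eqref{eq:T3-solutions}'' does not apply -- that computation solves commutator relations and has no analogue of a product of $N+1$ fixed-conjugacy-class elements equal to the identity -- and ``general position'' by itself is not enough: $N+1$ lines in general position in $\C^{N}$ form a projective frame and are unique up to $\mathrm{PGL}(N,\C)$, but \emph{not} up to unitary conjugation, since the pairwise angles are continuous $U(N)$-invariants. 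The relation $\rho(m_{0})\cdots\rho(m_{N})=1$ has to be used quantitatively to pin those angles down, and this is precisely what the paper does by a different device: the Mehta--Seshadri correspondence with parabolic bundles on $\CP^{1}$ with $N+1$ marked points, where the stability inequality forces the bundle $\cE$ to be trivial and the parabolic lines to be in general position, so the moduli space is a single point.

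Your non-degeneracy argument also does not close. A small error first: $\rho(c)$ is a scalar matrix, so $\Ad\rho(c)$ is trivial on all of $\su(N)$, not just on the centre; the $H^{1}(S^{1})\otimes H^{0}$ term in the K\"unneth decomposition is killed by irreducibility of $\sigma$ (i.e.\ $H^{0}(S^{2}\sminus\{p_{j}\};\g_{\sigma})=0$), not by any non-triviality of the scalar action. More seriously, the ``cleanest route'' is circular: the formal dimension of the moduli space on $S^{1}\times(S^{1}\times S^{2})$ at a pulled-back flat connection is zero automatically (the four-dimensional deformation theory on a product $S^{1}\times Y$ doubles the three-dimensional one), and in dimension three Poincar\'e duality pairs $H^{2}$ with $H^{1}$, not with $H^{0}$; so ``index zero plus $H^{0}=0$'' gives no control of $H^{1}$, and infinitesimal rigidity is genuinely extra content -- as you yourself note, rigid is not the same as infinitesimally rigid. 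In the paper's approach this, too, comes out of the parabolic-bundle computation: the moduli space of stable parabolic bundles with these weights is smooth of its expected dimension, which is zero, so the meridian-relative $H^{1}$ of the punctured-sphere factor vanishes, and Lemma~\ref{lem:non-degenerate-rho} together with the K\"unneth argument over the $S^{1}$ factor then gives non-degeneracy.
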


\begin{proof}
    The critical set parametrizes conjugacy classes of homomorphisms
    \[
             \rho:   \pi_{1}(S^{1}\times S^{2} \setminus L) \to \SU(N)
    \]
    such that $\rho$ of each meridian of $L$ is conjugate to
    $\exp(\Phi')$. The fundamental group is a product, with a $\Z$
    factor coming from the $S^{1}$. The
    lemma will follow if we can
    show that there is only a single, irreducible, conjugacy class of
    homomorphisms
    \[
                   \sigma :  \pi_{1}( S^{2} \setminus \{p_{1},\dots
                   p_{N+1}\}) \to \SU(N)
    \]
    such that $\sigma$ sends the linking circle of each $p_{j}$ into
    the conjugacy class of $\exp(\Phi')$. The classification of such
    homomorphisms $\sigma$ can be most easily achieved by using the
    correspondence with stable parabolic bundles on the Riemann sphere
    with $(N+1)$ marked points. In this instance, the relevant
    parabolic bundles are holomorphic bundles $\cE\to \CP^{1}$ of rank
    $N$ and degree $0$, equipped with a  distinguished line $\cL_{i}$
    in the fiber $\cE_{p_{i}}$ for each $i$. The appropriate stability
    condition for such a parabolic bundle $(\cE, \{\cL_{i}\})$ is
    that, for every proper holomorphic subbundle $\cF\subset \cE$, we should have
    \[
                    \#\{ \, i \mid
                         \cL_{i}\subset \cF_{p_{i}} \,\}
                         + 2 \deg(\cF) \le \mathrm{rank}(\cF).
    \]
    The only solution to these constraints is to take $\cE$ to be the
    trivial bundle $\mathcal{O} \otimes \C^{N}$ and to take $\cL_{i}$ to be
    $\mathcal{O}_{p_{i}}\otimes L_{i}$, where the lines $L_{i}$ define $N+1$
    points in general position in $\CP^{N-1}$. There is therefore a
    single homomorphism $\sigma$ satisfying the given conditions.
\end{proof}

Now let $(Y,K)$ be an arbitrary pair, and let $k_{0}$ be a basepoint
on $K$. Choose a framing of $K$ at $k_{0}$, and use this framing
data to form the connected sum of pairs
\[
                 (\hat Y ,\hat K) =   (Y,K) \# (S^{1}\times S^{2}, L),
\]
connecting the component of $K$ containing $k_{0}$ to the component
$S^{1}\times \{p_{0}\}$ of $L$. We define a \emph{reduced} version of
the framed instanton homology by setting
\[
            \Ir_{*}^{N}(Y,K) = \I_{*}(\hat Y , \hat K, \Phi').
\]
Like $\tI_{*}^{N}(Y,K)$, this group has an affine grading by
$\Z/(2N)$.

The definition is such that, for the case that $Y$ is $S^{3}$ and $K$
is the unknot, the pair $(\hat Y, \hat K)$ is simply $(S^{1}\times
S^{2}, L)$. The lemma above thus tells us that, in this case, the
complex that computes $\Ir^{N}_{*}$ has $N$ generators. The relative
grading of these generators is even, and we therefore have
\[
                \Ir_{*}^{N}(S^{3},\mathrm{unknot}) = \Z^{N}.
\]

The set of critical points in $\bonf(\hat Y, \hat K,\Phi')$ for a
general $(Y,K)$ can be described by a version of
Proposition~\ref{prop:N-copies-crit}. Let us again write
$\Rep(Y,K,\Phi')$ for the space of homomorphisms described in
Definition~\ref{def:Rep-def}. As a basepoint for the fundamental group
$\pi_{1}(Y\setminus K)$ let us choose the push-off the the chosen point
$k_{0}\in K$ using the framing. We then have a preferred meridian,
$m_{0}$, in $\pi_{1}(Y\setminus K)$ linking $K$ near this basepoint,
and hence an evaluation map taking values in the conjugacy class
$C(\exp\Phi')$ of the element $\exp(\Phi')$:
\begin{equation}\label{eq:ev}
\begin{aligned}
\ev : \Rep(Y,K,\Phi') &\to C(\exp\Phi') \\
        \rho &\mapsto \rho(m_{0}).
\end{aligned}
\end{equation}
The counterpart to Proposition~\ref{prop:N-copies-crit} is then:

\begin{proposition}
     For any oriented pair $(Y,K)$, the set of critical points of the
    Chern-Simons functional on $\bonf(\hat Y, \hat K,\Phi')$
    consists of $N$ copies of the fiber of the evaluation map
    \eqref{eq:ev}. \qed
\end{proposition}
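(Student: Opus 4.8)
The plan is to reduce the statement to the computation of $\Crit(S^{1}\times S^{2}, L, \Phi')$ already carried out in the preceding lemma, using the connected-sum formula for representation varieties that was recorded as \eqref{eq:connected-sum-reps}. First I would observe that, since the non-integral condition holds for $(\hat Y, \hat K)$ (it is inherited from the $(S^{1}\times S^{2}, L)$ summand, exactly as in Corollary~\ref{cor:irreducibles-comp}), the critical points of the Chern-Simons functional on $\bonf(\hat Y, \hat K, \Phi')$ are precisely the gauge-equivalence classes of flat singular connections, which correspond to conjugacy classes of homomorphisms $\rho:\pi_{1}(\hat Y\setminus \hat K)\to \SU(N)$ sending each meridian of $\hat K$ into the conjugacy class of $\exp(\Phi')$.

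Next I would exploit the van Kampen decomposition of $\hat Y\setminus\hat K$ coming from the connected sum performed along $k_{0}$. Removing a small ball around the connect-sum sphere, the complement $\hat Y\setminus \hat K$ is the union of $(Y\setminus K)$ minus a ball around the push-off of $k_{0}$ and $(S^{1}\times S^{2}\setminus L)$ minus a ball around the corresponding point of $S^{1}\times\{p_{0}\}$, glued along a twice-punctured sphere (punctured because the connect-sum sphere meets $\hat K$ in two points, one on each side). The fundamental group is therefore an amalgamated product over the free group on the meridian $m_{0}$: an $\SU(N)$-representation of $\pi_{1}(\hat Y\setminus\hat K)$ is the data of a representation $\rho_{1}$ of $\pi_{1}(Y\setminus K)$ and a representation $\rho_{2}$ of $\pi_{1}(S^{1}\times S^{2}\setminus L)$, both sending meridians into $C(\exp\Phi')$, together with the matching condition $\rho_{1}(m_{0})$ conjugate to $\rho_{2}(m_{0})$. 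This is exactly the fibered-product description \eqref{eq:connected-sum-reps}: the representation variety of $(\hat Y, \hat K)$ is $\Rep(Y,K,\Phi')\times_{C(\exp\Phi')}\Rep(S^{1}\times S^{2}, L, \Phi')$, with the maps to $C(\exp\Phi')$ given by evaluation on $m_{0}$.

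Then I would plug in the previous lemma: $\Rep(S^{1}\times S^{2}, L, \Phi')$ consists of $N$ points, forming a single orbit of $\cH=H^{1}(S^{1}\times S^{2};\Z/N)$, and each of these is irreducible and non-degenerate. Crucially, each of these $N$ representations restricts on the distinguished meridian $m_{0}$ of the $S^{1}\times\{p_{0}\}$ component to some fixed element $g_{0}$ in the conjugacy class $C(\exp\Phi')$ — all $N$ of them giving $\SU(N)$-conjugate values, since the $\cH$-action on the $S^{1}\times S^{2}$ side is by multiplication by central roots of unity on the holonomy around the $S^{1}$ direction, which does not change the conjugacy class of $\rho(m_{0})$ (a central twist is absorbed in the conjugacy class of $\exp(\Phi')$, cf. the passage between \eqref{eq:reflection} and \eqref{eq:reflection-primed}). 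Hence the fibered product over $C(\exp\Phi')$ splits as $N$ disjoint copies of the fiber $\ev^{-1}(g_{0})\subset\Rep(Y,K,\Phi')$, i.e. $N$ copies of the fiber of the evaluation map \eqref{eq:ev}, which is the claim. One still has to pass from the representation variety to the critical set in $\bonf(\hat Y,\hat K,\Phi')$, but this is exactly the identification \eqref{eq:crit-is-R}/\eqref{eq:homomorphisms} applied to the singular configuration space.

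The main obstacle I anticipate is bookkeeping around the matching condition at $m_{0}$: one must verify that the evaluation maps on the two sides land in a \emph{common} conjugacy class in the \emph{pointed} sense needed for the fibered product (i.e. that one may conjugate the $S^{1}\times S^{2}$ representations so that $\rho_{2}(m_{0})$ equals a fixed $g_{0}$ without disturbing the count of $N$), and that the residual stabilizer of the $S^{1}\times S^{2}$ piece acts trivially on this fiber so that the $N$ copies really are disjoint rather than identified. Both points follow from the irreducibility assertion in the previous lemma — an irreducible representation has only central stabilizer, so conjugating to normalize $\rho_{2}(m_{0})$ is unobstructed and the fibered product is a genuine disjoint union — but spelling this out cleanly is where the care is required. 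The rest is the same van Kampen / fibered-product argument used to prove Proposition~\ref{prop:N-copies-crit}, and I would present the proof as a direct parallel to that one, citing the preceding lemma in place of the computation of $\Crit(T^{3})_{\delta_{1}}$.
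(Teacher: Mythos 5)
Your proof is correct and follows exactly the route the paper intends: the proposition is stated without proof as the evident counterpart of Proposition~\ref{prop:N-copies-crit}, and your van Kampen decomposition along the twice-punctured connect-sum sphere (an amalgamated product over the meridian $m_{0}$, rather than the free product used before), combined with the preceding lemma and the observation that the irreducible $(S^{1}\times S^{2},L)$ piece has only central stabilizer, is precisely that parallel argument. The only superfluous step is your appeal to the non-integral condition, which is not needed to identify critical points of the unperturbed functional with flat connections.
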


Thus, for example, in the case of the trefoil, the set of critical
points consists of $N$ points and $N$ copies of the sphere $S^{2N-3}$.

As in the case of $\tI^{N}_{*}$, it is possible to pass to the
quotient of the configuration space by the action of the cyclic group
$\cH'$. The result is a variant, $\bar\Ir^{N}_{*}(Y,K)$, which is
isomorphic to $\Z$ in the case of the unknot. In the quotient space
$\bonf(\hat Y, \hat K,\Phi')/\cH'$, the set of critical points
consists of just one copy of the fiber of the evaluation map $\ev$
above.

Rasmussen \cite{Rasmussen-observations} has observed that the reduced
Khovanov homology coincides with Heegaard-Floer knot homology group
$\knotfloer(K)$, defined by Ozsv\'ath and Szab\'o in \cite{OS-knot-hf}
and by Rasmussen in \cite{Rasmussen-knot-hf}, for many knots, but not
for the $(4,5)$ torus knot. It would therefore be interesting to have
some data comparing the reduced group $\bar\Ir_{*}(K,k_{0})$ with
$\knotfloer(K)$.

\subsection{Longitudinal surgery}

Another variant briefly mentioned in the introduction is to begin with a
null-homologous knot $K$ in an arbitrary $Y$, and form the pair
$(Y_{K}, K_{0})$, where $Y_{K}$ is the $3$-manifold obtained by
$0$-surgery (longitudinal surgery), and $K_{0}$ is the core of the
solid torus used in the surgery. The knot $K_{0}$ represents a
primitive element in the first homology of the manifold $Y_{K}$.
We can apply our basic construction to this pair (without taking a
further connected sum), with $G=\SU(N)$ as usual. To avoid reducibles,
we can take $\Phi$ to have just two distinct eigenvalues with coprime
multiplicity (see Corollary~\ref{cor:irreducibles-examples}). In
particular,  we may again take the $\Phi'$ given in
equation~\eqref{eq:Phi-primed}. We make a definition to cover this
case:

\begin{definition}
    For a null-homologous knot $K$ in a $3$-manifold $Y$, we define
    $\Il^{N}_{*}(Y,K)$ to be the result of applying the standard
    construction $\I_{*}$ to the oriented pair $(Y_{K},K_{0})$, with
    $G=\SU(N)$ and $\Phi$ given by \eqref{eq:Phi-primed}.

    For $N=2$, we just write $\Il_{*}(Y,K)$, and if $Y=S^{3}$ we drop
    the $Y$ from our notation. \qed
\end{definition}

The complement of $K_{0}$ in $Y_{K}$ is homeomorphic to the original
complement of $K$ in $Y$, and the meridian of $K_{0}$ corresponds to
the longitude of $K$. Thus we see that the set of critical points of
the Chern-Simons function in the configuration space for
$(Y_{K},K_{0})$ can be identified with the space of conjugacy classes
of homomorphisms
\begin{equation}\label{eq:longitude-homomorphisms}
                    \rho : \pi_{1}(Y\sminus K) \to \SU(N)
\end{equation}
satisfying the constraint that $\rho$ maps the longitude of $K$ to an
element conjugate to $\exp(2\pi i \Phi)$. In the case of the unknot in
$S^{3}$, the group is $\Z$ and the longitude represents the identity
element, so the set of critical points is empty. For the unknot
therefore, the group $\Il^{N}_{*}(K)$ is zero. The same applies to an
``unknot'' in any $Y$, i.e.~a knot that bounds a disk.

 In part
because of the results of \cite{KM-wittenp} and \cite{KM-dehn}, it is
natural to conjecture that $\Il_{*}(Y,K)$ is zero only if $K$ is an
unknot.
An examination of the
representation variety, and a comparison with Casson's work
\cite{Akbulut-McCarthy}, suggests that the Euler characteristic of
$\Il_{*}(K)$ should be $2\Delta''_{K}(1)$, where $\Delta_{K}$ is the
symmetrized Alexander polynomial. In the case of a torus knot $K$, the
representation variety of homomorphisms $\rho$ satisfying the
longitudinal constraint
\[
            \rho(\mathrm{longitude}) \sim 
            \begin{pmatrix}
                i & 0 \\
                0 & -i
            \end{pmatrix}
\]
consists of exactly $2\Delta''_{K}(1)$ points, and it would
follow that
\[
                \Il_{*}(K) = \Z^{2\Delta''_{K}(1)}
\]
for all torus knots. (For the $(2,p)$ torus knot with $p>3$, it would
follow from this that $\Il_{*}(K)$ is not isomorphic to Eftekhary's
longitude variant of Heegaard Floer homology \cite{Eftekhary}; because
for the $(2,p)$ knot, the quantity $\Delta''_{K}(1)$ grows
quadratically with $p$, while the rank of Eftekhary's knot invariant
grows linearly.) 

\section{Filtrations and genus bounds}

In this section, we take up a theme from the introduction and explore
how a  lower bound for the slice genus of a knot can be obtained from
(a variant of) instanton Floer homology. In doing so, we will see that
the formal outline of the construction can be made to resemble closely
the work of Rasmussen in \cite{Rasmussen-slice}, while the underlying
mechanism of the proof draws on \cite{K-obstruction}.

From this point on, we will work exclusively with the group $G=\SU(2)$
and the element $\Phi=i \diag(1/4, -1/4)$ in the fundamental alcove
(this is the only balanced case for the group $\SU(2)$). We will write
$\tI_{*}(K)$ for the framed instanton homology of a  classical knot,
as described in Definition~\ref{def:basic-with-T3-def},
and $\tI_{*}(Y,K)$ when the knot is in a $3$-manifold other than
$S^{3}$. To keep the notation to a minimum, we will treat only
classical knots to begin with, though little changes when we generalize
to knots in other $3$-manifolds.

\subsection{Laurent series and local coefficients}

Let $K \subset S^{3}$ be an oriented knot, and let us write
\[
                \bonf(K) = \bonf(S^{3} \# T^{3}, K,\Phi)_{\delta}.
\]
for the configuration space that is used in defining the framed
instanton Floer homology group, Definition~\ref{def:basic-with-T3-def}.
As in section~\ref{subsec:local-coeffs},
we will consider the Floer homology of $K$
coming from a system of local
coefficients on the configuration space $\bonf(K)$.
Specifically, we let $\mu: \bonf\to U(1)=\R/\Z$ be a circle-valued
function arising from the standard character $G_{\Phi} \to U(1)$,
using the holonomy construction of \eqref{eq:construction-of-mu}; and
we let $\Gamma = \Gamma^{\mu}$ be the corresponding local system
pulled back from $\R/Z$, as described in
section~\ref{subsec:local-coeffs}. Thus $\Gamma$ is a local system of
free modules of rank $1$ over the ring
\[
\begin{aligned}
R &= \Z[\Z] \\
   &= \Z[t^{-1}, t].
\end{aligned}
\]
Thus for each knot $K$, we have a finitely-generated
$R$-module
\[
                \tI_{*}(K;\Gamma).
\]
We should recall at this point that the construction of the local
system $\Gamma^{\mu}$ really depends on a choice of framing $n$ for the
knot $K$, but that the local systems arising from different choices of
framings are all canonically identified (see
section~\ref{subsec:local-coeffs} again). So we should regard $\Gamma$
as the common identification of a collection of local systems
$\Gamma^{\mu_{n}}$ as $n$ runs through all framings.

A cobordism of pairs, $(W,S)$, from $(S^{3}, K_{0})$ to
$(S^{3},K_{1})$ gives rise to a homomorphism of the corresponding
Floer groups, by the recipe described at
\eqref{eq:local-coeff-cobordism-int}. (The way we have set it up, a framing of the normal
bundle to  $S$ is
needed in order to define the map, but the resulting map is
independent of the choices made.) We will abbreviate our notation for
the map and write
\[
                \psi_{W,S} : \tI_{*}(K_{0};\Gamma) \to
                \tI_{*}(K_{1};\Gamma).
\]
A homology-orientation of $W$ is needed to fix the sign of the map
$\psi_{W,S}$.

From the argument of Proposition~\ref{prop:basic-unknot} we obtain:

\begin{proposition}\label{prop:local-coeff-unknot}
    For the unknot $U$ in $S^{3}$, the Floer group $\tI_{*}(U;\Gamma)$
    is a free $R$-module of rank $4$.
\end{proposition}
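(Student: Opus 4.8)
The plan is to follow the template of Proposition~\ref{prop:basic-unknot}, but keeping track of the local coefficient module $\Gamma$. First I would recall that for the unknot $U$, the critical set $\Crit$ in $\bonf(S^{3}\#T^{3},U,\Phi)_{\delta}$ consists of $N=2$ copies of $\CP^{1}=S^{2}$, and that the Chern--Simons functional is Morse--Bott along $\Crit$, with the kernel of the Hessian equal to the tangent space of $\Crit$ (this is the corollary following Proposition~\ref{prop:N-copies-crit}, specialised to $N=2$). So before perturbing we have a Morse--Bott situation whose critical manifold is $S^{2}\amalg S^{2}$, and exactly as in the proof of Proposition~\ref{prop:basic-unknot} a small $\cH$-equivariant (or here just arbitrary small) holonomy perturbation $\pert_{\epsilon}=\epsilon\pert_{1}$ replaces each $S^{2}$ by two non-degenerate critical points of the same index mod $2$, so that $\Crit_{\pert_{\epsilon}}$ has $4$ points, all lying in the same grading mod $2$.

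Next I would analyse the differential with local coefficients. The point is that the complex $C_{*}(U;\Gamma)=\bigoplus_{\beta}\Z\Lambda(\beta)\otimes\Gamma_{\beta}$ has $4$ generators, each contributing a free rank-$1$ $R$-module, so the chain complex is a free $R$-module of rank $4$; it remains to see that the boundary map vanishes. For $\epsilon$ small, any gradient trajectory between two of these $4$ critical points has small topological energy (it is confined near $\Crit$, where $\CS$ is nearly constant), so the only trajectories that can occur with $\gr_{z}=1$ are the short ones lying in a neighbourhood of $\Crit$; these are exactly the trajectories of the Morse function $f_{1}|_{S^{2}\amalg S^{2}}$ on the critical manifold. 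But within a single $S^{2}$, a standard Morse function with one index-$0$ and one index-$2$ critical point has \emph{no} index-difference-one trajectories, and there are no trajectories between the two disjoint spheres, nor between the two copies coming from $\cH'$ (which are separated by a definite amount of energy, as in Lemma~\ref{lem:H-relative-CS} and Lemma~\ref{lem:H-relative-gr}). Hence $\partial=0$ on $C_{*}(U;\Gamma)$ for this perturbation, and $\tI_{*}(U;\Gamma)=C_{*}(U;\Gamma)$ is free of rank $4$ over $R$. Since Floer homology with local coefficients is independent of the perturbation and metric (the cobordism-invariance argument of Proposition~\ref{prop:functor} carries over to the local-coefficient setting, as in section~\ref{subsec:local-coeffs}), this computes $\tI_{*}(U;\Gamma)$ for any admissible data.

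I do not expect any step here to be a serious obstacle, since it is essentially the $\Gamma$-coefficient version of an already-proved computation; the only point requiring a little care is confirming that passing to the local system $\Gamma$ does not introduce new contributions to the differential. Concretely, one must check that the maps $\Gamma_{z}$ attached to the short trajectories are the relevant monomials $t^{\ast}$ and, more to the point, that since the underlying counts $\sum_{[\breve A]}\epsilon[\breve A]$ already vanish, the twisted counts $\sum_{[\breve A]}\epsilon[\breve A]\otimes\Gamma_{z}$ vanish as well --- which is immediate once one has arranged the perturbation so that the only moduli spaces of dimension $1$ are empty. Thus the main (and only mildly technical) point is the identification of the small-energy part of the moduli spaces with the Morse trajectories on $S^{2}\amalg S^{2}$, exactly as in Proposition~\ref{prop:basic-unknot}; everything else is formal.
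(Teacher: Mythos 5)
Your overall route is the same as the paper's: the paper proves this proposition simply by re-running the argument of Proposition~\ref{prop:basic-unknot} with $\Gamma$-coefficients (Morse--Bott critical set $S^{2}\amalg S^{2}$, small holonomy perturbation producing four non-degenerate generators, each contributing a free rank-one $R$-module). However, the step where you argue $\partial=0$ contains a genuine gap. You claim that for small $\epsilon$ every gradient trajectory between the four generators has small topological energy and is therefore confined near $\Crit$, so that only Morse trajectories of $f_{1}|_{S^{2}\amalg S^{2}}$ can contribute. This is not correct: the topological energy $\cE_{z}(\alpha,\beta)$ depends on the homotopy class $z$ of the path, not just on the endpoints, and for each pair of generators there are infinitely many classes $z$ with $\gr_{z}(\alpha,\beta)=1$. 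Smallness of the perturbation does nothing to exclude trajectories in the classes of large period; indeed, by monotonicity a class with $\gr_{z}=1$ has energy on the order of $8\pi^{2}$, which is not small, so such a trajectory need not stay near the critical manifold and is not a Morse trajectory on $\Crit$. Similarly, the observation that the two $\cH'$-orbits are ``separated by a definite amount of energy'' does not rule out trajectories between them --- a downward trajectory only needs a positive energy drop, and Lemma~\ref{lem:H-relative-CS} provides exactly that (mod periods) in one direction.

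The fix is immediate and is exactly what the paper's argument uses: you have already established that all four generators lie in the same grading mod $2$ (relative grading $2$ within each sphere, and $-4(N-1)=-4$ between the $\cH'$-translates by Lemma~\ref{lem:H-relative-gr}). Since $\gr_{z}(\alpha,\beta)$ mod $2$ is independent of $z$, every moduli space $M_{z}(\alpha,\beta)$ between generators has even formal dimension, so there are no index-one moduli spaces in \emph{any} homotopy class, and hence $\partial=0$ with any coefficient system --- no energy-confinement or identification with Morse trajectories is needed. With that substitution your proof matches the paper's, and the conclusion that $\tI_{*}(U;\Gamma)$ is free of rank $4$ over $R$ follows as you state.
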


The definition of the local coefficient system $\Gamma$ and the maps
induced by a  cobordism are related to the monopole number.
To understand this relationship, consider in general two different cobordisms of
pairs, 
$(W, S)$ and $(W', S')$ from $(S^{3}, K_{0})$ to $(S^{3}, K_{1})$,
(with both $S$ and $S'$ being embedded surfaces, not immersed). Let
$\beta_{0}$ and $\beta_{1}$ be critical points in $\bonf(K_{0})$ and
$\bonf(K_{1})$ respectively, and let $z$ and $z'$ be paths from
$\beta_{0}$ to $\beta_{1}$ along $(W,S)$ and $(W', S')$ respectively.
Corresponding to $z$ and $z'$, the local system gives us maps of the
form \eqref{eq:Delta-z}, which in this case means we have
\[
            \Delta_{z}, \Delta_{z'} : t^{\mu(\beta_{0})} R \to
             t^{\mu(\beta_{1})}  R.
\]
Both of these maps are multiplication by a certain (real) power of
$t$:
\[
\begin{aligned}
            \Delta_{z} &= t^{\nu(z)} \\
            \Delta_{z'} &= t^{\nu(z')}.
\end{aligned}
\]
We can express the difference between $\nu(z)$ and $\nu(z')$
in topological terms. The surfaces $S$ and $S'$ are not closed, so
there is not a well-defined monopole number $l$ for the classes $z$
and $z'$; but there is  a well-defined relative monopole number: we
can write
\[
                d(z-z') = (k,l)
\]
where $k$ is the relative instanton number and $l$ is the relative
monopole number (both are integer-valued). There is a also a ``relative''
self-intersection number $S^{2}-(S')^{2}$  (the self-intersection number
of the union of $S$ and $-S'$ in $W \cup (-W')$. In these terms, we
have
\[
                \nu(z) - \nu(z') = -l + (1/4)(S^{2}-(S')^{2}).
\]
This is essentially the formula (17) of \cite{K-obstruction}, which
expresses the curvature integral which defines $\nu$ in terms of the
topological data. Our $\nu$ corresponds to $-\lambda$ in
\cite{K-obstruction}. If we fix a reference cobordism $(W_{*}, S_{*})$
and a path $z_{*}$ along it from $\beta_{0}$ to $\beta_{1}$, then
the contribution involving $\beta_{0}$ and $\beta_{1}$ to the map
$\psi_{W,S}$ can be written (in the style of 
definition \eqref{eq:boundary-map-def}) as
\begin{equation}\label{eq:explanation-of-psiWS}
                \sum_{z} \sum_{[\breve{A}]}
                 \epsilon[\breve{A}] \, \Delta_{z} =
                 \sum_{z} \sum_{[\breve{A}]}
                 \epsilon[\breve{A}] \, t^{-l +
                 (1/4)(S^{2}-(S_{*})^{2}) +
                 \nu_{z_{*}}}.
\end{equation}

\subsection{Immersed surfaces and canonical isomorphisms}

A cobordism of pairs from $(Y_{0}, K_{0})$ to $(Y_{1}, K_{1})$, as
considered so far, consists of a $4$-dimensional cobordism $W$ and an
\emph{embedded} surface $S$ with $\partial S = K_{1} - K_{0}$. It is
convenient to follow \cite{K-obstruction} and consider also
\emph{immersed} surfaces $S$. We will always consider only smoothly
immersed surfaces $f: S \looparrowright W$ with normal crossings
(transverse double-points), all of which should be in the interior of
$W$. As is common, we often omit mention of $f$ and confuse $S$ with
its image in $W$. By blowing up $W$ at each of the double points (forming a
connected sum with copies of $\bar\CP^{2}$) and taking the proper
transform of $S$ (\cf \cite{K-obstruction}, we have a canonical
procedure for replacing any such immersed cobordism $(W,S)$ with an
embedded version, $(\tilde W, \tilde S)$. We then \emph{define} a map
\[
                \psi_{W,S}: \tI_{*}(K_{0};\Gamma) \to
                \tI_{*}(K_{1};\Gamma)
\]
corresponding to the immersed cobordism by declaring it to be equal to
the map obtained from its resolution:
\[
                    \psi_{W,S}:= \psi_{\tilde{W},\tilde{S}}.
\]

Now suppose that $f_{0}:S\to W$ and $f_{1}:S \to W$ are two
immersions in $W$ with normal crossings, and suppose that they are
homotopic as maps relative to the boundary.  Then the image $S_{1}$ of
$f_{1}$ can be
obtained from $S_{0}=f(S_{0})$ by a sequence of standard moves, each of which
is either
\begin{enumerate}
\setcounter{enumi}{-1}
    \renewcommand{\theenumi}{(\arabic{enumi})}
    \item \label{item:basic-0}
           an ambient isotopy of the image of the immersed surface in
    $W$,
    \item \label{item:basic-1}
           a twist move introducing a positive double point,
    \item  \label{item:basic-2}
          a twist move introducing a negative double point,
    \item  \label{item:basic-3}
          a finger move introducing two double points of opposite
    sign,
\end{enumerate}
or the inverse of one of these \cite{Freedman-Quinn}. To analyze the
relation between the maps $\psi_{W,S_{0}}$ and $\psi_{W,S_{1}}$, we
must therefore analyze the effect of each of these types of elementary
changes to an immersed surface. This was carried out in
\cite{K-obstruction} for the case of closed surfaces in a  closed
$4$-manifold, and the same arguments work as well in the relative
case, leading to the following result.

\begin{proposition}\label{prop:basic-moves}
    Let $S$ be obtained from $S'$ by one of the three basic moves
    \ref{item:basic-1}--\ref{item:basic-3}. Then the maps
    \[
\begin{aligned}
    \psi_{W,S'} : \tI_{*}(K_{0},\Gamma) &\to \tI_{*}(K_{1},\Gamma) \\
    \psi_{W,S} : \tI_{*}(K_{0},\Gamma) &\to \tI_{*}(K_{1},\Gamma) \\
\end{aligned}
    \]
    are related by, respectively,
    \begin{enumerate}
\setcounter{enumi}{0}
    \renewcommand{\theenumi}{(\arabic{enumi})}
    \item \label{item:result-1}
           $\psi_{W,S} = (t^{-1}-t)\psi_{W,S'}$,
    \item  \label{item:result-2}
          $\psi_{W,S} = \psi_{W,S'}$ (no change), and
    \item  \label{item:result-3}
          $\psi_{W,S} = (t^{-1}-t)\psi_{W,S'}$ (the same as case
          \ref{item:result-1}).
\end{enumerate}
\end{proposition}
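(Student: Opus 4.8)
The plan is to follow the strategy of \cite{K-obstruction} and reduce each of the three moves to a local model computation near the double point(s), the rest of the cobordism being unchanged so that the moduli spaces involved match up away from a small ball. First I would set up the bookkeeping: by the definition $\psi_{W,S}:=\psi_{\tilde W,\tilde S}$, each move \ref{item:basic-1}--\ref{item:basic-3} on $S$ corresponds, after resolving double points, to a blow-up of $W$ at one or two points together with the passage from $\tilde S$ to a proper transform. Thus the content of the proposition is a comparison of the cobordism maps for $(\tilde W, \tilde S)$ and $(\tilde W', \tilde S')$ where $\tilde W = \tilde W' \# \overline{\CP}{}^2$ (one blow-up for moves \ref{item:basic-1},\ref{item:basic-2}) or $\tilde W = \tilde W' \# \overline{\CP}{}^2 \# \overline{\CP}{}^2$ (move \ref{item:basic-3}), and $\tilde S$ differs from the pull-back of $\tilde S'$ by adding $\pm 2$ times the exceptional sphere to its homology class, or by a finger-move pair. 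This is exactly the situation analyzed for closed surfaces in \cite{K-obstruction}, and I would emphasize that the arguments there are entirely local near the blow-up region, so the presence of the two boundary components $(S^3\#T^3,K_i)$ and the local coefficient system $\Gamma$ play no role in the gluing analysis itself.

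Next I would carry out the three computations. For move \ref{item:basic-2} (negative double point, result \ref{item:result-2}): blowing up at a negative double point and taking the proper transform changes $S^2$ by $-4$ and changes the relative monopole number in a way that, by the formula $\nu(z)-\nu(z') = -l + \tfrac14(S^2-(S')^2)$ from the previous subsection, exactly cancels; simultaneously the dimension formula \eqref{eq:dimension-fmla-2} is unchanged because the contribution of the exceptional sphere to $4h^\vee k + 4\delta(l)$ vanishes. Combined with the fact (from \cite{K-obstruction}) that the zero-dimensional moduli spaces over $\overline{\CP}{}^2$ with the relevant reduction contribute a single point with $+1$ sign, one gets $\psi_{W,S}=\psi_{W,S'}$ on the nose. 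For move \ref{item:basic-1} (positive double point, result \ref{item:result-1}): here the local model over $\overline{\CP}{}^2$ contributes, instead of a single point, the element $t^{-1}-t \in R$; this is precisely the computation of \cite{K-obstruction} relating the positive double-point to the evaluation of a certain relative invariant of $(\overline{\CP}{}^2, \text{conic})$, and the factor $t^{-1}-t$ appears because the two relevant critical points on the boundary $3$-sphere (with its singular metric) differ in $\mu$-value by the two signs, giving $t^{+1}$ and $t^{-1}$ with opposite orientations — equivalently, it is the local coefficient version of the ``$h^0-h^2$'' term, matching Proposition~\ref{prop:local-coeff-unknot} where the unknot gives a free module of rank $4 = 2\cdot 2$ and the factor of $2$ over $R$ is $(t^{-1}-t)$ up to units in the filtered sense. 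For move \ref{item:basic-3}: a finger move is the composite of a move of type \ref{item:basic-1} and one of type \ref{item:basic-2} (introducing a cancelling pair), so $\psi_{W,S} = (t^{-1}-t)\cdot 1 \cdot \psi_{W,S'}$, giving result \ref{item:result-3}; alternatively one checks directly that the two-fold blow-up contributes $(t^{-1}-t)$ because only one of the two exceptional divisors carries the nontrivial local-coefficient weight.

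The main obstacle I anticipate is the gluing/excision step: one must show rigorously that the cobordism map for $(\tilde W, \tilde S)$ factors, in the appropriate chain-homotopy sense, as the cobordism map for $(\tilde W', \tilde S')$ composed with a ``local'' contribution supported near the blow-up locus, and that this local contribution is computed by a zero-dimensional moduli space on $(\overline{\CP}{}^2, \text{quadric})$ (or the two-fold blow-up) with a singular orbifold metric as in subsection~\ref{subsec:orbifold-metrics}. This requires a neck-stretching argument together with control of bubbling along the singular surface; here one invokes Proposition~\ref{prop:Uhlenbeck-cylinder} and the dimension bound of Corollary~\ref{cor:at-least-2N} (with $N=2$, so the smallest positive-dimensional bubble has dimension $4$) to rule out energy loss, exactly as in the proof that $\partial\circ\partial=0$. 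The identification of the local contribution with $1$ or $t^{-1}-t$ is then the purely local computation of \cite{K-obstruction}, transcribed to the singular setting and with the local coefficient $\Gamma$ inserted; I expect this transcription to be routine once the neck-stretching is in place, since nothing about the double-point region meets the singular surface $S$ transversally in a way that interacts with the codimension-two singularity.
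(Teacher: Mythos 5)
Your overall route is the same as the paper's: the proof there consists of the remark that the statement is essentially Proposition~3.1 of \cite{K-obstruction}, whose local analysis near the double points carries over to the relative case, together with the bookkeeping of \eqref{eq:explanation-of-psiWS}, which says that the power of $t$ in each matrix entry is governed by $-l$ plus one quarter of the self-intersection of the proper transform. Your proposal to re-run that analysis by neck-stretching around the blow-up locus is a legitimate (if more laborious) way of justifying the phrase ``the same arguments work in the relative case'', and your treatment of moves \ref{item:basic-1} and \ref{item:basic-3} lands in the right place.

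There is, however, a concrete error in your treatment of move \ref{item:basic-2}. For a \emph{negative} double point the two branches of $S$ meet the exceptional sphere $E\subset\bar\CP^{2}$ with opposite signs, so the proper transform lies in the class $\pi^{*}[S]$ (not $\pi^{*}[S]-2E$) and its self-intersection is \emph{unchanged}; it is only for a positive double point that $[\tilde S]=\pi^{*}[S]-2E$ and $S^{2}$ drops by $4$. Your claim that $S^{2}$ drops by $4$ at a negative double point and that this is ``exactly cancelled'' by a change in the relative monopole number has no basis: the relative monopole number of corresponding paths does not shift, and if one fed your stated bookkeeping into $\nu(z)-\nu(z')=-l+\tfrac14\bigl(S^{2}-(S')^{2}\bigr)$ one would get a spurious factor of $t^{-1}$ in case \ref{item:result-2} rather than equality of the maps. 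The correct mechanism is simply that nothing changes: neither the class of the proper transform nor the local moduli count. Relatedly, your ``equivalently'' aside in case \ref{item:basic-1} — that the factor $(t^{-1}-t)$ reflects the rank $4=2\cdot2$ of $\tI_{*}(U;\Gamma)$ in Proposition~\ref{prop:local-coeff-unknot}, with ``the factor of $2$ over $R$'' being $(t^{-1}-t)$ up to units — is not meaningful ($(t^{-1}-t)$ is precisely \emph{not} a unit in $R$; that is the point of the later filtration). The factor arises, as in \cite{K-obstruction}, from the two values of the monopole charge carried by the blow-up region contributing with opposite signs, combined with the shift $S^{2}\mapsto S^{2}-4$ of the proper transform; since you defer to \cite{K-obstruction} for that computation anyway, the fix is to drop the faulty heuristics and state the correct homological bookkeeping for each sign of double point.
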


\begin{remark}
    The three cases of this proposition can be summarized by saying
    that the map $\psi_{W,S}$ acquires a factor of $(t^{-1}-t)$ for
    every positive double point that is introduced.
\end{remark}

\begin{proof}
As indicated above, this is essentially Proposition~3.1 of
\cite{K-obstruction}. In that proposition, the monopole number $l$
contributes to the power of $t$ in the coefficients of the map
$\psi_{W,S}$, according to the formula
\eqref{eq:explanation-of-psiWS}. The other contribution to the
exponent is the self-intersection number of the proper transform of
the immersed surface $S$, which
changes by $-4$ when $S$ acquires a positive double point and is
unaffected by negative double-points.
\end{proof}

Let us now return to situation where we have two homotopic maps
$f_{i}:S\to W$ with images $S_{i}$, $i=0,1$.
If we form the ring $R'$ by inverting the element $(t^{-1}-t)$ in $R$,
so
\[
                R' = R[ (t^{-1}-t)^{-1} ],
\]
and if we write
\[
\psi'_{W,S}=\psi_{W,S}\otimes 1 : \tI_{*}(K_{0};\Gamma)\otimes_{R} R' \to
\tI_{*}(K_{1};\Gamma)\otimes_{R} R'
\]
then the proposition tells us:

\begin{corollary}\label{cor:differ-by-unit}
    If $S_{0}$ and $S_{1}$ are the images of homotopic immersions as
    above, then the maps
    \[
           \psi'_{W,S_{i}}: \tI_{*}(K_{0};\Gamma)\otimes_{R} R' \to
           \tI_{*}(K_{1};\Gamma)\otimes_{R} R'
     \]
     differ by multiplication by a unit. More specifically, if $\tau(S_{i})$ is
     the number of positive double-points in $S_{i}$, then we have
     \[
                \psi'_{W,S_{1}} = (t^{-1}-t)^{\tau(S_{1}) -
                \tau(S_{0})} \psi'_{W,S_{0}}.
     \]     
\end{corollary}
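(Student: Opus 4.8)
The plan is to deduce Corollary~\ref{cor:differ-by-unit} directly from Proposition~\ref{prop:basic-moves} together with the classical fact that two homotopic (rel boundary) generic immersions are related by a finite sequence of the elementary moves \ref{item:basic-0}--\ref{item:basic-3} and their inverses. First I would invoke the result of Freedman--Quinn \cite{Freedman-Quinn} cited just before Proposition~\ref{prop:basic-moves}: since $f_0$ and $f_1$ are homotopic as maps of pairs rel $\partial$, there is a finite chain of immersed surfaces $S_0 = \Sigma^{(0)}, \Sigma^{(1)}, \dots, \Sigma^{(n)} = S_1$ in which each consecutive pair differs by one of moves \ref{item:basic-0}--\ref{item:basic-3} or an inverse. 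It suffices to track how the two numerical quantities $\psi'_{W,\Sigma^{(j)}}$ and $\tau(\Sigma^{(j)})$ (the count of positive double points) change across a single move, and then multiply/telescope.

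The key step is the per-move bookkeeping, reading off Proposition~\ref{prop:basic-moves}. Move \ref{item:basic-0} (ambient isotopy of the image) changes neither $\psi_{W,S}$ nor $\tau$, so after tensoring with $R'$ it changes nothing. Move \ref{item:basic-1} (introduce a positive double point) multiplies $\psi_{W,S}$ by $(t^{-1}-t)$ and raises $\tau$ by $1$, so $\psi'_{W,S}(t^{-1}-t)^{-\tau(S)}$ is unchanged. Move \ref{item:basic-2} (introduce a negative double point) leaves $\psi_{W,S}$ alone and leaves $\tau$ alone, so again the normalized quantity is unchanged. Move \ref{item:basic-3} (finger move: one positive and one negative double point) multiplies $\psi_{W,S}$ by $(t^{-1}-t)$, by case \ref{item:result-3}, while $\tau$ also increases by exactly $1$ (only the positive double point contributes to $\tau$); so once more $\psi'_{W,S}(t^{-1}-t)^{-\tau(S)}$ is invariant. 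Inverses of these moves are handled by the same identities read backwards, using that $(t^{-1}-t)$ is a unit in $R'$. Thus the element
\[
(t^{-1}-t)^{-\tau(\Sigma^{(j)})}\,\psi'_{W,\Sigma^{(j)}} \;\in\; \Hom_{R'}\bigl(\tI_{*}(K_{0};\Gamma)\otimes_{R}R',\ \tI_{*}(K_{1};\Gamma)\otimes_{R}R'\bigr)
\]
is independent of $j$; comparing $j=0$ and $j=n$ gives
\[
(t^{-1}-t)^{-\tau(S_{0})}\psi'_{W,S_{0}} = (t^{-1}-t)^{-\tau(S_{1})}\psi'_{W,S_{1}},
\]
which rearranges to the stated formula $\psi'_{W,S_{1}} = (t^{-1}-t)^{\tau(S_{1})-\tau(S_{0})}\psi'_{W,S_{0}}$. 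In particular the two maps differ by multiplication by the unit $(t^{-1}-t)^{\tau(S_{1})-\tau(S_{0})}$ of $R'$, proving the first assertion as well.

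The only point needing a little care — and the main (minor) obstacle — is checking that a finger move, move \ref{item:basic-3}, really does raise $\tau$ by exactly one, i.e.\ that the two new double points created by a finger move consist of precisely one positive and one negative crossing with respect to the orientation conventions used to define $\tau$; this is standard (a finger move introduces a cancelling pair of opposite signs) and is already implicit in the statement of Proposition~\ref{prop:basic-moves}, whose case \ref{item:result-3} asserts exactly one factor of $(t^{-1}-t)$, matching the interpretation in the Remark that ``$\psi_{W,S}$ acquires a factor of $(t^{-1}-t)$ for every positive double point.'' With that understood, no further computation is required: the corollary is a purely formal consequence of the proposition and the move-decomposition of homotopic immersions.
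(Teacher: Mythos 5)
Your argument is correct and is essentially the same as the paper's: the corollary is treated there as an immediate consequence of Proposition~\ref{prop:basic-moves} together with the Freedman--Quinn decomposition of the homotopy into moves \ref{item:basic-0}--\ref{item:basic-3}, with exactly the bookkeeping you carry out (the normalized map $(t^{-1}-t)^{-\tau(S)}\psi'_{W,S}$ is invariant under each move once $(t^{-1}-t)$ is inverted). Your extra care about the finger move contributing one positive and one negative double point matches the convention implicit in case \ref{item:result-3} and the remark following the proposition.
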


\begin{corollary}
    For any two knots $K_{0}$ and $K_{1}$, we have
    \[
                   \tI_{*}(K_{0};\Gamma)\otimes_{R} R'
                   \cong \tI_{*}(K_{1};\Gamma)\otimes_{R} R' 
    \]
\end{corollary}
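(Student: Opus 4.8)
The plan is to produce, for any two knots $K_{0}$ and $K_{1}$, an immersed cobordism of pairs between them whose induced map on $\tI_{*}(\dash;\Gamma)$ becomes an isomorphism after tensoring with $R'$; the ambiguity created by the double points of the cobordism will be absorbed into the unit $(t^{-1}-t)$ using Corollary~\ref{cor:differ-by-unit}.

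First I would construct the cobordism. Take $W=[0,1]\times S^{3}$, regarded as a cobordism from $(S^{3},K_{0})$ to $(S^{3},K_{1})$. Since $S^{3}$ is simply connected, $K_{0}$ and $K_{1}$ are homotopic as smooth maps $S^{1}\to S^{3}$; choosing such a homotopy $H\colon S^{1}\times[0,1]\to S^{3}$ with $H(\dash,0)=K_{0}$, $H(\dash,1)=K_{1}$, the graph map $(\mathrm{id},H)\colon S^{1}\times[0,1]\to W$ is smooth and already vertical near the two ends, so a small perturbation of its interior makes it an immersion $S\looparrowright W$ with normal crossings, all double points in the interior. This is an immersed cobordism in the sense of the previous subsection, so it induces $\psi_{W,S}\colon\tI_{*}(K_{0};\Gamma)\to\tI_{*}(K_{1};\Gamma)$; the surface $S$ with the orientation of $W$ reversed gives $\psi_{\bar W,\bar S}\colon\tI_{*}(K_{1};\Gamma)\to\tI_{*}(K_{0};\Gamma)$ in the opposite direction.

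Next I would identify the two composites. By the composition law for cobordism maps --- which still holds for immersed cobordisms, since the blow-up resolution is carried out at disjoint double points and is compatible with stacking --- we have $\psi_{\bar W,\bar S}\circ\psi_{W,S}=\psi_{\bar W\circ W,\ \bar S\circ S}$, a map induced by an immersed cobordism from $(S^{3},K_{0})$ to itself with underlying manifold $[0,2]\times S^{3}$. The surface $\bar S\circ S$ is the graph of $H$ followed by the graph of $H$ run backwards; by the standard ``there and back'' argument the concatenated path in $\mathrm{Map}(S^{1},S^{3})$ is homotopic rel endpoints to the constant path at $K_{0}$, so $\bar S\circ S$ is homotopic rel boundary, through immersions with normal crossings in the interior, to the embedded product annulus $[0,2]\times K_{0}$ (we may arrange in the construction that $S$ has no double points near the gluing slice, so that $\bar S\circ S$ genuinely has normal crossings). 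Corollary~\ref{cor:differ-by-unit} then gives $\psi'_{\bar W\circ W,\ \bar S\circ S}=(t^{-1}-t)^{d}\,\psi'_{[0,2]\times K_{0}}$ for some integer $d\ge 0$, and the product cobordism with its product surface induces the identity on $\tI_{*}(K_{0};\Gamma)$ by the invariance property (the local-coefficient analogue of Proposition~\ref{prop:functor}). Hence $\psi'_{\bar W,\bar S}\circ\psi'_{W,S}=\pm(t^{-1}-t)^{d}\cdot\mathrm{id}$, an $R'$-module automorphism because $(t^{-1}-t)$ is invertible in $R'$; by symmetry $\psi'_{W,S}\circ\psi'_{\bar W,\bar S}$ is likewise an automorphism. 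A homomorphism with a two-sided inverse up to a unit is bijective, so $\psi'_{W,S}$ is the required isomorphism $\tI_{*}(K_{0};\Gamma)\otimes_{R}R'\cong\tI_{*}(K_{1};\Gamma)\otimes_{R}R'$.

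The hard part is entirely bookkeeping rather than substance: one must check that the composition law descends to immersed cobordisms and their blow-up resolutions, that the homotopy from $\bar S\circ S$ to the product annulus can be realized rel boundary with all double points confined to the interior and away from the gluing slice, and that the sign ambiguities coming from the choices of homology orientation do no harm --- which they do not, since only the existence of an isomorphism is claimed. Granting the functoriality established earlier and Corollary~\ref{cor:differ-by-unit}, no further analytic input is needed.
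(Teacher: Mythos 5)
Your proposal is correct and follows essentially the same route as the paper: choose an immersed annular cobordism from $K_{0}$ to $K_{1}$ and one back again inside $[0,1]\times S^{3}$, observe that each composite annulus is homotopic rel boundary to the product annulus, and invoke Corollary~\ref{cor:differ-by-unit} so that each composite map is a unit multiple of the identity over $R'$, whence $\psi'_{W,S}$ is an isomorphism. Your extra care about realizing the annulus as an immersion with normal crossings and about the $(t^{-1}-t)^{d}$ factor (rather than literally the identity) is sound bookkeeping but does not change the argument.
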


\begin{proof}
     In the cylindrical cobordism $W=[0,1]\times S^{3}$, let $S$ be
     any immersed annulus from $K_{0}$ to $K_{1}$, and let $\bar{S}$
     be any annulus from $K_{1}$ to $K_{0}$. The concatenation of
     these immersed annular cobordisms, in either order, give annular
     immersed cobordisms from $K_{0}$ to $K_{0}$ and from $K_{1}$ to
     $K_{1}$. These composite annuli are each homotopic to a trivial
     product annulus; so the composite maps
     \[
                    \psi'_{W,S} \circ \psi'_{W,\bar{S}}
     \]
     and
      \[
                    \psi'_{W,\bar{S}} \circ \psi'_{W,S}
     \]
     are both the identity map, and it follows that $\psi'_{W,S}$ and
     $\psi'_{W,\bar{S}}$ are isomorphisms.
\end{proof}

Extending this line of thought a little further, we see that the group
$\tI_{*}(K;\Gamma)\otimes_{R}R'$ is not just independent of $K$ up to
isomorphism, but up to \emph{canonical} isomorphism.  That is, if
$K_{0}$ and $K_{1}$ are any two knots, we may choose any immersed
annulus $S$ from $K_{0}$ to $K_{1}$ in the 4-dimensional product
cobordisms $W=[0,1]\times S^{3}$ and construct the isomorphism
\[
                    (t^{-1}-t)^{-\tau(S)} \psi'_{W,S}:
                    \tI_{*}(K_{0};\Gamma)\otimes_{R} R'
                   \to \tI_{*}(K_{1};\Gamma)\otimes_{R} R' .
\]
This isomorphism is independent of the choice of annulus $S$. In
particular, for any knot $K$, the $R'$-module
$\tI_{*}(K;\Gamma)\otimes_{R}R'$ is canonically isomorphic to the $R'$-module
arising from the unknot. From
Proposition~\ref{prop:local-coeff-unknot} we therefore obtain:

\begin{corollary}\label{cor:canonical-iso}
    For any knot $K$ in $S^{3}$, there is a canonical isomorphism
    \[
                   \Psi:   ( R' )^{4} \to \tI_{*}(K;\Gamma)\otimes_{R}R'.
    \]
\end{corollary}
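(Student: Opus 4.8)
The proof of Corollary~\ref{cor:canonical-iso} is immediate from the two preceding results, and the plan is simply to assemble them. The argument proceeds in the following steps.

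First, apply the Corollary immediately preceding (the one asserting that $\tI_{*}(K_{0};\Gamma)\otimes_{R}R'\cong\tI_{*}(K_{1};\Gamma)\otimes_{R}R'$ for all pairs of knots) with $K_{0}=U$ the unknot and $K_{1}=K$ the given knot. This already yields an isomorphism $\tI_{*}(U;\Gamma)\otimes_{R}R'\xrightarrow{\ \sim\ }\tI_{*}(K;\Gamma)\otimes_{R}R'$; the point of the present corollary is to pin down a \emph{canonical} such isomorphism, so I would not merely invoke that corollary as a black box but rather recall its construction: choose any immersed annulus $S$ from $U$ to $K$ in the product cobordism $W=[0,1]\times S^{3}$, and set $\Psi_{S}=(t^{-1}-t)^{-\tau(S)}\psi'_{W,S}$, where $\tau(S)$ is the number of positive double points of $S$. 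By Corollary~\ref{cor:differ-by-unit}, two choices $S$ and $S'$ of such immersed annuli are homotopic rel boundary (any two annuli from $U$ to $K$ in $[0,1]\times S^{3}$ are homotopic, since the relevant homotopy classes of maps of an annulus into the cylinder rel its two boundary circles form a single class once the knot types of the ends are fixed), so $\psi'_{W,S'}=(t^{-1}-t)^{\tau(S')-\tau(S)}\psi'_{W,S}$, and therefore $\Psi_{S'}=\Psi_{S}$. This is the canonicity statement, and it is exactly the discussion already carried out in the paragraph following the previous corollary; I would state it cleanly as: the isomorphism $\Psi_{S}$ is independent of $S$, hence defines a canonical isomorphism, which I will denote simply $\Psi$ when the source is the unknot.

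Second, combine this with Proposition~\ref{prop:local-coeff-unknot}, which states that $\tI_{*}(U;\Gamma)$ is a free $R$-module of rank $4$. Tensoring with $R'$ over $R$ gives $\tI_{*}(U;\Gamma)\otimes_{R}R'\cong (R')^{4}$; since freeness is preserved under base change and the rank is unchanged, this identification is canonical up to the choice of basis of the free module $\tI_{*}(U;\Gamma)$. Composing the two steps, I obtain the desired canonical isomorphism
\[
\Psi:(R')^{4}\ \longrightarrow\ \tI_{*}(K;\Gamma)\otimes_{R}R'.
\]

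There is essentially no obstacle here; the corollary is a formal consequence of material already in hand. The only point requiring a word of care — and the place I would spend a sentence in the written proof — is the claim that any two immersed annuli from $U$ to $K$ in $[0,1]\times S^{3}$ are homotopic rel $\partial$, so that Corollary~\ref{cor:differ-by-unit} applies and $\Psi_{S}$ is genuinely well-defined. This follows because $S^{3}$ is simply connected and $[0,1]\times S^{3}$ is $2$-connected, so the relative homotopy class of a map of the annulus fixing its two boundary circles is unique. (Equivalently, one may invoke the argument already given in the proof of the preceding corollary, which shows that concatenating any such annulus with a reverse annulus gives a map homotopic to the product annulus, forcing $\psi'_{W,S}$ to be an isomorphism and, combined with Corollary~\ref{cor:differ-by-unit}, forcing the normalized map to be independent of choices.) With that remark in place the proof is complete.
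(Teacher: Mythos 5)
Your proposal is correct and follows essentially the same route as the paper: choose an immersed annulus $S$ from the unknot to $K$ in $[0,1]\times S^{3}$, normalize by $(t^{-1}-t)^{-\tau(S)}$, use Corollary~\ref{cor:differ-by-unit} to see the result is independent of $S$, and then invoke Proposition~\ref{prop:local-coeff-unknot} to identify $\tI_{*}(U;\Gamma)\otimes_{R}R'$ with $(R')^{4}$. The only addition is your explicit justification that any two such annuli are homotopic rel boundary (since $[0,1]\times S^{3}$ is $2$-connected), a point the paper leaves implicit, and it is correct.
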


\subsection{Filtrations and double-point bounds}

The inclusion $R\to R'$ gives us a canonical copy of $R^{4}$ in
$(R')^{4}$, the image of $\tI_{*}(U;\Gamma)$ in $\tI_{*}(U;\Gamma)\otimes_{R}R'$ for
the unknot $U$. We define an increasing filtration of
$\tI_{*}(K;\Gamma)\otimes_{R}R'$,
\[
                \cdots \subset \cF^{-1}(K)\subset
                \cF^{0}(K)
                \subset \cF^{1}(K) \subset \cdots
\]
by first setting
\[
                \cF^{0}(K) = \Psi(R^{4}) \subset \tI_{*}(K;\Gamma)\otimes_{R}R',
\]
where $\Psi$ is the canonical isomorphism of
Corollary~\ref{cor:canonical-iso},
and then defining
\begin{equation}\label{eq:filtration}
            \cF^{i}(K) = (t^{-1}-t)^{-i} \cF^{0}(K).
\end{equation}
Although $\cF^{0}(U)$ is the image of $\tI_{*}(U;\Gamma)$ in the tensor
product for the case of the unknot, this does not hold for a general
knot. We make the following definition, modelled on similar
constructions in \cite{Rasmussen-slice, OS-tau, Lobb}.

\begin{definition}
    For any knot $K$ in $S^{3}$ we define  $\vr(K)$ to be the smallest
    integer $i$ such that $\cF^{-i}(K)$ is contained in the image of
    $\tI_{*}(K;\Gamma)$ in $\tI_{*}(K;\Gamma)\otimes_{R}R'$.
\end{definition}

To see that the definition makes sense, choose an immersed annular
cobordism from the unknot $U$ to $K$, and let $\tau$ be the
number of positive double points in this annulus. As $R$-submodules of
$\tI_{*}(K;\Gamma)\otimes_{R}R'$, we have, from the definitions,
\[
            \begin{aligned}
                \cF^{-\tau}(K) &= \Psi( (t^{-1}-t)^{\tau}
                R^{4}) \\
                &= \psi'_{W,S}\bigl( \tI_{*}(U;\Gamma) \bigr) \\
                &\subset \im \big( \tI_{*}(K;\Gamma) \to
                \tI_{*}(K;\Gamma)\otimes_{R}R')
            \end{aligned}
\]
where the last inclusion holds because passing to the ring $R'$
commutes with the maps $\psi_{W,S}$ and $\psi'_{W,S}$ induced by the
cobordism. From this observation and the definition of $\vr(K)$, we
obtain
\[
                  \vr(K) \le \tau.
\]
Since the annulus $S$ was arbitrary, we have:

\begin{theorem}\label{thm:double-point-bound}
    Let $K$ be a knot in $S^{3}$ and let $D$ be an immersed disk with
    normal crossings in the 4-ball, with boundary $K$. Then the number
    of positive double points in $D$ is at least $\vr(K)$. \qed
\end{theorem}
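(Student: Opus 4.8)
The plan is to deduce Theorem~\ref{thm:double-point-bound} from the chain of inclusions of filtration levels displayed just before its statement, by converting the immersed slice disk $D$ into an immersed annular cobordism from the unknot to $K$. Concretely, I would first pick a point $p\in D$ that is not a double point; since $p$ is a smooth point, for a sufficiently small round ball $B_{p}\subset B^{4}$ centered at $p$ the intersection $D\cap B_{p}$ is an embedded unknotted disk and $U:=D\cap\partial B_{p}$ is an unknot in the small sphere $\partial B_{p}$. The complement $W:=B^{4}\setminus \mathrm{int}(B_{p})$ is diffeomorphic to the product cobordism $[0,1]\times S^{3}$, one end being $\partial B_{p}\supset U$ and the other being the outer $S^{3}\supset K$; under this identification $S:=D\setminus\mathrm{int}(B_{p})$ becomes a normally immersed annular cobordism from $U$ to $K$. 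Because $B_{p}$ was chosen disjoint from the double points of $D$ and the excision does not affect the local sign data at a crossing, the number $\tau(S)$ of positive double points of $S$ equals the number of positive double points of $D$.

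Next I would invoke the displayed computation immediately preceding the theorem, which gives, with $\tau=\tau(S)$,
\[
\cF^{-\tau}(K)=\Psi\bigl((t^{-1}-t)^{\tau}R^{4}\bigr)=\psi'_{W,S}\bigl(\tI_{*}(U;\Gamma)\bigr)\subset\im\bigl(\tI_{*}(K;\Gamma)\to\tI_{*}(K;\Gamma)\otimes_{R}R'\bigr).
\]
Here the first equality is the definition \eqref{eq:filtration} of the filtration together with the identification $\cF^{0}(U)=\Psi(R^{4})$; the second equality uses Corollary~\ref{cor:differ-by-unit}, which says that $\psi'_{W,S}$ and the canonical isomorphism $\Psi$ of Corollary~\ref{cor:canonical-iso} differ precisely by the unit $(t^{-1}-t)^{\tau(S)}$ (the unknot contributes no positive double points to the reference); and the final inclusion holds because $\psi'_{W,S}$ is the base change to $R'$ of the integral cobordism map $\psi_{W,S}$, whose image by construction lies in the image of the integral module $\tI_{*}(K;\Gamma)$. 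By the very definition of $\vr(K)$ as the least integer $i$ for which $\cF^{-i}(K)$ lies in that image, this yields $\vr(K)\le\tau(S)=\tau(D)$, which is the assertion of the theorem.

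The main obstacle is not really an obstacle for \emph{this} proof: it is the invariance statement underlying $\Psi$ and Corollary~\ref{cor:canonical-iso} — that the isomorphism $\tI_{*}(U;\Gamma)\otimes_{R}R'\to\tI_{*}(K;\Gamma)\otimes_{R}R'$ induced by an immersed annulus is independent of the annulus — which rests on Proposition~\ref{prop:basic-moves} and the Freedman--Quinn classification of homotopies of immersed surfaces by the three basic moves. For the present theorem that machinery is used verbatim as a black box, so what remains is purely bookkeeping: verifying that excising the ball $B_{p}$ preserves the count and signs of double points, that the resulting $W$ is genuinely the product cobordism with the stated ends, and (since $\dim\SU(2)$ is odd) that the homology-orientation ambiguity affects only signs and hence is irrelevant to the submodule containment. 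None of these steps presents a serious difficulty.
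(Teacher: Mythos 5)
Your proposal is correct and follows essentially the same route as the paper: the displayed chain $\cF^{-\tau}(K)=\Psi\bigl((t^{-1}-t)^{\tau}R^{4}\bigr)=\psi'_{W,S}\bigl(\tI_{*}(U;\Gamma)\bigr)\subset\im\bigl(\tI_{*}(K;\Gamma)\to\tI_{*}(K;\Gamma)\otimes_{R}R'\bigr)$ appearing just before the theorem is exactly the paper's proof, giving $\vr(K)\le\tau(S)$ for an arbitrary immersed annulus from $U$ to $K$. Your only addition is to spell out the (standard) conversion of the immersed slice disk into such an annulus by excising a small ball about a smooth point, a step the paper leaves implicit, and that bookkeeping is handled correctly.
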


\subsection{Algebraic knots}

The lower bound for the number of double points in an immersed disk,
given by Theorem~\ref{thm:double-point-bound}, is sharp for the case
of an algebraic knot (a knot arising as the link of a singularity in
a complex plane curve, such as a torus knot). The reason this is so
comes down to the same mechanisms that were involved in
\cite{KM-gtes-I,KM-gtes-II} and \cite{K-obstruction}, where singular
instantons were used to obtain bounds on unknotting numbers and slice
genus.

To explain this, we recall some background from
\cite{KM-gtes-I,K-obstruction}. Let
$(X,\Sigma)$ be a closed pair, with $\Sigma$ connected for
simplicity,  and let $P\to X$ be a $U(2)$ bundle.
Suppose that $c_{1}(P)$ satisfies the non-integrality condition, that
\[
            \tfrac{1}{2}c_{1}(P) \pm \tfrac{1}{4}[\Sigma]
\]
is not an integer class, for either choice of sign. Denoting by $k$
the instanton number of $P$, we have for any choice of monopole number
$l$ a moduli space $M_{k,l}(X,\Sigma)_{\delta}$, which we label by $k$
$l$ and $\delta$, where $\delta$ is the line bundle $\det P$. If the
formal dimension of this moduli space is zero, then there an integer
invariant
\[
            q^{\delta}_{k,l}(X,\Sigma) \in \Z.
\]
(A homology orientation is needed as usual to fix the sign.) In
\cite{K-obstruction}, these integer invariants are combined into a
Laurent series (with only finitely many non-zero terms):
\[
                    R^{\delta}(X,\Sigma)(t) = 2^{-g(\Sigma)} \sum_{(k,l):\dim
                    M_{k,l}=0} t^{-l}q^{\delta}_{k,l}(X,\Sigma).
\]
The normalizing factor $2^{-g}$ was convenient in \cite{K-obstruction}
but is not significant here. The definition of
$q^{\delta}_{k,l}(X,\Sigma)$ and $R^{\delta}(t)$ is extended to the
case of immersed surfaces with normal crossings by blowing up.

Suppose now that $(X,\Sigma)$ decomposes along a $3$-manifold $Y$,
meeting $\Sigma$ transversely in a knot $K$. Suppose also that the
restriction of $P$ to $(Y,\Sigma)$ satisfies the non-integrality
condition. Then there is a gluing formula which expresses the Laurent
series $R^{\delta}(X,\Sigma)(t)$ as  a pairing between a cohomology
class and a homology class in the Floer group
\[
                \I_{*}(Y,K,P;\Gamma).
\]
The coefficient system $\Gamma$ is the one we have been using, and
keeps track of the power of $t$.

Suppose now that we have a cobordism $(W,S)$ (with $S$ immersed
perhaps) from $(Y_{0}, K_{0})$ to $(Y_{1},K_{1})$, giving us a map
\[
            \psi : \I_{*}(Y_{0},K_{0},P;\Gamma) \to
            \I_{*}(Y_{1},K_{1},P;\Gamma).
\]
Suppose we wish to show that the image of $\psi$ is not contained in the
image of multiplication by $(t^{-1}-t)$. From the functorial
properties and the gluing formulae, it will be sufficient if we can
find a closed pair $(X,\Sigma)$ (together with a $U(2)$ bundle $P$)
such that:
\begin{figure}
    \begin{center}
        \includegraphics[scale=0.90]{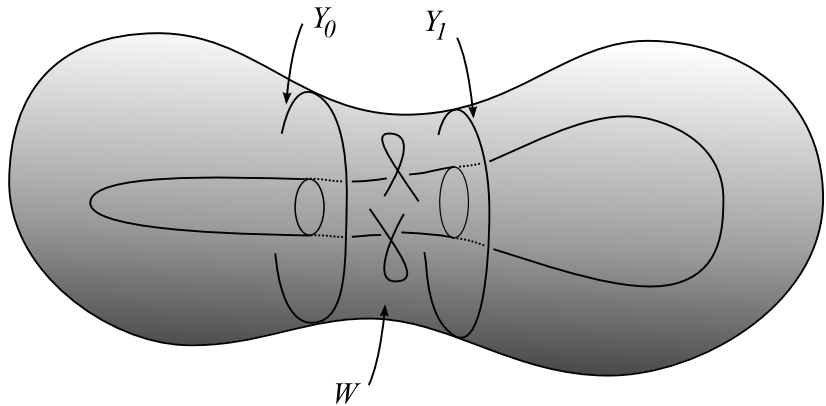}
    \end{center}
    \caption{A closed pair $(X,\Sigma)$ separated by $(W,S)$, with $S$
    immersed.}
\end{figure}
\begin{itemize}
    \item $(X,\Sigma,P)$ contains $(W,S,P)$ as a separating subset,
    as indicated in the figure;  and
    \item the Laurent series $R^{\delta}(X,\Sigma)(t)$ does not vanish
    at $t=1$.
\end{itemize}
Summarizing this discussion, we therefore have:

\begin{proposition}
    Suppose $(X,\Sigma)$ is a  pair (with $\Sigma$ perhaps immersed)
    such that, for some $\delta$, the finite Laurent series
    $R^{\delta}(X,\Sigma)(t)$ is non-vanishing at $t=1$. Suppose that
    $(X,\Sigma)$ has a  decomposition as shown, and suppose:
    \begin{itemize}
        \item $Y_{0}\cong Y_{1}\cong S^{3} \# T^{3}$;
        \item $W$ the 4-dimensional product cobordism;
        \item $c_{1}(\delta)$ is dual to a standard circle in $T^{3}$;
        \item $K_{0}$, $K_{1}$ arise from classical knots in $S^{3}$,
        with $K_{0}$ the unknot;
        \item the surface $S$ arises from an immersed annulus in
        $[0,1]\times S^{3}$ with $\tau$ double positive points.
    \end{itemize}
    Then $\vr(K_{1})=\tau$ and the bound of
    Theorem~\ref{thm:double-point-bound} is sharp for $K_{1}$. \qed
\end{proposition}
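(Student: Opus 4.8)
The plan is to obtain $\vr(K_{1})\le\tau$ for free and to get the reverse inequality $\vr(K_{1})\ge\tau$ from the non‑vanishing of $R^{\delta}(X,\Sigma)$ at $t=1$, via the gluing formula quoted just above. For the easy half: since $S$ is an immersed annular cobordism from the unknot $U$ to $K_{1}$ with $\tau$ positive double points, the computation recorded just before Theorem~\ref{thm:double-point-bound} gives $\cF^{-\tau}(K_{1})=\psi'_{W,S}\bigl(\tI_{*}(U;\Gamma)\bigr)\subseteq\im\bigl(\tI_{*}(K_{1};\Gamma)\to\tI_{*}(K_{1};\Gamma)\otimes_{R}R'\bigr)$, so $\vr(K_{1})\le\tau$ by the definition of $\vr$. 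Everything therefore reduces to $\vr(K_{1})\ge\tau$. Write $j\colon\tI_{*}(K_{1};\Gamma)\to\tI_{*}(K_{1};\Gamma)\otimes_{R}R'$ and $M=\im j$. Because $\cF^{-i}(K_{1})$ is decreasing in $i$, it suffices to show $\cF^{-(\tau-1)}(K_{1})\not\subseteq M$; and since $\cF^{-(\tau-1)}(K_{1})=(t^{-1}-t)^{-1}\cF^{-\tau}(K_{1})=(t^{-1}-t)^{-1}j\bigl(\psi_{W,S}(\tI_{*}(U;\Gamma))\bigr)$ by \eqref{eq:filtration}, it is enough to exhibit one class $h_{-}\in\tI_{*}(U;\Gamma)$ with $(t^{-1}-t)^{-1}j\bigl(\psi_{W,S}(h_{-})\bigr)\notin M$.

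That class is the relative invariant of the part of $X$ on the unknot side of $W$. Decompose $(X,\Sigma,P)$ along its outgoing cross‑section $(Y_{1},K_{1})$ as $X=X_{-}\cup_{Y_{1}}X_{+}$, arranged so that $W$, carrying the immersed annulus $S$, lies in $X_{-}$; let $X^{-}=X_{-}\setminus W$, which abuts $(Y_{0},K_{0})=(S^{3}\#T^{3},U)$. Resolving the double points of $S$ by blowing up (the operation used to define $q^{\delta}_{k,l}$ and $\psi_{W,S}$ for immersed surfaces), the gluing formula expresses $R^{\delta}(X,\Sigma)(t)$ as a pairing of the relative invariant of $(X_{-},\Sigma\cap X_{-},P)$ in $\tI_{*}(K_{1};\Gamma)$ against the co‑relative invariant $\phi_{+}\in\tI^{*}(K_{1};\Gamma)$ of $(X_{+},\Sigma\cap X_{+},P)$; and the functoriality of the cobordism maps (the resolution of the compound $X^{-}\cup W$ being the corresponding resolution inside $X$) identifies the relative invariant of $X_{-}$ with $\psi_{W,S}(h_{-})$, where $h_{-}\in\tI_{*}(U;\Gamma)$ is the relative invariant of $(X^{-},\Sigma\cap X^{-},P)$. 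Thus, as an identity valued in $R$,
\[
    R^{\delta}(X,\Sigma)(t)\;=\;2^{-g(\Sigma)}\bigl\langle \psi_{W,S}(h_{-}),\,\phi_{+}\bigr\rangle .
\]

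Now suppose, for contradiction, that $(t^{-1}-t)^{-1}j\bigl(\psi_{W,S}(h_{-})\bigr)=j(u)$ for some $u\in\tI_{*}(K_{1};\Gamma)$. Then $j\bigl(\psi_{W,S}(h_{-})-(t^{-1}-t)u\bigr)=0$, so this element is annihilated by $(t^{-1}-t)^{n}$ for some $n\ge0$, whence $(t^{-1}-t)^{n}\psi_{W,S}(h_{-})=(t^{-1}-t)^{n+1}u$ in $\tI_{*}(K_{1};\Gamma)$. Pairing with the $R$‑linear functional $\phi_{+}$ and cancelling $(t^{-1}-t)^{n}$ in the integral domain $R$ gives $\langle\psi_{W,S}(h_{-}),\phi_{+}\rangle=(t^{-1}-t)\langle u,\phi_{+}\rangle$; hence $\langle\psi_{W,S}(h_{-}),\phi_{+}\rangle$ is divisible by $(t^{-1}-t)$ in $R$, and since $t^{-1}-t$ vanishes at $t=1$ the displayed identity forces $R^{\delta}(X,\Sigma)(1)=0$, contrary to hypothesis. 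Therefore $\cF^{-(\tau-1)}(K_{1})\not\subseteq M$, so $\vr(K_{1})\ge\tau$ and thus $\vr(K_{1})=\tau$. Finally, attaching $B^{4}$ to $[0,1]\times S^{3}$ and capping the unknot boundary of $S$ with a standard embedded disk in $B^{4}$ produces an immersed disk $D\subset B^{4}$ with $\partial D=K_{1}$ whose only double points are the $\tau$ positive double points of $S$; as the number of positive double points of $D$ equals $\vr(K_{1})$, the lower bound of Theorem~\ref{thm:double-point-bound} is attained, hence sharp for $K_{1}$.

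The chief obstacle is the gluing/functoriality input from the previous subsection: one must know that the decomposition of $X$ lets $R^{\delta}(X,\Sigma)(t)$ be read off as $\psi_{W,S}$ applied to a relative invariant of the unknot end, paired against a co‑relative invariant, and that this is compatible with the blow‑up definitions for immersed $S$. This is exactly what was asserted there (``from the functorial properties and the gluing formulae''), so it may be invoked verbatim; the only genuinely new point is the treatment of $(t^{-1}-t)$‑power torsion, disposed of above by passing to identities in the domain $R$.
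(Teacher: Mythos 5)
Your proposal is correct and follows essentially the same route as the paper, whose ``proof'' of this proposition is precisely the discussion preceding it: the gluing formula and functoriality identify $R^{\delta}(X,\Sigma)(t)$ (up to the harmless normalization $2^{-g}$) with the pairing of $\psi_{W,S}$ applied to the relative invariant of the unknot-side piece against the co-relative invariant of the other side, so that containment of $\cF^{-(\tau-1)}(K_{1})$ in the image of $\tI_{*}(K_{1};\Gamma)$ would force vanishing at $t=1$, while the annulus itself gives $\vr(K_{1})\le\tau$ and, capped off in $B^{4}$, the sharpness. Your explicit treatment of the $(t^{-1}-t)$-power torsion in the kernel of localization is a detail the paper leaves implicit, but it sits inside the same argument rather than constituting a different approach.
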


Consider now the $4$-torus $T^{4}$ as a complex surface, containing an
algebraic curve $C$ with a unibranch singularity at a point $p$. Let
$B_{1}$ be a small ball around $p$ so that the
curve $C$ meets $\partial B_{1}$  in a knot $K_{1}$. In a $C^{\infty}$
manner, we can alter $C$ in the interior of $B_{1}$ to obtain an immersed
surface $\tilde C$, so that the part of $\tilde C$ that is in the
interior of $B_{1}$ is isotopic to a complex-analytic immersed disk with
$\tau$ positive double-points. Let $B_{0}\subset B_{1}$ be a smaller
$4$-ball, meeting $\tilde C$ in a standard embedded disk, so that the
part of $\tilde C$ that lies between $B_{0}$ and $B_{1}$ is an
immersed annulus with $\tau$ double points. Let $T$ be a real $2$-torus in
$T^{4}$ disjoint from $C \cup B$, and let $\delta$ be a line bundle
with $c_{1}(\delta)[T]=1$. Let $Y_{1}\cong S^{3}\# T^{3}$ be obtained
as an internal connected sum of $\partial B_{1}$ with the boundary of a
tubular neighborhood of $T$. Similarly, let $Y_{0}$ be obtained as the
internal connected sum of $\partial B_{0}$ with the boundary of a
smaller tubular neighborhood of $T$. Then the pair $(T^{4}, \tilde C)$ has
a decomposition as shown in the diagram, satisfying the itemized
conditions of the theorem above.

To show that $\vr(K_{1})=\tau$ for this algebraic knot $K_{1}$, we
therefore need only show that the corresponding Laurent series
$R^{\delta}(T^{4},\tilde C)(t)$ is non-zero at $t=1$. Because of the
results of \cite{K-obstruction}, this is equivalent to showing that
$R^{\delta}(T^{4},\Sigma)(1)$ is non-zero, where $\Sigma$ is a smooth
algebraic curve (embedded in the abelian surface). Using the results
of \cite{KM-gtes-I} however, we can calculate this Laurent series. We
are free to arrange that $\Sigma$ has odd genus, that
$c_{1}(\delta)[\Sigma]$ is zero, and that $c_{1}(\delta)^{2}=0$. The
terms in the series come from the invariants $q_{k,l}^{\delta}$ with
\[
                2k + l - \tfrac{1}{2}(g-1) = 0,
\]
and from \cite{KM-gtes-I} we learn that
\[
                q_{k,l}^{\delta} = 
                \begin{cases}
                    2^{g} q_{0}^{\delta}(T^{4}), & \text{$k=0$ and
                    $l=(g-1)/2$,} \\
                    0, &\text{otherwise,}
                \end{cases}
\]
where $q^{\delta}_{0}(T^{4})$ is the Donaldson invariant of $T^{4}$,
which is $2$. The Laurent series is therefore a non-zero multiple of a
certain power of $t$, and in particular is non-zero at $t=1$, as
required.

\subsection{The involution on the configuration space}

As we have defined it, the group $\tI_{*}(K;\Gamma)$ is a free $R$-module of
rank $4$ in the case that $K$ is the unknot. The four generators come
from the two $2$-spheres that make up the set of critical points of
the unperturbed Chern-Simons functional on $\bonf(K)$. However, there
is an involution on $\bonf(K)$, interchanging these two copies: this
is the action of the cyclic group $\cH'$ of order $2$. Recall that,
with $\Z$ coefficients, we defined $\bar\tI_{*}(K)$ (in
Definition~\ref{def:bar-version}, where we dealt with $\SU(N)$ for
arbitrary $N$) by passing to the
quotient $\bonf(K)/\cH'$ and taking the Morse theory in this quotient.

Because the local coefficient system $\Gamma$ on $\bonf(K)$ is
actually pulled back from the quotient $\bonf(K)/\cH'$, we can adapt
this construction to define an $R$-module
\begin{equation}\label{eq:bar-Gamma-version}
            \bar\tI_{*}(K;\Gamma)
\end{equation}
for any knot $K$. For a suitable choice of perturbation,
the complex that computes $\bar\tI_{*}(K;\Gamma)$ is
the quotient of the complex that computes $\tI_{*}(K;\Gamma)$ by an
involution that acts freely on the generators. In the case of the
unknot, this Floer group would be $R^{2}$ instead of $R^{4}$. Little
else in our discussion would need to be changed.

\subsection{Genus bounds}

Let
\[
\begin{aligned}
        f: S &\to W \\
        f': S' &\to W
\end{aligned}
\]
be two immersions with transverse self-intersections, having as
boundary the same knots $K_{0}\subset Y_{0}$ and $K_{1}\subset Y_{1}$.
We have seen that if $S=S'$ and $f\simeq f'$ relative to the
boundary, then the two resulting maps $\tI_{*}(K_{0};\Gamma)\to
\tI_{*}(K_{1};\Gamma)$ differ only by factors of $(t^{-1}-t)$
(Proposition~\ref{prop:basic-moves} and
Corollary~\ref{cor:differ-by-unit}). Another situation to consider is
the case that $S'$ is obtained from $S$ by adding a handle: forming an
internal connected sum with a $2$-torus contained in a ball in $W$.

The effect of adding a handle in this way was examined for the case
of closed pairs $(X,\Sigma)$ in \cite{K-obstruction}. In our present
context, the relevant calculation is the following. Let $W$ be the
$4$-dimensional product cobordism $[0,1]\times S^{3}$, and let
$S_{1}\subset W$ be a cobordism from the unknot to the unknot and
having genus $1$. This gives rise to a homomorphism
\[
            \psi_{1} : \tI_{*}(U;\Gamma) \to \tI_{*}(U;\Gamma).
\]
If we pass to the ``bar'' version of the Floer groups
\eqref{eq:bar-Gamma-version}, then we have also a map
\[
         \bar\psi_{1} : \bar\tI_{*}(U;\Gamma) \to
         \bar\tI_{*}(U;\Gamma).
\]
The group $\bar\tI_{*}(U;\Gamma)$ is a free $R$-module of rank $2$,
with generators in different degrees mod $4$. We can therefore
identify it with $R\oplus R$ with an ambiguity consisting of
multiplication by units on each summand. The map $\bar\psi_{1}$ must
be off-diagonal in this basis, because its degree is $2$ mod $4$; so we have a map
\[
                \bar\psi_{1} : R \oplus R \to R\oplus R
\]
of the form
\begin{equation}\label{eq:psi-matrix}
                    \bar\psi_{1} = 
                    \begin{pmatrix}
                        0 & p(t) \\
                        q(t) & 0
                    \end{pmatrix}
\end{equation}
for certain Laurent polynomials $p(t)$ and $q(t)$, well-defined up to
units.  From \cite{K-obstruction} we know the effect of adding
\emph{two} handles to a surface, and from that we deduce the relation
\[
                    p(t)q(t) = 4 (t - 2 + t^{-1}),
\]
or in other words
\begin{equation}\label{eq:barpsi-squared}
\begin{aligned}
\bar\psi_{1}^{2} &= 4(t - 2 + t^{-1}) \\
            &= 4 t^{-1}(t-1)^{2}.
\end{aligned}
\end{equation}
as an endomorphism of $\bar\tI_{*}(U;\Gamma)$.

At this point, because of the factor of $4$ in the above formula, we
shall pass to rational coefficients rather than integer coefficients:
without change of notation, let us redefine $R$ as $\Q[t^{-1}, t]$. We
again define $R'$ by inverting $(1-t)$ and $(1+t)$ in $R$.

Suppose now that we have an embedded cobordism $S$ of genus $g$ from the unknot
$U$ to $K$, inside $[0,1]\times S^{3}$. This gives rise to a map
\[
            \bar\psi_{S} : \bar\tI_{*}(U;\Gamma) \to  \bar\tI_{*}(K;\Gamma)
\]
and similarly
\[
            \bar\psi'_{S} : \bar\tI_{*}(U;\Gamma)\otimes_{R}R' \to
            \bar\tI_{*}(K;\Gamma)\otimes_{R}R'.
\]
The surface $S$ is homotopic to an
immersed cobordism $S^{+}$ which is a composite of two parts: the
first part is $g$ copies of the standard genus-1 cobordism $S_{1}$
from $U$ to $U$; and the second part is an immersed annulus. Let
$\tau$ be the number of positive double points in the immersed
annulus. From Corollary~\ref{cor:differ-by-unit} and the definition of
the canonical isomorphism $\Psi$, we obtain
\[
            \bar\psi'_{S} =
             \Psi\circ (\bar\psi'_{1})^{g} 
\]
where $\bar\psi'_{1}$ is the map defined by \eqref{eq:psi-matrix}.
This provides a constraint on the genus $g$: it must be that
$\Psi\circ (\bar\psi'_{1})^{g}$ carries $R\oplus R$ into the image of
$ \bar\tI_{*}(U;\Gamma) $ in $ \bar\tI_{*}(U;\Gamma)\otimes_{R}R'$.
This constraint gives us a lower bound for $g$, just as we obtained a
lower bound $\vr(K)$ for the number of double points previously.
For algebraic knots again, the bound will be sharp.

It is not inconceivable that, by working over $\Z$ and paying
attention to the factor $4$ above instead of passing to $\Q$, one
could obtain a stronger bound for $g$ in some cases, but the authors
have no evidence one way or the other. 

\bibliographystyle{abbrv}
\bibliography{yaft}

\end{document}